\def\EE{\mathcal{E}}
\def\BB{\mathcal{B}}
\def\PP{\mathbb{P}}
\def\RR{\mathbb{R}}
\def\QQ{\mathbb{Q}}
\def\pr{\mathrm{pr}}
\def\dd{\mathrm{d}}
\def\ee{\mathsf{e}}
\def\sgn{\mathrm{sgn}}
\def\GG{\mathcal{G}}
\def\AA{\mathcal{A}}
\def\MM{\mathcal{M}}
\def\HH{\mathcal{H}}
\def\FF{\mathcal{F}}
\def\JJ{\mathcal{J}}
\def\LLL{\mathrm{L}}
\def\Cl{\mathrm{Cl}}
\def\at{\mathrm{at}}
\def\leb{\mathscr{L}}
\def\BBB{\mathsf{B}}
\def\FFF{\mathbb{F}}
\def\acc{\mathrm{acc}}
\def\WW{\mathbb{W}}
\theoremstyle{definition}
\newtheorem{definition}{Definition}
\theoremstyle{theorem}
\newtheorem{proposition}[definition]{Proposition}
\newtheorem{lemma}[definition]{Lemma}
\newtheorem{theorem}[definition]{Theorem}
\newtheorem{corollary}[definition]{Corollary}
\numberwithin{equation}{section}
\numberwithin{definition}{section}
\theoremstyle{remark}
\newtheorem{remark}[definition]{Remark}
\newtheorem{example}[definition]{Example}
\begin{document}
\title[Extending the noise of splitting and stability of  Brownian  maxima]{Extending the noise of splitting to its completion and stability of  Brownian  maxima}

\author{Matija Vidmar}
\address{Department of Mathematics, Faculty of Mathematics and Physics, University of Ljubljana, Slovenia}
\email{matija.vidmar@fmf.uni-lj.si}

\author{Jon Warren}
\address{Department of Statistics, University of Warwick, United Kingdom}
\email{j.warren@warwick.ac.uk}

\begin{abstract}
The stochastic noise of splitting, defined initially on the (basic) algebra of finite unions of intervals of the real line, is extended to a largest class of domains. The $\sigma$-fields of this largest extension constitute the completion, in the sense of noise-type Boolean algebras, of the range of the unextended (basic)  noise. The basic noise extends to a given  measurable domain precisely when a certain stability property is met: the times at which a  Brownian motion has local maxima which fall \emph{inside} the domain must remain unaffected under resampling of the Brownian increments \emph{outside} the domain; together with the same being true for the complement of the domain. A set that is equal to an open set modulo a Lebesgue negligible one, with the same holding of its complement, has  this stability property, but others have it too: the extension is non-trivial. Some domains are totally unstable with respect to the indicated resampling, and to them the extension cannot be made. 
\end{abstract}

\thanks{MV acknowledges funding from the Slovenian Research and Innovation Agency (ARIS) under programme No. P1-0448. JW acknowledges financial support of the LMS research in pairs scheme.}

\keywords{Nonclassical noise; splitting noise; noise Boolean algebra; completion; extension;  Brownian maxima; stability; sensitivity; spectral resolution; stable $\sigma$-field}

\subjclass[2020]{Primary: 60G20. Secondary: 60J65, 60G05.} 

\date{\today}

\maketitle


\section{Introduction}
\subsection{Motivation and agenda}\label{subsection:motivation}

According to Tsirelson  \cite[Abstract]{tsirelson}, whose phrasing  it seems hardly possible to better, ``a [stochastic] noise is a kind of homomorphism from a Boolean algebra of domains to the lattice of $\sigma$-fields.'' In the one-dimensional case \cite{vershik-tsirelson,tsirelson-nonclassical,picard2004lectures} the domains are finite unions of intervals of the  real line (that we think of as the time axis) to which the homomorphism assigns sub-$\sigma$-fields of the probability space that are independent over disjoint time sets. Classical one-dimensional noises are engendered by (maybe infinite-dimensional) L\'evy processes, the $\sigma$-field associated to a given domain being generated by the increments of the L\'evy process within that domain. And while their construction is typically  much more involved, there are,  in fact, many nonclassical noises~\cite{warren,warren-sticky,spectra-harris,watanabe,Jan08thenoise,yan-raimond,schramm,ellis,directed-landscape}. 

Now, a classical noise always admits a sequentially monotonically continuous extension  to all Borel  sets \cite[Lemma~6.18]{picard2004lectures} \cite[Section~3]{tsirelson-arxiv-5} and such  families of $\sigma$-fields, indexed by the measurable subsets of a Borel space, were first studied already by Feldman \cite[Definition~1.1]{feldman}. 
But, and to quote again Tsirelson \cite[top of p.~2]{tsirelson-arxiv-1}: ``What happens to a nonclassical [\ldots] noise? One may hope that it extends naturally to the greatest class  [of domains] acceptable for the given noise. For now, nothing like that is proved, nor even conjectured.'' He splits the problem in two: (a) enlarging the given set of $\sigma$-fields (irrespective of their relation to the domains); (b) extending the given correspondence between the domains and the $\sigma$-fields. 

Item~(a) is treated in \cite{tsirelson} (see also the preprints  \cite{tsirelson-arxiv-1,tsirelson-arxiv-2,tsirelson2011noise}, containing some results which the published version \cite{tsirelson} does not). Namely, Tsirelson recognizes that the image of a noise is a Boolean algebra of $\sigma$-fields, which he calls a  noise(-type) Boolean algebra. Then the largest extension $\overline{\BBB}$ of a noise Boolean algebra $\BBB$ is defined and named the noise(-type) completion of $\BBB$. 
A benefit of this abstract approach is that $\overline{\BBB}$ can be introduced in full generality. However, even when $\BBB$ came about as the range of a noise, the indexation is lost in the process; a priori there is no association of the $\sigma$-fields of $\overline{\BBB}$ back  to domains. 

 Our original motivation was to attempt Item~(b) of Tsirelson's agenda, i.e. to index $\overline{\BBB}$ as a(n extended) noise, for those $\BBB$, which are the images of a given (basic) noise. In this regard, it is notable that a nonclassical noise  cannot be extended sequentially monotonically continuously to all Borel sets, which is a consequence of a seminal result of Tsirelson \cite[Theorem~1.5]{tsirelson}  \cite[Theorem~3.2]{tsirelson-arxiv-5}, answering a long-standing question of Feldman \cite[Problem~1.9]{feldman}. It is this that makes the noise extension problem highly non-trivial  and it has proved too difficult in general for now. Settling for a humbler goal we achieve (b), apparently for the first time in the nonclassical case, for the one-dimensional noise of splitting,  introduced by Warren \cite{warren} and later studied by Watanabe \cite{watanabe-splitting}, see also  \cite[Paragraph~4.4.3]{yan-raimond} (erratum \cite{yan-raimond-erratum}).  Additionally, in constructing and characterizing the extension, we find a series of results concerning the stability of the times of Brownian local maxima, that we find interesting in their own right. 

\subsection{The  landscape in broad strokes}\label{the-landscape}
Before presenting the results of our paper in Subsection~\ref{subsection:contributions}, we are obliged to sketch the concepts involved, and to introduce some notation.

\subsubsection{Continuous products and noises}\label{subsub:cts-products-noises} A continuous product of probability spaces  is a separable probability space equipped with a two-parameter  family of complete sub-$\sigma$-algebras $\bigl({\mathcal F}_{s,t}\bigr)_{s< t}$ indexed by pairs of ordered (extended-real) times  and  having the following two properties for all times $s< t< u$:
 \begin{equation}\label{intro:indep}
 {\mathcal F}_{s,t} \mbox{ is independent of } {\mathcal F}_{t,u};
 \end{equation}
 \begin{equation}\label{intro:join}
 {\mathcal F}_{s,u}={\mathcal F}_{s,t} \vee {\mathcal F}_{t,u}
 \end{equation}
 (also, $\FF_{-\infty,\infty}=\lor_{n\in \mathbb{N}}\FF_{-n,n}=\PP^{-1}([0,1])$ is the domain of the ambient probability measure $\PP$).   Then, by a one-dimensional noise, we mean a continuous product of probability spaces, which is homogeneous in time, so that for each real $h$  one may shift time  by $h$ and this carries ${\mathcal F}_{s,t}$ to ${\mathcal F}_{s+h,t+h}$. To bring it in line with the homomorphism perspective of (the first paragraph of) Subsection~\ref{subsection:motivation} one has  only to consider $\FF_E:=\FF_{s,t}$ as being attached to the intervals $E$ with left endpoint $s$, right endpoint $t$ (the inclusion/exclusion of the endpoints is without significance) and extend this correspondence to the algebra 
 \begin{equation}\label{algebra-elementary}
 \EE:=\{\text{finite unions of intervals of the real line}\}
 \end{equation} by taking joins of $\sigma$-fields (the empty join is the $\PP$-trivial $\sigma$-field $\FF_\emptyset=\PP^{-1}(\{0,1\}$)), obtaining thus  $\FF:=(\EE\ni E\mapsto \FF_E)$, a homomorphism in a sense to be explained in Paragraph~\ref{paragraph:closure-completion}. 

\subsubsection{Stability and sensitivity}\label{paragraph:stability-sensitivity}
Notions of stability and sensitivity  are central to the theory of stochastic  noises.   They concern the behaviour of random variables under infinitesimal resampling. The continuous product $(\FF_{s,t})_{s<t}$ allows us to decompose the whole of the (one-dimensional) noise into independent parts corresponding to finite partitions of $\mathbb{R}$ into  intervals. Each of the parts is replaced with an independent copy of itself with some probability $p>0$, left the same with probability $1-p$; in the limit as the partition becomes finer and finer we get for each (complex-valued) $X\in \LLL^2(\FF_{-\infty,\infty})$ a copy $X_p$. 
Then $X$ is called sensitive if $\lim_{p\downarrow 0}\mathbb{E}[\overline{X}X_p]=0$ and stable if $\lim_{p\downarrow 0}\mathbb{E}[\vert X-X_p\vert^2]=0$. 	The stable  random variables belong to the largest classical subnoise and form the $\LLL^2$-space of a unique complete, so-called stable, $\sigma$-field $\FF_{\mathrm{stb}}$. The orthogonal complement  $\LLL^2(\FF_\mathbb{R})\ominus \LLL^2(\FF_{\mathrm{stb}})$  consists of the sensitive random variables.

\subsubsection{Boolean algebras of $\sigma$-fields, closure and completion}\label{paragraph:closure-completion}
As  indicated already in Subsection~\ref{subsection:motivation}, $\BBB:=\{\FF_E:E\in \EE\}$ is a noise Boolean algebra. 
It means that $\BBB$ is a distributive lattice of complete sub-$\sigma$-fields of $\PP$ for the operations of intersection as meet and the usual join of $\sigma$-fields, which is  also a Boolean algebra for independent complements. Here,  $y$ being an independent complement of $x$ means  that $x\lor y=\sigma(x\cup y)=\FF_\mathbb{R}=\PP^{-1}([0,1])$  (= the unit $1$ of $\BBB$) and that $x$ and $y$ are independent (and thus $x\land y=x\cap y=\FF_\emptyset=\PP^{-1}(\{0,1\})$  (= the $0$ of $\BBB$)); distributivity of  $\BBB$ ensuring that such a $y$ is then unique  \emph{within} $\BBB$ for a given $x\in\BBB$.\footnote{In keeping (and for consistency) with \cite{tsirelson} we  allow ourselves to use the letters $x$, $y$ to denote $\sigma$-fields.\label{footnote:x-y-for-sigma-fields}}  Taking the sequential monotone closure of $\BBB$ gives $\Cl(\BBB)$, which we refer to simply as the closure of $\BBB$. Then the (noise) completion $\overline{\BBB}$ of $\BBB$ consists of  those elements of $\Cl(\BBB)$ that are independently complemented in $\Cl(\BBB)$. As it happens, $\overline{\BBB}$ is a noise Boolean algebra in turn, and is hence the largest one of its kind contained in $\Cl(\BBB)$ and containing $\BBB$. A largest extension of $\BBB$! Note that in defining the completion it is quite natural to stay within the closure, since  it is only the $\sigma$-fields of $\Cl(\BBB)$ that we may think of as being somehow determined by~$\BBB$. When a noise was previously described as a ``homomorphism,'' this referred to  the fact that $\FF:\EE\to \BBB$ is  a homomorphism of Boolean algebras.

\subsubsection{Tanaka's SDE and the splitting noise}
A fairly general recipe for constructing nonclassical noises is to let ${\mathcal F}_{s,t}$ be generated by the evolution of a stochastic flow between times $s$ and $t$, using  a flow which  is associated with a stochastic differential equation (SDE) having a weak solution not measurable with respect to  the driving  Brownian motion \cite{yan-raimond,tsirelson-nonclassical}.  
The noise of splitting arises in this way by considering Tanaka's SDE.  The latter,
 \begin{equation}\label{eq:tanaka-1}
 X_t= x+ \int_0^t \mbox{sgn}(X_s) \dd B_s\qquad\qquad \left(\mbox{sgn}:=\mathbbm{1}_{(0,\infty)}-\mathbbm{1}_{(-\infty,0]}\right),
 \end{equation}  
is  unusual in that it is possible to describe very explicitly how the solution  $X$, which is itself distributed as a Brownian motion,  fails  to be measurable with respect  to the driving Wiener process $B$. The  initial segment of the path up to hitting zero, during which $X$ simply follows the increments of $\sgn(x)B$, is trivial. So, we may as well take $x=0$.  Then, applying Tanaka's formula \cite[Theorem~VI.1.2]{revuz-yor} and Skorokhod's lemma \cite[Lemma~VI.2.1]{revuz-yor},
\begin{equation}\label{eq:tanaka-2}
\vert X_t\vert= B_t+ L_t^X=B_t-\inf\{ B_s: s\in [0,t]\},
\end{equation}
where $L_t^X$ is the $\vert X\vert$-measurable local time accrued by $X$ at $0$. Thus, via \eqref{eq:tanaka-2}, $\vert X\vert$ can be constructed from $B$, and $B$ recovered from $\vert X\vert$. Furthermore,  the signs of the excursions of $X$  from zero are  independent equiprobable random signs,  independent of $ \vert X\vert$, therefore of $B$. Notice that as time passes these random signs  manifest themselves at the instants excursions of $X$ from zero begin, at which times $B$ makes excursions above its running minimum. They are in particular   the times of (certain)  local minima of  $B$.  It is possible to define a stochastic flow  of maps $\big ( X_{s,t}, s\leq t)$ so that each one-point motion $t\mapsto X_{s,t}(x)$ for $t \geq s$ solves \eqref{eq:tanaka-1}, albeit starting  from $x\in \mathbb{R}$ at time $s\in \mathbb{R}$,  all  driven  by the same two-sided Brownian motion $B=(B_t)_{t\in \mathbb{R}}$  \cite{yan-raimond,yan-raimond-tanaka}. Then the flow contains even more randomness than  a single solution to \eqref{eq:tanaka-1} does: $B$ is augmented with a family  of independent random signs, one associated to every time at which $B$ has a local minimum. In  Subsection~\ref{subsection:splitting-noise} we make precise the notion, which at first seems a little strange, of attaching a random sign  to each local minimum of $B$. For now we note that we will be able to construct for any random time $T$, which selects a local minimum of $B$, a corresponding random sign, which we denote $\epsilon_T$. The noise of splitting is then specified by taking,   for all $s< t$,  ${\mathcal F}_{s,t}$ to be generated by the $X_{u,v}$ for $s\leq u< v\leq t$, or equivalently, 
\begin{equation}\label{second-description}
\FF_{s,t}= \FF^{\mathrm{stb}}_{s,t}\lor \sigma(\epsilon_{T_i}:i\in \mathbb{N}), 
\end{equation}
where $\FF^{\mathrm{stb}}_{s,t}:=\sigma(\text{increments of  $B$ on $(s, t)$})$ and where $T=(T_i)_{i\in \mathbb{N}}$ is an(y) $\FF^{\mathrm{stb}}_{s,t}$-measurable enumeration of the times of local minima of $B$ in $(s,t )$. This second description \eqref{second-description} will become our definition of the noise  of splitting, with no need to refer to Tanaka's SDE. But, we prefer to work henceforth with local maxima rather than minima, which at the level of \eqref{eq:tanaka-1} amounts simply to replacing $\mbox{sgn}$ with $-\mbox{sgn}$. The presence of the random signs, which are sensitive, makes the splitting noise nonclassical.

\subsection{Highlights and discussion of results}\label{subsection:contributions}
 Consider the noise of splitting $\FF$, defined on the algebra $\EE$ of \eqref{algebra-elementary}, its range $\BBB=\FF(\EE)$, underlying Brownian motion $B$ and random signs $\epsilon:\{\text{times of local maxima of $B$}\}\to \{-1,1\}$. Noting that $\FF_{\mathrm{stb}}=\sigma(B)$, consider also the Wiener noise $\FF^{\mathrm{stb}}=(\FF^{\mathrm{stb}}_E)_{E\in \MM}$, indexed by the $\sigma$-field $\MM$ of all $\leb$-measurable sets ($\leb:=$ complete Lebesgue measure), generated by $B$:  it means that $\FF^{\mathrm{stb}}_E=\sigma(\int_0^t \mathbbm{1}_E(s) \dd B_s:t\in \mathbb{R})$  for   $E\in \MM$.   We may then outline the gist of our findings as follows.
\begin{enumerate}[(I)]
\item \label{gist:I} \emph{The largest extension of $\FF$ to a noise $\overline{\FF}:\overline{\EE}\to \Cl(\BBB)$, such that $\overline{\FF}_E\cap \FF_{\mathrm{stb}}=\FF^{\mathrm{stb}}_E$ for all $E\in \overline{\EE}$, exists. The extension $\overline{\FF}$ maps the subalgebra $\overline{\EE}$ of $\MM$ onto the completion $\overline{\BBB}$  sequentially monotonically  continuously.  For $E\in \overline{\EE}$  the times of the local maxima  of $B$ which fall in $E$ admit a countable enumeration $T=(T_i)_{i\in I}$ measurable w.r.t. $\FF^{\mathrm{stb}}_E$, and  $\overline{\FF}_E=\FF^{\mathrm{stb}}_E\lor \sigma(\epsilon_{T_i}:i\in I)$. }
\item\label{gist:II}\emph{An  $E\in \MM$ belongs to $\overline{\EE}$ iff  the times of the local maxima of  $B$ falling in $E$ remain unaffected under resampling of the  increments of $B$ on $\mathbb{R}\backslash E$, and vice versa. The extension $\overline\FF$ of $\FF$ is non-trivial in the sense that $\overline{\EE}$ contains domains other than those which are open mod-$\leb$ and whose complement is open mod-$\leb$.}
\item\label{gist:III}\emph{There are $E\in \MM$ (in fact closed $E$) with $\leb(E)>0$ for which the local maxima of $B$  belonging to $E$,  after resampling the increments of $B$ off $E$, are disjoint from those of $B$ prior to the resampling. Such $E$ do not belong to $\overline{\EE}$.} 
\end{enumerate}
Let us expand on \ref{gist:I}-\ref{gist:III}, pointing the reader along the way to where in the text the precise and more exhaustive statements are to be found. 

To start us off, we observe that when attempting to extend a general (one-dimensional) nonclassical noise $\FF=(\EE\ni E \mapsto {\mathcal F}_E)$, two problems manifest themselves. 

Firstly, different  approximations  to the same  domain  can give rise to  differing $\sigma$-algebras from the closure of $\FF(\EE)$. In  Appendix~\ref{appendix:ntba} we show this issue arises in a particularly egregious way: provided the stable part of $\FF$ is non-trivial, for all times $s<t$, there is a sequence  $(A_n)_{n\in \mathbb{N}}$ in $\EE$ such that $\liminf_{n\to\infty}A_n=[s,t]$ a.e.-$\leb$, but $\liminf_{n\to\infty}\FF_{A_n}={\mathcal F}_{\mathrm{stb}}\cap \FF_{[s,t]}\subsetneq \FF_{[s,t]}$. Actually, the latter is a consequence of a more general statement, Theorem~\ref{thm:is-in-closure}, asserting that the stable $\sigma$-field always belongs to the closure of a noise Boolean algebra.\footnote{We feel this is a significant finding in its own right, which gives insight into the structure of the closure. However, we do not directly use it in our study of the splitting noise, hence its presence in an appendix. } In any event, we see that, generically,  we have  non-uniqueness of the $\sigma$-algebra $\FF_E$ associated  with a measurable set $E$ by  sequential monotonic continuity (mod-$\leb$), even in the case that $E$ is an interval, which  dovetails with the fact,  mentioned in Subsection~\ref{subsection:motivation}, that a continuous extension of  $\FF$ to \emph{all} Borel sets is precluded.

Setting aside potential  non-uniqueness, the second issue that arises when extending the noise $\FF$ is that whilst if $E$  and $F$ are disjoint  then (any reasonably defined) ${\mathcal F}_E$ and ${\mathcal F}_{F}$ are always independent (corresponding to \eqref{intro:indep}), the generalisation of property \eqref{intro:join} does not necessarily hold: ${\mathcal F}_E$ and ${\mathcal F}_{F}$ need not generate  ${\mathcal F}_{E\cup F}$. 

In fact, both these problems involve the same phenomenon, which can be thought of as a form of loss of information. It is the nature of this loss of information in the noise of splitting that represents a pervading theme of our paper and we will show how it depends on the ``geometric'' structure of a domain and properties of Brownian motion.

Restricting our attention now to the splitting noise, spectral theory\footnote{An important technical device in  the analysis of a noise is the spectral structure of the commutative von Neumann algebra generated by the conditional expectation operators w.r.t. its $\sigma$-fields.} gives us a way of directly associating,  with every  $E\in \MM$, a definite  $ {\mathcal F}_E\in\Cl(\BBB)$, that agrees with the given $\FF_E$ when $E\in \EE$. This circumvents the issue, noted above, with attempting to define $\FF_E$ via some arbitrary approximation of $E$. But, it amounts only to a very abstract specification. We would like to describe the $\FF_E$, introduced via spectrum, using $B$ and $\epsilon$. 
To this end,  call  $E$ max-enumerable when the times of local maxima of $B$ that fall in $E$ admit an $\FF^{\mathrm{stb}}_E$-measurable countable enumeration. Then it turns out that, whenever  $E$ and  $\mathbb{R}\backslash E$ are both max-enumerable: (1) ${\mathcal F}_E$ can be rendered explicitly as $\FF_E=\FF^{\mathrm{stb}}_E\lor \sigma(\epsilon_{T_i}:i\in I)$, where $T=(T_i)_{i\in I}$ is a(ny) countable enumeration of the times of local maxima of $B$ falling in $E$, measurable w.r.t. $\FF^{\mathrm{stb}}_E$; (2) not only are $\FF_E$ and $\FF_{\mathbb{R}\backslash E}$ independent (which is true always), but also $\FF_E\lor \FF_{\mathbb{R}\backslash E}=\FF_\mathbb{R}$, whence
${\mathcal F}_E\in\overline{\BBB}$. 
  Conversely, $$\overline{\BBB}\subset \{\FF_E:E\in \MM\}.$$ Furthermore, we are able to show that  for no $E\in \MM$  for which, or for the complement of which, the property of max-enumerability fails, does $\FF_E$ belong to $\overline{\BBB}$. Thus, for an $E\in \MM$, the statements
\begin{enumerate}[(i)]
\item\label{knickers:i} $E\in \overline{\EE}:=\{F\in \MM:\FF_F\in \overline{\BBB}\}$ and
\item\label{knickers:ii} $E$ and $\mathbb{R}\backslash E$ are  max-enumerable
\end{enumerate}
are equivalent. In proving that \ref{knickers:i} implies \ref{knickers:ii} we are required  to extract relatively concrete objects (suitably measurable enumerations of the times of local maxima) out of the  abstract definition of $\overline{\BBB}$, which is one of the subtler arguments of this paper. 
We are also able to establish that the resulting indexation $\overline{\FF}:=(\overline{\EE}\ni E\mapsto \FF_E)$ of $\overline{\BBB}$ is temporally homogeneous and that 
it is a homomorphism of Boolean algebras, so that we may rightfully call it a noise. This noise is even sequentially monotonically continuous: for every sequence $(A_n)_{n\in \mathbb{N}}$  in $\overline{\EE}$ that is $\downarrow$ (resp. $\uparrow$) to an $A\in \overline{\EE}$, one has that $\land_{n\in \mathbb{N}}\FF_{A_n}= \FF_A$  (resp. $\lor_{n\in \mathbb{N}}\FF_{A_n}= \FF_A$);  and it is insensitive to $\leb$-negligible sets:  if $E\in \overline{\mathcal{E}}$, $F\in \MM$  and  $F=E$ a.e.-$\leb$, then $F\in \overline{\mathcal{E}}$; and, for $\{E,F\}\subset \overline{\mathcal{E}}$,  $\FF_E=\FF_F$ iff $E=F$ a.e.-$\leb$.  Lastly, $\overline{\FF}$ is the largest extension of $\FF$ within  $\Cl(\BBB)$ that sends complements to independent complements and which restricts to $\FF^{\mathrm{stb}}$ on intersection with $\FF_{\mathrm{stb}}$. This is the content of Theorems~\ref{theorem:extension} and~\ref{theorem:characterization}; compare \ref{gist:I} above.

The preceding is quite agreeable. But it is also specific to the noise of splitting. While we believe the techniques that we develop could be applied to extending some other nonclassical noises, the mileu of  noises is so rich and diverse that significant new ideas would likely be required to achieve the extension to the completion in generality. This applies in particular to black noises, whose stable part is trivial. 

Returning to reporting on the positive outcomes of our work, it emerges that max-enumerability of $E$ is equivalent to the times of local maxima of  $B$ which belong to $E$  being also   times of local maxima of the $E$-censored Brownian motion $\int_0^\cdot \mathbbm{1}_E(s) \dd B_s$. This is a type of stability property, that we call max-total stability, for which we offer several characterizations in Theorem~\ref{thm:stability}, one of them having been set out in \ref{gist:II} above. Whether or not this property holds  depends on the sizes of the gaps of $E$ at small scales, and we  give in Proposition~\ref{proposition:density-of-sets}\ref{towards-stable} a sufficient condition for it in  terms of  the speed of convergence of the local density of $E$.   Thereafter we can provide explicit examples of non-trivial closed $E$ meeting this criterion by constructing them out of the ranges of infinite activity subordinators with positive drifts (Example~\ref{example:stable}\ref{subo:A}). Their complements being open and hence also max-totally stable, the extended indexation $\overline{\FF}$ is defined on them as observed above.

At the other end of the spectrum we have sets $E$, of positive Lebesgue measure, for which the times of local maxima of $B$ are disjoint with the times of local maxima of $\int_0^\cdot \mathbbm{1}_E(s) \dd B_s$. We refer to this as max-total instability of $E$, providing equivalent descriptions for it in Theorem~\ref{theorem:max-unstable} that match up nicely with those of Theorem~\ref{thm:stability}, and one of which was developed in \ref{gist:III} above. Such sets do not belong to the domain of definition of $\overline{\mathcal{F}}$. Remarkably,  Theorem~\ref{thm:is-in-closure} is  already sufficient to ensure existence of ``large'' (in the sense of Lebesgue measure) closed max-totally unstable sets (Example~\ref{example:unstable}). But, and in parallel to the results in the max-totally stable case, we can also give a criterion on the local density of $E$, which ensures its max-total instability (Proposition~\ref{proposition:density-of-sets}\ref{towards-unstable}), and we can elicit examples of closed max-totally unstable sets coming out of the ranges of subordinators (Example~\ref{example:stable}\ref{subo:B}). 

We leave the reader  with the following striking observation that drops out of the abstract results of Appendix~\ref{appendix:ntba} on noise Boolean algebras, and which (it will be seen) relates to the distinction between the max-totally stable and unstable sets. For the maximizer $\tau$ of $B$ on  the interval $[0,1]$ there exists a closed nowhere dense  $E\subset [0,1]$ of $\leb$-measure arbitrarily small (resp. arbitrarily close to $1$) such that $\PP(\tau=\tau_E)>0$ (resp. $=0$). Here $\tau_E$ is the maximizer on $[0,1]$ of $B_E$ that results from $B$ by replacing its increments off $E$ with an independent copy. For a precise rendering of this, see Items~\ref{dichotomy:B}-\ref{dichotomy:A} on p.~\pageref{dichotomy:B}. We stress that it concerns a phenomenon of stability/sensitivity quite distinct from that of Paragraph~\ref{paragraph:stability-sensitivity}. Indeed, like all random variables, so too $\tau$ is stable for the Wiener noise. That it can be sensitive in the preceding sense  (when $\PP(\tau=\tau_E)=0$) is due to the different nature of the perturbation: for the noise version we resample independently uniformly with vanishing probability, whereas here it is  with probability one on  a set of positive $\leb$-measure, however small this set may be.

\subsection{Article structure} The organization of the remainder of this paper is as follows. Section~\ref{sec:preliminaries} lays the groundwork, importantly the splitting noise is put forward in formal terms. Section~\ref{sec:extension} pins down the ``largest extension'' of this noise. We turn to stability/sensitivity of the Brownian maxima and to how this is connected to the nature of the domains to which the splitting noise extends/fails to extend in Section~\ref{section:stability-sensitivity}. The two Appendices~\ref{appendix:ntba}-\ref{appendix:coupling} are stand-alone items that can be read independently of the body of the text (save for Subsections~\ref{miscellaneous}-\ref{subsection:lattice}).

\section{Preliminaries}\label{sec:preliminaries}
We commence herewith a formal exposition.

\subsection{Miscellaneous general notation, vocabulary, conventions}\label{miscellaneous} 
For a measure $\nu$ defined on a $\sigma$-field $\mathcal{N}$: the push-forward of $\nu$ along an $\mathcal{N}$-measurable map $f$ is denoted $f_\star\nu:=(A\mapsto \nu(f\in A))$, the domain of $f_\star \nu$ (which is to say, the $\sigma$-field on the codomain of $f$) being understood from context; for a numerical $\mathcal{N}$-measurable $f$,  the definite (resp. indefinite) integral of $f$ against $\nu$ is written $\nu[f]:=\int f\dd\nu$ (resp. $f\cdot \nu:=(\mathcal{N}\ni A\mapsto \nu [f;A])$, where $\nu[f;A]:=\int_A f\dd\nu$ for $A\in \mathcal{N}$) whenever this is significant (is well-defined). In particular, if $\mathbf{P}$ is a probability, then $\mathbf{P}[X]=\mathbf{E}[X]$ (resp. $\mathbf{P}[X;A]=\mathbf{E}[X\mathbbm{1}_A]$) is the expectation of a numerical random element  $X$ under $\mathbf{P}$ (resp. on the event $A$); similarly, for further a sub-$\sigma$-field $\GG$, $\mathbf{P}[X\vert\GG]=\mathbf{E}[X\vert\GG]$ stands for the $\mathbf{P}$-conditional expectation of $X$ w.r.t. $\GG$, while $\mathbf{P}(A\vert \GG):=\mathbf{P}[\mathbbm{1}_A\vert\GG]$ is the $\mathbf{P}$-conditional probability of an event $A$ given $\GG$.

A measure $\nu$  defined on a measurable space $(\Omega,\Sigma)$ is restricted sometimes to a sub-$\sigma$-field $\Sigma'$ (i.e. a $\sigma$-field on $\Omega$ contained in $\Sigma$), sometimes to a measurable subset $A$ of its base ``sample space'' (i.e. an element of $\Sigma$), these restrictions to be  denoted indiscriminately by $\nu\vert_{\Sigma'}$ and $\nu\vert_A:=\nu\vert_{\Sigma\cap 2^A}$ respectively ($2^X$ is the power set of a set $X$).  If  $\nu$ is $\sigma$-finite, then by the $\nu$-essential union  of a family $\mathcal{V}\subset \Sigma$ we mean the a.e.-$\nu$ uniquely determined   $\text{$\nu$-ess-$\cup$}\mathcal{V}:=V\in \Sigma$ such that $\mathbbm{1}_V=\text{$\nu$-ess sup}\{\mathbbm{1}_V:V\in \mathcal{V}\}$ a.e.-$\nu$, the latter being just the usual essential supremum of a family of measurable (extended-real valued) maps. The notation $\mu\ll \nu$ (resp. $\mu\sim\nu$) is used for absolute continuity (resp. equivalence) of $\mu$ w.r.t. (resp. and)   $\nu$, $\mu$ and $\nu$ being measures defined on the same measurable space. 

We will write $\times_{i\in I}X_i$ for the cartesian product of a family of sets $(X_i)_{i\in I}$; as usual, the same symbol $\times $ will be used for the product of measures.

By an extension of a probability $\mathbf{P}$ we mean a probability $\mathbf{Q}$ together with  a measurable transformation $\mathsf{p}$ satisfying $\mathsf{p}_\star \mathbf{Q}=\mathbf{P}$. However, by a standard abuse of notation,  we would avoid making $\mathsf{p}$ explicit: a random element $Z$ under $\mathbf{P}$  is still written $Z$ under $\mathbf{Q}$, though really we mean  $Z\circ \mathsf{p}$; similarly, if $\GG$ is a sub-$\sigma$-field of $\mathbf{P}$ we transfer it to $\mathbf{Q}$ as $\mathsf{p}^{-1}(\GG)$, but continue to just write $\GG$ etc.

We intend to be pedantic about negligible sets: if an a.s./a.e. qualifier is missing but the claim is not true with certainty, then we are in error. Nevertheless, we shall indulge in the usual confusion between members of $\LLL^2$-spaces as functions on the measure space and equivalence classes thereof: whether the first or the second is intended will be clear from context (e.g. from the presence or absence of an a.e./a.s. qualifier), which should be sufficient to guard against fallacy.

The collection of the Borel sets of $\mathbb{R}$ is denoted $\mathcal{B}_\mathbb{R}$, while $\leb$ is the complete Lebesgue measure on $\mathbb{R}$.  A two-sided Brownian motion is for us a stochastic process $H=(H_t)_{t\in \mathbb{R}}$ for which $(H_t)_{t\in [0,\infty)}$ and $(H_{-t})_{t\in (-\infty,0]}$ are independent univariate Brownian motions null at zero. The indefinite Wiener integral of a locally square-integrable $\leb$-measurable map $f:\mathbb{R}\to \mathbb{R}$ against such an $H$ is  $f\cdot H:=(\mathbb{R}\ni t\mapsto \int_0^t f(s)\dd H_s)$ [$\int_0^tf(s)\dd H_s:=-\int_t^0f(s)\dd H_s$ for $t\in (-\infty,0)$], and we mean in the latter a(ny) continuous version of this process vanishing at zero (there is ambiguity [only] on a negliglible event). For a set $s$ we understand $\vert s\vert:=\infty$ when $s$ is infinite, and otherwise $\vert s\vert$ is the number of points in $s$.  The symbol $\uparrow$ (resp. $\uparrow\uparrow$) means nondecreasing (resp. strictly increasing); analogously we interpret $\downarrow$, $\downarrow\downarrow$.
 
 All Hilbert spaces appearing herein will be complex.  Products of ($\sigma$-finite) measures are automatically completed. 
  
 \subsection{Lattice of sub-$\sigma$-fields and noise(-type) Boolean algebras}\label{subsection:lattice}
 For an arbitrary probability $\mathbf{P}$ we 
 \begin{quote}
\normalsize denote  by $\hat{\mathbf{P}}$ the collection of all $\mathbf{P}$-complete sub-$\sigma$-fields of $\mathbf{P}$
 \end{quote} 
 (the probability $\mathbf{P}$ itself need not be complete, it just means that every element of $\hat{\mathbf{P}}$ contains $\mathbf{P}^{-1}(\{0\})$). $\hat{\mathbf{P}}$ is a bounded complete lattice for meet $\land=$ intersection, join $\lor=$ the $\sigma$-field generated by the union, bottom element 
 \begin{equation*}
 0_\mathbf{P}:=\mathbf{P}^{-1}(\{0,1\})\text{ ($=$ the  $\mathbf{P}$-trivial sets)}
  \end{equation*}
   and top element
   \begin{equation*}
    1_\mathbf{P}:=\mathbf{P}^{-1}([0,1]) \text{ ($=$ the domain of $\mathbf{P}$).}
    \end{equation*}
It must be emphasized that $\hat{\mathbf{P}}$ is in general not distributive \cite[Example~1.1]{vidmar_2019}, however there is distributivity over independent families \cite[Proposition~3.4]{vidmar_2019}.  In connection to the latter it is important to point out that actually
\begin{equation}\label{distributivity-indep-not-complete}
[\land_{n\in \mathbb{N}}(x_n\lor y_n)]\lor 0_\mathbf{P}= [(\land_{n\in \mathbb{N}}x_n)\lor (\land_{n\in \mathbb{N}}y_n)]\lor 0_\mathbf{P}
\end{equation}
 for arbitrary $\downarrow$ sequences  $(x_n)_{n\in \mathbb{N}}$ and $(y_n)_{n\in \mathbb{N}}$ of (not necessarily complete!) sub-$\sigma$-fields of $\mathbf{P}$, \emph{provided} $\lor_{n\in\mathbb{N}}x_n$ and $\lor_{n\in \mathbb{N}}y_n$ are independent. (The proof is the same as in the case of complete sub-$\sigma$-fields; we outline it  briefly for the sake of completeness. By decreasing martingale convergence  and a general property of independent conditioning of products w.r.t. joins \cite[Lemma~2.2]{vidmar_2019}, we compute, for bounded real, respectively  $\lor_{n\in \mathbb{N}}x_n$- and $\lor_{n\in \mathbb{N}}y_n$-measurable $f$ and $g$, 
 \begin{align*}
& \mathbf{P}[fg\vert \land_{n\in \mathbb{N}}(x_n\lor y_n)]=\lim_{n\to\infty}\mathbf{P}[fg\vert x_n\lor y_n]=\lim_{n\to\infty}\mathbf{P}[f\vert x_n]\mathbf{P}[g\vert y_n]\\&=\mathbf{P}[f\vert \land_{n\in \mathbb{N}}x_n]\mathbf{P}[g\vert \land_{n\in \mathbb{N}}y_n]=\mathbf{P}[fg\vert(\land_{n\in \mathbb{N}}x_n)\lor (\land_{n\in \mathbb{N}}y_n)]\text{ a.s.-$\mathbf{P}$};
 \end{align*} and conclude via monotone class. Q.E.D.) A very special case of the latter observation is then that 
\begin{equation}\label{triviality-in-intersection}
\land_{n\in \mathbb{N}}(x_n\lor 0_\mathbf{P})=(\land_{n\in\mathbb{N}}x_n)\lor 0_\mathbf{P}
\end{equation} for all $\downarrow$ sequences  $(x_n)_{n\in \mathbb{N}}$ of (again, not necessarily complete) sub-$\sigma$-fields of a given probability $\mathbf{P}$.\footnote{We may also mention in passing that  in \eqref{distributivity-indep-not-complete}, $0_\mathbf{P}$ depends on $\mathbf{P}$ only up to equivalence, which means that the condition of independence of $\lor_{n\in\mathbb{N}}x_n$ and $\lor_{n\in \mathbb{N}}y_n$ ensuring the validity of \eqref{distributivity-indep-not-complete} can be weakened to: $\lor_{n\in\mathbb{N}}x_n$ and $\lor_{n\in \mathbb{N}}y_n$ are independent under a  probability $\mathbf{Q}\gg \mathbf{P}$. Still, the intervention of the trivial sets of the probability in \eqref{distributivity-indep-not-complete} cannot in  general be dispensed with, not even on a product space \cite{steinicke}.}

  An independent complement of an $x\in\hat{\mathbf{P}}$ is a $y\in \hat{\mathbf{P}}$ such that $x\lor y=1_\mathbf{P}$ and $x$ is independent of $y$ (in which case automatically $x\land y=0_\mathbf{P}$).  Members $x$ and $y$ of the lattice $\hat{\mathbf{P}}$ are said to be commuting if their associated conditional expectations $\mathbf{P}[\cdot\vert x]$ and $\mathbf{P}[\cdot\vert y]$, acting on $\LLL^2( \mathbf{P})$, are \cite[Definition~3.1]{tsirelson}, in which case $\mathbf{P}[\cdot\vert x]\mathbf{P}[\cdot \vert y]=\mathbf{P}[\cdot\vert x\land y]=\mathbf{P}[\cdot\vert y]\mathbf{P}[\cdot \vert x]$.
 
A noise(-type) Boolean algebra under a probability $\mathbf{P}$ (which we now insist is essentially separable, i.e. $\LLL^2(\mathbf{P})$ is separable) is a distributive sublattice $\mathbf{B}$ of $\hat{\mathbf{P}}$ containing $\{0_\mathbf{P},1_\mathbf{P}\}$, every element of which admits an independent complement in $\mathbf{B}$ \cite[Definition~1.1]{tsirelson}. Such $\mathbf{B}$ is indeed a  Boolean algebra for the given lattice  meet, join, and constants $0_\mathbf{P}$, $1_\mathbf{P}$, independent complements playing the role of complementation $(\cdot)'$. True, in $\hat{\mathbf{P}}$ independent complements are in general not unique \cite[Example~1.3]{vidmar_2019}, but within $\mathbf{B}$ they are \cite[(just before) Definition~1.1]{tsirelson}. When using the notation $x'$ to denote the unique independent complement of an $x\in \mathbf{B}$ \emph{within} $\mathbf{B}$,  the $\mathbf{B}$ (relative to which we intend the independent complement) will be understood from context or will be spelled out. 

We put $\Cl(\mathbf{B})$ for the sequential monotone closure of $\mathbf{B}$ (i.e. $\Cl(\mathbf{B})$ is the smallest $C\subset \hat\PP$ such that $\mathbf{B}\subset C$ and such that $C$ is closed for $\uparrow$ joins and $\downarrow$ meets of sequences) and recall  \cite[Theorem~1.6]{tsirelson} that 
 \begin{equation}\label{eq:closure-identified}
 \Cl(\mathbf{B})=\left\{\liminf_{n\to\infty}x_n:x\text{ a sequence in $\mathbf{B}$}\right\}.
 \end{equation} Then the (noise-type) completion 
 \begin{equation}\label{eq:completion-definition}
 \overline{\mathbf{B}}:=\{x\in\Cl(\mathbf{B}):\exists y\in \Cl(\mathbf{B})\text{ such that }y\text{ is an independent complement of }x\}
 \end{equation} of $\mathbf{B}$ is the collection of those members of $\Cl(\mathbf{B})$ that are independently complemented in $\Cl(\mathbf{B})$, and it is a noise Boolean algebra in turn, the largest one containing $\mathbf{B}$ and contained in $\Cl(\mathbf{B})$ \cite[Theorem~1.7]{tsirelson}.  
 Like in any noise Boolean algebra, independent complements are unique also within $\overline{\mathbf{B}}$; for $x\in \overline{\mathbf{B}}$, without ambiguity (as long as $\mathbf{B}$ is understood from context), 
 \begin{quote}
 \normalsize $x'$ denotes the unique independent complement of $x$ in $\overline{\mathbf{B}}$.
 \end{quote}

\subsection{The noise of splitting}\label{subsection:splitting-noise}
Let $\WW$ be the  two-sided Wiener measure on the canonical space $(\Omega_0,\mathcal{B}_{\Omega_0})$ --- coordinate process $W$ --- of continuous paths mapping $\mathbb{R}$ to $\mathbb{R}$, vanishing at zero,  every one of which has countably infinitely many (\emph{times of strict}-, to be understood henceforth without emphasis) local maxima. The last property is not so ``canonical'' but it will save us from a number of a.s. qualifiers in what follows. We complete $\WW$ and use $\WW$ to denote this completion from now on. 

By a selection of a local maximum of $W$ (resp. belonging to/on some set $A\subset \mathbb{R}$) we shall mean a  random variable $\tau$ of $\WW$ taking values in $\mathbb{R}\cup \{\dagger\}$ (resp. $A\cup \{\dagger\}$) such that $\tau$ is a local maximum of $W$  on $\{\tau\ne \dagger\}$. Here $\dagger\notin \mathbb{R}$ is a separate adjoined point and we agree $ \dagger\pm r:=r\pm \dagger:=\dagger$, $\omega(\dagger):=0$ for $r\in\mathbb{R}$, $\omega:\mathbb{R}\to \mathbb{R}$. Unless explicitly stressed otherwise, when talking about a \emph{maximizer} of $W$ we shall always implicitly mean the time of the maximum of $W$ on some non-degenerate compact interval (this interval not depending on the sample path of $W$), setting it for definiteness equal to $\dagger$ in case of non-unique existence or else when it is not a local maximum of $W$ (which happens only with $\WW$-probability zero). For real $s<t$,  the notation for said maximizer of $W$ on $[s,t]$ shall be $\tau_{s,t}$. 

An enumeration of the local maxima of $W$ (resp. belonging to/on some set $A\subset \mathbb{R}$) is a sequence $S=(S_k)_{k\in \mathbb{N}}$ of selections of local maxima of $W$ such that for every $\omega\in \Omega_0$, $\{S_k(\omega):k\in \mathbb{N}\}\backslash \{\dagger\}$ are precisely the local maxima of $\omega$ (resp. belonging to $A$). Thus, for instance, the maximizers of $W$ on intervals with rational endpoints offer an enumeration of the local maxima of $W$. Sometimes we will speak about selections/enumerations of local maxima and maximizers  of  processes, indexed by 	[subintervals of] $\mathbb{R}$ and $\mathbb{R}$-valued, other than $W$: these will then carry the obvious analogous meanings. 

\phantomsection\label{enumeration-page}
Let $\mathsf{S}=(\mathsf{S}_k)_{k\in \mathbb{N}}$ be  an enumeration of the local maxima of $W$ such that, for all $\omega\in \Omega_0$,  $\mathsf{S}(\omega)=(\mathbb{N}\ni k\mapsto \mathsf{S}_k(\omega))$ is injective and real-valued (we do not allow  $\dagger$ for $\mathsf{S}$). Such an enumeration certainly exists because we have insisted that paths in $\Omega_0$ admit precisely denumerably many local maxima.  Put $$\Omega:=\{(\omega,f):\omega\in \Omega_0,\, f\in \{-1,1\}^{\{\text{local maxima of }\omega\}}\}$$ and define $\Theta:\Omega_0\times \{-1,1\}^\mathbb{N}\to \Omega$ by setting $$\Theta(\omega,p):=(\omega,p\circ \mathsf{S}(\omega)^{-1}),\quad (\omega,p)\in \Omega_0\times \{-1,1\}^\mathbb{N}.$$ Pushing forward $\WW\times (\frac{1}{2}\delta_{-1}+\frac{1}{2}\delta_1)^{\times \mathbb{N}}$ --- together with its $\sigma$-field 
 --- along $\Theta$ we get the probability $\PP:=\Theta_\star (\WW\times (\frac{1}{2}\delta_{-1}+\frac{1}{2}\delta_1)^{\times \mathbb{N}})$ on $\Omega$, which does not depend on the choice of the enumeration $\mathsf{S}$. 

The probability $\PP$ supports the one-parameter group $\theta=(\theta_t)_{t\in \mathbb{R}}$ of bimeasurable bijections of~$\Omega$, \phantomsection\label{levy-shift}
$$\theta_t(\omega,f):=(\underbrace{\omega(t+\cdot)-\omega(t)}_{\Delta_t(\omega)},f(t+\cdot)),\quad (\omega,f)\in \Omega,\, t\in \mathbb{R},$$
that leave $\PP$ invariant: $ {\theta_t}_\star\PP=\PP$  (in particular, the L\'evy shift $\Delta_t:\Omega_0\to\Omega_0$ leaves invariant $\WW$) for all $t\in \mathbb{R}$. The random elements $B$ and $\epsilon$ under $\PP$ will be the two coordinate projections: $$\text{$B(\omega,f):=\omega$ and $\epsilon(\omega,f):=f$ for $(\omega,f)\in \Omega$}.$$ We may thus think of the local maxima of the Brownian motion $B$ as being decorated by the random signs of $\epsilon$: for a selection $T$ of a local maximum of $W$, 
\begin{equation*}\phantomsection\label{random-signs-page}
\epsilon_T:=\epsilon(T(B))
\end{equation*}
 is the random sign ``attached'' to the local maximum $T(B)$; here we understand $\epsilon(\dagger):=0$ in case the selection allows for the state $\dagger$, so that $\epsilon_T=(\Omega\ni (\omega,f)\mapsto f(T(\omega))\mathbbm{1}_{\mathbb{R}}(T(\omega)))$. For definiteness we set $\Delta_\dagger(\omega)\equiv \mathsf{w}_0$ ($\mathsf{w}_0\in\Omega_0$ fixed, its choice will not matter) for all $\omega\in \Omega_0$. 
 
 It is immediate from the construction, and important to keep in mind, that for a family $(T_i)_{i\in I}$ of selections of local maxima of $W$, if for all distinct $i$ and $j$ from $I$, $T_i\ne T_j$ a.s.-$\WW$ on $\{T_i\ne\dagger,T_j\ne \dagger\}$, then the $\epsilon_{T_i}$, $i\in I$, are independent mean zero given $B$; if further $\WW(T_i=\dagger)=0$ for all $i\in I$, then $\PP(\epsilon_{T_i}=1\vert B) =\frac{1}{2}$ a.s.-$\PP$ for all $i\in I$, so that the $\epsilon_{T_i}$, $i\in I$, are also jointly independent of $B$ and mutually independent amongst themselves.

 A finite (possibly empty) union of (possibly degenerate) intervals of $\mathbb{R}$ shall be called an elementary set, 
 \begin{quote}\normalsize the algebra of all elementary sets shall be  denoted $\mathcal{E}$. 
 \end{quote}
 For an $A\in \EE$ define $\FF_A$ to be the $\sigma$-field generated by the $\PP$-trivial sets $0_\PP$, the increments of $B$ on $A$, and the random signs $\epsilon_{\mathsf{S}^A(k)}$, $k\in \mathbb{N}$, with $\mathsf{S}^A$  any enumeration of the local maxima of $W$  on $A$  measurable w.r.t. $\sigma(\text{increments of $W$ on $A$})\lor 0_\WW$. [We should have perhaps said in the preceding, to be completely unmabiguous, ``of the local maxima of $W$ \emph{that fall in} $A$'', but we will not emphasize this from now on.] Again one verifies at once that $\FF_A$ does not actually depend on the choice of $\mathsf{S}^A$. The factorization $\FF=(\FF_A)_{A\in \mathcal{E}}$, endowed with the time shifts $\theta$, viewed under the Lebesgue-Rokhlin probability $\PP$ is the  noise 
 \cite[Definitions~3.16 and~3.27, Remark~3.28]{picard2004lectures} of splitting in that: 
 \begin{align}\label{eq.noise1}
 \FF_\mathbb{R}&=1_\PP=\lor_{n\in \mathbb{N}}\FF_{[-n,n]};\\\label{eq.noise2}
 \FF_E\lor\FF_F&=\FF_{E\cup F},\quad \{E,F\}\subset \EE;\\\label{eq.noise3}
\text{$\FF_E$ is independent }&\text{of $\FF_{E\backslash \mathbb{R}}$},\quad E\in \EE;
\end{align}
 and (temporal homogeneity) 
 \begin{equation}\label{eq.noise4}
 \theta_t^{-1}(\FF_E)=\FF_{E+t},\quad E\in \EE,\, t\in \mathbb{R}.
 \end{equation} We do not make here explicit a certain technical issue  --- continuity of the group action of $\theta$ --- which makes sure that we are in fact dealing with a one-dimensional noise in the narrowest sense of \cite[Definition~3.27]{picard2004lectures}
  referring the reader for this instead to \cite[Subsection~7.2]{stationary}, where this is handled more generally for the noise attached to an arbitrary stationary local random countable set over the Wiener noise, of which the local maxima are a prime example. The designation ``factorization'' derives from the equality $\LLL^2(\PP)=\LLL^2(\PP\vert_{\FF_E})\otimes \LLL^2(\PP\vert_{\FF_{\mathbb{R} \backslash E}})$, holding true on account of \eqref{eq.noise1}-\eqref{eq.noise3} for all $E\in \EE$ and valid up to the natural unitary equivalence $$fg\leftrightarrow f\otimes g,\quad (f,g)\in \LLL^2(\PP\vert_{\FF_E})\times \LLL^2(\PP\vert_{\FF_{\mathbb{R} \backslash E}}).$$
  By general distributivity  over independent $\sigma$-fields \eqref{distributivity-indep-not-complete} it is automatic from \eqref{eq.noise2}-\eqref{eq.noise3} that 
  \begin{equation}\label{noise:meet}
 \FF_E\land\FF_F=\FF_{E\cap F},\quad \{E,F\}\subset \EE.
 \end{equation}
  Remark also that, like for any (one-dimensional) noise, the addition or subtraction of a finite number of points from an elementary set $E$ does not affect $\FF_E$ (just because e.g. \eqref{eq.noise2}-\eqref{eq.noise3} imply that the $\FF_{\{t\}}$, $t\in \mathbb{R}$, are independent, which, by the separability of $\LLL^2(\PP)$ and \eqref{eq.noise4} can only happen if $\FF_{\{t\}}=0_\PP$ for all $t\in \mathbb{R}$; now combine this with \eqref{eq.noise2}). 
 
 We write $$\BBB:=\{\FF_A:A\in \mathcal{E}\}\subset \hat\PP$$ for the range of $\FF$, which is a  noise Boolean algebra, moreover $\FF:\EE\to \BBB$ is an epimorphism of Boolean algebras. In particular, the pieces of notation $\Cl(\BBB)$, $\overline{ \BBB}$ and complementation $(\cdot)^\prime$ are defined as put forth in Subsection~\ref{subsection:lattice} (for  $\mathbf{P}=\PP$, $\mathbf{B}=\BBB$).

The $\PP$-complete $\sigma$-field generated by $B$ is equal to the so-called stable $\sigma$-field of $\FF$ \cite[Theorems~5.10 and~6.2]{picard2004lectures}, which we shall denote $\FF_{\mathrm{stb}}$. Then setting $$\FF^{\mathrm{stb}}_A:=\FF_A\land \FF_{\mathrm{stb}}=\sigma(\text{increments of $B$ on $A$})\lor 0_\PP,\quad A\in \mathcal{E},$$ gives a subnoise of $\FF$, the  linear/classical/stable part $\FF^{\mathrm{stb}}=(\FF^{\mathrm{stb}}_A)_{A\in \mathcal{E}}$ of $\FF$ \cite[Theorem~6.15, Remark~6.16]{picard2004lectures}, which, viewed under $\PP\vert_{\FF_{\mathrm{stb}}}$, is (isomorphic to) the classical Wiener noise. \phantomsection\label{for-absolute-consistency}(For absolute consistency with the literature \cite[bottom of p.~5]{picard2004lectures} we ought actually in the preceding to have passed to the quotient of $\PP$ w.r.t. $\FF_{\mathrm{stb}}$ to get a standard probability, but this is just a technical reservation that will be of no consequence here.) 

It is well-known  that  the indexation $\FF^{\mathrm{stb}}$ is continued uniquely to all $\leb$-measurable sets by insisting that the resulting family, which we shall continue to denote $\FF^{\mathrm{stb}}$, respects  sequential monotone limits (mod-$\leb$): for all $\leb$-measurable $A$, for every sequence $(A_n)_{n\in \mathbb{N}}$ of $\leb$-measurable sets that is $\uparrow$ (resp. $\downarrow$) to $A$ a.e.-$\leb$, $\lor_{n\in \mathbb{N}}\FF^{\mathrm{stb}}_{A_n}=\FF^{\mathrm{stb}}_A$ (resp. $\land_{n\in \mathbb{N}}\FF^{\mathrm{stb}}_{A_n}=\FF^{\mathrm{stb}}_A$).  The resulting extension is a homomorphism from the Boolean algebra of $\leb$-measurable sets into its noise Boolean algebra range $\BBB_{\mathrm{stb}}$, which satisfies the property
\begin{equation}\label{eq:wiener-noise-injective}
\text{ for all $\leb$-measurable $E$ and $F$: }\FF^{\mathrm{stb}}_E=\FF^{\mathrm{stb}}_F\text{ iff }E=F\text{ a.e.-$\leb$},
\end{equation}
 indeed $\FF_E^{\mathrm{stb}}=\sigma(\mathbbm{1}_E\cdot B)\lor 0_\PP$ for $\leb$-measurable $E$. Rather than under $\PP$, it will sometimes  be more convenient to consider the Wiener noise with reference to the probability $\WW$ and the accompanying L\'evy time-shifts   $(\Delta_t)_{t\in \mathbb{R}}$. We will  then denote it  $\FF^W$, the indexation being again over all $\leb$-measurable sets, so that 
 \begin{equation*}
\FF^W_A:=\sigma(\mathbbm{1}_A\cdot W)\lor 0_\WW\text{ for $\leb$-measurable $A$.}
\end{equation*}

\subsection{Spectrum of  a noise}\label{subsection:spectrum-of-a-noise}
The conditional expectations $\PP[\cdot\vert \FF_A]$, $A$ running over the elementary sets, acting on the separable Hilbert space $\LLL^2(\PP)$, generate the commutative von Neumann algebra $$\AA:=\{\PP[\cdot\vert\FF_A]:A\text{ an $\leb$-measurable set}\}''$$ (indeed $\PP[\cdot \vert \FF_E]\PP[\cdot\vert\FF_F]=\PP[\cdot\vert \FF_{E\cap F}]=\PP[\cdot\vert\FF_F]\PP[\cdot\vert\FF_E]$ for $\{E,F\}\subset \EE$), which admits a spectral resolution: there is a unitary isomorphism $\Psi$ between $\LLL^2(\PP)$ and a direct integral $\int^\oplus_S \HH_s\mu(\dd s)$ of a measurable field of separable Hilbert spaces, $\mu$ a (complete, finite) standard measure, which carries $\AA$ onto the algebra of diagonalizable operators (as a spatial isomorphism of von Neumann algebras). In particular, for each elementary set $A$, the projection $\PP[\cdot\vert \FF_A]$ corresponds via $\Psi$ to multiplication with $\mathbbm{1}_{S_A}$ for some a.e.-$\mu$ unique measurable subset $S_A$ of the spectral space $S$, called the spectral set of $\FF_A$, and then for $f\in \LLL^2(\PP)$, $\mu_f:=\Vert \Psi(f)\Vert^2\cdot \mu$ satisfies \phantomsection\label{relying-upon}  $$\mu_f(S_A)=\PP[\vert\PP[f\vert \FF_A]\vert^2].$$
Relying upon \cite[Theorem~2.3]{Tsirelson1998UnitaryBM} \cite[Theorem~9b1(a)]{tsirelson-nonclassical} we may and do represent $S$ as the compact subsets of the real line --- members of $S$ to be informally also referred to as ``spectral sets''\footnote{Context will always be sufficient to determine whether we mean a spectral set as a $\mu$-measurable set $S^x$ associated to some element  $x\in \hat\PP$ for which $\PP[\cdot\vert x]\in \AA$, so that $\PP[\cdot\vert x]$ corresponds to multiplication with $\mathbbm{1}_{S^x}$ via $\Psi$, or whether we intend it as an element of $S$.} --- with $S_A$ being those contained in  $A$ (in principle only a.e.-$\mu$, but we fix now once and for all this version) for all $A\in \EE$. The $\sigma$-field $$\Sigma^0:=\sigma_S(\{S_A:A\in \EE\})$$
turns $S$ into a standard measurable space \cite[Appendix~C]{molchanov2005theory} and $\mu$ is just the completion of $\mu\vert_{\Sigma^0}$. Having insisted on this, $\mu\vert_{\Sigma^0}$ is then unique up to equivalence \cite[bottom of p.~274]{tsirelson-nonclassical}, and consequently, by the temporal homogeneity \eqref{eq.noise4} of the noise $\FF$,
\begin{equation} \label{temporal-homogeneity-spectral}
(\cdot+h)_\star\mu\sim \mu,\quad h\in \mathbb{R},
\end{equation}
i.e. $\mu$ is quasi-invariant under translations.  Recall also that \cite[Eq.~(3.11)]{picard2004lectures}  \begin{equation}\label{eq:single-points-in-spectral-set}
\mu(\{s\in S:t\in s\})=0,\quad t\in \mathbb{R}.
\end{equation}

For $\mu$-measurable $V$ let us further set $$H(V):=\Psi^{-1}\left(\int^\oplus_V\HH_s\mu(\dd s)\right)\subset\LLL^2(\PP),$$ so that $H(S_A)=\LLL^2(\PP\vert_{\FF_A})$ for $A\in \EE$, and let us  write $$K(s):=\vert s\vert,\quad s\in S,$$ for the ``counting'' map $K:S\to \mathbb{N}_0\cup \{\infty\}$. The empty set is an atom of $\mu$, indeed the spectral set $S_\emptyset=\{\emptyset\}=\{K=0\}$ corresponds to the one-dimensional projection $\PP[\cdot\vert 0_\PP]=\PP[\cdot]$ (= the expectation operator) onto the space of constants.

For arbitrary $A\subset \mathbb{R}$, let us now put $$S_A:=\{s\in S:s\subset A\}=S\cap 2^A,$$   and let us define $\pr_A:S\to S_A$ by asking that $$\pr_A(s):=
\begin{cases}
s\cap A, & \text{ if }s\cap A\in S \\
\emptyset, &\text{ otherwise}
\end{cases},\quad s\in S.$$  With this notation in hand we remind  the reader that
\cite[p.~10]{Tsirelson1998UnitaryBM} \cite[p.~275]{tsirelson-nonclassical} 
 for a finite partition $P$ of $\mathbb{R}$ into elementary sets (where by \eqref{eq:single-points-in-spectral-set} we may ignore a finite set of points):  $s\cap p\in S$ for $\mu$-a.e. $s$ and 
 $\pr_p:S\to S_p$ is measurable for all $p\in P$; the map 
 $$\sqcup_P:=(\times_{p\in P}S_p\ni O\mapsto \cup_{p\in P}O_p\in S)$$ (sending $P$-tuples from $\times_{p\in P}S_p$  to their union)  is also measurable and 
\begin{equation}\label{equivalent-to-product}
\mu\sim {\sqcup_P}_\star \left(\times_{p\in P}((\pr_p)_\star\mu)\right)\sim {\sqcup_P}_\star \left(\times_{p\in P} \mu\vert_{S_p}\right).
\end{equation}
Besides, with $P$ as above,  for $\mu$-measurable $E_p$ ($p\in P$),
 \begin{equation}\label{tensor-product}
H(\cap_{p\in P}\pr_p^{-1}(E_p))=\otimes_{p\in P}H(E_p\cap S_p)
 \end{equation}
 up to the natural unitary equivalence of $\LLL^2(\PP)$ and $\otimes_{p\in P}\LLL^2(\PP\vert_{\FF_p})$ (\eqref{tensor-product} is true evidently when, for $p\in P$,  $E_p=S_{Q_p}$ with $Q_p\in \EE$, and extends to the general case by an application of Dynkin's lemma). Lest the reader be misled, let us emphasize that  \eqref{equivalent-to-product} does not mean existence of a probability equivalent to $\mu$ under which for all $A\in \EE$, $\pr_A$ and $\pr_{\mathbb{R}\backslash A}$ are independent; this can indeed happen only in the classical case \cite[Extended remark~5.28]{vidmar-fock} \cite[Theorem~9.17]{vidmar-noise}. 
 
 \subsection{Spectral structure of the splitting noise}\label{subsection:spectrum-splitting-gen}
The content of the preceding subsection holds true of any one-dimensional noise; apart from the specific representation of the spectral space as compact subsets of $\mathbb{R}$ and \eqref{temporal-homogeneity-spectral}-\eqref{eq:single-points-in-spectral-set} it actually holds true, mutatis mutandis, in the general context of noise Boolean algebras \cite{tsirelson,vidmar-noise}. A property specific to the noise of splitting \cite{watanabe-splitting,warren} \cite[Section~6.2, esp. Example~6.10]{picard2004lectures} \cite[Example~8b10]{tsirelson-nonclassical}  \cite[Subsection~7.3]{stationary} that we shall use is that 
\begin{quote}
\normalsize for $\mu$-a.e. $s$   the collection  $\acc(s)$  of the accumulation points of $s$ is finite,
\end{quote} 
 corresponding to the fact that the noise of splitting has no super-superchaoses, only the stable part and superchaoses of finite order graded by the ``supercounting'' map \phantomsection\label{super-cuonting-map}  
 $$K'(s):=\vert \acc(s)\vert,\quad s\in S,$$ $\{K'=0\}=\{\acc=\emptyset\}=\{K<\infty\}$ giving the stable subspace  $$H_{\mathrm{stb}}:=H(\{K<\infty\})=\LLL^2(\PP\vert_{\FF_{\mathrm{stb}}}).$$ In particular, the latter equality ensures  that
 \begin{equation}\label{eq:stable-in-vNa}
 \PP[\cdot\vert \FF_\mathrm{stb}]\in \AA\text{ with spectral set equal to }\{K<\infty\}\text{ a.e.-$\mu$}.
 \end{equation}

We denote the first chaos \cite[pp.~67-68]{picard2004lectures} and first superchaos \cite[p.~71]{picard2004lectures} of $\FF$ by $$H^{(1)}:=H(\{K=1\})=\{f\in \LLL^2(\PP):f=\PP[f\vert \FF_A]+\PP[f\vert \FF_{\mathbb{R}\backslash A}]\text{ for all }A\in \EE\}$$ and 
\begin{align*}
H^{(1)\prime}&:=H(\{K'=1\}) \\
&=\{f\in \LLL^2(\PP):f=\PP[f\vert \FF_A\lor\FF_{\mathrm{stb}}]+\PP[f\vert \FF_{\mathbb{R}\backslash A}\lor\FF_{\mathrm{stb}}]\text{ for all }A\in \EE\}
\end{align*} respectively. It is known and not difficult to see that  $$H^{(1)}=\overline{\mathrm{lin}(\{\text{increments of $B$}\})}=\left\{\int_{-\infty}^\infty f(s)\dd  B_s:f\in \LLL^2(\leb)\right\},$$ while $$H^{(1)\prime}=\overline{\mathrm{lin}\{g\epsilon_T:g\in  \LLL^2(\PP\vert_{\FF_{\mathrm{stb}}}),\, T \text{ a selection of  a local maximum of $W$}\}}.$$ The orthogonal complement of the stable subspace $H_{\mathrm{stb}}$ is the so-called sensitive subspace $$H_{\mathrm{sens}}:=\LLL^2(\PP)\ominus H_{\mathrm{stb}}=H(\{K=\infty\}).$$  By \eqref{tensor-product}, writing $\{0<K<\infty\}=\cup_P \cap_{p\in P}\{ K(\pr_p)=1\}$ (resp. $\{K=\infty\}=\cup_P \cap_{p\in P}\{ K'(\pr_p)=1\}$ a.e.-$\mu$ [$\because$ \eqref{eq:single-points-in-spectral-set}, $\mu(K'=\infty)=0$]), where the union is over all finite partitions $P$ of $\mathbb{R}$ into intervals with rational endpoints (say), we see that random variables of the form $\prod_{p\in P}f_p$ with $f_p\in \LLL^2(\PP\vert_{\FF_p})\cap H^{(1)}$ (resp. $f_p\in \LLL^2(\PP\vert_{\FF_p})\cap H^{(1)\prime}$)  for $p\in P$, as $P$ runs over the finite partitions of $\mathbb{R}$ into intervals,   are total in $H_{\mathrm{stb}}\ominus\{\text{constants}\}$ (resp. in $H_{\mathrm{sens}}$).

We draw attention next to the following two general facts:\phantomsection\label{draw-attention}
\vspace{1mm}

($\bullet$) A non-zero $\sigma$-finite measure $m$ on $(\RR,\BB_\RR)$ that is translation invariant up to equivalence (is quasi-invariant under translations), i.e. one which satisfies $(\cdot+h)_\star m\sim m$ for all $h\in \mathbb{R}$, is equivalent to $\leb\vert_{\BB_\mathbb{R}}$ \cite[Proposition~VII.1.11]{bourbaki2004integration}.
\vspace{1mm}

($\dagger$) If a $\sigma$-finite measure $\nu$ on the finite subsets of $\mathbb{R}$ (to be precise, on $\{K<\infty\}$ endowed with the trace  $\Sigma^0\vert_{\{K<\infty\}}$) satisfies $\nu(\{\emptyset\})>0$, $\nu\vert_{\{K=1\}}\sim \leb\vert_{\BB_\mathbb{R}}$ (where we identify a singleton with its only element) and $\nu\sim {\sqcup_P}_\star \left(\times_{p\in P}((\pr_p)_\star\nu)\right)$ on $\{K<\infty\}$ for all finite partitions $P$ of $\mathbb{R}$ into elementary sets, then $\nu$ is equivalent to the law $\Pi$  of a(ny) Poisson point process with finite intensity measure equivalent to $\leb$. Proof: We have only to note that $\{K<\infty\}=\{K<\infty\}\cap \left[\{\emptyset\}\cup  \left(\cup_P \cap_{p\in P}\{ K(\pr_p)=1\}\right)\right]$, where $\cup_P$ is over all finite partitions $P$ of $\mathbb{R}$ into intervals with rational endpoints, and that $\nu$ and $\Pi$ are evidently equivalent on restriction to each member of the denumerable union inside $[\cdots]$. Q.E.D.

\vspace{1mm}
Using $(\bullet$) and ($\dagger$) we are in a position to divulge a little more concerning the nature of the spectral measure $\mu$. Namely, identifying a singleton with its only element,  the measure $\mu$ restricted to   $\{K=1\}$ is equivalent to $\leb$  by $(\bullet)$, translation quasi-invariance \eqref{temporal-homogeneity-spectral} of $\mu$ (which restricts to $\{K=1\}$) and since the first chaos $H^{(1)}=H(\{K=1\})$ is non-void for the noise of splitting. Moreover, from \eqref{equivalent-to-product} (which restricts to $\{K<\infty\}$), $\mu(\{\emptyset\})>0$ and $(\dagger)$, it follows that
\begin{quote}
\normalsize\phantomsection\label{Poisson-character}$\mu\vert_{\{K<\infty\}}$ is equivalent to the law of a(ny) Poisson point process on $\mathbb{R}$ with finite  intensity measure equivalent to $\leb$ 
\end{quote}
 (cf. \cite[Example~9b9]{tsirelson-nonclassical}). Similarly, the measure $\mu$ restricted to sets with a single accumulation point, pushed forward along the map which sends members of $\{K'=1\}$ to their single accumulation points, is also equivalent to $\leb$, where, in order to reach this conclusion, just like before we appeal to  $(\bullet)$, \eqref{temporal-homogeneity-spectral} (which restricts to $\{K'=1\}$, then pushes forward along $\acc\vert_{\{K'=1\}}$) and now the fact that the first superchaos $H^{(1)\prime}=H(\{K'=1\})$ is non-empty for the noise of splitting. In turn, from  \eqref{equivalent-to-product} (which restricts to $\{K'<\infty\}$ and then, noting \eqref{eq:single-points-in-spectral-set}, pushes forward along $\acc$), $\mu(\acc=\emptyset)=\mu(K<\infty)>0$ and $(\dagger)$, we deduce that 
 \begin{quote}
\normalsize  $\acc_\star(\mu\vert_{\{K'<\infty\}})$  is equivalent to the law  of a(ny) Poisson point process on $\mathbb{R}$ with finite  intensity measure equivalent to $\leb$ (of course in the present case $\mu(K'=\infty)=0$ anyway). 
  \end{quote}
  We shall refer to the two findings in display just above as the Poisson character of $\mu$.

\subsection{Some  properties of Brownian motion}\label{subsection:properties-of-BM}
In the  arguments of Section~\ref{section:stability-sensitivity} we shall appeal to a number of, for the most part technical, observations concerning the Wiener process, that are not so immediate, but are nevertheless either easily accessible in the literature or straightforward consequences of well-established results. We gather them here so as to disturb less the natural flow of the text later. The content  of this subsection the reader may skip without it affecting his/her understanding of the results of Section~\ref{section:stability-sensitivity} themselves, or of the other parts of this paper.

Recall from Subsection~\ref{subsection:splitting-noise} the two-sided Brownian motion $W$, viewed under the probability $\WW$, and its L\'evy shifts $(\Delta_t)_{t\in \mathbb{R}}$.

\begin{enumerate}[(1),wide, labelwidth=!, labelindent=0pt] 
\item \label{millar-result} Let $\mathsf{e}$ be an exponential random time independent of $W$, defined under an extension $\QQ$ of $\WW$. Then, denoting by $\tau_\mathsf{e}:=\tau_{0,\ee}$ the maximizer of $W$ on $[0,\mathsf{e}]$, Millar's zero-one law holds true:
\begin{equation}\label{eq:milla-basic}
\cap_{t\in (0,\infty)}0_\QQ\lor \sigma(\Delta_{\tau_\mathsf{e}}\vert_{[0,t]})= 0_\QQ,
\end{equation}
i.e. the germ $\sigma$-field of the increments of $W$ after $\tau_\mathsf{e}$ is $\QQ$-trivial -- see \cite[Theorem~3.1(c)]{millar-zero-one-law} (combined with \eqref{triviality-in-intersection})\footnote{Of course $\Delta_{\tau_\mathsf{e}}=\Delta_{\tau_\mathsf{e}}(W)$, since $W$ is just the identity on $\Omega_0$.}. By the (excursion-theoretic considerations surrounding the) Wiener-Hopf factorization \cite{greenwood-pitman}, $\tau_\ee$ is independent of $\Delta_{\tau_\mathsf{e}}\vert_{[0,\infty)}$ [actually, even $((\mathsf{r}\circ\Delta_{\tau_\mathsf{e}})\vert_{[0,\infty)},\tau_\mathsf{e})$, $\mathsf{r}:=(\Omega_0\ni\omega\mapsto (\mathbb{R}\ni t\mapsto \omega(-t)))$ being time-reversal, is independent of and has the same distribution as $(\Delta_{\tau_\mathsf{e}}\vert_{[0,\infty)},\ee-\tau_\ee)$, but we do not need it], so that via \eqref{distributivity-indep-not-complete} we get 
\begin{equation}\label{eq:millar}
\cap_{t\in (0,\infty)} 0_\QQ\lor \sigma(\tau_\mathsf{e},\Delta_{\tau_\mathsf{e}}\vert_{[0,t]})= \sigma(\tau_\mathsf{e})\lor 0_\QQ.
\end{equation}
To comfort the reader regarding the intervention of the independent randomization with $\ee$, we may note that \eqref{eq:milla-basic} would be equally true if a deterministic time were to replace $\ee$, however procuring the ``$\ee$ deterministic'' analog of \eqref{eq:millar} (therefrom) would be more difficult (and would perhaps not be true), since one would lose the independence coming out of the splitting at the maximum of $W$ before an independent exponential random time. We will use up \eqref{eq:millar} in due course, and while the ``$\ee$ deterministic'' version thereof would be sufficient in that context, that of \eqref{eq:milla-basic} would not be (at least not as far as we can see).
\item\label{loc-unif-a.s.} We shall require (only) a (very crude) stochastic integration convergence result. Namely, if $(h_n)_{n\in \mathbb{N}}$ is a bounded sequence of $\leb$-measurable maps with values in $\mathbb{R}$ converging to some $\leb$-measurable $h:\mathbb{R}\to\mathbb{R}$  then \cite[Theorem~IV.2.12]{revuz-yor} $\lim_{n\to\infty}h_n\cdot W=h\cdot W$  in $\WW$-probability, uniformly on every compact time interval. Consequently (by a diagonalization argument, to handle the localization to compact time intervals) there is a subsequence  $(h_{\mathsf{n}_k})_{k\in \mathbb{N}}$  of $(h_n)_{n\in \mathbb{N}}$ such that $\lim_{k\to\infty}h_{\mathsf{n}_k}\cdot W= h\cdot W$ locally uniformly a.s.-$\WW$.
\item \label{near-loc-max}It will be important to understand the behaviour of $W$ in the neighbourhood of a local maximum. To this end  let $s<t$ be real numbers. Denisov \cite[Theorem~1]{denisov} has shown that the process $( W_h)_{h\in [s,t]}$  a.s.-$\WW$ decomposes at the maximizer $\tau_{s,t}$ of $W$ on $[s,t]$ into two independent scaled fragments, $\left( \frac{W( \tau_{s,t})- W(\tau_{s,t}+u(t-\tau_{s,t}))}{\sqrt{t-\tau_{s,t}}}\right)_{u\in [0,1]}$ and $\left( \frac{W( \tau_{s,t})- W(\tau_{s,t}-u(\tau_{s,t}-s))}{\sqrt{\tau_{s,t}-s}}\right)_{u\in [0,1]}$, which have the distribution of Brownian meanders, and which are independent of $\tau_{s,t}$  (we make an arbitrary convention for the value of these fragments on $\{\tau_{s,t}\notin (s,t)\}$, which is $\WW$-negligible). Also, the law of a Brownian meander is equivalent to the law of a Bessel process of dimension three issuing from zero and restricted to the time interval $[0,1]$ \cite[Eq.~(3.4)]{aspects}, a result due to Imhof \cite{imhof}. Consequently, the a.s. behaviour of the  Brownian motion near a local  maximum is   described by the a.s. behaviour of the 3D Bessel process at small times. In particular, the results of  Dvoretzky and Erd\"{o}s \cite{Dvoretzky1951SomePO} describe the lower envelope of the 3D Bessel  process near zero. For a $\uparrow$ function $g>0$ defined on $(0,\delta)$ for some $\delta>0$ we have then that  \cite[Section~4.12, Eq.~(16)]{itomckean}
\begin{align}
&\WW \left( W(\tau_{s,t}+h)- W( \tau_{s,t}) < \sqrt{h} g(h) \mbox{ for arbitrarily small } h\in (0,\infty) \right)= 0 \mbox{ or }  1\label{asymptotics}\\
&\text{according  to whether}\nonumber\\
&\int_{0+} g(h) \frac{\dd h}{h} \mbox{ converges or diverges,}\nonumber
\end{align}
with a similar statement holding true to the left of the maximum. 
\end{enumerate}

Let now also  $E$ be $\leb$-measurable. For real $t$ and $ u$  put 
\begin{equation}\label{E-t-u-notation}
E_{t,u}:=\leb(E\cap [t,u]),
\end{equation} where we understand  $[t,u]:=[u,t]$ in case $t>u$; also, for definiteness, $E_{t,u}:=0$ if one or both of $t$ and $u$ are equal to $\dagger$. 
\begin{enumerate}[(4),wide, labelwidth=!, labelindent=0pt] 
\item\label{maxima-of-censored}  We may connect the local maxima of the censored process $\mathbbm{1}_E\cdot W$ to those of the Brownian motion that results from the latter by time-change, effectively deleting out $\mathbb{R}\backslash E$ of the time axis, as follows. Define the $\uparrow$ continuous map $\rho:\mathbb{R}\to \mathbb{R}$, $$\rho(t):=
\sgn(t)E_{0,t},\quad t\in \mathbb{R},$$ which we may think of as the distribution function of $\mathbbm{1}_E\cdot \leb$ centered at zero, indeed $\mathbbm{1}_E\cdot \leb=\dd\rho$ and $\rho(0)=0$;  then its $\uparrow\uparrow$ right-continuous inverse $ \zeta$, 
$$\zeta(s):=\inf\{t\in \mathbb{R}:\rho(t)>s\}\in \mathbb{R}\cup\{-\infty\},\quad s\in I:=[\rho((-\infty)+),\rho(\infty-))\cap \mathbb{R}.$$ The real range of $\zeta$  is the complement in $\mathbb{R}$ of the maximal right-open intervals of constancy of $\rho$ and \begin{equation}\label{equality-of-measures}
\zeta_\star \leb\vert_{I}=\mathbbm{1}_E\cdot \leb
\end{equation}
 [\eqref{equality-of-measures} is trivial on left-open right-closed bounded intervals, since $(\rho(u),\infty)\subset \{\zeta>u\}\subset [\rho(u),\infty)$ for all $u\in \mathbb{R}$; it extends to equality of measures via monotone class]. Owing to $\rho\circ \zeta=\mathrm{id}_{I}$, by L\'evy's characterization theorem \cite[Theorem~IV.3.6]{revuz-yor} or directly from the properties of Gaussian families, we see that the $\WW$-a.s. continuous  process\footnote{Extend, if necessary, but only when $\leb(E\cap (-\infty,0])<\infty$, the process $\mathbbm{1}_E\cdot W$ a.s.-$\WW$ continuously to the temporal point $-\infty$.} $(\mathbbm{1}_E\cdot W)_\zeta$  has the law of a two-sided Brownian motion restricted to the interval $I$ 
 [to be referred to below as a possibly-initiated-possibly-killed two-sided Brownian motion]. Furthermore, 
 \begin{align}
& \text{$\WW$-a.s. the local maxima of  $(\mathbbm{1}_E\cdot W)_\zeta$ are in a one-to-one and onto}\label{one-to-one}\\
&\text{correspondence with the local maxima of $\mathbbm{1}_E\cdot W$ via the clock $\zeta$}\nonumber
\end{align} [$\because$ on the one hand, $\WW$-a.s. the local maxima of $(\mathbbm{1}_E\cdot W)_\zeta$ do not belong to one of the at most countably many points that map via $\zeta$ into the right endpoints of the maximal intervals of constancy of $\rho$; on the other hand, $\WW$-a.s.  the  process $\mathbbm{1}_E\cdot W$ is constant on the intervals of constancy of $\rho$]. 
We deduce from \eqref{equality-of-measures}-\eqref{one-to-one} that for $\leb$-measurable $M$,
  \begin{align}\label{aux:loc-max-censored}
&\text{$\mathbbm{1}_E\cdot W$ has local maxima  on $M$ with positive $\WW$-probability ($\Leftrightarrow$ $\WW$-a.s.)} \\
&\text{iff }\leb(M\cap E)>0,\nonumber
\end{align} 
in particular the local maxima of $\mathbbm{1}_E\cdot W$ belong to $E$ a.s.-$\WW$.\phantomsection\label{maxima-of-censored'}
\end{enumerate}

Still $E$ is an $\leb$-measurable set. We infer from \ref{near-loc-max}-\ref{maxima-of-censored}  two more observations concerning the behaviour of the censored processes$$\tilde W:=\mathbbm{1}_E\cdot W\text{ and  }\tilde W':= \mathbbm{1}_{\mathbb{R}\backslash E} \cdot W$$ near the maximizer  $\tilde{\tau}_{s,t}$ of $\tilde W$ on $[s,t]$, which are valid for any real $s<t$ for which  $E_{s,t}>0$ (we continue to use up the notation of  \eqref{E-t-u-notation}).

\begin{enumerate}[(5),wide, labelwidth=!, labelindent=0pt] 
 \item Since, as described in \ref{maxima-of-censored}, $\tilde{W}$ is only a time-change away from being a possibly-initiated-possibly-killed two-sided Brownian motion, \eqref{asymptotics} delivers that, for any $\uparrow$ function $g>0$ defined on $(0,\delta)$ for some $\delta>0$, \begin{align}\label{jon:0}
&\WW \Big( \tilde{W}(\tilde{\tau}_{s,t}+h)- \tilde{W}( \tilde{\tau}_{s,t}) < \sqrt{E_{\tilde{\tau}_{s,t}, \tilde{\tau}_{s,t}+h}}\, g( E_{\tilde{\tau}_{s,t}, \tilde{\tau}_{s,t}+h})\\\nonumber
&\qquad \qquad \qquad \qquad \qquad \qquad \mbox{ for arbitrarily small } h\in (0,\infty) \Big)= 0 \mbox{ or }  1\\\nonumber
&\text{according to whether}\\
&\int_{0+} g(h) \frac{\dd h}{h} \mbox{ converges or diverges;}\nonumber
\end{align}
and analogously to the left of the maximum. 
\end{enumerate}
\begin{enumerate}[(6),wide, labelwidth=!, labelindent=0pt] 
\item After the  time-change of \ref{maxima-of-censored} the process $\tilde W'$ too becomes a possibly-initiated-possibly-killed two-sided Brownian motion and so satisfies the law of the iterated logarithm. Since $\tilde\tau_{s,t}$ is independent of $\tilde W'$ ($\because$ $\tilde W$ and $\tilde W'$ are uncorrelated and jointly Gaussian families, therefore independent), it follows that
\begin{equation}\label{jon:2}
\WW\left(\limsup_{h\rightarrow 0}\frac{ |\tilde W'(\tilde\tau_{s,t} +h) -\tilde W'(\tilde\tau_{s,t})|}  {\sqrt{2(|h|-E_{\tilde\tau_{s,t}, \tilde\tau_{s,t}+h}) \log\log (1/(|h|-E_{\tilde\tau_{s,t}, \tilde\tau_{s,t}+h}))}}=1\right)=1
\end{equation}
with the interpretation of the quotient as being $=1$ in case $E_{\tilde\tau_{s,t}, \tilde\tau_{s,t}+h}=\vert h\vert$. 
\end{enumerate}

\section{The extension and max-enumerability}\label{sec:extension}
We start by identifying the domains of $\mathbb{R}$ which are negligible for the noise of splitting in the sense that $\mu$-a.e. spectral set  avoids them. In an intuitive sense they are those subsets of $\mathbb{R}$, the addition or subtraction of which from a given set we can expect to have (we would hope to be able to insist on having) no effect on the information carried by said set (considered as an indexing domain for the noise).

\begin{proposition}\label{negligible-for-splitting}
Let $A\subset \mathbb{R}$ be an $\leb$-measurable set. Then $S_A$ is $\mu$-measurable. Furthermore,  $S_{\mathbb{R}\backslash A}$ is $\mu$-conegligible [i.e. $\mu(S\backslash S_{\mathbb{R}\backslash A})=0$; we will say that ``$A$ is negligible for the noise''] iff $A$ is $\leb$-negligible. Thus, for $\leb$-measurable $E$ and $F$, $S_E=S_F$ a.e.-$\mu$ iff $E=F$ a.e.-$\leb$.
\end{proposition}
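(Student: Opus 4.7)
I would prove (2) first, deduce (1) from (2), and obtain (3) as a short corollary.

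For the easy direction of (2) (i.e., $\leb(A)>0\Rightarrow S_{\mathbb{R}\setminus A}$ is not $\mu$-conegligible): take $f_A:=\int_{-\infty}^\infty \mathbbm{1}_A(t)\,\dd B_t$ in the first chaos $H^{(1)}$, after restricting $A$ to a subset of finite positive Lebesgue measure if $\leb(A)=\infty$. By \eqref{eq:measures-spect}, for elementary $E$, $\mu_{f_A}(S_E)=\PP[|\int\mathbbm{1}_{A\cap E}\,\dd B|^2]=\leb(A\cap E)$, and by the Poisson character $\mu_{f_A}$ is concentrated on $\{K=1\}$, identifying via singletons with $\leb|_A$; thus $\mu_{f_A}(\{s:s\cap A\ne\emptyset\})=\leb(A)>0$, and since $\mu_{f_A}\ll \mu$, also $\mu(\{s:s\cap A\ne\emptyset\})>0$, i.e.\ $S_{\mathbb{R}\setminus A}$ is not $\mu$-conegligible.

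For the reverse direction, $\leb(A)=0\Rightarrow \mu(\{s:s\cap A\ne \emptyset\})=0$: reduce to $A$ Borel by replacing $A$ with any Borel Lebesgue-null set containing it. I exploit the Poisson character in strata. On $\{K<\infty\}$, $\mu$ is equivalent to a Poisson point process law with $\leb$-equivalent intensity, so its realizations almost surely avoid $A$. On $\{K'\ge 1\}$ (where $K=\infty$), I dispatch the finitely many accumulation points using the $\leb$-equivalent Poisson structure of the push-forward of $\mu$ along $\acc$, and address the countably many isolated points by combining the product decomposition \eqref{equivalent-to-product} along a refining sequence of finite partitions of $\mathbb{R}$ into elementary sets with the Poisson character applied to each factor $\mu|_{S_p}$; within small enough pieces $p$ there is at most one accumulation point, and the isolated points away from it obey Poisson statistics with $\leb$-equivalent intensity. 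This recursive/limiting handling of isolated points on $\{K'\ge 1\}$ is the technical heart of the argument.

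For (1), I first show $S_A\in \Sigma^0$ for Borel $A$: the case $A$ open is by definition; for $A$ closed, $S_A=\bigcap_n S_{U_n}$ for any open neighborhoods $U_n\downarrow A$, using compactness of spectral sets; for general Borel $A$ I appeal to the descriptive-set-theoretic fact that $\{C\text{ compact}:C\subset A\}$ is Borel in the Effros Borel structure on compact subsets of $\mathbb{R}$, together with measurability of the identity embedding $(S,\Sigma^0)\hookrightarrow$ compact subsets of $\mathbb{R}$ (immediate from $\Sigma^0$ being generated by the sets $S_V$, $V$ open). For Lebesgue $A=B\triangle N$ with $B$ Borel and $N\subset \tilde N$ Borel null, the pointwise inclusion $S_A\triangle S_B\subset \{s:s\cap \tilde N\ne \emptyset\}$ combined with (2) shows $S_A$ agrees with $S_B\in \Sigma^0$ modulo $\mu$-null sets, hence is $\mu$-measurable. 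Part (3) follows quickly from (1) and (2): the pointwise inclusion $S_E\triangle S_F\subset \{s:s\cap (E\triangle F)\ne \emptyset\}$ and (2) give $\leb(E\triangle F)=0\Rightarrow S_E=S_F$ a.e.-$\mu$; conversely, if $\leb(E\setminus F)>0$ (WLOG), every singleton $\{t\}$ with $t\in E\setminus F$ lies in $S_E\setminus S_F$, and the first-chaos calculation for $f_{E\setminus F}$ yields $\mu_{f_{E\setminus F}}(S_E\triangle S_F)\ge \leb(E\setminus F)>0$, forcing $\mu(S_E\triangle S_F)>0$.
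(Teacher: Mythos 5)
Your overall strategy---prove the negligibility equivalence first and deduce measurability of $S_A$ from it---is the same as the paper's, and your parts (2) and (3) are essentially the paper's argument: the easy direction via the equivalence of $\mu$ restricted to singletons with Lebesgue measure (your first-chaos computation is a rephrasing of exactly that), and the hard direction via the finiteness of $\acc(s)$ a.e.-$\mu$, the Poisson character of $\acc_\star\mu$, and the Poisson character of the projections away from the accumulation points. The paper's concrete realization of what you call the ``technical heart'' is the decomposition $S\backslash S_{\mathbb{R}\backslash A}=\{s:\acc(s)\cap A\ne\emptyset\}\cup\bigcup_{O\in\mathcal{O}}\bigl(\{\acc\subset O\}\cap\{s:s\cap(A\backslash O)\ne\emptyset\}\bigr)$ over finite unions $O$ of rational open intervals, each term being $\mu$-null because $\{\acc\subset O\}\subset\{K(\pr_{\mathbb{R}\backslash O})<\infty\}$. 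Your refining-partition version amounts to the same countable union of null sets; note that no genuine recursion or limit is needed: once the stratum $\{\acc(s)\cap A\ne\emptyset\}$ is disposed of, any point of $s\cap A$ is an isolated point of $s$ and is therefore separated from the finite set $\acc(s)$ by some fine enough rational partition (or neighborhood $O$), and only the finitely many points of $s$ outside the pieces containing accumulation points ever need to be controlled.

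The genuine flaw is in your part (1). The assertion that $\{C\text{ compact}:C\subset A\}$ is Borel in the Effros/Vietoris Borel structure for every Borel $A$ is false: by Hurewicz's theorem the collection of compact subsets of $[0,1]$ contained in $\mathbb{Q}$ is $\Pi^1_1$-complete, hence not Borel, although $\mathbb{Q}$ is $F_\sigma$ and even Lebesgue-null (so $S_{\mathbb{Q}}$ is $\mu$-measurable only by virtue of the completeness of $\mu$, not because it lies in $\Sigma^0$). In general $\{C:C\subset A\}$ is merely coanalytic for Borel $A$. You can repair this either by invoking universal measurability of coanalytic sets, or---as the paper does, and more in keeping with the elementary spirit of your own reduction of Lebesgue sets to Borel ones---by bypassing Borel sets altogether: approximate an arbitrary $\leb$-measurable $A$ from the outside by a $G_\delta$ set $R$ with $\leb(R\backslash A)=0$ (you already establish $S_R\in\Sigma^0$ for $G_\delta$ sets $R$), and observe that $S_A\subset S_R$ while $S_R\backslash S_A\subset\{s:s\cap(R\backslash A)\ne\emptyset\}$, which is $\mu$-null by part (2); completeness of $\mu$ then yields the $\mu$-measurability of $S_A$. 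All the pieces for this fix are already present in your proposal.
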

\begin{proof}
The first statement will follow if we can show that $A$ being $\leb$-negligible is sufficient for the negligibility of $A$ for the noise. For once this is established,  we can approximate $A$ from the outside to within $\leb$-measure zero by a $G_\delta$ set $R$: $R=\cap_{n\in \mathbb{N}}G_n$  for a $\downarrow$ sequence $(G_n)_{n\in \mathbb{N}}$ of open subsets of $\mathbb{R}$ and  $\leb(R\backslash A)=0$.  Then, on the one hand, for arbitrary  open $G\subset \mathbb{R}$, by compactness of the spectral sets, $S_G=\cup_{P\in \mathcal{P}} S_P$, $\mathcal{P}$ being finite unions of the connected components of $G$ (of which there are countably many); on the other hand, $S_{R}=\cap_{n\in \mathbb{N}}S_{G_n}$. We deduce that $S_R$ is $\mu$-measurable. However, $S_A\subset S_R$ and $S_R\backslash S_A\subset S\backslash S_{\mathbb{R}\backslash (R\backslash A)}$, which is $\mu$-negligible.  Therefore $S_A$ is $\mu$-measurable.

Now suppose $A$ is $\leb$-negligible. Thanks to $\mu(K'=\infty)=0$, we express, $\mu$-a.e., 
\begin{equation*}
S\backslash S_{\mathbb{R}\backslash A}=\{\acc\cap A\ne \emptyset\}\cup \left(\cup_{O\in \mathcal{O}}\{\acc\subset O,A\cap\pr_{\mathbb{R}\backslash O}\ne \emptyset\}\right),\phantomsection\label{it-now-stands}
\end{equation*} $\mathcal{O}$ being the set of finite unions of open intervals of $\mathbb{R}$ with rational endpoints. The countable union on the r.h.s. of the preceding display is  one of $\mu$-negligible sets by the Poisson character of $\mu$: a Poisson point process $\xi$ of finite intensity measure equivalent to $\leb$ does not meet an $\leb$-negligible set with probability one; therefore
\begin{enumerate}[(I)]
\item $\mu(\acc\cap A\ne \emptyset)=\mu(\acc\cap A\ne \emptyset,K'<\infty)=0$ because $\acc_\star(\mu\vert_{\{K'<\infty\}})$ is equivalent the law of $\xi$, while
\item\label{poisson-char-II} for $O\in \mathcal{O}$, $\mu(\acc\subset O,A\cap\pr_{\mathbb{R}\backslash O}\ne \emptyset)\leq \mu(K(\pr_{\mathbb{R}\backslash O})<\infty,A\cap\pr_{\mathbb{R}\backslash O}\ne \emptyset)=0$, because $(\pr_{\mathbb{R}\backslash O})_\star\mu\sim \mu\vert_{S_{\mathbb{R}\backslash O}}$ and $\mu\vert_{\{K<\infty\}}$ is also equivalent to the law of $\xi$.
\end{enumerate}
Thus $A$ is negligible for the noise. 

Conversely, it is also plain by the Poisson character of $\mu$ that if $A$ has positive Lebesgue measure then $\mu(S\backslash S_{\mathbb{R}\backslash A})>0$ (since e.g. $A$ contains a singleton of $\mu$ with positive $\mu$-measure).

The last claim is now immediate.
\end{proof}
Recall that for $A\in \EE$, $H(S_A)=\LLL^2(\PP\vert_{\FF_A})$. An ideal extension of $\FF$ should preserve the nature of the spectral sets in the sense that for a would-be $E$ to which the noise extends one would hope to be able to insist that $S_E$ is $\mu$-measurable and $\LLL^2(\PP\vert_{\FF_E})=H(S_E)$. This happens indeed in the case of the classical Wiener noise $\FF^{W}$ and it strongly motivates
\begin{definition}
 For an $\leb$-measurable $A$ define $$H_A:=H(S_A)=\Psi^{-1}\left(\int_{S_A}^\oplus\HH_s\mu(\dd s)\right)=\{f\in \LLL^2(\PP):\mu_f(S\backslash S_A)=0\}.$$ 
\end{definition}
 Actually all the $H_A$, for $A$ an $\leb$-measurable set (not just for $A\in \EE$), are the $\LLL^2$-space of a complete sub-$\sigma$-field of $\PP$, as the next proposition demonstrates. Later we will find in Proposition~\ref{proposition:completion-in-spectral-ones} that these associated $\sigma$-fields exhaust  $\overline{\BBB}$, so that, insofar as our goals herein are concerned, nothing will have been sacrificed in restricting attention to them.
\begin{proposition}\label{corollary:H_A-spaces}
If $A$ is $\leb$-measurable then $H_A$ is an $\LLL^2$-space, i.e. there is a (then automatically unique) $\FF_A\in \hat\PP$ such that $H_A=\LLL^2(\PP\vert_{\FF_A})$. Besides, 
\begin{equation}\label{eq:in-closure}
\{\FF_A:A\text{ an $\leb$-measurable set}\}=\{\FF_R:R\text{ a $G_\delta$ subset of $\mathbb{R}$}\}\subset \Cl(\BBB).
\end{equation}
\end{proposition}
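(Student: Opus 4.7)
The plan is to reduce the general $\leb$-measurable case to the $G_\delta$ case via Proposition~\ref{negligible-for-splitting}, and to realize the spectral subspace associated to a $G_\delta$ as a double sequential monotone limit of spectral subspaces coming from $\EE$.

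First I would treat an open $G\subset \mathbb{R}$: writing $G=\sqcup_{k\in \mathbb{N}}I_k$ as a disjoint union of open intervals and setting $F_m:=\sqcup_{k=1}^m I_k\in \EE$, the compactness of the members of $S$ (every $s\in S$ contained in $G$ is covered by finitely many of the $I_k$) gives $S_{F_m}\uparrow S_G$, whence the direct integral structure yields $H_{F_m}\uparrow H_G$. Since $H_{F_m}=\LLL^2(\PP\vert_{\FF_{F_m}})$ and increasing limits of $\LLL^2$-spaces of complete sub-$\sigma$-fields correspond to joins of those $\sigma$-fields, this identifies $H_G$ as the $\LLL^2$ space of $\FF_G:=\lor_m \FF_{F_m}\in \Cl(\BBB)$. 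Next, for a $G_\delta$ set $R=\cap_n G_n$ with $G_n$ open and (WLOG) decreasing, $S_{G_n}\downarrow S_R$ (as $s\subset R$ iff $s\subset G_n$ for each $n$), so by the same direct integral argument $H_{G_n}\downarrow H_R$, identifying $H_R$ with the $\LLL^2$ space of $\FF_R:=\land_n \FF_{G_n}\in \Cl(\BBB)$.

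Finally, for general $\leb$-measurable $A$ I would pick a $G_\delta$ set $R\supset A$ with $\leb(R\backslash A)=0$; by the last assertion of Proposition~\ref{negligible-for-splitting}, $S_A=S_R$ a.e.-$\mu$, hence $H_A=H_R$, and setting $\FF_A:=\FF_R\in \Cl(\BBB)$ discharges both claims at once: the existence of $\FF_A\in \hat\PP$ with $H_A=\LLL^2(\PP\vert_{\FF_A})$, and that it lies in $\Cl(\BBB)$. Equality of the two families in \eqref{eq:in-closure} is then immediate: the inclusion from $G_\delta$ to $\leb$-measurable is trivial and the reverse is precisely the construction just carried out.

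I do not foresee a substantive obstacle. The only mild point of care is that elements of $\EE$ are by definition \emph{finite} unions of intervals, so one sequential monotone step is already needed to pass from $\EE$ to open sets and a second one thereafter to reach $G_\delta$'s; both are routine applications of the direct integral structure together with the compactness of the spectral sets.
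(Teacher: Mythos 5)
Your proposal is correct and follows essentially the same route as the paper's proof: elementary sets $\to$ open sets (via compactness of spectral sets and increasing unions, the paper using the upward-directed family of finite unions of connected components where you use an increasing sequence) $\to$ $G_\delta$'s (decreasing intersections) $\to$ arbitrary $\leb$-measurable sets via outer approximation and Proposition~\ref{negligible-for-splitting}. No gap.
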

\begin{proof}
Existence of $\FF_A$. As noted, for an elementary $A$, $\FF_A$ is just the given $\sigma$-field of the noise $\FF$. For $A=R$ a $G_\delta$ set it follows because by the argument of the first paragraph of the proof of Proposition~\ref{negligible-for-splitting}, in the notation thereof, $H_R=\cap_{n\in \mathbb{N}}H_{G_n}$, while for an open $G$, $H_G=\overline{\cup_{P\in \mathcal{P}}H_P}$, and since (i) the $\downarrow$ intersection of $\LLL^2$-spaces is an $\LLL^2$-space (with associated $\sigma$-field being the intersection) and (ii) the closure of the union of an upwards-directed (w.r.t. inclusion) family of $\LLL^2$-spaces is also an $\LLL^2$-space (with associated $\sigma$-field being the join). For an arbitrary $A$ it holds true by approximation to within $\leb$-measure zero of $A$ from above with a $G_\delta$ set $R$, since indeed $H_A=H_R$ by Proposition~\ref{negligible-for-splitting}. 

It is clear that the $\FF_A$ so identified is unique, and from the manner of the construction  that it belongs to $\Cl(\BBB)$. The argument just above also demonstrates the equality in \eqref{eq:in-closure}. 
\end{proof}
\begin{definition}\label{definition:extended-fields}
We retain in what follows the notation $\FF_A$ for $\leb$-measurable $A$ of Proposition~\ref{corollary:H_A-spaces}.
\end{definition}
Due to the inclusion of \eqref{eq:in-closure}, 
\begin{equation} \label{eq:commuting}
\{\PP[\cdot\vert \FF_A]:A\text{ an $\leb$-measurable set}\}\subset\AA,
\end{equation}
in particular 
\begin{equation}\label{eq:commuting'}
\text{the $\sigma$-fields $\FF_A$, for  $\leb$-measurable $A$, are commuting.}
\end{equation}
Let us develop some further basic properties of the $\sigma$-fields of  Definition~\ref{definition:extended-fields}. 
 \begin{proposition}\label{proposition:extensions-mod}
\leavevmode
 \begin{enumerate}[(i)]
 \item\label{proposition:extensions-mod:i} For $\leb$-measurable $A$, $\FF_A\land \FF_{\mathrm{stb}}=\FF^{\mathrm{stb}}_A$. 
\item\label{proposition:extensions-mod:ii}  For $\leb$-measurable $E$ and $F$, $\FF_E=\FF_F$ iff $E=F$ a.e.-$\leb$. 
\end{enumerate}
 \end{proposition}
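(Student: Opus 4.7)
\medskip
\noindent\emph{Proof plan.} The strategy is to establish \ref{proposition:extensions-mod:i} first via spectral theory and then deduce \ref{proposition:extensions-mod:ii} as a corollary using the known rigidity of the stable indexation.

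\emph{For \ref{proposition:extensions-mod:i}.} Working in $\LLL^2(\PP)$, the basic observation is that meet of complete sub-$\sigma$-fields corresponds to intersection of the associated $\LLL^2$-spaces, so $\LLL^2(\PP\vert_{\FF_A\land \FF_{\mathrm{stb}}})=H_A\cap H_{\mathrm{stb}}=H(S_A\cap \{K<\infty\})$ by the direct integral decomposition. The target of \ref{proposition:extensions-mod:i} is therefore the claim that $H(S_A\cap \{K<\infty\})=\LLL^2(\PP\vert_{\FF_A^{\mathrm{stb}}})$. For $A\in \EE$ this is just the definition  $\FF_A^{\mathrm{stb}}=\FF_A\land \FF_{\mathrm{stb}}$ recalled in the preliminaries, so the work is in the extension to a general $\leb$-measurable $A$, which I would carry out in two steps through the $G_\delta$ approximation of Proposition~\ref{corollary:H_A-spaces}.

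Concretely, writing $A\subset R=\cap_{n\in \mathbb{N}}G_n$ with $(G_n)$ a $\downarrow$ sequence of open sets and $\leb(R\backslash A)=0$, Proposition~\ref{negligible-for-splitting} gives $\FF_A=\FF_R$ and the monotone continuity mod $\leb$-null sets of $\FF^{\mathrm{stb}}$ gives $\FF^{\mathrm{stb}}_A=\FF^{\mathrm{stb}}_R$, reducing the problem to $G_\delta$ sets. For each open $G_n$ I would express $S_{G_n}=\cup_P S_P$ as a $\uparrow$ union over $P$ ranging in the finite subunions of the (countably many) connected components, obtaining $H_{G_n}\cap H_{\mathrm{stb}}=H(\cup_P(S_P\cap \{K<\infty\}))=\overline{\cup_P H(S_P\cap\{K<\infty\})}=\overline{\cup_P \LLL^2(\PP\vert_{\FF^{\mathrm{stb}}_P})}$, which by monotone join continuity of $\FF^{\mathrm{stb}}$ on the elementary algebra equals $\LLL^2(\PP\vert_{\FF^{\mathrm{stb}}_{G_n}})$. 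Then passing to $R$ and using that intersections of $\LLL^2$-spaces correspond to meets of $\sigma$-fields, together with the sequential monotone down continuity of $\FF^{\mathrm{stb}}$, closes the argument. The main (small) obstacle is that the lattice $\hat\PP$ is not in general distributive, so I cannot naively pull the meet with $\FF_{\mathrm{stb}}$ through countable joins and meets; spectral theory is what rescues distributivity in these monotone countable operations.

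\emph{For \ref{proposition:extensions-mod:ii}.} The ``if'' direction is immediate from Proposition~\ref{negligible-for-splitting}: $E=F$ a.e.-$\leb$ forces $S_E=S_F$ a.e.-$\mu$, whence $H_E=H_F$ and $\FF_E=\FF_F$. For the ``only if'' direction, $\FF_E=\FF_F$ combined with \ref{proposition:extensions-mod:i} yields $\FF^{\mathrm{stb}}_E=\FF_E\land \FF_{\mathrm{stb}}=\FF_F\land \FF_{\mathrm{stb}}=\FF^{\mathrm{stb}}_F$, and the injectivity of the stable indexation mod $\leb$-null sets (recorded in the preliminaries immediately after the definition of the extended $\FF^{\mathrm{stb}}$) gives $E=F$ a.e.-$\leb$. \qed
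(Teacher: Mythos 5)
Your proof is correct and follows essentially the same route as the paper's, which for \ref{proposition:extensions-mod:i} simply asserts the chain $\LLL^2(\PP\vert_{\FF_A\land\FF_{\mathrm{stb}}})=H_A\cap H(\{K<\infty\})=\LLL^2(\PP\vert_{\FF_A^{\mathrm{stb}}})$ as holding ``directly from the definitions and the properties of the classical Wiener noise''; your open/$G_\delta$ approximation merely makes the second equality explicit. The only divergence is in the ``only if'' half of \ref{proposition:extensions-mod:ii}, where the paper deduces $E=F$ a.e.-$\leb$ directly from $S_E=S_F$ a.e.-$\mu$ via the last claim of Proposition~\ref{negligible-for-splitting}, whereas you route it through \ref{proposition:extensions-mod:i} and the injectivity of the stable indexation --- both are equally valid.
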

  \begin{proof}
  For the first claim we note that directly from the definitions and the properties of the classical Wiener noise $\FF^{\mathrm{stb}}$, 
  \begin{align*}
  \LLL^2(\PP\vert_{\FF_A\land \FF_{\mathrm{stb}}})&=\LLL^2(\PP\vert_{\FF_A})\cap \LLL^2(\PP\vert_{\FF_{\mathrm{stb}}})=H_A\cap H(\{K<\infty\})=H(S_A\cap \{K<\infty\})\\
  &=\LLL^2(\PP\vert_{\FF^{\mathrm{stb}}_A}).
  \end{align*}
As for the second claim,  we have only to apply Proposition~\ref{negligible-for-splitting} (and the definitions).
  \end{proof}
    Proposition~\ref{negligible-for-splitting} combined with Proposition~\ref{proposition:extensions-mod}\ref{proposition:extensions-mod:i} and the fact that for an $\leb$-measurable $A$, $\FF^{\mathrm{stb}}_A$ is trivial iff $A$ is $\leb$-negligible, establishes that $\mu(S_A\backslash \{\emptyset\})=0\text{ iff }\mu(S\backslash S_{\mathbb{R}\backslash A})=0$, i.e. $\mu$-a.e. $s\ne \emptyset$ satisfies $s\not\subset A$ iff $\mu$-a.e. $s$ satisfies $s\cap A=\emptyset$. We stress that this is a property for which we see no reason why it should hold true of every (one-dimensional) noise; in fact, we suspect that in general it is probably false, especially for a black noise, like the noise of coalescence \cite[Chapter~7]{picard2004lectures}.

In turn, we have the following ``continuity of complementation at $0_\PP$, $1_\PP$'': for all $\downarrow$ sequences $(A_n)_{n\in \mathbb{N}}$ in $\EE$, $\land_{n\in \mathbb{N}}\FF_{A_n}=0_\PP$ iff $\lor_{n\in \mathbb{N}}\FF_{\mathbb{R}\backslash A_n}=1_\PP$ (see it by noting that $\cup_{n\in \mathbb{N}}S_{\mathbb{R}\backslash A_n}=S_{\cup_{n\in \mathbb{N}}\mathbb{R}\backslash A_n}$ a.e.-$\mu$ $\because$ of the compactness of the spectral sets); or, even more succinctly still: 
\begin{equation}\label{semi-continuity}
\text{for all $\downarrow$ sequences $(x_n)_{n\in \mathbb{N}}$ in $\BBB$, $\land_{n\in \mathbb{N}}x_n=0_\PP$ iff $\lor_{n\in \mathbb{N}}x_n'=1_\PP$.}
\end{equation}  In relation to \eqref{semi-continuity} it is worthile pointing out that while $\lor_{n\in \mathbb{N}}x_n'=1_\PP\Rightarrow \land_{n\in \mathbb{N}}x_n=0_\PP$ in any noise Boolean algebra, the converse implication is not true in general \cite[Remark~4.1]{tsirelson}. 
We emphasize also that \eqref{semi-continuity} does not mean that the  equality $(\land_{n\in \mathbb{N}}x_n)\lor (\lor_{n\in \mathbb{N}}x_n')=1_\PP$ holds true more strongly for all $ \downarrow$ sequences $(x_n)_{n\in \mathbb{N}}$ of $\BBB$; in fact it cannot \cite[Theorem~1.5]{tsirelson} because $\FF$ is nonclassical!  (Though, it holds if $(x_n)_{n\in \mathbb{N}}$ is $\downarrow$ to an element $x$ of $\BBB$, since then $\land_{n\in \mathbb{N}}(x_n\land x')=0_\PP$, hence $1_\PP=\lor_{n\in \mathbb{N}}(x_n\land x')'=\lor_{n\in \mathbb{N}}x_n'\lor x$ and therefore  $x'=x'\land (\lor_{n\in \mathbb{N}}x_n'\lor x)=\lor_{n\in \mathbb{N}}(x'\land (x_n'\lor x))=\lor_{n\in \mathbb{N}}x_n'$ by the sequential continuity of $\land$ on $\Cl(\BBB)$ \cite[Eq.~(4.11)]{tsirelson}.)

\begin{proposition}\label{proposition:meet-and-join}
We have the following assertions.
\begin{enumerate}[(i)]
\item\label{cap} $\land_{n\in \mathbb{N}}\FF_{A_n}=\FF_{\cap_{n\in \mathbb{N}}A_n}$ for any sequence $(A_n)_{n\in \mathbb{N}}$ of $\leb$-measurable sets. 
\item\label{cup} $\lor_{n\in \mathbb{N}}\FF_{A_n}= \FF_{\cup_{n\in \mathbb{N}}A_n}$ for any sequence $(A_n)_{n\in \mathbb{N}}$ of open subsets of $\mathbb{R}$. 
\item\label{indep} $\FF_E$ and $\FF_{\mathbb{R}\backslash E}$ are independent for every $\leb$-measurable $E$. 
\end{enumerate}
\end{proposition}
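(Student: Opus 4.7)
The plan is to leverage the spectral representation of $\LLL^2(\PP)$ throughout, using the defining equality $\LLL^2(\PP\vert_{\FF_A})=H_A=H(S_A)$ from Definition~\ref{definition:extended-fields} and the standard correspondence between meets/joins of complete sub-$\sigma$-fields and intersections/closed linear spans of the associated $\LLL^2$-spaces.

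For~\ref{cap}, I note that $\land_{n\in \mathbb{N}}\FF_{A_n}$ has associated $\LLL^2$-space $\cap_n H_{A_n}$. A vector $f\in \LLL^2(\PP)$ lies in this intersection iff $\mu_f$ is supported on each $S_{A_n}$, i.e. on $\cap_n S_{A_n}$. The set-theoretic identity $\cap_n S_{A_n}=S_{\cap_n A_n}$ holds literally (no $\mu$-null fudge: a compact $s$ is contained in every $A_n$ iff it is contained in their intersection), giving $\cap_n H_{A_n}=H_{\cap_n A_n}$, and hence the claim after reading off the associated $\sigma$-field.

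For~\ref{cup}, I would first replace $A_n$ by $A'_n:=\cup_{k\leq n}A_k$, which is still open and preserves both sides, reducing to an increasing sequence. The decisive step is then that \emph{compactness} of members of $S$ together with \emph{openness} of the $A'_n$ forces any $s\subset \cup_n A'_n$ into some single $A'_N$ by a finite-subcover argument, producing the literal set equality $\cup_n S_{A'_n}=S_{\cup_n A_n}$. A routine dominated-convergence argument in the direct integral (applied to $\mathbbm{1}_{S_{A'_n}}\Psi(f)\to \mathbbm{1}_{S_{\cup_n A_n}}\Psi(f)$ for $f\in H_{\cup_n A_n}$) then shows that the closure of the directed union $\cup_n H(S_{A'_n})$ is $H(\cup_n S_{A'_n})=H_{\cup_n A_n}$, which on the $\sigma$-field side reads $\lor_n \FF_{A_n}=\FF_{\cup_n A_n}$.

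For~\ref{indep}, I would argue directly and spectrally, avoiding any approximation. Take bounded real $f\in H_E$ and $g\in H_{\mathbb{R}\backslash E}$, and decompose $f=\PP[f]+f_0$ and $g=\PP[g]+g_0$ into constants plus mean-zero parts. Because the constants coincide with $H(\{\emptyset\})$, $\Psi(f_0)$ is supported on $S_E\setminus \{\emptyset\}$ and $\Psi(g_0)$ on $S_{\mathbb{R}\backslash E}\setminus \{\emptyset\}$; these spectral supports are disjoint, since $S_E\cap S_{\mathbb{R}\backslash E}=\{\emptyset\}$ (any compact subset of both $E$ and its complement must be empty). Therefore $\PP[f_0 g_0]=\langle f_0,g_0\rangle_{\LLL^2(\PP)}=0$, and expanding $\PP[fg]$ via the decomposition yields $\PP[fg]=\PP[f]\PP[g]$, which is independence.

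The only place that truly requires attention is~\ref{cup}: the exact set equality $\cup_n S_{A_n}=S_{\cup_n A_n}$ genuinely exploits that the $A_n$ are open (together with compactness of members of $S$), and can fail for general $\leb$-measurable $A_n$ --- for instance with $A_n=[0,1]\cup \{1+1/k:k\leq n\}$ the compact $s=\{1+1/k:k\geq 1\}\cup \{1\}$ lies in $\cup_n A_n$ but in none of the $A_n$. This is precisely why the statement is restricted to open sets.
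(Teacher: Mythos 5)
Your treatments of \ref{cap} and \ref{indep} are sound. Part \ref{cap} coincides with the paper's own one-line argument. For \ref{indep} you take a more self-contained route than the paper (which invokes the fact that members of $\Cl(\BBB)$ commute and then \cite[Proposition~3.5]{tsirelson}): orthogonality of the mean-zero parts of $H_E$ and $H_{\mathbb{R}\backslash E}$, read off in the direct integral from $S_E\cap S_{\mathbb{R}\backslash E}=\{\emptyset\}$, yields $\PP(A\cap B)=\PP(A)\PP(B)$ for all $A\in \FF_E$ and $B\in \FF_{\mathbb{R}\backslash E}$, and for \emph{two} $\sigma$-fields this is exactly independence. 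That is a legitimate shortcut.

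The gap is in \ref{cup}, at the very first step. Replacing $A_n$ by $A_n':=\cup_{k\leq n}A_k$ ``preserves both sides'' only if $\FF_{A_1\cup\cdots\cup A_n}\subset \FF_{A_1}\lor\cdots\lor\FF_{A_n}$, i.e.\ only if the finite-union case of \ref{cup} is already known. That finite case is genuinely part of the assertion (take a sequence that is eventually constant), and it is precisely the non-trivial half: the inclusion $\FF_A\lor\FF_B\subset \FF_{A\cup B}$ is monotonicity, but the reverse inclusion is a join identity that your purely spectral argument cannot reach. The reason is that $H(S_{A\cup B})$ is not the closed \emph{linear} span of $H(S_A)$ and $H(S_B)$: a spectral set $s\subset A\cup B$ contained in neither $A$ nor $B$ contributes to $H_{A\cup B}$ but to neither summand, and the join of $\sigma$-fields corresponds to products, not sums, of the associated $\LLL^2$-spaces. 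Indeed, a central theme of the paper is that $\FF_E\lor\FF_F=\FF_{E\cup F}$ \emph{fails} for general measurable sets, so some input beyond set-level identities of spectral supports is unavoidable here. The paper supplies it by writing each open set as an increasing union of open \emph{elementary} sets and invoking the noise axiom \eqref{eq.noise2}, $\FF_{O\cup U}=\FF_O\lor\FF_U$ for elementary $O$ and $U$, in combination with the increasing-union case (which you do prove correctly, and by the same compactness argument as the paper). As written, your proof of \ref{cup} is therefore incomplete; inserting this second step would repair it.
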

In particular, because  $\FF_\mathbb{R}=1_\PP$, \ref{cap} entails that $\FF$ respects finite intersections and thus that $\FF_E\subset \FF_F$ for $\leb$-measurable $E\subset F$. Similarly, since $\FF_\emptyset=0_\PP$, \ref{cup} secures $\FF_{E\cup F}=\FF_E\lor \FF_F$ for open subsets $E$ and $F$ of $\mathbb{R}$.
\begin{proof}[Proof of Proposition~\ref{proposition:meet-and-join}]
\ref{cap} is clear, because for all $s\in S$, $s\subset A_n$ for all $n\in \mathbb{N}$ iff $s\subset \cap_{n\in \mathbb{N}}A_n$.

\ref{cup}.  First, let $(A_n)_{n\in \mathbb{N}}$ be a $\uparrow$ sequence of open subsets of $\mathbb{R}$. Then 
\begin{align*}
\LLL^2(\PP\vert_{\lor_{n\in \mathbb{N}}\FF_{A_n}})&=\overline{\cup_{n\in \mathbb{N}}\LLL^2(\PP\vert_{\FF_{A_n}})}=\overline{\cup_{n\in \mathbb{N}}H(S_{A_n})}\\
&=H(\cup_{n\in \mathbb{N}}S_{A_n})=H(S_{\cup_{n\in \mathbb{N}}A_n})=H_{\cup_{n\in \mathbb{N}}A_n},
\end{align*} where the first equality would be true even with just an arbitrary $\uparrow$ sequence $(x_n)_{n\in \mathbb{N}}$ from $\hat\PP$ in lieu of $(\FF_{A_n})_{n\in  \mathbb{N}}$ (and for a general probability $\PP$), the third equality is a property of spectral resolutions \cite[Eq.~(2.15)]{tsirelson}, while in the fourth equality we appeal to the compactness of the spectral sets: for $s\in S$, $s\subset \cup_{n\in \mathbb{N}}A_n$ (i.e. $s\in S_{\cup_{n\in \mathbb{N}}A_n}$) iff $s\subset A_n$ for some $n\in \mathbb{N}$ (i.e. $s\in \cup_{n\in \mathbb{N}}S_{A_n}$). We conclude, by definition, that  $\lor_{n\in \mathbb{N}}\FF_{A_n}= \FF_{\cup_{n\in \mathbb{N}}A_n}$.

 Second, let $E$ and $F$ be open subsets of $\mathbb{R}$. There is a $\uparrow$ sequence $(O_n)_{n\in \mathbb{N}}$ of open elementary sets with union $E$, similarly there is a $\uparrow$ sequence $(U_n)_{n\in \mathbb{N}}$ of open elementary sets with union $F$. By the previous point we evaluate 
 \begin{align*}
 \FF_{E\cup F}&=\FF_{\cup_{n\in \mathbb{N}}(O_n\cup U_n)}=\lor_{n\in \mathbb{N}}\FF_{O_n\cup U_n}=\lor_{n\in \mathbb{N}}(\FF_{O_n}\lor \FF_{U_n})\\
 &=(\lor_{n\in \mathbb{N}}\FF_{O_n})\lor (\lor_{n\in \mathbb{N}}\FF_{U_n})=\FF_E\lor \FF_F. 
\end{align*} 

\ref{indep}. The $\sigma$-fields $\FF_E$ and $\FF_{\mathbb{R}\backslash E}$ are commuting, as was noted already in \eqref{eq:commuting'}. Also, $\FF_{E}\land \FF_{\mathbb{R}\backslash E}=\FF_{E\cap (\mathbb{R}\backslash E)}=\FF_\emptyset=0_\PP$, where we have used  \ref{cap}. Therefore \cite[Proposition~3.5]{tsirelson} $\FF_E$ and $\FF_{\mathbb{R}\backslash E}$ are independent. 
\end{proof}
\begin{corollary}\label{corollary:F-for-open}
For an open $A\subset \mathbb{R}$, $\FF_A$ is generated by $0_\PP$, the increments of $B$ on $A$ and the random signs $\epsilon_{T_n}$, $n\in \mathbb{N}$, for an(y) $\FF^W_A$-measurable enumeration $(T_n)_{n\in\mathbb{N}}$ of the local maxima of $W$ belonging to $A$; also, $\theta_t^{-1}(\FF_A)=\FF_{A+t}$ for all $t\in \mathbb{R}$. 
\end{corollary}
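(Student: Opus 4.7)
The plan is to reduce to the elementary case and then identify generators on both sides. Since $A$ is open, I can write $A=\bigcup_{m\in\mathbb{N}}O_m$ for an $\uparrow$ sequence $(O_m)$ of open elementary subsets (for example, $O_m$ the union of the open intervals $(p,q)$ with $p,q\in\mathbb{Q}$, $[p,q]\subset A$, and $|p|\vee|q|\le m$). By the proof of Proposition~\ref{proposition:meet-and-join}\ref{cup}, $\FF_A=\bigvee_{m\in\mathbb{N}}\FF_{O_m}$, and by the definition of the splitting noise on an elementary set, each $\FF_{O_m}$ is generated by $0_\PP$, the increments of $B$ on $O_m$, and the signs $\epsilon_{S^{O_m}_k}$, $k\in\mathbb{N}$, for any $\FF^W_{O_m}$-measurable enumeration $S^{O_m}=(S^{O_m}_k)$ of the local maxima of $W$ in $O_m$.

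To produce an $\FF^W_A$-measurable enumeration $(T_n)$ of the local maxima of $W$ in $A$, for each pair $p<q$ of rationals with $[p,q]\subset A$ I take the maximizer $\tau_{p,q}$ of $W$ on $[p,q]$; since the argmax depends only on $W|_{[p,q]}$ up to additive constants, $\tau_{p,q}$ is $\sigma(\text{increments of }W\text{ on }[p,q])\subset\FF^W_A$-measurable, and every local max of $W$ in the open set $A$ is such a $\tau_{p,q}$ (a.s.), so enumerating the countably many rational pairs yields $(T_n)$. Let $\GG_A$ denote the $\sigma$-field generated by $0_\PP$, the increments of $B$ on $A$, and $\{\epsilon_{T_n}:n\in\mathbb{N}\}$; the goal is $\FF_A=\GG_A$.

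For the inclusion $\FF_A\subset\GG_A$ it suffices to show $\FF_{O_m}\subset\GG_A$ for every $m$. The increments of $B$ on $O_m\subset A$ lie in $\GG_A$ by construction. For the signs, on $\{S^{O_m}_k\ne\dagger\}$ the local max $S^{O_m}_k$ coincides a.s. with $T_{l(m,k)}$, where $l(m,k):=\min\{n:T_n=S^{O_m}_k\}$ is $\FF^W_A$-measurable; since increments of $B$ on $A$ generate $\FF^W_A$ modulo $0_\PP$, $l(m,k)$ is $\GG_A$-measurable, and hence $\epsilon_{S^{O_m}_k}=\sum_n\mathbbm{1}_{\{l(m,k)=n\}}\epsilon_{T_n}\in\GG_A$. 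Conversely, for $\GG_A\subset\FF_A$, I exploit the inclusion $\FF^W_A\subset\FF_A$ (Proposition~\ref{proposition:extensions-mod}\ref{proposition:extensions-mod:i}): for each $n$ and $m$, on the $\FF^W_A$-measurable event $\{T_n\in O_m\}$ one has $T_n=S^{O_m}_{k(n,m)}$ for an $\FF^W_A$-measurable index $k(n,m)$, so $\mathbbm{1}_{\{T_n\in O_m\}}\epsilon_{T_n}=\sum_k\mathbbm{1}_{\{k(n,m)=k,\,T_n\in O_m\}}\epsilon_{S^{O_m}_k}$ is measurable with respect to $\FF^W_A\vee\FF_{O_m}\subset\FF_A$; letting $m\to\infty$ and using $O_m\uparrow A$ together with the convention $\epsilon_\dagger=0$, $\epsilon_{T_n}\in\FF_A$.

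The temporal homogeneity follows because $\theta_t^{-1}$ is a lattice isomorphism of $\hat\PP$, so $\theta_t^{-1}(\FF_A)=\bigvee_m\theta_t^{-1}(\FF_{O_m})=\bigvee_m\FF_{O_m+t}$ by \eqref{eq.noise4}, and the right-hand side equals $\FF_{A+t}$ by the same construction applied to $A+t$. The principal obstacle is the measurability shuffling needed to put $\epsilon_{T_n}$ into $\FF_A$ (and vice versa for $\epsilon_{S^{O_m}_k}$); the crucial ingredient that makes this work cleanly is Proposition~\ref{proposition:extensions-mod}\ref{proposition:extensions-mod:i}, which lets us treat the matching indicators $\mathbbm{1}_{\{T_n=S^{O_m}_k\}}$ as $\FF_A$-measurable.
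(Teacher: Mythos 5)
Your proof is correct and follows essentially the route the paper intends for this \qed'd corollary: exhaust the open set $A$ by open elementary sets $O_m$, invoke Proposition~\ref{proposition:meet-and-join}\ref{cup} to get $\FF_A=\lor_m\FF_{O_m}$, and match the enumerations of local maxima on the $O_m$ against an $\FF^W_A$-measurable enumeration on $A$, using that $\FF^{\mathrm{stb}}_A\subset\FF_A$ to make the matching indices measurable. The only blemish is notational: you write $\FF^W_A\subset\FF_A$ where you mean its pullback $\FF^{\mathrm{stb}}_A=B^{-1}(\FF^W_A)\lor 0_\PP\subset\FF_A$ (Proposition~\ref{proposition:extensions-mod}\ref{proposition:extensions-mod:i}), which does not affect the argument.
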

\begin{proof}
Writing $A$ as the $\uparrow$ union of open elementary sets, apply Proposition~\ref{proposition:meet-and-join}\ref{cup}.
\end{proof}
As previously announced, the $\sigma$-fields of Definition~\ref{definition:extended-fields} cover the completion $\overline{\BBB}$:
\begin{proposition}\label{proposition:completion-in-spectral-ones}
 $\overline{\BBB}\subset \{\FF_A:A\text{ an $\leb$-measurable set}\}$. Furthermore, if for an $\leb$-measurable $E$, $\FF_E\in \overline{\BBB}$, then its independent complement in $\overline{\BBB}$ is  ${\FF_E}'=\FF_{\mathbb{R}\backslash E}$. 
 \end{proposition}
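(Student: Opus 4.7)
The plan is to establish the containment $\overline{\BBB}\subset\{\FF_A:A\text{ an }\leb\text{-measurable set}\}$ by spectral identification, then to deduce the complement formula from the first half together with Propositions~\ref{proposition:meet-and-join} and~\ref{proposition:extensions-mod}.

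Let $x\in \overline{\BBB}$. Since $x\in \Cl(\BBB)$, the projection $\PP[\cdot\vert x]$ commutes with every $\PP[\cdot\vert \FF_A]$, $A\in \EE$, hence belongs to $\AA$ and corresponds under $\Psi$ to multiplication by $\mathbbm{1}_{S^x}$ for some $\mu$-measurable $S^x\subset S$. The goal reduces to identifying an $\leb$-measurable $A$ with $S^x=S_A$ a.e.-$\mu$, whereupon Proposition~\ref{corollary:H_A-spaces} yields $x=\FF_A$. To locate the candidate $A$, I would pass to the first chaos: the closed subspace $H^x\cap H^{(1)}=H(S^x\cap\{K=1\})$, transported through the isometry $\int f\dd B\mapsto f$, gives a closed subspace $V$ of $\LLL^2(\leb)$. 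The commutativity forces $V$ to be invariant under multiplication by $\mathbbm{1}_A$ for $A\in \EE$, extending by monotone class to all $\leb$-measurable $A$; standard Hilbert-space theory then produces an $\leb$-measurable $A$ (unique mod $\leb$-null) with $V=\{f\in \LLL^2(\leb):f=0\text{ a.e.~off }A\}$. Translating back via $\mu\vert_{\{K=1\}}\sim \leb$ (Poisson character, under the singleton identification) gives $S^x\cap\{K=1\}=S_A\cap\{K=1\}$ a.e.-$\mu$.

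To propagate the identification to all of $S$, I would exploit distributivity of $\hat\PP$ over independent families (Subsection~\ref{subsection:lattice}): for any finite partition $P$ of $\mathbb{R}$ into elementary sets, $x=\bigvee_{p\in P}(x\land \FF_p)$, the summands being mutually independent and, since they lie in $\Cl(\BBB)$, spectral with spectral sets $S^x\cap S_p$; \eqref{tensor-product} then yields
\[
S^x=\bigcap_{p\in P}\pr_p^{-1}(S^x\cap S_p)\quad\text{a.e.-}\mu,
\]
i.e.~for $\mu$-a.e.~$s$, $s\in S^x$ iff $s\cap p\in S^x$ for every $p\in P$. For $s$ with $K(s)<\infty$, choosing $P$ fine enough to separate the points of $s$ and invoking the singleton identification delivers $S^x\cap\{K<\infty\}=S_A\cap\{K<\infty\}$ a.e.-$\mu$. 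The remaining $\mu$-conegligible stratum $\{K=\infty,\,K'<\infty\}$ requires a parallel argument on the first superchaos $H^{(1)\prime}=H(\{K'=1\})$: use $\acc_\star(\mu\vert_{\{K'=1\}})\sim\leb$, tensorize over a partition isolating the accumulation points, and match the resulting $\leb$-measurable parameter to $A$ by comparison with the already-identified stable layer; iterating on the (a.e.-$\mu$ finite) value of $K'$ via partitions separating successive accumulation points closes $S^x=S_A$ a.e.-$\mu$. \emph{This last step is where I expect the chief technical obstacle}: threading the singleton-level identification of $A$ consistently through each superchaos level, while tracking Poisson-character equivalences and the partition-separation requirements, will demand delicate bookkeeping, since, unlike in the finite case, no partition reduces an infinite spectral set to uniformly finite pieces.

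For the complement formula, take $\FF_E\in \overline{\BBB}$ with independent complement $z\in \Cl(\BBB)$. By the preceding, $z=\FF_D$ for some $\leb$-measurable $D$. From $\FF_E\land\FF_D=0_\PP$, Proposition~\ref{proposition:meet-and-join}\ref{cap} gives $\FF_{E\cap D}=0_\PP$, so Proposition~\ref{proposition:extensions-mod}\ref{proposition:extensions-mod:ii} makes $E\cap D$ $\leb$-negligible; thus $D\subset \mathbb{R}\backslash E$ a.e.-$\leb$, so that $\FF_D=\FF_D\land \FF_{\mathbb{R}\backslash E}\subset \FF_{\mathbb{R}\backslash E}$ again by the meet identity. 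Therefore $1_\PP=\FF_E\lor \FF_D\subset \FF_E\lor \FF_{\mathbb{R}\backslash E}$, giving equality; combined with the independence of $\FF_E$ and $\FF_{\mathbb{R}\backslash E}$ from Proposition~\ref{proposition:meet-and-join}\ref{indep}, this exhibits $\FF_{\mathbb{R}\backslash E}$ as an independent complement of $\FF_E$ in $\Cl(\BBB)$, and uniqueness forces ${\FF_E}'=\FF_{\mathbb{R}\backslash E}$.
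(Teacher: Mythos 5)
Your second half --- deducing ${\FF_E}'=\FF_{\mathbb{R}\backslash E}$ once the containment is known --- is correct and close to what the paper does. The first half, however, has a genuine gap. Your spectral identification of $S^x$ with $S_A$ is only actually carried out on the stratum $\{K<\infty\}$, and the passage to $\{K=\infty\}$, which you flag as the ``chief technical obstacle,'' is not bookkeeping: it is the entire content of the claim. Observe that everything you do up to and including the propagation step uses only $x\in\Cl(\BBB)$ together with the decomposition $x=\lor_{p\in P}(x\land\FF_p)$ into independent summands. But $\FF_{\mathrm{stb}}$ satisfies all of this (its spectral set $\{K<\infty\}$ agrees with $S_{\mathbb{R}}$ on the whole finite stratum), yet $\FF_{\mathrm{stb}}$ is not of the form $\FF_A$ for any $\leb$-measurable $A$ --- the paper points this out immediately after the proposition. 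So no argument that matches $S^x$ with $S_A$ only on $\{K<\infty\}$ can close; the existence of the independent complement $x'\in\Cl(\BBB)$ must enter essentially, and your first half never invokes it. Concretely, on $\{K'=1\}$ membership of $s$ in $S_A$ is not determined by $\acc(s)\subset A$ (the infinitely many points of $s$ clustering at the accumulation point may escape $A$); the splitting property $\{s\cap E,\,s\backslash E\}\subset S$ that your superchaos step would need is exactly what Proposition~\ref{proposition:spectral-projection-for-extension} and Theorem~\ref{theorem:characterization} establish \emph{later}, using the present proposition as input.

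The paper's proof avoids spectral theory here altogether. Write $x=\liminf_{n\to\infty}\FF_{A_n}$ and $x'=\liminf_{n\to\infty}\FF_{A_n'}$ with $A_n, A_n'$ elementary. Then $\FF^{\mathrm{stb}}_{\liminf_n A_n}\subset x\subset \FF_{\liminf_n A_n}$ (lower bound by sequential continuity of the classical part via Proposition~\ref{proposition:extensions-mod}\ref{proposition:extensions-mod:i}, upper bound by Proposition~\ref{proposition:meet-and-join}\ref{cap}), and similarly for $x'$. Independence of $x$ and $x'$ then forces $\liminf_n A_n$ and $\liminf_n A_n'$ to be disjoint mod $\leb$, while $x\lor x'=1_\PP$ forces them to cover $\mathbb{R}$ mod $\leb$; hence $\FF_{\liminf_n A_n}$ and $\FF_{\liminf_n A_n'}$ are mutually independent complements in $\overline{\BBB}$, and antitonicity together with uniqueness of complementation squeezes $x=\FF_{\liminf_n A_n}$. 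If you want to salvage your approach, this is where the complement has to do the work that your sketch leaves undone.
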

  On the other hand, it is certainly not true that  $\Cl(\BBB)\subset \{\FF_A:A\text{ an $\leb$-measurable set}\}$. Indeed, by Theorem~\ref{thm:is-in-closure}, $\FF_{\mathrm{stb}}\in \Cl(\BBB)$ and it is clear from Proposition~\ref{proposition:extensions-mod} and from \eqref{eq:wiener-noise-injective} that there is no $\leb$-measurable $A$ for which $\FF_{\mathrm{stb}}=\FF_A$. 
The same example shows that if an $x\in \Cl(\BBB)$ admits an independent complement $y$ in $\hat\PP$, then we cannot conclude that $x\in \overline{\BBB}$, in other words it does not follow that $y\in \Cl(\BBB)$. For  $\FF_{\mathrm{stb}}\in \Cl(\BBB)$  admits the independent complement $\FF_{\mathrm{stb}}':=\sigma(\epsilon_{\mathsf{S}_k}:k\in \mathbb{N})\lor 0_\PP$,\footnote{Recall the enumeration $\mathsf{S}$ from p.~\pageref{enumeration-page}.} but does not belong to $\overline{\BBB}$ and neither can then $\FF_{\mathrm{stb}}'$ belong to $\Cl(\BBB)$.
 \begin{proof}[Proof of Proposition~\ref{proposition:completion-in-spectral-ones}] 
 Suppose $x\in \overline{\BBB}$ with independent complement $x'$ from $\overline{\BBB}$. By \eqref{eq:closure-identified} there is a sequence $(A_n)_{n\in \mathbb{N}}$ of elementary sets such that $x=\liminf_{n\to\infty}\FF_{A_n}$ and likewise there is a sequence $(A_n')_{n\in \mathbb{N}}$ of elementary sets such that $x'=\liminf_{n\to\infty}\FF_{A_n'}$. 
 
 It is clear from Proposition~\ref{proposition:meet-and-join}\ref{cap} that $x\subset \FF_{\liminf_{n\to\infty}A_n}$ and $x'\subset \FF_{\liminf_{n\to\infty}A_n'}$. 
 Furthermore, $x\land \FF_{\mathrm{stb}}=(\liminf_{n\to\infty}\FF_{A_n})\land \FF_{\mathrm{stb}}\supset  \liminf_{n\to\infty}(\FF_{A_n}\land \FF_{\mathrm{stb}})$.   Hence, by Proposition~\ref{proposition:extensions-mod}\ref{proposition:extensions-mod:i}, $x\land \FF_{\mathrm{stb}}\supset \liminf_{n\to\infty}\FF^{\mathrm{stb}}_{A_n}=\FF^{\mathrm{stb}}_{\liminf_{n\to\infty}A_n}$, since the classical Wiener noise is sequentially monotonically continuous. By the same token we avail ourselves of the inclusion $x' \land \FF_{\mathrm{stb}}\supset \FF^{\mathrm{stb}}_{\liminf_{n\to\infty}A_n'}$.   The $\sigma$-fields $x$ and $x'$ being independent, $\FF^{\mathrm{stb}}_{\liminf_{n\to\infty}A_n\cap \liminf_{n\to\infty}A_n'}=\FF^{\mathrm{stb}}_{\liminf_{n\to\infty}A_n}\cap \FF^{\mathrm{stb}}_{\liminf_{n\to\infty}A_n'}=0_\PP=\FF_\emptyset$; by \eqref{eq:wiener-noise-injective} it must be that 
\begin{equation}\label{eq:lininf-cap}
(\liminf_{n\to\infty}{A_n})\cap (\liminf_{n\to\infty}{A_n'})=\emptyset\text{ a.e.-$\leb$}.
 \end{equation}

 From Proposition~\ref{proposition:meet-and-join}\ref{indep} we now infer that $\FF_{\liminf_{n\to\infty}A_n}$ and $\FF_{\liminf_{n\to\infty}A_n'}$ are independent. Also,  $x\subset \FF_{\liminf_{n\to\infty}A_n}$, $x'\subset \FF_{\liminf_{n\to\infty}A_n'}$ and $x\lor x'=1_\PP$, a fortiori therefore $\FF_{\liminf_{n\to\infty}A_n}\lor \FF_{\liminf_{n\to\infty}A_n'}=1_\PP$. As certainly  $\FF_{\liminf_{n\to\infty}A_n}\lor \FF_{\liminf_{n\to\infty}A_n'}\subset \FF_{(\liminf_{n\to\infty}A_n)\cup (\liminf_{n\to\infty}A_n')}$, we deduce from Proposition~\ref{proposition:extensions-mod}\ref{proposition:extensions-mod:i}  that $$\FF^\mathrm{stb}_{(\liminf_{n\to\infty}A_n)\cup (\liminf_{n\to\infty}A_n')}=\FF_\mathrm{stb}$$ and so by \eqref{eq:wiener-noise-injective} we must also have \begin{equation}\label{eq:lininf-cup}
 (\liminf_{n\to\infty}A_n)\cup (\liminf_{n\to\infty}A_n')=\mathbb{R}\text{ a.e.-$\leb$}.
 \end{equation}

 Thus $\FF_{\liminf_{n\to\infty}A_n}\in \Cl(\BBB )$ and $\FF_{\liminf_{n\to\infty}A_n'}\in \Cl(\BBB )$ are seen to belong to the completion $\overline{\BBB}$ and are each other's independent complements therein. This being so, $x'\subset \FF_{\liminf_{n\to\infty}A_n'}$ implies $ \FF_{\liminf_{n\to\infty}A_n}={\FF_{\liminf_{n\to\infty}A_n'}}'\subset x$. Together with $x\subset \FF_{\liminf_{n\to\infty}A_n}$ we deduce that $x=\FF_{\liminf_{n\to\infty}A_n}$, which establishes the first claim. By the same token $x'=\FF_{\liminf_{n\to\infty}A_n'}$, which combined with \eqref{eq:lininf-cap}-\eqref{eq:lininf-cup} and Proposition~\ref{proposition:extensions-mod}\ref{proposition:extensions-mod:ii} gives also the second claim. 
 \end{proof}

 \begin{definition}\label{definition:EE-completion}
We set $$\overline{\mathcal{E}}:=\{E\in 2^\mathbb{R}:\text{$E$ is $\leb$-measurable and } \FF_E\in \overline{\BBB}\}$$ and then $\overline{\FF}:=(\overline{\mathcal{E}}\ni E\mapsto \FF_E)$.
 \end{definition}
 Not only do the $\sigma$-fields of Definition~\ref{definition:extended-fields} cover $\overline{\BBB}$, but also $\overline{\FF}$ ``faithfully'' prolongates $\FF$ as a bona fide noise: 
 \begin{theorem}\label{theorem:extension}
 We have the following assertions.
 \begin{enumerate}[(i)]
 \item \label{extension:0}  If $E\in \overline{\mathcal{E}}$, then $F\in \overline{\mathcal{E}}$ for any $\leb$-measurable $F$  that is equal to $E$ a.e.-$\leb$. For $\{E,F\}\subset \overline{\mathcal{E}}$,  $\overline{\FF}_E=\overline{\FF}_F$ iff $E=F$ a.e.-$\leb$. 
  \item\label{extension:i}  $\overline{\FF}$ extends $\FF$ as a homomorphism of  the Boolean algebra $\overline{\mathcal{E}}$ onto the Boolean algebra $\overline{\BBB}$, the spectral set associated to the projection $\PP[\cdot\vert \FF_E]$ of $\AA$ being $S_E$ (a.e.-$\mu$) for all $E\in \overline{\EE}$. Furthermore, for all $t\in \mathbb{R}$, $\overline{\EE}$ is closed for the action of the translation by $t$, and $$\theta_t^{-1}(\FF_E)=\FF_{E+t},\quad E\in \overline{\EE};$$ also, $\overline{\FF}$ ``respects the stable part'': $$\FF_{E}\cap \FF_{\mathrm{stb}}=\FF^{\mathrm{stb}}_E,\quad E\in \overline{\mathcal{\EE}}.$$
  \item\label{extension:ii} Suppose $\GG:\EE'\to \Cl(\BBB)$ is an extension of $\FF$,  which respects complements (meaning: $\GG_{\mathbb{R}\backslash A}$ is an independent complement of $\GG_A$ for all $A\in \EE'$)   and the stable part, $\EE'$ being a class of $\leb$-measurable sets closed under complementation in $\mathbb{R}$. Then $\overline{\FF}$ extends $\GG$.
  \end{enumerate}
 \end{theorem}
  \begin{proof}
  \ref{extension:0}. This follows directly from Proposition~\ref{proposition:extensions-mod}\ref{proposition:extensions-mod:ii} and Definition~\ref{definition:EE-completion}.
  
\ref{extension:i}. For $E\in \overline{\EE}$, by \eqref{eq:commuting}, the projection $\PP[\cdot\vert \FF_E]$ belongs to $\AA$ and since it projects onto $H_E=H(S_E)$ its associated spectral set is $S_E$ (a.e.-$\mu$).

By Proposition~\ref{proposition:completion-in-spectral-ones}, $\overline{\mathcal{E}}$ is closed under complementation and $\overline{\FF}$ respects complementation. Suppose now that $E$ and $F$ are $\leb$-measurable sets for which $\FF_E$ and $\FF_F$ belong to $\overline{\BBB}$. By Proposition~\ref{proposition:completion-in-spectral-ones}  there is an $\leb$-measurable $C$ such that $\overline{\BBB}\ni \FF_E\land \FF_F=\FF_C$. But $\FF_E\land \FF_F=\FF_{E\cap F}$ thanks to Proposition~\ref{proposition:meet-and-join}\ref{cap}. We conclude that $C=E\cap F$ a.e.-$\leb$ and hence $E\cap F\in \overline{\mathcal{E}}$. Thus $\overline{\mathcal{E}}$ is closed under intersections and $\overline{\FF}$ respects meets. Clearly we also have $\mathbb{R}\in \overline{\mathcal{E}}$, $\FF_\mathbb{R}=1_\PP$, and the proof of the first statement is complete.

As for the second, the fact that $\overline{\FF}$ respects the stable part was seen already in Proposition~\ref{proposition:extensions-mod}\ref{proposition:extensions-mod:i}.  By temporal homogeneity of $\FF$ it is plain that $\overline{\EE}$ is left invariant by translations. Besides, if $E=\cap_{n\in \mathbb{N}}O_n$ for a $\downarrow$ sequence $(O_n)_{n\in \mathbb{N}}$ of open subsets of $\mathbb{R}$, then by  Corollary~\ref{corollary:F-for-open}, for all $t\in \mathbb{R}$, $$\theta_t^{-1}(\FF_{O_n})=\FF_{O_n+t},\quad n\in \mathbb{N}.$$ Taking intersection we get from Proposition~\ref{proposition:meet-and-join}\ref{cap} that 
\begin{align*}
\theta_t^{-1}(\FF_E)&=\theta_t^{-1}(\FF_{\cap_{n\in \mathbb{N}}O_n})=\theta_t^{-1}(\land_{n\in \mathbb{N}}\FF_{O_n})=\land_{n\in \mathbb{N}}\theta_t^{-1}(\FF_{O_n})\\
&=\land_{n\in \mathbb{N}}\FF_{O_n+t}=\FF_{\cap_{n\in \mathbb{N}}(O_n+t)}=\FF_{E+t},
\end{align*} where the third equality stems from the following simple observation: if $A=\theta_t^{-1}(A_n)$ for some $A_n\in \FF_{O_n}$ for all $n\in \mathbb{N}$, then we can express $A=\theta_t^{-1}(\limsup_{n\to\infty}A_n)$ and certainly $\limsup_{n\to\infty}A_n\in \land_{n\in \mathbb{N}}\FF_{O_n}$, forcing $A\in \theta_t^{-1}(\land_{n\in \mathbb{N}}\FF_{O_n})$ (the converse inclusion is plain). For a general $\leb$-measurable $E$ we see that $\theta_t^{-1}(\FF_E)=\FF_{E+t}$ by approximation of $E$ from the outside by a $G_\delta$ set to within a set of $\leb$-measure zero.

\ref{extension:ii}. For $A\in \EE'$, $\GG_A\in \Cl(\BBB)$ admits the independent complement $\GG_{\mathbb{R}\backslash A}$ in $\Cl(\BBB)$, therefore $\GG_A\in\overline{\BBB}$, hence there is $E\in \overline{\EE}$ such that $\GG_A=\FF_E$. Taking intersection with $\FF_{\mathrm{stb}}$ we deduce that $\FF^{\mathrm{stb}}_A=\FF^{\mathrm{stb}}_E$, which implies $E=A$ a.e.-$\leb$ by \eqref{eq:wiener-noise-injective}. In turn we obtain $A\in \overline{\EE}$ and $\FF_A=\FF_E=\GG_A$, concluding the argument.
  \end{proof}
Let us  characterize $\overline{\mathcal{E}}$. It will emerge that its members are distinguished by enjoying, together with their complements, the property of
\begin{definition}\label{definition:max-enumerable}
An $\leb$-measurable set $E$ is said to be max-enumerable if the local maxima of $W$ that belong to $E$ admit an $\FF^W_E$-measurable enumeration.
\end{definition}
\emph{Caveat lector:} If an $\leb$-measurable set $E$ is such that the local maxima of $W$ that belong to $E$ admit a $\GG$-measurable enumeration for all $\GG\in \mathfrak{G}\subset \hat\WW$, does it mean that they admit an enumeration measurable w.r.t. $\cap \mathfrak{G}$? No, it does not, for the enumerations in question may  vary as $ \GG\in \mathfrak{G}$ varies and therein lies the rub. Indeed if this was not an issue, then by approximation from the outside by a $G_\delta$ set we would conclude at once that any $\leb$-measurable set is max-enumerable. But such is not the case, as we shall see.

 \begin{proposition}\label{lemma:locmaxstable}
Let $E$ be an $\leb$-measurable set. If a selection $\tau$ of a local maximum of $W$ belonging to $E$ is $\FF^W_E$-measurable, then $\epsilon_\tau$ is $\FF_E$-measurable.
\end{proposition}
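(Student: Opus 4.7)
The plan is to reduce to the open case by approximating $E$ from above by a $G_\delta$ set, and then to invoke Corollary~\ref{corollary:F-for-open} on each open approximant.

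Choose a $\downarrow$ sequence $(O_n)_{n\in \mathbb{N}}$ of open subsets of $\mathbb{R}$ whose intersection $R:=\cap_{n\in \mathbb{N}}O_n$ satisfies $E\subset R$ and $\leb(R\backslash E)=0$. By Proposition~\ref{proposition:extensions-mod}\ref{proposition:extensions-mod:ii}, $\FF_E=\FF_R$, and analogously $\FF^W_E=\FF^W_R$ (the latter by sequential monotone continuity of the classical Wiener noise). By Proposition~\ref{proposition:meet-and-join}\ref{cap} and its obvious classical counterpart, $\FF_R=\wedge_{n\in \mathbb{N}}\FF_{O_n}$ and $\FF^W_R=\wedge_{n\in \mathbb{N}}\FF^W_{O_n}$. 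In particular, $\tau$ is $\FF^W_{O_n}$-measurable for every $n$; moreover, since $\tau$ takes values in $E\cup\{\dagger\}\subset O_n\cup\{\dagger\}$ and is a local maximum of $W$ on $\{\tau\ne \dagger\}$ by hypothesis, it qualifies as an $\FF^W_{O_n}$-measurable selection of a local maximum of $W$ belonging to $O_n$. Thus it suffices to show that $\epsilon_\tau$ is $\FF_{O_n}$-measurable for every $n$: intersecting then yields $\FF_R=\FF_E$-measurability.

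Fix $n\in \mathbb{N}$ and deal with the open case. By Corollary~\ref{corollary:F-for-open} we can pick an $\FF^W_{O_n}$-measurable enumeration $(T_k)_{k\in \mathbb{N}}$ of the local maxima of $W$ belonging to $O_n$, and $\FF_{O_n}$ is generated by $0_\PP$, the increments of $B$ on $O_n$, and the random signs $\epsilon_{T_k}$, $k\in \mathbb{N}$. Define the random index
$$\nu:=\min\{k\in \mathbb{N}:T_k=\tau\}\quad \text{on}\ \{\tau\ne \dagger\},$$
extending $\nu:=\dagger$ on $\{\tau=\dagger\}$; this $\nu$ is $\FF^W_{O_n}$-measurable since $\tau$ and each $T_k$ are, and it is well-defined on $\{\tau\ne \dagger\}$ because every local maximum of $W$ in $O_n$ is hit by the enumeration $(T_k)$. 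Then
$$\epsilon_\tau=\sum_{k\in \mathbb{N}}\epsilon_{T_k}\mathbbm{1}_{\{\nu=k\}},$$
a countable sum of products of $\FF^W_{O_n}\subset \FF_{O_n}$-measurable indicators with the $\FF_{O_n}$-measurable random signs $\epsilon_{T_k}$; hence $\epsilon_\tau$ is $\FF_{O_n}$-measurable, as required.

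I do not anticipate a substantive obstacle: the argument is essentially a matter of bookkeeping around the definition of selections and the formal generators of $\FF_{O_n}$ furnished by Corollary~\ref{corollary:F-for-open}. The only point deserving attention is the assertion that $\tau$ can legitimately be regarded as a selection of a local maximum on each $O_n$, which is settled by the strict inclusion $E\subset O_n$ (no a.e. qualifier needed) and the hypothesized local-maximum property on $\{\tau\ne \dagger\}$.
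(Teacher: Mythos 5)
Your proof is correct and follows essentially the same route as the paper's: approximate $E$ from above by a $G_\delta$ set $R=\cap_n O_n$ with $\leb(R\backslash E)=0$, use Corollary~\ref{corollary:F-for-open} to get $\FF_{O_n}$-measurability of $\epsilon_\tau$ for each open $O_n$, and conclude via Proposition~\ref{proposition:meet-and-join}\ref{cap}. The only difference is that you spell out, via the random index $\nu$ and the identity $\epsilon_\tau=\sum_k\epsilon_{T_k}\mathbbm{1}_{\{\nu=k\}}$, the step the paper leaves implicit when it cites Corollary~\ref{corollary:F-for-open} for the open case.
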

\begin{proof}
Approximating $E$ from the outside up to an $\leb$-negligible set we may and do assume $E=\cap_{n\in \mathbb{N}}O_n$ for a $\downarrow$ sequence $(O_n)_{n\in \mathbb{N}}$ of open subsets of $\mathbb{R}$. By Corollary~\ref{corollary:F-for-open}, $\epsilon_\tau$ is $\FF_{O_n}$-measurable for all $n\in \mathbb{N}$. It remains to apply Proposition~\ref{proposition:meet-and-join}\ref{cap}. 
\end{proof}
\begin{proposition}\label{characterization-one}
If an $\leb$-measurable set $E$ is such that both $E$ and $\mathbb{R}\backslash E$ are max-enumerable, then $E\in \overline{\EE}$. 
\end{proposition}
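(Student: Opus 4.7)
The plan is to show that $\FF_{\mathbb{R}\setminus E}$ serves as an independent complement of $\FF_E$ inside $\Cl(\BBB)$, which would directly give $\FF_E \in \overline{\BBB}$ and hence $E \in \overline{\EE}$. By Proposition~\ref{corollary:H_A-spaces} both $\FF_E$ and $\FF_{\mathbb{R}\setminus E}$ already lie in $\Cl(\BBB)$, and by Proposition~\ref{proposition:meet-and-join}\ref{indep} they are independent. So the entire task reduces to verifying
\begin{equation*}
\FF_E \vee \FF_{\mathbb{R}\setminus E} = 1_\PP.
\end{equation*}

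For the stable content, I would observe that by Proposition~\ref{proposition:extensions-mod}\ref{proposition:extensions-mod:i}, $\FF^{\mathrm{stb}}_E \subset \FF_E$ and $\FF^{\mathrm{stb}}_{\mathbb{R}\setminus E} \subset \FF_{\mathbb{R}\setminus E}$. Since the classical Wiener noise is sequentially monotonically continuous over the $\leb$-measurable sets, $\FF^{\mathrm{stb}}_E \vee \FF^{\mathrm{stb}}_{\mathbb{R}\setminus E} = \FF^{\mathrm{stb}}_\mathbb{R} = \FF_{\mathrm{stb}}$, whence $\FF_E \vee \FF_{\mathbb{R}\setminus E} \supset \FF_{\mathrm{stb}}$. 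In particular the whole of the Brownian motion $B$, and the distinguished enumeration $\mathsf{S}=(\mathsf{S}_k)_{k\in\mathbb{N}}$ of its local maxima (which is $\FF_{\mathrm{stb}}$-measurable), are measurable in the join.

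For the sensitive content, the key input is Corollary~\ref{corollary:F-for-open} applied to the open set $\mathbb{R}$: $1_\PP = \FF_\mathbb{R}$ is generated by $\FF_{\mathrm{stb}}$ together with the family $\{\epsilon_{\mathsf{S}_k}: k \in \mathbb{N}\}$. So I need only verify that each $\epsilon_{\mathsf{S}_k}$ is measurable w.r.t.\ $\FF_E \vee \FF_{\mathbb{R}\setminus E}$. Invoking the max-enumerability hypothesis, fix $\FF^W_E$- (resp.\ $\FF^W_{\mathbb{R}\setminus E}$-) measurable enumerations $(T^E_n)_{n\in\mathbb{N}}$ and $(T^{\mathbb{R}\setminus E}_n)_{n\in\mathbb{N}}$ of the local maxima of $W$ belonging to $E$ and $\mathbb{R}\setminus E$ respectively. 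By Proposition~\ref{lemma:locmaxstable}, each $\epsilon_{T^E_n}$ is $\FF_E$-measurable and each $\epsilon_{T^{\mathbb{R}\setminus E}_n}$ is $\FF_{\mathbb{R}\setminus E}$-measurable. Since every local maximum $\mathsf{S}_k(\omega)$ equals some $T^E_n(\omega)$ when $\mathsf{S}_k(\omega)\in E$ and some $T^{\mathbb{R}\setminus E}_n(\omega)$ otherwise, and since the indicators $\mathbbm{1}_{\mathsf{S}_k\in E}$, $\mathbbm{1}_{\mathsf{S}_k = T^E_n}$ and $\mathbbm{1}_{\mathsf{S}_k = T^{\mathbb{R}\setminus E}_n}$ are all $\FF_{\mathrm{stb}}$-measurable, we recover
\begin{equation*}
\epsilon_{\mathsf{S}_k}=\mathbbm{1}_{\mathsf{S}_k\in E}\sum_{n\in\mathbb{N}}\mathbbm{1}_{\{n=\min\{m:\mathsf{S}_k=T^E_m\}\}}\epsilon_{T^E_n}+\mathbbm{1}_{\mathsf{S}_k\in\mathbb{R}\setminus E}\sum_{n\in\mathbb{N}}\mathbbm{1}_{\{n=\min\{m:\mathsf{S}_k=T^{\mathbb{R}\setminus E}_m\}\}}\epsilon_{T^{\mathbb{R}\setminus E}_n},
\end{equation*}
which is $\FF_E \vee \FF_{\mathbb{R}\setminus E}$-measurable.

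The main (indeed only) non-trivial point is precisely this last step: without max-enumerability there is no evident way to localise the random sign attached to a generic local maximum belonging to $E$ (resp.\ $\mathbb{R}\setminus E$) inside $\FF_E$ (resp.\ $\FF_{\mathbb{R}\setminus E}$), which is exactly what bridges the gap from ``$\FF_E$ and $\FF_{\mathbb{R}\setminus E}$ are independent members of $\Cl(\BBB)$'' to ``$\FF_E\vee\FF_{\mathbb{R}\setminus E}=1_\PP$.'' Once this bridge is in place the theorem follows immediately from the definition of $\overline{\BBB}$.
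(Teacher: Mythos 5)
Your proposal is correct and follows essentially the same route as the paper: Proposition~\ref{lemma:locmaxstable} applied to $E$ and $\mathbb{R}\backslash E$ puts the relevant random signs into $\FF_E$ and $\FF_{\mathbb{R}\backslash E}$, the stable parts join to $\FF_{\mathrm{stb}}$, hence $\FF_E\lor\FF_{\mathbb{R}\backslash E}=1_\PP$, and Proposition~\ref{proposition:meet-and-join}\ref{indep} supplies independence. The paper's proof is just a terser version of yours; your explicit reassembly of each $\epsilon_{\mathsf{S}_k}$ from the enumerations via $\FF_{\mathrm{stb}}$-measurable indicators is precisely the detail the paper leaves implicit.
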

\begin{proof}
We apply Proposition~\ref{lemma:locmaxstable} to $E$ and $\mathbb{R}\backslash E$ to find that $\FF_E\lor \FF_{\mathbb{R}\backslash E}=1_\PP$ (noting that $\FF_E^{\mathrm{stb}}\subset \FF_E$ by Proposition~\ref{proposition:extensions-mod}\ref{proposition:extensions-mod:i}, the same for $\mathbb{R}\backslash E$ in lieu of $E$, and certainly $\FF_E^{\mathrm{stb}}\lor \FF_{\mathbb{R}\backslash E}^{\mathrm{stb}}=\FF_{\mathrm{stb}}$). In addition, by Proposition~\ref{proposition:meet-and-join}\ref{indep}, we have that $\FF_E$ and $\FF_{\mathbb{R}\backslash E}$ are independent, while by \eqref{eq:in-closure} both these $\sigma$-fields belong to $\Cl(\BBB)$. Thus $\FF_E$ and $\FF_{\mathbb{R}\backslash E}$ are each others independent complements in $\Cl(\BBB)$. The very definition \eqref{eq:completion-definition} of the completion of a noise Boolean algebra now entails that $\FF_E$  belongs to $\overline{\BBB}$. In turn, again by definition (of $\overline{\EE}$), we have that $E\in \overline{\EE}$.
\end{proof}
 We seek to prove the converse of Proposition~\ref{characterization-one}. In doing so, we must gradually climb out of the abstractness of the definition of the completion $\overline{\BBB}$ towards more tangible properties.
 We set out on this route by extending \eqref{equivalent-to-product}-\eqref{tensor-product} to members of $\overline{\EE}$.

\begin{proposition}\label{proposition:spectral-projection-for-extension}
Suppose $E\in \overline{\mathcal{E}}$. Then $\{s\cap E,s\backslash E\}\subset S$ for $\mu$-a.e. $s$, also $\pr_E:S\to S_E$ and $\pr_{\mathbb{R}\backslash E}:S\to S_{\mathbb{R}\backslash E}$ are measurable; the map $\sqcup^E:=(S_E\times S_{\mathbb{R}\backslash E}\ni (s,s')\mapsto s\cup s'\in S)$ too is measurable and 
\begin{equation}\label{eq:disjoint-union}
\mu\sim {\sqcup^E}_\star\left[ ((\pr_E)_\star \mu)\times ((\pr_{\mathbb{R}\backslash E})_\star \mu)\right]\sim {\sqcup^E}_\star\left[ \mu\vert_{S_E}\times \mu\vert_{S_{\mathbb{R}\backslash E}}\right].
\end{equation}
Besides, for $\mu$-measurable $F$ and $F'$,
 \begin{equation}\label{tensor-product-complete}
H(\pr_E^{-1}(F)\cap \pr_{\mathbb{R}\backslash E}^{-1}(F'))=H(F\cap S_E)\otimes H(F'\cap S_{\mathbb{R}\backslash E})
 \end{equation}
 up to the natural unitary equivalence of $\LLL^2(\PP)$ and $\LLL^2(\PP\vert_{\FF_E})\otimes \LLL^2(\PP\vert_{\FF_{\mathbb{R}\backslash E}})$.
\end{proposition}
\begin{proof}
We restrict $\Psi$ to $\LLL^2(\PP\vert_{\FF_E})$ and get the spectral resolution $\Psi_E:\LLL^2(\PP\vert_{\FF_E})\to \int^{\oplus}_{S_E}\HH_s\mu(\dd s)$ of the commutative von Numann algebra generated by the conditional expectation operators of  $\BBB_E:=\{x\land \FF_E:x\in \BBB\}=\{\FF_{A\cap E}:A\in \EE\}$ acting on $\LLL^2(\PP\vert_{\FF_E})=H_E$. The same with $\mathbb{R}\backslash E$ in lieu of $E$.

The independent complements property implies that $$\LLL^2(\PP)=\LLL^2(\PP\vert_{\FF_E})\otimes \LLL^2(\PP\vert_{\FF_{\mathbb{R}\backslash E}})=H_E\otimes H_{\mathbb{R}\backslash E}$$ up to the natural unitary equivalence, and (by Theorem~\ref{theorem:extension}\ref{extension:i}) also that $$\FF_A=\FF_{A\cap E}\lor \FF_{A\backslash E},\quad A\in \EE.$$ Thus $\AA=\AA_E\otimes \AA_{\mathbb{R}\backslash E}$, as a consequence of which we get in the natural way the spectral resolution $$\Psi_E\otimes \Psi_{\mathbb{R}\backslash E}:\LLL^2(\PP)\to \int^{\oplus}_{S_E\times S_{\mathbb{R}\backslash E}}[\HH_s\otimes \HH_{s'}](\mu\vert_{S_E}\times \mu \vert_{S_{\mathbb{R}\backslash E}})(\dd (s,s'))$$ of the von Neumann algebra $\AA$, the spectral set associated to $\PP[\cdot \vert\FF_A]$ being $S_{A\cap E}\times S_{A\backslash E}$ (a.e.-$\mu\vert_{S_E}\times \mu \vert_{S_{\mathbb{R}\backslash E}}$) for all $A\in \EE$. Pushing forward $\mu\vert_{S_E}\times \mu \vert_{S_{\mathbb{R}\backslash E}}$ (relative to $\Sigma^0$ on the codomain) through the injective map $\sqcup^E$ and completing gives the standard measure $\tilde\mu$ on $S$. The map $\sqcup^E$ is automatically a mod-$0$ isomorphism between $\mu$ and $\tilde\mu$ \cite[p.~22, Section~2.5, Theorem on isomorphisms]{rohlin} (in particular, $\sqcup^E(S_{E}\times S_{\mathbb{R}\backslash E})$ is $\tilde\mu$-almost certain), therefore we obtain the associated  spectral resolution $\tilde \Psi:\LLL^2(\PP)\to \int_S\tilde \HH_{\tilde s}\tilde \mu(\dd \tilde s)$ of $\AA$ relative to which the spectral set corresponding to $\PP[\cdot \vert\FF_A]$ is $S_A$ (within $\tilde\mu$-equivalence, of course) for all $A\in  \EE$.  We now have two spectral resolutions, $\Psi$ and $\tilde \Psi$, for $\AA$ on the same standard measurable space $(S,\Sigma^0)$ with the same spectral sets associated to the generating multiplicative class of projections $\{\PP[\cdot \vert\FF_A]:A\in \EE\}$ for $\AA$. From this, a Dynkin's lemma argument ensures the existence of an isomorphism between the measure algebra of $\mu$ and the measure algebra of $\tilde\mu$ carrying the $\mu$-equivalence class of $R$ to the $\tilde \mu$-equivalence class of $R$ for all $R\in \Sigma^0$: the class $\mathcal{D}$ of sets $R\in \Sigma^0$ for which the projections on $\LLL^2(\PP)$ associated to $R$ via $\Psi$ and $\tilde \Psi$ are the same is indeed a Dynkin class containing the $\pi$-system $\{S_A:A\in\EE\}$ generating $\Sigma^0$; therefore, in fact, $\mathcal{D}=\Sigma^0$. This in in turn renders $\tilde\mu\sim \mu$. All the claims follow.
\end{proof}
In the next proposition recall from p.~\pageref{super-cuonting-map} that $H^{(1)\prime}$ is the first superchaos and $K'=\vert \acc\vert$ the supercounting map.

\begin{proposition}\label{proposition:weird}
For $E\in \overline{\mathcal{E}}$, up to the natural unitary equivalence of   $\LLL^2(\PP)$ and $\LLL^2(\PP\vert_{\FF_E})\otimes \LLL^2(\PP\vert_{\FF_{\mathbb{R}\backslash E}})$, 
\begin{align*}
H^{(1)\prime}\cap H(\{\acc\subset E\})&= ( H^{(1)\prime}\cap \LLL^2(\PP\vert_{\FF_E}))\otimes \LLL^2(\PP\vert_{\FF^{\mathrm{stb}}_{\mathbb{R}\backslash E}})\\
&=H^{(1)\prime}\cap \LLL^2(\PP\vert_{\FF_E\lor \FF^{\mathrm{stb}}_{\mathbb{R}\backslash E}})=H^{(1)\prime}\cap \LLL^2(\PP\vert_{\FF_E\lor \FF_{\mathrm{stb}}}).
\end{align*}
\end{proposition}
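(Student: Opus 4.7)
The plan is to work entirely on the spectral side: represent each of the four subspaces appearing in the statement as $H(R)$ for an explicit $\mu$-measurable $R \subset S$, and then verify equality of the relevant spectral sets $\mu$-a.e. The main technical input is Proposition~\ref{proposition:spectral-projection-for-extension}, which for $E \in \overline{\EE}$ provides the measurability of $\pr_E,\pr_{\mathbb{R}\setminus E}$, the factorization $\mu\sim{\sqcup^E}_\star(\mu|_{S_E}\times \mu|_{S_{\mathbb{R}\setminus E}})$, and the tensor identity~\eqref{tensor-product-complete}.

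First I would analyse how $\acc$ behaves under the decomposition $s = (s\cap E)\sqcup(s\setminus E)$. For $\mu$-a.e. $s$, Proposition~\ref{proposition:spectral-projection-for-extension} guarantees that both pieces are compact, hence closed in $\mathbb{R}$; therefore $\acc(s\cap E)\subset s\cap E\subset E$ and $\acc(s\setminus E)\subset s\setminus E\subset \mathbb{R}\setminus E$. Combined with the elementary inclusion $\acc(s)\subset\acc(s\cap E)\cup\acc(s\setminus E)$ (pass to an infinite subsequence on one side), this gives $\acc(s)=\acc(s\cap E)\sqcup\acc(s\setminus E)$ as a disjoint union. Consequently, for $\mu$-a.e. $s$, $\acc(s)\subset E$ is equivalent to $\acc(s\setminus E)=\emptyset$, which, $s\setminus E$ being compact, is in turn equivalent to $s\setminus E$ being finite. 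Intersecting with $\{K'=1\}$ I obtain the key identity, valid a.e.-$\mu$:
\[
\{K'=1\}\cap\{\acc\subset E\}\;=\;\pr_E^{-1}(\{K'=1\})\cap \pr_{\mathbb{R}\setminus E}^{-1}(\{K<\infty\})\;=\;\{K'=1\}\cap\{|s\setminus E|<\infty\}.
\]

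Second, applying \eqref{tensor-product-complete} to the middle set, together with the identifications $H(\{K'=1\}\cap S_E)=H^{(1)\prime}\cap \LLL^2(\PP|_{\FF_E})$ and $H(\{K<\infty\}\cap S_{\mathbb{R}\setminus E})=H_{\mathrm{stb}}\cap \LLL^2(\PP|_{\FF_{\mathbb{R}\setminus E}})=\LLL^2(\PP|_{\FF^{\mathrm{stb}}_{\mathbb{R}\setminus E}})$ (the latter by Proposition~\ref{proposition:extensions-mod}\ref{proposition:extensions-mod:i}), produces the first stated equality. For the second equality, note that since $\FF^{\mathrm{stb}}_{\mathbb{R}\setminus E}\subset \FF_{\mathbb{R}\setminus E}$ is independent of $\FF_E$ by Proposition~\ref{proposition:meet-and-join}\ref{indep}, one has $\LLL^2(\PP|_{\FF_E\lor \FF^{\mathrm{stb}}_{\mathbb{R}\setminus E}})=\LLL^2(\PP|_{\FF_E})\otimes \LLL^2(\PP|_{\FF^{\mathrm{stb}}_{\mathbb{R}\setminus E}})$, which by \eqref{tensor-product-complete} equals $H(\{s:|s\setminus E|<\infty\})$ (a.e.); intersecting with $H^{(1)\prime}$ recovers the middle set of the key identity. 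For the third equality, combine $\FF^{\mathrm{stb}}_E\subset \FF_E$ (Proposition~\ref{proposition:extensions-mod}\ref{proposition:extensions-mod:i}) with $\FF_{\mathrm{stb}}=\FF^{\mathrm{stb}}_E\lor \FF^{\mathrm{stb}}_{\mathbb{R}\setminus E}$ (classicality of the Wiener subnoise) to obtain $\FF_E\lor \FF_{\mathrm{stb}}=\FF_E\lor \FF^{\mathrm{stb}}_{\mathbb{R}\setminus E}$, whence the third equality is immediate.

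The one subtle step is the clean disjoint decomposition $\acc(s)=\acc(s\cap E)\sqcup \acc(s\setminus E)$ in the first paragraph: it genuinely requires that \emph{both} $s\cap E$ and $s\setminus E$ be compact, for otherwise boundary points of $E$ could be shared accumulation points of both pieces and one loses the crucial implication ``$\acc(s)\subset E\Rightarrow s\setminus E$ finite''. This is exactly where the hypothesis $E\in\overline{\EE}$, through Proposition~\ref{proposition:spectral-projection-for-extension}, is indispensable; everything else is a routine manipulation of spectral subspaces.
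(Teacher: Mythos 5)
Your proposal is correct and follows essentially the same route as the paper's proof: both rest on the tensor identity \eqref{tensor-product-complete} together with the $\mu$-a.e.\ compactness of $s\cap E$ and $s\setminus E$ from Proposition~\ref{proposition:spectral-projection-for-extension}, used to identify $\{K'=1,\acc\subset E\}$ with $\{K'(\pr_E)=1,\,K(\pr_{\mathbb{R}\backslash E})<\infty\}$ a.e.-$\mu$. The only difference is that you spell out the topological verification of the disjoint decomposition $\acc(s)=\acc(s\cap E)\sqcup\acc(s\setminus E)$, which the paper leaves implicit as "the first finding of Proposition~\ref{proposition:spectral-projection-for-extension}".
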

\begin{proof}
 Applying \eqref{tensor-product-complete}  we have 
 \begin{align*}
 H(\{K'(\pr_E)=1,K'(\pr_{\mathbb{R}\backslash E})=0\})&= H(\{K'=1\}\cap S_E)\otimes H(\{K'=0\}\cap S_{\mathbb{R}\backslash E}) \\
 &=( H^{(1)\prime}\cap\LLL^2(\PP\vert_{\FF_E}))\otimes \LLL^2(\PP\vert_{\FF^{\mathrm{stb}}_{\mathbb{R}\backslash E}}).\end{align*}
  But, thanks to the first finding of Proposition~\ref{proposition:spectral-projection-for-extension}, $$\{K'(\pr_E)=1,K'(\pr_{\mathbb{R}\backslash E})=0\}=\{K'=1,\acc\subset E\}\text{ a.e.-$\mu$}$$  (which gives the first equality of the proposition) and also 
 \begin{align*} \{K'(\pr_E)=1,K'(\pr_{\mathbb{R}\backslash E})=0\}&=\{K'=1\}\cap \{K'(\pr_{\mathbb{R}\backslash E})=0\}\\
 &=\{K'=1\}\cap \{K(\pr_{\mathbb{R}\backslash E})<\infty\}\text{ a.e.-$\mu$},
 \end{align*}  while, thanks to \eqref{tensor-product-complete} again, $$H( \{K(\pr_{\mathbb{R}\backslash E})<\infty\})=\LLL^2(\PP\vert_{\FF_E})\otimes \LLL^2(\PP\vert_{\FF^{\mathrm{stb}}_{\mathbb{R}\backslash E}})=\LLL^2(\PP\vert_{\FF_E\lor \FF^{\mathrm{stb}}_{\mathbb{R}\backslash E}})$$ (which combine to give the second). The last equality holds because $\FF^{\mathrm{stb}}_E\subset \FF_E$ and $\FF_{\mathrm{stb}}=\FF^{\mathrm{stb}}_E\lor \FF^{\mathrm{stb}}_{\mathbb{R}\backslash E}$.
\end{proof}

To proceed further we need ``spectral measures'' that, in some intuitive sense which we do not attempt to make precise, correspond to observing simultaneously the Brownian motion  $B$ and the accumulation point of the spectral sets of the first superchaos.

\begin{proposition}\label{proposition:rich-measures-basic}
For each $f\in H^{(1)\prime}$ there exists a unique measure $\mu_f'$  on $1_\WW\otimes \mathcal{B}_{\mathbb{R}}$ such that $$\mu_f'(Z\times A)=\PP[\vert\PP[f\vert \FF_A\lor \FF_{\mathrm{stb}}]\vert^2;B\in Z],\quad (Z,A)\in 1_\WW\times \EE.$$  The first marginal of $\mu_f'$ is absolutely continuous w.r.t. $ \WW$. As $f$ runs over any  countable total subset of $H^{(1)\prime}$, the disintegrations $\frac{\dd \mu'_f}{\dd \WW_1}$ of the $\mu_f'$ against $\WW$ on the first marginal --- viewed as maps from $\Omega_0$ into the space of measures on $\mathcal{B}_\mathbb{R}$: $$\mu_f'(\dd(\omega,t))=\WW(\dd \omega)\left(\frac{\dd \mu'_f}{\dd \WW_1}(\omega)\right)(\dd t),\quad (\omega,t)\in \Omega_0\times \mathbb{R}$$ --- are $\WW$-a.s. purely atomic and (between them) their atoms precisely exhaust the local maxima of $W$. Besides, $$\frac{\dd \mu'_{f+g}}{\dd \WW_1}\ll \frac{\dd \mu'_f}{\dd \WW_1}+\frac{\dd \mu'_g}{\dd \WW_1}\text{ a.s.-$\WW$},\quad \{f,g\}\subset H^{(1)\prime}.$$
\end{proposition}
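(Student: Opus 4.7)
I would construct $\mu'_f$ explicitly via a Fourier-type expansion of $f$ in the orthonormal system $(\epsilon_{\mathsf{S}_k})_{k\in\mathbb{N}}$ attached to the enumeration $\mathsf{S}=(\mathsf{S}_k)_{k\in\mathbb{N}}$ of local maxima of $W$ from p.~\pageref{enumeration-page}. Each $\mathsf{S}_k$ is $\FF_\mathrm{stb}$-measurable, and $(\epsilon_{\mathsf{S}_k})_{k\in\mathbb{N}}$ are i.i.d.\ Rademacher, jointly independent of $\FF_\mathrm{stb}$. The description of $H^{(1)\prime}$ in Subsection~\ref{subsection:splitting-noise}, together with the identity $\epsilon_T=\sum_k\epsilon_{\mathsf{S}_k}\mathbbm{1}_{T=\mathsf{S}_k}$ for any selection $T$ of a local maximum, shows that every $f\in H^{(1)\prime}$ has the unique $\LLL^2(\PP)$-convergent orthogonal expansion $f=\sum_k g_k\epsilon_{\mathsf{S}_k}$ with $g_k:=\PP[f\epsilon_{\mathsf{S}_k}\vert \FF_\mathrm{stb}]\in\LLL^2(\PP\vert_{\FF_\mathrm{stb}})$ and $\sum_k\PP[\vert g_k\vert^2]=\PP[\vert f\vert^2]$. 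I then set
\[
\mu'_f(\dd(\omega,t)):=\WW(\dd\omega)\sum_{k\in\mathbb{N}}\vert g_k(\omega)\vert^2\delta_{\mathsf{S}_k(\omega)}(\dd t),
\]
a finite measure on $1_\WW\otimes\mathcal{B}_\mathbb{R}$ of total mass $\PP[\vert f\vert^2]$; the first marginal is dominated by $\PP[\vert f\vert^2]\WW$, hence absolutely continuous w.r.t.\ $\WW$, with disintegration $\frac{\dd\mu'_f}{\dd \WW_1}(\omega)=\sum_k\vert g_k(\omega)\vert^2\delta_{\mathsf{S}_k(\omega)}$ purely atomic with atoms among the local maxima of $\omega$.

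\textbf{Verification and Borel extension.} For $A\in\EE$, the truncated selection $\tau_k:=\mathsf{S}_k\mathbbm{1}_{\mathsf{S}_k\in A}+\dagger\,\mathbbm{1}_{\mathsf{S}_k\notin A}$ is $\FF^W_A$-measurable, so Proposition~\ref{lemma:locmaxstable} yields $\epsilon_{\mathsf{S}_k}\mathbbm{1}_{\mathsf{S}_k\in A}\in\LLL^2(\PP\vert_{\FF_A})$, while the complementary piece $\epsilon_{\mathsf{S}_k}\mathbbm{1}_{\mathsf{S}_k\in A^c}\in\LLL^2(\PP\vert_{\FF_{A^c}})$ has $\PP[\cdot \vert \FF_\mathrm{stb}]=0$ and by \eqref{eq.noise3} is orthogonal in $\LLL^2(\PP)$ to $\LLL^2(\PP\vert_{\FF_A\lor\FF_\mathrm{stb}})$. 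Hence $\PP[\epsilon_{\mathsf{S}_k}\vert \FF_A\lor\FF_\mathrm{stb}]=\epsilon_{\mathsf{S}_k}\mathbbm{1}_{\mathsf{S}_k\in A}$, and linearity plus $\LLL^2$-continuity give $\PP[f\vert \FF_A\lor\FF_\mathrm{stb}]=\sum_k g_k\epsilon_{\mathsf{S}_k}\mathbbm{1}_{\mathsf{S}_k\in A}$; squaring, invoking $\PP[\epsilon_{\mathsf{S}_j}\epsilon_{\mathsf{S}_k}\vert \FF_\mathrm{stb}]=\delta_{jk}$, and integrating against $\mathbbm{1}_{\{B\in Z\}}$ produces the defining formula. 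To pass to a Borel $A$, I approximate from outside by a $G_\delta$ $R\supset A$ with $\leb(R\setminus A)=0$; neither side changes, by Proposition~\ref{proposition:extensions-mod}\ref{proposition:extensions-mod:ii} on the left and because the local maxima of $W$ $\WW$-a.s.\ miss any $\leb$-null Borel set on the right. For $R=\uparrow_nO_n$ open with $O_n\in\EE$, Proposition~\ref{proposition:meet-and-join}\ref{cup} and $\LLL^2$-martingale convergence finish the job; for $R=\cap_nO_n$ a $G_\delta$ with $O_n$ open, I verify that $\sum_k g_k\epsilon_{\mathsf{S}_k}\mathbbm{1}_{\mathsf{S}_k\in R}$ (the dominated-convergence limit of the corresponding partial sums on $O_n$) is $\FF_R\lor\FF_\mathrm{stb}$-measurable, via Proposition~\ref{lemma:locmaxstable} applied to $\tau_k$ (whose $\FF^W_R$-measurability uses sequential monotone continuity of the classical Wiener noise and Proposition~\ref{proposition:meet-and-join}\ref{cap}), and that $f$ minus this candidate, being $\FF_{R^c}\lor\FF_\mathrm{stb}$-measurable with vanishing $\FF_\mathrm{stb}$-conditional mean, is orthogonal to $\LLL^2(\PP\vert_{\FF_R\lor\FF_\mathrm{stb}})$; thus the candidate equals $\PP[f\vert \FF_R\lor\FF_\mathrm{stb}]$.

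\textbf{Uniqueness, atoms, and dominance.} Uniqueness of $\mu'_f$ follows from the $\pi$-$\lambda$ theorem: rectangles $Z\times A$ form a $\pi$-system generating $1_\WW\otimes\mathcal{B}_\mathbb{R}$ and $\mu'_f$ is finite. For the atomic claim: given any countable total $\mathcal{T}\subset H^{(1)\prime}$, the bounded linear map $H^{(1)\prime}\ni f\mapsto g_k(f)\in\LLL^2(\PP\vert_{\FF_\mathrm{stb}})$ cannot vanish on a fixed $\WW$-nonnegligible set for every $f\in\mathcal{T}$, since by density the same would hold on all of $H^{(1)\prime}$, contradicting $g_k(\epsilon_{\mathsf{S}_k})\equiv 1$; hence $\WW$-a.s.\ each $\mathsf{S}_k$ is an atom of $\mu'_f$ for some $f\in\mathcal{T}$, and the reverse inclusion is by construction. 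Finally, $(f+g)_k=f_k+g_k$ gives $\frac{\dd \mu'_{f+g}}{\dd \WW_1}=\sum_k\vert f_k+g_k\vert^2\delta_{\mathsf{S}_k}$, and $\vert f_k+g_k\vert^2>0\Rightarrow\vert f_k\vert^2+\vert g_k\vert^2>0$ delivers the asserted absolute continuity. I expect the main obstacle to lie in the Borel-extension measurability bookkeeping, specifically in upgrading $\FF^W$-measurability of $\tau_k$ from elementary to $G_\delta$ sets; once this is established, the independent-complement orthogonality built into the noise identifies $\PP[f\vert \FF_R\lor\FF_\mathrm{stb}]$ without requiring the $\sigma$-field identity $\cap_n(\FF_{O_n}\lor\FF_\mathrm{stb})=\FF_R\lor\FF_\mathrm{stb}$.
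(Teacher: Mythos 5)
Your construction of $\mu'_f$ --- the orthogonal expansion $f=\sum_k g_k\epsilon_{\mathsf{S}_k}$, the explicit formula $\mu'_f=\sum_k{(W,\mathsf{S}_k)}_\star[\vert g_k\vert^2\cdot\WW]$, the verification of the defining identity for \emph{elementary} $A$ by conditional orthogonality, uniqueness via the $\pi$-$\lambda$ theorem, and the atoms/domination claims via totality and $\vert f_k+g_k\vert^2\leq 2(\vert f_k\vert^2+\vert g_k\vert^2)$ --- is exactly the paper's proof, which stops at elementary $A$: that class is a generating $\pi$-system, so it already pins down $\mu'_f$, and it is the only case invoked later (e.g.\ in Proposition~\ref{proposition:tensor}).

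The Borel-extension step you add is, however, genuinely wrong, in two escalating ways. First, the truncated selection $\tau_k=\mathsf{S}_k\mathbbm{1}_{\{\mathsf{S}_k\in A\}}+\dagger\,\mathbbm{1}_{\{\mathsf{S}_k\notin A\}}$ is not $\FF^W_A$-measurable even for elementary $A$: deciding whether the $k$-th member of the \emph{global} enumeration $\mathsf{S}$ lands in $A$ requires the whole path, not just the increments of $W$ on $A$. For elementary $A$ this slip is harmless, because what the computation actually needs --- $\FF_A\lor\FF_{\mathrm{stb}}$-measurability of $\epsilon_{\mathsf{S}_k}\mathbbm{1}_{\{\mathsf{S}_k\in A\}}$ --- still holds, via $\epsilon_{\mathsf{S}_k}\mathbbm{1}_{\{\mathsf{S}_k\in A\}}=\sum_j\epsilon_{S^A(j)}\mathbbm{1}_{\{S^A(j)=\mathsf{S}_k\}}$ with $S^A$ the $\FF^W_A$-measurable enumeration entering the definition of $\FF_A$. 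But for a general $G_\delta$ set $R=\cap_nO_n$ the conclusion you want is simply false: an $\FF^W_R$-measurable enumeration of the local maxima in $R$ is precisely max-enumerability of $R$, and intersecting over $n$ does not produce one because the enumerations vary with $n$ --- this is exactly the ``caveat lector'' the paper issues right after defining max-enumerability. Concretely, for a closed max-totally unstable $R$ of positive Lebesgue measure (Example~\ref{example:unstable}) one has $\FF_R\subset\FF_{\mathrm{stb}}$, hence $\PP[f\vert\FF_R\lor\FF_{\mathrm{stb}}]=\PP[f\vert\FF_{\mathrm{stb}}]=0$ for every $f\in H^{(1)\prime}$, while $\mu'_f(\Omega_0\times R)>0$ for suitable $f$ in a total set (local maxima do fall in $R$ with positive probability). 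So the displayed identity cannot hold for all Borel $A$, and no bookkeeping will rescue the extension; it must be read, as the paper's proof and all its subsequent applications do, as the defining formula on the $\pi$-system of rectangles with elementary second factor.
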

 \begin{proof}
 Recall the enumeration $\mathsf{S}$ and the notation $\epsilon_T$ for the random sign attached to a selection $T$ of a local maximum of $W$ from p.~\pageref{enumeration-page}.  
 
 Since $f$ is from the first superchaos we may express it as the orthogonal sum $\oplus_{i\in \mathbb{N}}f_i(B)\epsilon_{\mathsf{S}_i}$ for some unique $f_i\in \LLL^2(\WW)$, $i\in \mathbb{N}$, satisfying $\sum_{i\in \mathbb{N}}\WW[\vert f_i\vert^2]<\infty$. Then, for any elementary $A\subset \mathbb{R}$ and any $\WW$-measurable $Z$, taking into account 
 \begin{enumerate}[(I)]
 \item that  $\epsilon_{\mathsf{S}_i}\mathbbm{1}_{\{\mathsf{S}_i(B)\in A\}}$ is $\FF_A\lor \FF_{\mathrm{stb}}$-measurable, while $\PP[\epsilon_{\mathsf{S}_i}\mathbbm{1}_{\{\mathsf{S}_i(B)\in \mathbb{R}\backslash A\}} \vert \FF_A\lor \FF_{\mathrm{stb}}]=0$ a.s.-$\PP$ for all $i\in \mathbb{N}$, and
 \item that the $\epsilon_{\mathsf{S}_i}\mathbbm{1}_{\{\mathsf{S}_i(B)\in A\}}$, $i\in \mathbb{N}$, are independent mean zero given $\FF_{\mathrm{stb}}$, 
 \end{enumerate}
 we express 
 \begin{align*}
& \PP[\vert\PP[f\vert \FF_A\lor \FF_{\mathrm{stb}}]\vert^2;B\in Z]=\PP\left[\left\vert\sum_{i\in \mathbb{N}}f_i(B)\epsilon_{\mathsf{S}_i}\mathbbm{1}_{\{\mathsf{S}_i(B)\in A\}}\right\vert^2;B\in Z\right]\\
  &=\PP\left[\sum_{(i,j)\in \mathbb{N}^2} \overline{f_j}(B)f_i(B)\mathbbm{1}_{\{\mathsf{S}_j(B)\in A,\mathsf{S}_i(B)\in A\}}\PP[\epsilon_{\mathsf{S}_i}\epsilon_{\mathsf{S}_j}\vert\FF_{\mathrm{stb}}];B\in Z\right]\\
 &=\PP\left[\sum_{i\in \mathbb{N}}\vert f_i(B)\vert^2\mathbbm{1}_{Z\times A}(B,\mathsf{S}_i(B))\right]=\sum_{i\in \mathbb{N}}\WW[\vert f_i\vert^2;(W,\mathsf{S}_i)\in Z\times A],
 \end{align*} which proves existence of $\mu_f'$, indeed we may take \begin{equation}\label{eq:rich}
 \mu_f'=\sum_{i\in \mathbb{N}}{(W,\mathsf{S}_i)}_\star [\vert f_i\vert^2\cdot \WW].
 \end{equation}
 
  Uniqueness of $\mu'_f$ is by Dynkin's lemma: any two candidates for $\mu_f'$ agree on the $\pi$-system $\{Z\times A:(Z,A)\in 1_\WW\times \EE\}$ generating $1_\WW\otimes \mathcal{B}_{\mathbb{R}}$ and have the same finite mass $\PP[\vert f\vert^2 ]$, therefore they agree everywhere on $1_\WW\otimes \mathcal{B}_{\mathbb{R}}$. 
  
  From \eqref{eq:rich} we see that the first marginal of $\mu'_f$ is absolutely continuous w.r.t. $\WW$ and that, moreover, 
  \begin{equation}\label{eq:rich-rich}
  \frac{\dd \mu'_f}{\dd \WW_1}=\sum_{i\in \mathbb{N}}\vert f_i\vert^2\delta_{\mathsf{S}_i}\text{ a.s.-$\WW$},
  \end{equation} which is evidently purely atomic $\WW$-a.s., the atoms being $\WW$-a.s. contained in $\{\mathsf{S}_i:i\in \mathbb{N}\}$, i.e. in the local maxima of $W$.
  
Let $\{f,g\}\subset H^{(1)\prime}$.   By the elementary inequality $\vert a+b\vert^2\leq 2(\vert a\vert ^2+\vert b\vert^2)$, which is valid for $\{a,b\}\subset \mathbb{C}$ and because $(f+g)_i=f_i+g_i$ for $i\in \mathbb{N}$, we see from \eqref{eq:rich-rich} that  $  \frac{\dd \mu'_{f+g}}{\dd \WW_1}\leq 2(  \frac{\dd \mu'_f}{\dd \WW_1}+  \frac{\dd \mu'_g}{\dd \WW_1})$ a.s.-$\WW$, a fortiori $  \frac{\dd \mu'_{f+g}}{\dd \WW_1} \ll  \frac{\dd \mu'_f}{\dd \WW_1}+  \frac{\dd \mu'_g}{\dd \WW_1}$ a.s.-$\WW$.

Suppose now that $F\subset  H^{(1)\prime}$ is countable and total. Then  $\WW(\cup_{f\in F}\{f_i\ne 0\})=1$ for all $i\in \mathbb{N}$, since the converse would imply that for some $i\in \mathbb{N}$, the vector $\mathbbm{1}_{\Omega_0\backslash \cup_{f\in F}\{f_i\ne 0\}}(B)\epsilon_{\mathsf{S}_i}$ belongs to $H^{(1)\prime}\backslash \{0\}$ and is orthogonal to $F$, which is a contradiction. If we combine this observation with \eqref{eq:rich-rich} we see that $\WW$-a.s. the atoms of the disintegrations   $\frac{\dd \mu'_f}{\dd \WW_1}$ as $f$ runs over $F$ exhaust the local maxima of $W$.
 
Thus all the assertions of the proposition are established.
 \end{proof}
\begin{definition}\label{definition:measures-rich}
For $f\in H^{(1)\prime}$ we retain in what follows the two pieces of notation $\mu_f'$, $\frac{\dd \mu'_f}{\dd \WW_1}$  of Proposition~\ref{proposition:rich-measures-basic}.
\end{definition}
 \begin{remark}\label{remark:representation-of-mu-f'}
 The proof of Proposition~\ref{proposition:rich-measures-basic} has shown that for all $f\in H^{(1)\prime}$ one has the representation \eqref{eq:rich}: $$\mu_f'=\sum_{i\in \mathbb{N}}{(W,\mathsf{S}_i)}_\star [\vert f_i\vert^2\cdot \WW],$$ where the enumeration  $\mathsf{S}$ is from p.~\pageref{enumeration-page} and $f=\oplus_{n\in \mathbb{N}}f_i(B)\epsilon_{\mathsf{S}_i}$ is an orthogonal sum decomposition ($f_i\in \LLL^2(\WW)$ for $i\in \mathbb{N}$).
 \end{remark}

Let us expose the salient properties of the measures of Definition~\ref{definition:measures-rich}.
\begin{proposition}\label{proposition:rich-measures}
Suppose $E$ is $\leb$-measurable and $f\in H^{(1)\prime}\cap \LLL^2(\PP\vert_{\FF_E})$. Then $\frac{\dd \mu'_f}{\dd \WW_1}$  is  $\WW$-a.s. carried by the local maxima of $W$ belonging to $E$,  and is measurable w.r.t. $\FF^W_E$.
\end{proposition}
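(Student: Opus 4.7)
The plan is to compute $\mu_f'$ via the tensor product decomposition $\LLL^2(\PP)=\LLL^2(\PP\vert_{\FF_E})\otimes \LLL^2(\PP\vert_{\FF_{\mathbb{R}\backslash E}})$ afforded by $E\in \overline{\EE}$ (Proposition~\ref{proposition:spectral-projection-for-extension}). For any $A\in \EE$, closure of $\overline{\EE}$ under intersection (Theorem~\ref{theorem:extension}\ref{extension:i}) gives $A\cap E,A\setminus E\in \overline{\EE}$, so $\FF_A=\FF_{A\cap E}\lor \FF_{A\setminus E}$ is a join of independent $\sigma$-fields lying in $\FF_E$ and $\FF_{\mathbb{R}\backslash E}$ respectively. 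Combining with the classical-noise identity $\FF_{\mathrm{stb}}=\FF^{\mathrm{stb}}_E\lor \FF^{\mathrm{stb}}_{\mathbb{R}\backslash E}$, I would assemble the factorization
\[
\FF_A\lor \FF_{\mathrm{stb}} = (\FF_{A\cap E}\lor \FF^{\mathrm{stb}}_E)\lor (\FF_{A\setminus E}\lor \FF^{\mathrm{stb}}_{\mathbb{R}\backslash E}),
\]
whose two bracketed pieces lie in $\FF_E$ resp. $\FF_{\mathbb{R}\backslash E}$, hence are independent. Writing $f=f\otimes 1$ in the corresponding tensor product then yields $\PP[f\vert \FF_A\lor \FF_{\mathrm{stb}}]=\PP[f\vert \FF_{A\cap E}\lor \FF^{\mathrm{stb}}_E]$.

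For the $\FF^W_E$-measurability claim, I would condition $|\PP[f\vert \FF_{A\cap E}\lor \FF^{\mathrm{stb}}_E]|^2$ on $B$ (equivalently on $\FF_{\mathrm{stb}}$); since $\FF^{\mathrm{stb}}_{\mathbb{R}\backslash E}\subset \FF_{\mathbb{R}\backslash E}$ is independent of $\FF_E$, the conditioning collapses to $\FF^{\mathrm{stb}}_E$, and the defining formula for $\mu_f'$ gives, for elementary $A$,
\[
\Bigl(\tfrac{\dd \mu_f'}{\dd \WW_1}\Bigr)(A) = \PP\bigl[|\PP[f\vert \FF_{A\cap E}\lor \FF^{\mathrm{stb}}_E]|^2\,\big\vert\,\FF^{\mathrm{stb}}_E\bigr]\quad \WW\text{-a.s.,}
\]
which is manifestly $\FF^{\mathrm{stb}}_E=\FF^W_E$-measurable (identifying the completions with respect to $\PP$ and $\WW$). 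A Dynkin $\pi$-$\lambda$ argument then upgrades this to $\FF^W_E$-measurability of $(\dd \mu_f'/\dd \WW_1)(C)$ for all Borel $C$, hence of the random measure itself.

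For the support claim, I would take a $\downarrow$ sequence $(O_n)$ of open supersets of $E$ with $\leb(\cap_n O_n\setminus E)=0$. Since $\FF_E\subset \FF_{O_n}$, one has $\PP[f\vert \FF_{O_n}\lor \FF_{\mathrm{stb}}]=f$; extending the formula $\mu_f'(\Omega_0\times A)=\PP[|\PP[f\vert \FF_A\lor \FF_{\mathrm{stb}}]|^2]$ from elementary $A$ to open $A$ by $\LLL^2$-martingale convergence along an exhausting sequence of elementary subsets of $O_n$ (cf. proof of Proposition~\ref{proposition:meet-and-join}\ref{cup}), I would conclude
\[
\mu_f'(\Omega_0\times O_n) = \PP[|f|^2] = \mu_f'(\Omega_0\times \mathbb{R}),
\]
so $\mu_f'(\Omega_0\times(\mathbb{R}\setminus O_n))=0$ for each $n$, and monotone convergence yields $\mu_f'(\Omega_0\times(\mathbb{R}\setminus\cap_n O_n))=0$. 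Combined with the identification of the atoms of $\dd\mu_f'/\dd\WW_1$ as the local maxima of $W$ (Proposition~\ref{proposition:rich-measures-basic}), these atoms are $\WW$-a.s. confined to $\cap_n O_n$, hence to $E$ modulo a $\leb$-null set.

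The main delicate step is the tensor factorization of $\FF_A\lor \FF_{\mathrm{stb}}$ displayed in the first paragraph: one must verify that its two bracketed pieces are genuinely independent and their join exhausts $\FF_A\lor \FF_{\mathrm{stb}}$. This requires invoking the Boolean algebra homomorphism property of $\overline{\FF}$ (Theorem~\ref{theorem:extension}\ref{extension:i}), the independent-complement structure of $\overline{\BBB}$ applied to $\FF_{A\cap E}$ and $\FF_{A\setminus E}$, and the sequentially monotone extension of the classical Wiener noise through which $\FF_{\mathrm{stb}}=\FF^{\mathrm{stb}}_E\lor \FF^{\mathrm{stb}}_{\mathbb{R}\backslash E}$ holds for all $\leb$-measurable $E$.
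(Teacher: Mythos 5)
Your proof is correct, but it follows a genuinely different route from the paper's. The paper first treats open $E$, where Corollary~\ref{corollary:F-for-open} gives an explicit generating description of $\FF_E$ and hence an explicit orthogonal expansion $f=\oplus_{i}f_i(B)\epsilon_{T_i}$ with $f_i\in\LLL^2(\WW\vert_{\FF^W_E})$ and $(T_i)_i$ an $\FF^W_E$-measurable enumeration of the local maxima on $E$; the formula \eqref{eq:rich} then exhibits the disintegration concretely as $\sum_i\vert f_i\vert^2\delta_{T_i}$, from which both claims are immediate, and the general case follows by outer $G_\delta$ approximation together with the $\downarrow$ sequential continuity of $\FF^W$. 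You never produce this expansion: instead you use the full strength of $E\in\overline{\EE}$ (via the factorization $\FF_A=\FF_{A\cap E}\lor\FF_{A\setminus E}$ and independence of $\FF_E$ and $\FF_{\mathbb{R}\backslash E}$) to collapse $\PP[f\vert\FF_A\lor\FF_{\mathrm{stb}}]$ to $\PP[f\vert\FF_{A\cap E}\lor\FF^{\mathrm{stb}}_E]$ and the conditioning on $B$ to conditioning on $\FF^{\mathrm{stb}}_E$, which yields the measurability; the support claim is handled separately by outer approximation with open sets, using only $f\in\LLL^2(\PP\vert_{\FF_E})$ and the purely atomic structure from Proposition~\ref{proposition:rich-measures-basic}. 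Both arguments are sound; the trade-off is that the paper's route is more concrete (the explicit form of the disintegration is reused later, e.g.\ in Propositions~\ref{proposition:tensor} and~\ref{proposition:rich-measures-bis}) and in fact does not use the hypothesis $E\in\overline{\EE}$ at all, whereas your softer, independence-based argument leans on that hypothesis for the measurability part but bypasses the explicit chaos expansion entirely. Two small points to make explicit if you write this up: the passage from $\mu_f'(\Omega_0\times(\mathbb{R}\setminus\cap_nO_n))=0$ to the atoms lying in $E$ needs the (true, and used by the paper) fact that the local maxima of $W$ a.s.\ avoid any fixed $\leb$-null set; and the Dynkin upgrade at the end should fix versions on a countable generating algebra before passing to the generated random measure.
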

\begin{proof}
For an open $E$, this is obtained readily via  a computation  akin to the one leading to \eqref{eq:rich}, since in this case an arbitrary element of $H^{(1)\prime}\cap \LLL^2(\PP\vert_{\FF_E})$ admits an orthogonal decomposition of the form  $\oplus_{i\in \mathbb{N}}f_i(B)\epsilon_{T_i}$ for some $f_i\in \LLL^2(\WW\vert_{\FF^W_E})$, $i\in \mathbb{N}$, with $\sum_{i\in \mathbb{N}}\WW[\vert f_i\vert^2]<\infty$, $T$ an $\FF^W_E$-measurable enumeration of the local maxima of $W$ belonging to $E$ satisfying $T_i\ne T_j$ a.s.-$\WW$ on $\{T_i\ne\dagger,T_j\ne\dagger\}$ for $i\ne j$ from $\mathbb{N}$. The latter decomposition property one can, in turn, glean from Corollary~\ref{corollary:F-for-open},  which via functional monotone class guarantees that the union of the families $M_k:=\{g(B)\prod_{j\in K}\epsilon_{T_j}:g\in \LLL^2(\WW\vert_{\FF^W_E}) \text{ bounded},\, K\subset \mathbb{N},\, \vert K\vert=k\}$, $k\in \mathbb{N}_0$, is total in $\LLL^2(\PP\vert_{\FF^W_E})$, but only $M_1$ is not orthogonal to $H^{(1)\prime}\supset M_1$. 

For a general $E$ we approximate it to within  $\leb$-measure zero from the outside by a $G_\delta$ set and note that the local maxima of $W$ have an empty intersection with an $\leb$-measure zero set, as well as the  $\downarrow$ sequential  continuity (mod-$\leb$) of $\FF^W$.
\end{proof}
A key factorization property of the spectral measures $\mu_f'$, $f\in H^{(1)\prime}$, is contained in
\begin{proposition}\label{proposition:tensor}
For $E\in \overline{\EE}$,  $f\in H^{(1)\prime}\cap \LLL^2(\PP\vert_{\FF_E})$ and $g\in  \LLL^2(\PP\vert_{ \FF^W_{\mathbb{R}\backslash E}})$, we have that 
$$\frac{\dd\mu'_{ g(B)\cdot f}}{\dd \WW_1}=\vert g\vert^2\frac{\dd\mu'_f}{\dd \WW_1}\quad \text{a.s.-$\WW$}.$$
\end{proposition}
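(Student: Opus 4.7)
The plan is to compute $\mu'_{fg(B)}(Z\times A)$ directly from its defining identity in Proposition~\ref{proposition:rich-measures-basic}, exploiting that $g(B)$ is $\FF_{\mathrm{stb}}$-measurable so it can be factored out of the inner conditional expectation and then pulled outside by the tower property. Before doing so I would verify that $fg(B)\in H^{(1)\prime}$, which is needed for $\mu'_{fg(B)}$ to even make sense. Since $g$ is $\FF^W_{\mathbb{R}\backslash E}$-measurable, $g(B)$ is $\FF^{\mathrm{stb}}_{\mathbb{R}\backslash E}$-measurable; by independence of $\FF_E$ and $\FF_{\mathbb{R}\backslash E}$ (Proposition~\ref{proposition:meet-and-join}\ref{indep}) one gets $fg(B)\in \LLL^2(\PP)$ with norm $\|f\|_2\|g(B)\|_2$, and the tensor product factorization in Proposition~\ref{proposition:weird} places $fg(B)$ into $(H^{(1)\prime}\cap \LLL^2(\PP\vert_{\FF_E}))\otimes \LLL^2(\PP\vert_{\FF^{\mathrm{stb}}_{\mathbb{R}\backslash E}})=H^{(1)\prime}\cap H(\{\acc\subset E\})\subset H^{(1)\prime}$.

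Next, for arbitrary $A\in \mathcal{B}_\mathbb{R}$ and $\WW$-measurable $Z\subset \Omega_0$, I would carry out the computation
\begin{align*}
\mu'_{fg(B)}(Z\times A)
&= \PP\bigl[\vert \PP[fg(B)\vert \FF_A\lor \FF_{\mathrm{stb}}]\vert^2;\,B\in Z\bigr]\\
&= \PP\bigl[\vert g(B)\vert^2\,\vert \PP[f\vert \FF_A\lor \FF_{\mathrm{stb}}]\vert^2;\,B\in Z\bigr]\\
&= \PP\bigl[\vert g(B)\vert^2 \mathbbm{1}_{\{B\in Z\}}\,\PP[\vert \PP[f\vert \FF_A\lor \FF_{\mathrm{stb}}]\vert^2\vert \FF_{\mathrm{stb}}]\bigr]\\
&= \int_Z \vert g(\omega)\vert^2\,\bigl(\tfrac{\dd\mu'_f}{\dd\WW_1}(\omega)\bigr)(A)\,\WW(\dd\omega),
\end{align*}
in which the second line pulls $g(B)$ out of the inner conditional expectation (since $\FF_{\mathrm{stb}}\subset \FF_A\lor \FF_{\mathrm{stb}}$), the third applies the tower property with the $\FF_{\mathrm{stb}}$-measurable factor $\vert g(B)\vert^2\mathbbm{1}_{\{B\in Z\}}$, and the fourth uses the identification $\PP[\vert \PP[f\vert \FF_A\lor \FF_{\mathrm{stb}}]\vert^2\vert \FF_{\mathrm{stb}}]=\bigl(\tfrac{\dd\mu'_f}{\dd\WW_1}(B)\bigr)(A)$ a.s.-$\PP$, which is read off from the defining identity of $\mu'_f$ and its disintegration by comparing the integrals against $\mathbbm{1}_{\{B\in Z\}}$ as $Z$ varies.

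The display identifies $\omega\mapsto \vert g(\omega)\vert^2\,\frac{\dd\mu'_f}{\dd\WW_1}(\omega)$ as a version of the disintegration of $\mu'_{fg(B)}$ against $\WW$ on the first marginal. To upgrade this per-$A$ almost-sure equality to $\WW$-almost sure equality of measures on $\mathcal{B}_\mathbb{R}$ (which is what the proposition asserts), I would restrict to a countable $\pi$-system generating $\mathcal{B}_\mathbb{R}$ (e.g.\ half-lines with rational endpoints), observe that the exceptional $\WW$-null set can be taken uniformly over this countable family, and invoke Dynkin's lemma to propagate the identity to all of $\mathcal{B}_\mathbb{R}$. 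This last quantifier swap is the only mildly delicate point; all the genuine structural content has already been supplied by Propositions~\ref{proposition:weird} and~\ref{proposition:rich-measures-basic}.
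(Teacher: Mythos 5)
Your proof is correct, and it takes a genuinely different route from the paper's. The paper tests the claimed identity against the product $\pi$-system of events $Z_E\cap Z_{\mathbb{R}\backslash E}$ with $Z_E\in \FF^W_E$, $Z_{\mathbb{R}\backslash E}\in \FF^W_{\mathbb{R}\backslash E}$, and for this it needs three structural inputs: that $\PP[f\vert \FF_A\lor \FF_{\mathrm{stb}}]$ is $\FF_E$-measurable (obtained by showing $\PP[\cdot\vert \FF_A\lor \FF_{\mathrm{stb}}]$ and $\PP[\cdot\vert \FF_E]$ are commuting projections in $\AA$), the independence of $\FF_E$ and $\FF_{\mathbb{R}\backslash E}$, and the $\FF^W_E$-measurability of $\frac{\dd\mu'_f}{\dd\WW_1}$ from Proposition~\ref{proposition:rich-measures}; it then applies Dynkin's lemma twice, once in the $Z$ variable and once in $A$. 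You instead condition on $\FF_{\mathrm{stb}}$: since $\vert g(B)\vert^2\mathbbm{1}_Z(B)$ is $\FF_{\mathrm{stb}}$-measurable, the tower property reduces everything to the identification $\PP[\vert\PP[f\vert \FF_A\lor \FF_{\mathrm{stb}}]\vert^2\vert \FF_{\mathrm{stb}}]=\bigl(\frac{\dd\mu'_f}{\dd\WW_1}(B)\bigr)(A)$ a.s., which follows by comparing integrals over $\{B\in Z\}$ as $Z$ varies; this sidesteps the commutation argument and the independence/measurability inputs entirely, leaving only the single Dynkin step in $A$ (where, as you note, one should use a countable generating $\pi$-system whose elements exhaust $\mathbb{R}$ so that finiteness of the kernels lets the agreement propagate). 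The only place $E\in\overline{\EE}$ enters your argument is in checking $fg(B)\in H^{(1)\prime}$ via Proposition~\ref{proposition:weird} so that $\mu'_{fg(B)}$ is defined at all --- a prerequisite the paper leaves implicit and which you rightly make explicit. The trade-off is that the paper's route, by passing through the spectral commutation and the product $\pi$-system, re-exhibits the factorization structure that is reused in Proposition~\ref{proposition:rich-measures-bis}, whereas yours is shorter and more elementary.
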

\begin{proof}
Remark that, owing to $E\in \overline{\EE}$, $f\in H^{(1)\prime}\cap \LLL^2(\PP\vert_{\FF_E})$ and $g\in  \LLL^2(\PP\vert_{ \FF^W_{\mathbb{R}\backslash E}})$,  Proposition~\ref{proposition:weird} assures us that  $ g(B)\cdot f\in H^{(1)\prime}$.
For $A\in \EE$, $Z_E\in \FF^W_E$ and $Z_{\mathbb{R}\backslash E}\in \FF^W_{\mathbb{R}\backslash E}$, using 
\begin{enumerate}[(I)]
\item\label{descent.I} the fact that, like $f$, so too  $\PP[f\vert \FF_{A}\lor\FF_{\mathrm{stb}}]$ is $\FF_E$-measurable (by \eqref{tensor-product}, $\PP[\cdot\vert \FF_A\lor \FF_{\mathrm{stb}}]=\PP[\cdot \vert\FF_A\lor \FF^{\mathrm{stb}}_{\mathbb{R}\backslash A}]\in \AA$ (even $ \FF_A\lor \FF_{\mathrm{stb}}\in \Cl(\BBB) $ for all $A\in \overline{\EE}$ (see Corollary~\ref{corollary:stb-in-closure}), but we do not need it);  by  \eqref{eq:commuting},  $\PP[\cdot\vert \FF_E]\in \AA$; therefore $\PP[\cdot\vert \FF_A\lor \FF_{\mathrm{stb}}]$ commutes with $\PP[\cdot\vert \FF_E]$),
\item\label{descent.II} the observation that $\FF_E\supset \FF_{E}^\mathrm{stb}$ is independent of $\FF_{\mathbb{R}\backslash E}\supset \FF_{\mathbb{R}\backslash E}^\mathrm{stb}$, and
\item\label{descent.III} the $\FF^W_E$-measurability of $\frac{\dd\mu'_f}{\dd \WW_1}$ due to Proposition~\ref{proposition:rich-measures}, 
\end{enumerate} 
we can then evaluate:\label{looooooong}
\begin{align*}
&\WW\left[\frac{\dd\mu'_{g(B)\cdot f}}{\dd \WW_1}(A);Z_E\cap Z_{\mathbb{R}\backslash E}\right]=\mu_{g(B)\cdot f}'((Z_E\cap Z_{\mathbb{R}\backslash E})\times A)\\
&=\PP\left[\vert\PP[f\vert \FF_A\lor \FF_{\mathrm{stb}}]\vert^2\mathbbm{1}_{ Z_E}(B)\vert g(B)\vert^2 \mathbbm{1}_{Z_{\mathbb{R}\backslash E}}(B) \right]\\
&=\PP\left[\vert\PP[f\vert \FF_A\lor \FF_{\mathrm{stb}}]\vert^2;B\in Z_E\right]\PP\left[\vert g(B)\vert^2 ;
B\in Z_{\mathbb{R}\backslash E} \right]\\
&\qquad\qquad\qquad\qquad\qquad\qquad\qquad \text{($\because$ \ref{descent.I}-\ref{descent.II}, $Z_E\in \FF^W_E$, $Z_{\mathbb{R}\backslash E}\in \FF^W_{\mathbb{R}\backslash E}$, $g\in \LLL^2(\PP\vert_{\FF^W_{\mathbb{R}\backslash E}})$)}\\
&=\mu_f'(Z_E\times A)\WW[\vert g\vert^2;Z_{\mathbb{R}\backslash E}]=\WW\left[\frac{\dd\mu'_{f}}{\dd \WW_1}(A);Z_E\right]\WW\left[\vert g\vert^2;Z_{\mathbb{R}\backslash E}\right]\\
&=\WW\left[\vert g\vert^2\frac{\dd\mu'_{f}}{\dd \WW_1}(A);Z_E\cap Z_{\mathbb{R}\backslash E}\right] \text{($\because$ \ref{descent.III}, $Z_E\in \FF^W_E$, $Z_{\mathbb{R}\backslash E}\in \FF^W_{\mathbb{R}\backslash E}$, $g\in  \LLL^2(\PP\vert_{\FF^W_{\mathbb{R}\backslash E}})$)},
\end{align*}
 which establishes the claim by a twofold application of Dynkin's lemma (since  the countable subclass of $\EE$ consisting of finite unions of intervals with e.g. rational endpoints is closed under intersections and generates $\mathcal{B}_\mathbb{R}$).
\end{proof}

We use up Proposition~\ref{proposition:tensor} in the proof of
\begin{proposition}\label{proposition:rich-measures-bis}
Let $E\in \overline{\EE}$. As $f$ runs over any countable total subset of $H^{(1)\prime}\cap \LLL^2(\PP\vert_{\FF_E})$, $\WW$-a.s. the atoms of the disintegrations $\frac{\dd \mu'_f}{\dd \WW_1}$  precisely exhaust the local maxima of $W$ belonging to $E$.
\end{proposition}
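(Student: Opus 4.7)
The plan is to apply Proposition~\ref{proposition:rich-measures-basic} to a countable total subset of all of $H^{(1)\prime}$ that contains the given $F$, and then to use Propositions~\ref{proposition:rich-measures} and~\ref{proposition:tensor} to argue that each atom falling in $E$ must already be an atom of $\frac{\dd\mu'_f}{\dd \WW_1}$ for some $f\in F$.

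First I would decompose $H^{(1)\prime}$ along $E$ and $\mathbb{R}\backslash E$. By Proposition~\ref{proposition:spectral-projection-for-extension}, $s=(s\cap E)\sqcup(s\setminus E)$ for $\mu$-a.e.\ $s$; on $\{K'=1\}$ the unique accumulation point therefore lies in $E$ or in $\mathbb{R}\backslash E$, so
$$H^{(1)\prime}=\bigl(H^{(1)\prime}\cap H(\{\acc\subset E\})\bigr)\oplus \bigl(H^{(1)\prime}\cap H(\{\acc\subset \mathbb{R}\backslash E\})\bigr).$$
Proposition~\ref{proposition:weird} identifies these summands, via the natural unitary equivalences, with the Hilbert tensor products $(H^{(1)\prime}\cap \LLL^2(\PP\vert_{\FF_E}))\otimes \LLL^2(\PP\vert_{\FF^{\mathrm{stb}}_{\mathbb{R}\backslash E}})$ and $(H^{(1)\prime}\cap \LLL^2(\PP\vert_{\FF_{\mathbb{R}\backslash E}}))\otimes \LLL^2(\PP\vert_{\FF^{\mathrm{stb}}_E})$. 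I then pick countable total $G\subset \LLL^2(\WW\vert_{\FF^W_{\mathbb{R}\backslash E}})$ containing the constant $1$, countable total $F'\subset H^{(1)\prime}\cap \LLL^2(\PP\vert_{\FF_{\mathbb{R}\backslash E}})$, and countable total $G'\subset \LLL^2(\WW\vert_{\FF^W_E})$ containing $1$. By totality of elementary tensors in Hilbert tensor products,
$$\mathcal{T}:=\{fg(B):f\in F,\,g\in G\}\cup \{f'g'(B):f'\in F',\,g'\in G'\}$$
is a countable total subset of $H^{(1)\prime}$ that contains $F$ (take $g\equiv 1$).

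Next I would invoke Proposition~\ref{proposition:rich-measures-basic} for $\mathcal{T}$: there is a $\WW$-conegligible event on which the atoms of $\frac{\dd\mu'_h}{\dd \WW_1}$, $h\in \mathcal{T}$, exhaust the local maxima of $W$. Intersecting with further (countably many) $\WW$-full-measure events on which Proposition~\ref{proposition:tensor} gives
$$\frac{\dd\mu'_{fg(B)}}{\dd \WW_1}(\omega)=\vert g(\omega)\vert^2\frac{\dd\mu'_f}{\dd \WW_1}(\omega)$$
for all $(f,g)\in F\times G$ (and the analogue for primes), and on which Proposition~\ref{proposition:rich-measures} guarantees that $\frac{\dd\mu'_f}{\dd \WW_1}(\omega)$ and $\frac{\dd\mu'_{f'}}{\dd \WW_1}(\omega)$ are carried respectively by the local maxima of $W(\omega)$ in $E$ and in $\mathbb{R}\backslash E$ for all $f\in F$ and $f'\in F'$, I conclude thus. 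For any local maximum $\tau$ of $W(\omega)$ belonging to $E$, $\tau$ is an atom of some $\frac{\dd\mu'_h}{\dd \WW_1}(\omega)$ with $h\in \mathcal{T}$; the case $h=f'g'(B)$ is ruled out since the corresponding measure is carried by local maxima in $\mathbb{R}\backslash E$, so $h=fg(B)$, and then the scalar-multiplication identity forces $\vert g(\omega)\vert^2>0$ and $\tau$ to be an atom of $\frac{\dd\mu'_f}{\dd \WW_1}(\omega)$. The reverse inclusion, that every atom of $\frac{\dd\mu'_f}{\dd \WW_1}$, $f\in F$, is a local maximum of $W$ in $E$, is just Proposition~\ref{proposition:rich-measures}.

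The main obstacle lies in the tensor-product identification of Proposition~\ref{proposition:weird}: it is this that lets us promote the prescribed $F$, which is total only in the smaller subspace $H^{(1)\prime}\cap \LLL^2(\PP\vert_{\FF_E})$, into a countable total subset of the whole $H^{(1)\prime}$, so that Proposition~\ref{proposition:rich-measures-basic} can be invoked. The scalar-multiplication formula of Proposition~\ref{proposition:tensor} then performs the separation of variables needed to transfer the $E$-atoms coming from $fg(B)$ back onto $\frac{\dd\mu'_f}{\dd \WW_1}$.
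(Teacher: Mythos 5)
Your argument is correct and follows essentially the same route as the paper's proof: decompose $H^{(1)\prime}$ via Proposition~\ref{proposition:weird}, build from the prescribed total set in $H^{(1)\prime}\cap \LLL^2(\PP\vert_{\FF_E})$ a countable total subset of all of $H^{(1)\prime}$ out of products, invoke Proposition~\ref{proposition:rich-measures-basic} for exhaustion, and then use Propositions~\ref{proposition:rich-measures} and~\ref{proposition:tensor} to sort the atoms between $E$ and $\mathbb{R}\backslash E$ and pull them back onto the $\frac{\dd\mu'_f}{\dd\WW_1}$, $f\in F$. The only cosmetic difference is that you take the union of the two families of elementary products where the paper works with the sums $mg+ok$ and the absolute continuity relation of Proposition~\ref{proposition:rich-measures-basic}; both devices accomplish the same totality in $H^{(1)\prime}$.
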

\begin{proof}
By Proposition~\ref{proposition:weird} we have the orthogonal sum decomposition $$H^{(1)\prime}=\underbrace{[( H^{(1)\prime}\cap \LLL^2(\PP\vert_{\FF_E}))\otimes \LLL^2(\PP\vert_{\FF^{\mathrm{stb}}_{\mathbb{R}\backslash E}})]}_{H^{(1)\prime}\cap \LLL^2(\PP\vert_ {\FF_E\lor \FF_{\mathrm{stb}}})}\oplus\underbrace{[\LLL^2(\PP\vert_{\FF^{\mathrm{stb}}_{E}})\otimes ( H^{(1)\prime}\cap \LLL^2(\PP_{\FF_{\mathbb{R}\backslash E}}))]}_{H^{(1)\prime}\cap \LLL^2(\PP\vert_ {\FF_{\mathbb{R}\backslash E}\lor \FF_{\mathrm{stb}}})}.$$ 
Let $M$, $G$, $O$, $K$  be countable total sets in $H^{(1)\prime}\cap \LLL^2(\PP\vert_{\FF_E})$, $\LLL^2(\PP\vert_{\FF^{\mathrm{stb}}_{\mathbb{R}\backslash E}})$, $\LLL^2(\PP\vert_{\FF^{\mathrm{stb}}_{E}})$, $  H^{(1)\prime}\cap \LLL^2(\PP\vert_{\FF_{\mathbb{R}\backslash E}})$ respectively.  The $\WW$-a.s. absolute continuity relations $$\frac{\mu'_{mg+ok}}{\dd\WW_1}\ll \frac{\dd \mu'_{mg}}{\dd \WW_1}+\frac{\dd \mu'_{ok}}{\dd \WW_1},\quad (m,g,o,k)\in M\times G\times O\times K,$$ were noted in Proposition~\ref{proposition:rich-measures-basic} as was the fact that the atoms of the disintegrations $$\frac{\dd \mu'_{mg+ok}}{\dd \WW_1}, \quad (m,g,o,k)\in M\times G\times O\times K,$$ exhaust the local maxima of $W$ a.s.-$\WW$. By Propositions~\ref{proposition:rich-measures}-\ref{proposition:tensor} the  $\frac{\dd \mu'_{ok}}{\dd \WW_1}$, $(o,k)\in O\times K$,  are $\WW$-a.s. carried -- by $\mathbb{R}\backslash E$; the $\frac{\dd \mu'_{mg}}{\dd \WW_1}$, $(m,g)\in M\times G$ -- by the atoms of the $\frac{\dd \mu'_{m}}{\dd \WW_1}$, $m\in M$, said  atoms also belonging to $E$ a.s.-$\WW$. But that means that, $\WW$-a.s., already the atoms of the disintegrations $\frac{\dd \mu'_{m}}{\dd \WW_1}$, $m\in M$, must precisely exhaust the local maxima of $W$ on $E$.
\end{proof}

We are now in a position to conclude the validity of
\begin{proposition}\label{characterization-two}
If $E\in \overline{\EE}$, then $E$ and $\mathbb{R}\backslash E$ are max-enumerable.\qed
\end{proposition}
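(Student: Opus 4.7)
The plan is to combine Propositions~\ref{proposition:rich-measures} and~\ref{proposition:rich-measures-bis}: the former says that, for each $f\in H^{(1)\prime}\cap\LLL^2(\PP\vert_{\FF_E})$, the disintegration $\frac{\dd\mu'_f}{\dd\WW_1}$ is $\FF^W_E$-measurable and carried by local maxima of $W$ in $E$; the latter says that as $f$ varies over a countable total subset of $H^{(1)\prime}\cap\LLL^2(\PP\vert_{\FF_E})$, the atoms of these disintegrations jointly exhaust all local maxima of $W$ belonging to $E$ a.s. Since $\overline{\EE}$ is closed under complementation (Theorem~\ref{theorem:extension}\ref{extension:i}), it suffices to treat $E$; the same argument applied to $\mathbb{R}\setminus E$ yields the corresponding assertion for $\mathbb{R}\setminus E$.

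First I would fix a countable total subset $M$ of the separable Hilbert space $H^{(1)\prime}\cap\LLL^2(\PP\vert_{\FF_E})$. For each $m\in M$, Proposition~\ref{proposition:rich-measures} gives that $\WW$-a.s.\ the measure $\frac{\dd\mu'_m}{\dd\WW_1}$ is purely atomic and $\FF^W_E$-measurable (as a random measure taking values in the measures on $\mathcal{B}_\mathbb{R}$), with atoms contained in the set of local maxima of $W$ belonging to $E$. The next step is to produce, for each such $m$, an $\FF^W_E$-measurable sequence $(T^{m}_k)_{k\in\mathbb{N}}$ of selections of local maxima of $W$ (with values in $\mathbb{R}\cup\{\dagger\}$) whose non-$\dagger$ values are precisely the atoms of $\frac{\dd\mu'_m}{\dd\WW_1}$. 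This is a standard measurable-selection step: order the atoms of $\frac{\dd\mu'_m}{\dd\WW_1}$ by strictly decreasing mass, breaking ties by (say) the lexicographic order induced by rational neighbourhoods, and read off the corresponding locations; the resulting $(T^m_k)_k$ is $\FF^W_E$-measurable because both the atomic masses and the support of $\frac{\dd\mu'_m}{\dd\WW_1}$ are $\FF^W_E$-measurable functionals of the random measure. Any remaining slots are set to $\dagger$.

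Finally I would concatenate the countable family $\{T^m_k:m\in M,\,k\in\mathbb{N}\}$ into a single sequence $(T_n)_{n\in\mathbb{N}}$ (using any fixed bijection $\mathbb{N}\leftrightarrow M\times\mathbb{N}$). Each $T_n$ is a selection of a local maximum of $W$ on $E$ and is $\FF^W_E$-measurable; by Proposition~\ref{proposition:rich-measures-bis} the set $\{T_n(\omega):n\in\mathbb{N}\}\setminus\{\dagger\}$ coincides, for $\WW$-almost every $\omega$, with the entire set of local maxima of $W(\omega)$ belonging to $E$. On the exceptional $\WW$-null set we can redefine all $T_n$'s to $\dagger$ without disturbing $\FF^W_E$-measurability, since $\FF^W_E$ contains the $\WW$-negligible sets. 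This delivers an $\FF^W_E$-measurable enumeration of the local maxima of $W$ belonging to $E$, i.e.\ $E$ is max-enumerable.

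The only genuinely delicate point is the measurable-selection claim: we need to know that the atom locations of an $\FF^W_E$-measurable purely atomic random measure can be arranged into an $\FF^W_E$-measurable sequence. I would verify this by discretizing — intersecting the support with a countable basis of intervals and peeling off atoms in decreasing order of mass, which reduces to measurability of countably many infima/suprema of $\FF^W_E$-measurable functions — so in the end no new input beyond Propositions~\ref{proposition:rich-measures} and~\ref{proposition:rich-measures-bis} is required.
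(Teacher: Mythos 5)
Your proposal is correct and follows exactly the paper's route: the proof given there is literally ``combine Proposition~\ref{proposition:rich-measures-bis} with the measurability noted in Proposition~\ref{proposition:rich-measures}.'' The measurable-selection step you flag (extracting an $\FF^W_E$-measurable enumeration from the atoms of the countably many $\FF^W_E$-measurable purely atomic random measures) is left implicit in the paper, and your treatment of it, including dismissing the exceptional $\WW$-null set via completeness of $\FF^W_E$, matches the conventions used elsewhere in the text.
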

\begin{proof}
Combine Proposition~\ref{proposition:rich-measures-bis} with the measurability noted in Proposition~\ref{proposition:rich-measures}.
\end{proof} 
Moreover, we have
\begin{theorem}\label{theorem:characterization}
For an $\leb$-measurable $E$ the following are equivalent. 
\begin{enumerate}[(a)]
\item\label{theorem:characterization:i} $E\in \overline{\EE}$. 
\item\label{theorem:characterization:ii}  $E$ and $\mathbb{R}\backslash E$ are both max-enumerable. 
\item\label{theorem:characterization:iii} $\{s\cap E,s\backslash E\}\subset S$ for $\mu$-a.e. $s$.
\end{enumerate}
When so, then $\FF_E$ is generated by  $\FF_E^{\mathrm{stb}}$ and by the random signs $\epsilon_{T_n}$, $n\in \mathbb{N}$, for an(y) $\FF^W_E$-measurable enumeration $T$ of the local maxima of $W$ belonging to $E$. 

Furthermore, there is sequential monotonic continuity of $\overline{\FF}$:
for every sequence $(A_n)_{n\in \mathbb{N}}$  in $\overline{\EE}$ that is $\downarrow$ (resp. $\uparrow$) to an $A\in \overline{\EE}$, one has that $\land_{n\in \mathbb{N}}\FF_{A_n}= \FF_A$  (resp. $\lor_{n\in \mathbb{N}}\FF_{A_n}= \FF_A$).
\end{theorem}
\begin{proof}
The equivalence \ref{theorem:characterization:i}$\Leftrightarrow$\ref{theorem:characterization:ii} is the subject matter of  Propositions~\ref{characterization-one} and~\ref{characterization-two}. 
By Proposition~\ref{proposition:spectral-projection-for-extension},  \ref{theorem:characterization:iii} is necessary for  \ref{theorem:characterization:i}. Now assume  \ref{theorem:characterization:iii}. The condition, together with $K'<\infty$ a.e.-$\mu$, entails that $\mu$-a.e. $$S=\cup_{U\in \mathcal{U}}\{\pr_U\in S_E,\pr_{\mathbb{R}\backslash U}\in S_{\mathbb{R}\backslash E}\},$$ where $\mathcal{U}$ is the collection of finite unions of intervals with rational endpoints. 
As a consequence, by \eqref{tensor-product}, the collection $$\cup_{U\in \mathcal{U}}H(S_E\cap S_U)\otimes H(S_{\mathbb{R}\backslash E}\cap S_{\mathbb{R}\backslash U})$$ (the term corresponding to $U\in \mathcal{U}$ being viewed up to the natural unitary equivalence of $\LLL^2(\PP)$ and $\LLL^2(\PP\vert_{\FF_U})\otimes \LLL^2(\PP\vert_{\FF_{\mathbb{R}\backslash U}})$) is total in $\LLL^2(\PP)$, a fortiori $$\{fg:(f,g)\in \LLL^2(\PP\vert_{\FF_E})\times \LLL^2(\PP\vert_{\FF_{\mathbb{R}\backslash E}})\}$$ is total in $\LLL^2(\PP)$ rendering $\FF_E\lor \FF_{\mathbb{R}\backslash E}=1_\PP$ and therefore, combined with Proposition~\ref{proposition:meet-and-join}\ref{indep} and \eqref{eq:in-closure}, $E\in \overline{\EE}$, which is \ref{theorem:characterization:i}.

Now suppose $E$ and $\mathbb{R}\backslash E$ are both max-enumerable. Of course, $\FF_E$ and $\FF_{\mathbb{R}\backslash E}$ are independent. Put $\GG_E:= \FF_E^{\mathrm{stb}}\lor \sigma(\epsilon_{T_n}:n\in \mathbb{N})$, $T$ an enumeration of the local maxima of $W$ on $E$, and likewise introduce $\GG_{\mathbb{R}\backslash E}$.  By Proposition~\ref{lemma:locmaxstable} certainly $\GG_E\subset \FF_E$ and $\GG_{\mathbb{R}\backslash E}\subset \FF_{\mathbb{R}\backslash E}$. Intersecting the evident equality $\GG_E\lor \GG_{\mathbb{R}\backslash E}=1_\PP$ with $\FF_E$ we get by general distributivity over independent $\sigma$-fields \eqref{distributivity-indep-not-complete}  that $\FF_E=\GG_E$.

Concerning finally the sequential monotonic continuity of $\overline{\FF}$, by Proposition~\ref{proposition:meet-and-join}\ref{cap}, $\overline{\FF}$ respects $\downarrow$ intersections.  Let us handle the respective $\uparrow$ case. It is clear that $\lor_{n\in \mathbb{N}}\FF_{A_n}\supset \lor_{n\in \mathbb{N}}\FF_{A_n}^{\mathrm{stb}}=\FF^{\mathrm{stb}}_A$. Let, for each $n\in \mathbb{N}$,  $T^n$ be an $\FF_{A_n}^{W}$-measurable enumeration of the local maxima of $W$ on $A_n$. Then $\lor_{n\in \mathbb{N}}\FF_{A_n}\supset \sigma(\epsilon_{T^n_k}:(k,n)\in \mathbb{N}^2)$ as well. If now $T$ is any $\FF^W_A$-measurable selection of a local maximum of $W$ on $A$, then $\{T\ne \dagger\}=\cup_{(n,k)\in \mathbb{N}^2} \{T=T^n_k\ne \dagger\}\in \FF^W_A$ and hence $\epsilon_T$ is $\lor_{n\in \mathbb{N}}\FF_{A_n}$-measurable. We conclude that $\lor_{n\in \mathbb{N}}\FF_{A_n}=\FF_A$. 
\end{proof}
Notice that the continuity of $\overline\FF$ noted in Theorem~\ref{theorem:characterization} and its nonclassicality imply automatically \cite[Theorem~1.5]{tsirelson} that $\BB_\mathbb{R}\not\subset \overline{\EE}$. 

In closing this section let us remark  that it is perhaps not at all surpring that the extension $\overline{\FF}$ should be precisely to max-enumerable sets with max-enumerable complement, since these constitute the very class of domains on  which, and on the complements of which,  the ``locality'' property \cite[Definition~3.2(b)]{stationary} of the local maxima considered as a random countable set over the Wiener noise persists. With the benefit of hindsight it seems to be indeed exactly what one should have expected from a ``faithful'' full extension of the splitting noise. In turn, it lends credence to treating $\overline{\BBB}$ as the ``correct'' largest extension of $\BBB$.

\section{Stability of the local maxima}\label{section:stability-sensitivity}

If a regular open subset $E$  of $\mathbb{R}$ is such that $\leb(\partial E)=0$, then \cite[Proposition~3b9]{tsirelson-arxiv-2} (coupled with the fact that $\leb$-negligible sets are negligible for $\FF$, as we have seen) entails $E\in \overline{\EE}$. More generally,  by Proposition~\ref{proposition:meet-and-join}\ref{cup}-\ref{indep}, if an $\leb$-measurable $E$ is such that both $E$ and $\mathbb{R}\backslash E$ are equal  mod-$\leb$ each  to their own open subset of $\mathbb{R}$, then $E\in \overline{\mathcal{E}}$. This already demonstrates $\mathcal{E}\ne \overline{\mathcal{E}}$ albeit in a rather trivial way.
  
We wish to understand more about the kind of elements that $\overline{\EE}$ contains/does not contain and (thus) more about the nature of max-enumerable sets. The chief vehicle  which we adopt to explore this is that of coupling.

 Throughout this section let then
 \begin{quote}\normalsize
  $\mathbb{Q}$ be a complete extension of the probability $\WW$ 
  \end{quote}
  supporting another  
 \begin{quote}
 \normalsize $\Omega_0$-valued Brownian motion $W'$ independent of $W$
 \end{quote}
  (an independent copy of $W$). 

 \begin{definition}\label{def:pertrubed}
 For $\leb$-measurable $E$ put $$W_E:=\mathbbm{1}_E\cdot W+\mathbbm{1}_{\mathbb{R}\backslash E}\cdot W',$$ which we may and do insist is $\Omega_0$-valued;  more generally, for a random element $g$  of $\WW$, $$g_E:=g(W_E).$$ 
 \end{definition}
 
We may think of the  $W_E$ of   Definition~\ref{def:pertrubed} as $W$ independently resampled off $E$. A different, but equivalent, description is that $(W,W_E)$ is a bivariate $\Omega_0\times\Omega_0$-valued Brownian motion such that $\dd \langle W,W_E\rangle= \mathbbm{1}_E\cdot \leb$. In particular, 
\begin{equation}\label{eq:symmetric-coupling}
(W,W_E)_\star \QQ=(W_E,W)_\star \QQ,
\end{equation}
i.e. we have in $(W,W_E)$ under $\QQ$ a symmetric self-coupling of $W$. 

The ensuing Proposition~\ref{proposition:max-enumerable-necessary} demonstrates that the coupling $(W,W_E)$ is intimately related to $\FF_E$ on the one hand, and to   how stable/sensitive a selection $T$ of local maximum of $W$ is to the perturbation encoded by $(W,W_E)$, viz. to how $T$ compares with $T_E$,  on the other. Two extremes of this stability/sensitivity and their connection to $\overline{\EE}$  will be explored in Subsections~\ref{subsection:unstable}-\ref{subsection:stable}.

 \begin{proposition}\label{proposition:max-enumerable-necessary}
Let $E$ be an $\leb$-measurable set, $P$ a finite partition of $\mathbb{R}$ into elementary sets (modulo a finite set) and for each $p\in P$: $g^p$  an element of  $\LLL^2(\WW\vert_{\FF_p^W})$; $T^p$ an $\FF^W_p$-measurable selection of a local maximum of $W$ on $p$. 
Then 
\begin{equation}\label{probability-formula}
\PP \left[\left\vert\PP \left[\prod_{p\in P } g^p(B)\epsilon_{T^p}\vert \FF_E \right]\right\vert^2\right]=\prod_{p\in P}\QQ[\overline{g^p}g^p_E; T^p=T^p_E\in E].
\end{equation}
 \end{proposition}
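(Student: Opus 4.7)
The strategy is an auxiliary coupling with random signs. I extend $\QQ$ to a probability $\QQ^\#$ supporting, in addition to $(W,W_E)$, two $\pm 1$ sign families: $\epsilon$ attached to $\mathrm{LM}(W)$ and $\epsilon^\#$ attached to $\mathrm{LM}(W_E)$, coupled so that $\epsilon^\#_t=\epsilon_t$ whenever $t\in \mathrm{LM}(W)\cap \mathrm{LM}(W_E)$, all remaining signs being independent uniform and globally independent of $(W,W_E)$; then each of $(W,\epsilon)$ and $(W_E,\epsilon^\#)$ has the $\PP$-law of $(B,\epsilon)$. Setting $f:=\prod_p g^p(W)\epsilon_{T^p(W)}$ and $f^\#:=\prod_p g^p(W_E)\epsilon^\#_{T^p(W_E)}$, the proof reduces to the twin identities
\[
\PP\bigl[|\PP[f\vert \FF_E]|^2\bigr] \;=\; \QQ^\#[f\overline{f^\#}] \;=\; \prod_p \QQ[\overline{g^p}g^p_E;\, T^p = T^p_E \in E].
\]

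The right-hand identity is the computational part. Conditioning on $(W,W_E)$, the selections $T^p$ for different $p\in P$ lie in pairwise disjoint elementary sets, so all the involved points are distinct across $p$'s and the sign product factorizes over $p$; for each $p$ the conditional expectation of $\epsilon_{T^p(W)}\epsilon^\#_{T^p(W_E)}$ is $\mathbbm{1}_{T^p(W)=T^p(W_E)\ne\dagger}$ by the coupling of signs at shared local maxima. A short argument then rules out coincidences off $E$: on each component of $\mathbb{R}\setminus E$, conditionally on $W$, the path $W_E$ is an independent Brownian shift, so on the (a.s.\ countable) set $\mathrm{LM}(W)\cap(\mathbb{R}\setminus E)$ the chance of $W_E$ having a strict local maximum is zero; thus the indicator upgrades $\QQ$-a.s.\ to $\mathbbm{1}_{T^p(W) = T^p(W_E) \in E}$. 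Coupling symmetry $(W,W_E)\stackrel{d}{=}(W_E,W)$ transfers the complex conjugate from $g^p(W_E)$ to $g^p(W)$, while independence across $p$'s of the bivariate marginals $(W|_p,W_E|_p)$ yields the product.

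For the left-hand identity, when $E$ is elementary, Corollary~\ref{corollary:F-for-open} and the independent-complement property of the noise render $(W_E,\epsilon^\#)$ a bona fide conditionally independent copy of $(W,\epsilon)$ given $\FF_E$, so $\QQ^\#[f\overline{f^\#}] = \PP[\PP[f|\FF_E]\overline{\PP[f|\FF_E]}]$ is immediate. For general $\leb$-measurable $E$, I would fall back on spectral theory: $\PP[|\PP[f|\FF_E]|^2]=\mu_f(S_E)$ and the tensor factorization~\eqref{tensor-product} along $P$ gives $\mu_f(S_E)=\prod_p \mu_{f_p}(S_{E\cap p})$, reducing matters to the single-factor identity $\mu_{g(B)\epsilon_T}(S_E)=\QQ[\overline g g_E;\, T=T_E\in E]$ for $E\subset A$. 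This I would then obtain by approximating $E$ from outside by open $O_n\downarrow E$ (mod $\leb$-null): since $S_E=\bigcap_n S_{O_n}$ by compactness of spectral sets, the spectral side converges monotonically to $\mu_{g(B)\epsilon_T}(S_E)$.

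The principal obstacle is the companion $\downarrow$-continuity of the coupling side. Although $W_{O_n}\to W_E$ locally uniformly along a subsequence (by $\LLL^2$-convergence of the Wiener integrals together with Doob's inequality), the selection $T$ is only measurable and $g$ only $\LLL^2$, so neither $g(W_{O_n})\to g(W_E)$ nor the event $\{T(W)=T(W_{O_n})\in O_n\}$ is automatic. Lemma~\ref{lemma:star-ct} is tailored for precisely this purpose: replacing $T$ (up to an $\varepsilon$-small exceptional event, dispatched by $\varepsilon\downarrow 0$) by its $(\star)$-continuous modification $T^\varepsilon$, the events $\{T^\varepsilon(W)=T^\varepsilon(W_{O_n})\in O_n\}$ stabilize on $\{T^\varepsilon(W)=T^\varepsilon(W_E)\in E\}$ in the limit; combined with a standard $\LLL^2$-approximation of $g$ by finite-partition increment simple functions, this secures the required continuity and closes the proof.
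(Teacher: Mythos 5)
Your treatment of open (and elementary) $E$ — the decorated coupling $(W_E,\epsilon^{\#})$ with signs matched at common local maxima, plus the observation that coincident local maxima off $E$ form a $\QQ$-null event — is a legitimate repackaging of the paper's direct computation with enumerations, and the spectral reduction $\mu_f(S_E)=\prod_p\mu_{f_p}(S_{E\cap p})$ to a single-factor identity is fine. The gap is in the passage from open sets to a general $\leb$-measurable $E$ via $O_n\downarrow E$. You assert that, after replacing $T$ by its $(\star)$-continuous modification $T^{\varepsilon}$, the events $\{T^{\varepsilon}(W)=T^{\varepsilon}(W_{O_n})\in O_n\}$ ``stabilize on'' $\{T^{\varepsilon}(W)=T^{\varepsilon}(W_E)\in E\}$. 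But $(\star)$-continuity is a closed-graph property: from $W_{O_n}\to W_E$ and $T^{\varepsilon}(W_{O_n})=c$ infinitely often it lets you conclude $T^{\varepsilon}(W_E)=c$, which yields only the inclusion $\limsup_n\{T^{\varepsilon}(W)=T^{\varepsilon}(W_{O_n})\in O_n\}\subset\{T^{\varepsilon}(W)=T^{\varepsilon}(W_E)\in E\}$ and hence, via reverse Fatou, the upper bound $\limsup_n\QQ[\cdots_{O_n}]\le\QQ[\cdots_E]$. It gives nothing in the other direction: $T^{\varepsilon}$ is built as $S^I$ on \emph{closed} sets $C^I$, so $T^{\varepsilon}(W_E)=c$ with $W_{O_n}\to W_E$ does not force $T^{\varepsilon}(W_{O_n})=c$ eventually, and the event $\{T(W)=T(W_{E})\}$ is genuinely not lower semicontinuous under perturbation of the second coordinate. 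So your argument only proves one inequality in \eqref{probability-formula}.

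The paper closes precisely this half with Lemma~\ref{lemma:elementary} (Appendix~\ref{appendix:coupling}): since $(g,T)$ and $(g_{O_n},T_{O_n})$ are conditionally i.i.d.\ given $\FF^W_{O_n}$, and $\FF^W_E\subset\FF^W_{O_n}$, that lemma yields the monotonicity $\QQ[g\mathbbm{1}_{\{T\in E\}}g_{O_n}\mathbbm{1}_{\{T_{O_n}\in E\}};T=T_{O_n}]\ge\QQ[g\mathbbm{1}_{\{T\in E\}}g_{E}\mathbbm{1}_{\{T_{E}\in E\}};T=T_{E}]$ for every $n$, i.e.\ the missing $\liminf$ bound, with no continuity input at all. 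This is not a routine step one can wave through; you need either this lemma or an equivalent monotonicity/projection argument (the right-hand side of \eqref{probability-formula} is, factor by factor, a sum of squared norms of conditional expectations onto $\FF^W_{O_n}$, which decrease as the $\sigma$-field shrinks). As written, your proof establishes $\le$ but not $=$ in the general case.
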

 It may be helpful for the reader to restrict to the case $P=\{\mathbb{R}\}$ when initially contemplating Proposition~\ref{proposition:max-enumerable-necessary}. The extension to an arbitrary finite partition into elementary sets is largely a technicality (but a relevant, cf. Lemma~\ref{lemma-key-totality}, and non-trivial technicality).
 
 For the purposes of establishing Proposition~\ref{proposition:max-enumerable-necessary} an ancillary technical result will come in handy, that is to do with a certain kind of continuity of measurable selections of local maxima. In it, we flesh out more or less exactly what  will ultimately be used up: we have not attempted to optimize on the $(\star)$-continuity featuring therein.

\begin{lemma}\label{lemma:star-ct}
There is a $\WW$-a.s. event $\Omega_0'$ for which the following holds. For all $A\in \EE$, for any  $\FF^W_A$-measurable selection $T$ of a local maximum of $W$ on $A$, for each $\delta\in (0,\infty)$, there exists an $\FF_A^W$-measurable selection $T^\delta$ of a local maximum of $W$ on $A$  having the properties: $T^\delta=T$ on $\{T^\delta\ne\dagger \}$;  $\WW(T^\delta\ne T\ne \dagger)<\delta$; and 
\begin{quote}
\normalsize [to be referred to as ($\star$)-continuity of $T^\delta$ on $\Omega_0'$] for every sequence $(\omega_n)_{n\in \mathbb{N}}$ in $\Omega_0$ converging to an $\omega_0\in\Omega_0'$ in the locally uniform topology, if $T^\delta(\omega_n)$ is equal to some constant from $\mathbb{R}$ for infinitely many $n\in \mathbb{N}$, then $T^\delta(\omega_0)$ is equal to this constant. 
\end{quote}
\end{lemma}
The idea of the proof is quite simple: speaking somewhat loosely, the $(\star)$-continuity holds  on a $\WW$-a.s. event simultaneously for all maximizers of $W$ on intervals with dyadic endpoints, so one has only to see $T$ as  a suitable combination thereof, except maybe on an event of small probability. The formalities are somewhat unpleasant, but quite manageable. The case of a general $A\in \EE$ rather than $A=\mathbb{R}$ adds only superficial nuissance (but we go ahead with treating the general $A$ anyway).
\begin{proof}[Proof of Lemma~\ref{lemma:star-ct}]
For an open  interval $I$ with dyadic endpoints (henceforth, a dyadic interval) denote  by $S^I$ the maximizer of $W$ on $I$, setting it equal to $\dagger$ on the $\WW$-exceptional set on which it fails to exist uniquely. Fix a $\WW$-a.s. event $\Omega_0'$ on which $S^I$ exhibits $(\star)$-continuity for all dyadic intervals $I$ (e.g. the event on which $W$ attains its maximum uniquely on each dyadic interval will do). Also, let $\mathcal{I}$ be the open dyadic intervals of $\mathbb{R}$ of length $2^{-N}$ that are contained in $A$, $N\in \mathbb{N}$ having been chosen (and fixed) so large as to ensure that 
\begin{equation}\label{fuck1}
\WW(\{T\ne \dagger\}\backslash \cup_{I\in \mathcal{I}}\{T=S^I\ne\dagger\})< \delta/2.
\end{equation}

Next, for technical reasons that will become clear shortly, we specify a canonical encoding of the increments of $W$ on $A$. Since $A$ is elementary, it is the finite disjoint union of its connected components, which are intervals of the real line that we may and do assume are non-degenerate, closed and sharing no endpoints (it is without loss of generality $\because$ changing $A$ by a finite set leaves $\FF^W_A$ invariant). Let $\JJ$ be the collection of these connected components. For $J\in\JJ$ and $\omega\in\Omega_0$: 
\begin{enumerate}[(a)]
\item\label{cases:a} if $J$ is bounded from below with left endpoint $l_J$ put $\Phi_J(\omega):=\omega\vert_J-\omega(l_J)$; 
\item\label{cases:b} else, if $J$ is bounded from above with right endpoint $r_J$ put $\Phi_J(\omega):=\omega\vert_J-\omega(r_J)$;
\item\label{cases:c} otherwise ($J=\mathbb{R}$ and) set $\Phi_J(\omega):=\omega$.
\end{enumerate}
Thus we have introduced the continuous map $\Phi_A:=(\Phi_J)_{J\in \JJ}:\Omega_0\to \times_{J\in\JJ}C_0(J,\mathbb{R})$, where for $J\in \JJ$, $C_0(J,\mathbb{R})$ is the set of all continuous maps from $J$ to $\mathbb{R}$ vanishing at $l_J$, $r_J$ or $0$ according as to whether \ref{cases:a}, \ref{cases:b} or \ref{cases:c} above occurs, endowed with the locally uniform topology. 

Proceeding onwards, since $\mathcal{I}$ is countable, there is a map $\tilde\delta:\mathcal{I}\to (0,1]$ such that $\sum_{I\in \mathcal{I}}\tilde\delta_I\leq \delta/2$; pick some,  any. Now, for $I\in \mathcal{I}$, we have that $I\subset A$, hence $S^I$ is $\FF^W_A$-measurable, whilst $T$ is $\FF^W_A$-measurable by assumption. Therefore $\{S^I=T\ne\dagger\}\in \FF^W_A$. It means that there is a measurable subset $\tilde C^I$ of $\times_{J\in\JJ}C_0(J,\mathbb{R})$ such that $\{S^I=T\ne\dagger\}\supset \Phi_A^{-1}(\tilde C^I)$ and at the same time $\WW$-a.s.\footnote{We cannot ask for equality with certainty because $\FF_A^W$ contains also $0_\WW$, not just the $\sigma$-field of the increments of $W$ on $A$.} $\{S^I=T\ne\dagger\}= \Phi_A^{-1}(\tilde C^I)$. But, like any probability measure on a metric space \cite[Theorem~1.1]{bill}, $\WW_A:=(\Phi_A)_\star \WW$ is  inner regular w.r.t. the closed sets. So, there is a closed $C^{I\prime}\subset \times_{J\in\JJ}C_0(J,\mathbb{R})$ for which $C^{I\prime}\subset \tilde C_I$ and $\WW_A(\tilde C_I\backslash C^{I\prime})\leq \tilde\delta_I$. Pulling it back via $\Phi_A$ we get the closed $C^I:=\Phi_A^{-1}(C^{I\prime})\subset \Omega_0$ satisfying 
\begin{equation}\label{fuck0}
C^I\subset \{S^I=T\ne\dagger\}
\end{equation}
and $\WW(\{S^I=T\ne\dagger\}\backslash C^I)\leq \tilde \delta_I$. By countable subadditivity it follows that  
\begin{equation}\label{fuck2}
\WW(\cup_{I\in \mathcal{I}}\{T=S^I\ne\dagger\}\backslash C^I)\leq \delta/2.
\end{equation}
Notice also (from \eqref{fuck0}) that the $C^I$, $I\in \mathcal{I}$, are pairwise disjoint ($\because$ the elements of $\mathcal{I}$ are pairwise disjoint, $S^I\in I$ on $\{S^I\ne \dagger\}$ for all $I\in \mathcal{I}$, and surely $T$ cannot at the same sample point take values in disjoint intervals). 

In order to conclude the proof, it remains to set $T^\delta:=S^I$ on $C^I$ for  $I\in \mathcal{I}$ and $T^\delta:=\dagger$ on $\Omega_0\backslash \cup_{I\in \mathcal{I}}C^I$.  Indeed, by \eqref{fuck0} evidently $T=T^\delta$ on $\cup_{I\in \mathcal{I}}C^I=\{T^\delta\ne \dagger\}$. Also,
$$\{T^\delta\ne T\ne \dagger\}\subset \left(\{T\ne \dagger\}\backslash \cup_{I\in \mathcal{I}}\{T=S^I\ne \dagger\}\right)\cup \left(\cup_{I\in \mathcal{I}}(\{T=S^I\ne\dagger\}\backslash C^I)\right);$$ therefore from \eqref{fuck1} \& \eqref{fuck2} we get $\WW(T^\delta\ne T\ne \dagger)<\delta$.  Finally, the ($\star$)-continuity of $T^\delta$ on $\Omega_0'$ is a consequence of the observation that  $T^\delta\in I$ on the closed set $C^I$ for all $I\in \mathcal{I}$, the members of $\mathcal{I}$ being pairwise disjoint: so, if $T^\delta(\omega_n)$ is equal to some constant $c$ from $\mathbb{R}$ for infinitely many $n\in \mathbb{N}$, then there is (a unique)  $I\in \mathcal{I}$ such that ($c\in I$ and hence) for these $n\in \mathbb{N}$, $\omega_n\in C^I$ (in particular, $S^I(\omega_n)=T^\delta(\omega_n)=c$), rendering $\omega_0=\lim_{n\to\infty}\omega_n\in C^I$ and thus $T^\delta(\omega_0)=S^I(\omega_0)=c$ (by the $(\star)$-continuity of $S^I$ on $\Omega_0'$).
\end{proof}
 \begin{proof}[Proof of Proposition~\ref{proposition:max-enumerable-necessary}]
 The case when $\leb(E)=0$ is trivial; assume $\leb(E)>0$.
 
 For now suppose $E$ is open. By Propositions~\ref{proposition:meet-and-join}\ref{cup}-\ref{indep} and~\ref{lemma:locmaxstable} we may evaluate the l.h.s. of \eqref{probability-formula} as
 \begin{align*}
& \PP \left[\left\vert\PP \left[\prod_{p\in P } g^p(B)\epsilon_{T^p}\vert \FF_E \right]\right\vert^2\right]= \PP \left[\left\vert\PP \left[\prod_{p\in P } g^p(B)\epsilon_{T^p}\vert \lor_{p\in P}\FF_{E\cap p} \right]\right\vert^2\right]\\
 &=\PP \left[\prod_{p\in P }\left\vert\PP \left[ g^p(B)\epsilon_{T^p}\vert \FF_{E\cap p} \right]\right\vert^2\right]=\prod_{p\in P }\PP \left[\left\vert\PP \left[ g^p(B)\epsilon_{T^p}\vert \FF_{E\cap p} \right]\right\vert^2\right].
 \end{align*} As for the r.h.s. of \eqref{probability-formula}, for any elementary  $A$ and for any $\FF^W_A$-measurable random element $r$, $\QQ$-a.s. $r_E=r(W_E)=r(W_{E\cap A})=r_{E\cap A}$; therefore 
 $$\prod_{p\in P}\QQ[\overline{g^p}g^p_E; T^p=T^p_E\in E]=\prod_{p\in P}\QQ[\overline{g^p}g^p_{E\cap p}; T^p=T^p_{E\cap p}\in E\cap p].$$
 We conclude that in proving \eqref{probability-formula} for an open $E$ we may (and shall) just as well assume that $P=\{\mathbb{R}\}$, the trivial partition. Accordingly, we drop the superscript ${}^\mathbb{R}$ to ease the notation.  
 
Let now $(T_n)_{n\in \mathbb{N}}$ be an a.s.-$\WW$ real-valued $\FF_E^W$-measurable enumeration of the local maxima of $W$ on $E$ such that $T_i\ne T_j$ a.s.-$\WW$ on $\{T_i\ne \dagger,T_j\ne \dagger\}$ for all $i\ne j$ from $\mathbb{N}$, and let $(T'_n)_{n\in \mathbb{N}}$ be an enumeration of the local maxima of $W$ on $\mathbb{R}\backslash E$ such that $T_i'\ne T_j'$ a.s.-$\WW$ on $\{T_i'\ne \dagger,T_j'\ne \dagger\}$ for all $i\ne j$ from $\mathbb{N}$ (they exist).

By Corollary~\ref{corollary:F-for-open}, conditionally on $\FF_{\mathrm{stb}}$, the random signs $(\epsilon_{T_n'})_{n\in \mathbb{N}}$ are mean zero and independent of $\FF_E$. Hence, as the $T_n'$, $n\in \mathbb{N}$, are $\WW$-a.s. pairwise distinct if real and since they exhaust the local maxima of $W$ off $E$ (justifying the first of the equalities to follow)\phantomsection\label{long-display-one}
\begin{align*}
\PP[g(B)\epsilon_T\mathbbm{1}_{\{T(B)\notin E\}}\vert \FF_E]&=\PP\left[\sum_{n\in \mathbb{N}}g(B)\epsilon_{T_n'} \mathbbm{1}_{\{T(B)=T'_n(B)\ne\dagger\}}\vert\FF_E\right]\\
&=\PP\left[\sum_{n\in \mathbb{N}}\PP[\epsilon_{T_n'}\vert \FF_E\lor  \FF_{\mathrm{stb}}] g(B)\mathbbm{1}_{\{T(B)=T'_n(B)\ne\dagger\}}\vert\FF_E\right] \\
&=\PP\left[\sum_{n\in \mathbb{N}}\PP[\epsilon_{T_n'}\vert \FF_{\mathrm{stb}}] g(B)\mathbbm{1}_{\{T(B)=T'_n(B)\ne\dagger\}}\vert\FF_E\right]=0\text{ a.s.-$\PP$}.
\end{align*}
The random signs $(\epsilon_{T_n})_{n\in \mathbb{N}}$ too are mean zero, they are mutually independent, independent of $\FF_{\mathrm{stb}}$, and they are $\FF_E$-measurable (the latter by Corollary~\ref{corollary:F-for-open}). Also, $(g,T)$ and $(g_E,T_E)$ are independent and equally distributed given $\FF_E^W$. Thus, because the $T_n$, $n\in \mathbb{N}$, are $\WW$-a.s. pairwise distinct and as they exhaust the local maxima of $W$ on $E$ (justifying the first and penultimate of the equalities to follow)\phantomsection\label{long-display-two}
\begin{align*}
&\PP [\vert\PP[g(B)\epsilon_T\mathbbm{1}_{\{T(B)\in E\}}\vert \FF_E]\vert^2]=\PP\left[\left\vert\PP\left[\sum_{n\in \mathbb{N}}g(B)\epsilon_{T_n} \mathbbm{1}_{\{T(B)=T_n(B)\}}\vert\FF_E\right]\right\vert^2\right]\nonumber\\ 
&=\PP\left[\left\vert\sum_{n\in \mathbb{N}}\epsilon_{T_n} \PP[g(B)\mathbbm{1}_{\{T(B)=T_n(B)\}}\vert \FF_E]\right\vert^2\right]\\
&=\PP\left[\left\vert\sum_{n\in \mathbb{N}}\epsilon_{T_n} \PP[\PP[g(B)\mathbbm{1}_{\{T(B)=T_n(B)\}}\vert\FF_{\mathrm{stb}}]\vert \FF_E]\right\vert^2\right]\\
&=\PP\left[\left\vert\sum_{n\in \mathbb{N}}\epsilon_{T_n} \PP[g(B)\mathbbm{1}_{\{T(B)=T_n(B)\}}\vert  \FF_E^{\mathrm{stb}}]\right\vert^2\right]\, (\because \,\FF_E\text{ and  }\FF_{\mathrm{stb}}\text{ are commuting, in fact} \\ 
&\qquad \qquad \qquad\{\PP[\cdot\vert\FF_E],\PP[\cdot\vert\FF_{\mathrm{stb}}]\}\subset \AA\text{ (see \eqref{eq:commuting} \& \eqref{eq:stable-in-vNa}), while }\FF_E\land \FF_{\mathrm{stb}}=\FF_E^{\mathrm{stb}} \\
&\qquad \qquad \qquad  \text{ by Proposition~\ref{proposition:extensions-mod}\ref{proposition:extensions-mod:i}})\\\nonumber 
&=\PP\left[\sum_{n\in \mathbb{N}}\left\vert\PP[g(B)\mathbbm{1}_{\{T(B)=T_n(B)\}}\vert \FF_E^{\mathrm{stb}}]\right\vert ^2\right] =\WW\left[\sum_{n\in \mathbb{N}}\left\vert\WW[g\mathbbm{1}_{\{T=T_n \}}\vert \FF_E^W]\right\vert^2\right]\\
&=\QQ\left[\sum_{n\in \mathbb{N}}\QQ[\overline{g}\mathbbm{1}_{\{T=T_n\}}g_E\mathbbm{1}_{\{T_E=T_n\}}\vert \FF_E^W]\right]=\QQ[\QQ[\overline{g}g_E;T=T_E\in E\vert\FF^W_E]]\\
&=\QQ[\overline{g}g_E;T=T_E\in E],
\end{align*}
which completes the proof in case $E$ is open. 

Now consider a general $E$, the partition $P$ being again arbitrary. By approximation from the outside to within a set of $\leb$-measure zero it suffices to consider an $E$ that is a $G_\delta$ set so that $E=\cap_{n\in \mathbb{N}}E_n$ for a $\downarrow$ sequence $(E_n)_{n\in \mathbb{N}}$ of open subsets of $\mathbb{R}$.  By \ref{loc-unif-a.s.} from p.\pageref{loc-unif-a.s.}, passing to a subsequence if necessary, we may and do assume that $W_{E_n}\to W_E$ locally uniformly as $n\to\infty$ on an event $\Omega'$ of $\QQ$-probability one.

By what we have already established, for each $n\in \mathbb{N}$, the stipulated equality \eqref{probability-formula} holds with $E_n$ in lieu of $E$. Passing to the limit $n\to\infty$, the l.h.s. converges  by decreasing martingale convergence. On the r.h.s., for the purposes of passing to the limit in each factor separately, we may just as well assume $P=\{\mathbb{R}\}$, drop the superscript $^\mathbb{R}$ and seek to establish 
\begin{equation} \label{eq:goal}
\lim_{n\to\infty}\QQ[\overline{g}g_{E_n};T=T_{E_n}\in E_n]=\QQ[\overline{g}g_{E};T=T_{E}\in E],
\end{equation}
whereby the proof will be completed.

Now, in order to check \eqref{eq:goal}, assume $g$ is continuous, bounded and nonnegative in the first instance. 

Holding such $g$, but also $E$ and the $E_n$, $n\in \mathbb{N}$, fixed, suppose for the time being [meaning: until the end of this paragraph] that \eqref{eq:goal} has been established whenever $T$ has the $(\star)$-continuity property  on the $\WW$-a.s. set $\Omega_0'$ of Lemma~\ref{lemma:star-ct}. With $T$ arbitrary, let $\delta\in (0,\infty)$ also be arbitrary. By Lemma~\ref{lemma:star-ct} we get a selection $T^\delta$ of a local maximum of $W$ having the properties: $T^\delta=T$ on $\{T^\delta\ne\dagger \}$;  $\WW(T^\delta\ne T\ne \dagger)<\delta$; the $(\star)$-continuity on $\Omega_0'$. Now, 
\begin{align*}
&\vert \QQ[\overline{g}g_{E};T=T_{E}\in E]-\QQ[\overline{g}g_{E};T^\delta={T^\delta}_{E}\in E]\vert\\
&\leq \Vert g\Vert_\infty^2\QQ(\{E\ni T\ne T^\delta\}\cup \{E \ni T_E\ne {T^\delta}_E\}\cup \{T\ne T^\delta\in E\}\cup \{ T_E\ne {T^\delta}_E\in E\})\\
&\leq \Vert g\Vert_\infty^2\left(\QQ(\dagger\ne T\ne T^\delta)+\QQ(\dagger \ne T_E\ne {T^\delta}_E)+\QQ(T\ne T^\delta\ne \dagger)+\QQ(T_E\ne {T^\delta}_E\ne \dagger)\right)\\
&=2\Vert g\Vert_\infty^2\left(\WW(\dagger\ne T\ne T^\delta)+\WW(T\ne T^\delta\ne \dagger)\right)\quad (\because\, {W_E}_\star \QQ=\WW=W_\star\QQ)\\
&=2\Vert g\Vert_\infty^2\WW(\dagger\ne T\ne T^\delta)\quad (\because\ T^\delta=T\text{ on }\{T^\delta\ne \dagger\})\\
&\leq 2\Vert g\Vert_\infty^2\delta\quad (\because\, \WW(T^\delta\ne T\ne \dagger)<\delta).
\end{align*}
By the same token, $$\vert \QQ[\overline{g}g_{{E_n}};T=T_{{E_n}}\in {E_n}]-\QQ[\overline{g}g_{{E_n}};T^\delta={T^\delta}_{{E_n}}\in {E_n}]\vert\leq  2\Vert g\Vert_\infty^2\delta,\quad n\in \mathbb{N}.$$ Since, by the pro tempore assumption, \eqref{eq:goal} holds with $T^\delta$ in lieu of $T$, there is $N\in \mathbb{N}$ such that $$\vert \QQ[\overline{g}g_{{E_n}};T^\delta={T^\delta}_{{E_n}}\in {E_n}]-\QQ[\overline{g}g_{E};T^\delta={T^\delta}_{E}\in E]\vert\leq \Vert g\Vert_\infty^2\delta,\quad n\in \mathbb{N}_{\geq N};$$ whence, altogether,
$$\vert \QQ[\overline{g}g_{{E_n}};T=T_{{E_n}}\in {E_n}]-\QQ[\overline{g}g_{E};T=T_{E}\in E]\vert\leq 5\Vert g\Vert_\infty^2\delta,\quad n\in \mathbb{N}_{\geq N}.$$ As $\delta$ was arbitrary, we deduce that \eqref{eq:goal} holds also for the arbitrary $T$. 

As a consequence of the finding of the preceding paragraph, when proving \eqref{eq:goal} for $g$ continuous, bounded and nonnegative, to which we now return, we may and shall assume without loss of generality that $T$ has the $(\star)$-continuity property on the  $\WW$-a.s. set $\Omega_0'$ of Lemma~\ref{lemma:star-ct}. Then, on the one hand, by (reverse) Fatou's lemma, and by this very property, noting that  $ W_E^{-1}(\Omega_0')\cap \Omega'$ has $\QQ$-probability one, $$\limsup_{n\to\infty}\QQ[gg_{E_n};T=T_{E_n}\in E_n]\leq \QQ[\limsup_{n\to\infty}gg_{E_n}\mathbbm{1}_{\{T=T_{E_n}\in E_n\}}]\leq \QQ[gg_{E};T=T_E\in E].$$ On the other hand, by Lemma~\ref{lemma:elementary} for the second inequality, recalling that $(g,T)$ and $(g_E,T_E)$  are independent and equally distributed given $\FF_E^W$, the same with $E_n$ in lieu of $E$ ($n\in \mathbb{N}$),
\begin{align*}
&\liminf_{n\to\infty}\QQ[gg_{E_n};T=T_{E_n}\in E_n]\geq \liminf_{n\to\infty}\QQ[gg_{E_n};T=T_{E_n}\in E]\\
&=\liminf_{n\to\infty}\QQ[g\mathbbm{1}_{\{T\in E\}}g_{E_n}\mathbbm{1}_{\{T_{E_n}\in E\}};T=T_{E_n}]\\
&\geq \QQ[g\mathbbm{1}_{\{T\in E\}}g_{E}\mathbbm{1}_{\{T_{E}\in E\}};T=T_{E}]= \QQ[gg_{E};T=T_E\in E].
\end{align*}
This concludes the verification of \eqref{eq:goal} in the case when $g:\Omega_0\to [0,\infty)$ is continuous and bounded. 

Take next arbitrary $g$ and $h$, continuous, bounded maps from $\Omega_0$ to $[0,\infty)$. Applying \eqref{eq:goal} consecutively to $g$, $h$ and $g+h$ in lieu of $g$, it follows from linearity and from \eqref{eq:symmetric-coupling} that 
\begin{equation*} 
\lim_{n\to\infty}\QQ[\overline{h}g_{E_n};T=T_{E_n}\in E_n]=\QQ[\overline{h}g_{E};T=T_{E}\in E].
\end{equation*}
By linearity again it is now elementary to extend the preceding equality separately in $g$ and in $h$ to any continuous complex-valued bounded maps on $\Omega_0$. In particular, setting $h=g$, we deduce \eqref{eq:goal} for the case when $g$ is bounded and continuous. The class of such $g$ is dense in $\LLL^2(\WW)$ by functional monotone class;  a straightforward approximation (using e.g. Cauchy-Schwartz to effect the relevant estimates) allows finally to conclude that   \eqref{eq:goal} holds with no restriction on $g$ (beyond it belonging to $\LLL^2(\WW)$).  The proof is now complete. 
 \end{proof}

\subsection{Max-totally unstable sets}\label{subsection:unstable}
Here we consider sets whose local maxima are as sensitive to resampling as can be, in the precise sense of

 \begin{theorem}\label{theorem:max-unstable}
  For an $\leb$-measurable $E$ the following are equiveridical. 
  \begin{enumerate}[(a)]
    \item\label{unstable:iii} The local maxima of $W$ belonging to $E$ are disjoint with the local maxima of $W_E$ belonging to $E$ a.s.-$\QQ$, but the local maxima of $W$ belonging to $E$ are not a.s.-$\QQ$ the empty set.
        \item\label{unstable:v}  The local maxima of $W$ belonging to $E$ are disjoint with the local maxima of $\mathbbm{1}_E\cdot W$ a.s.-$\WW$,\footnote{As was explained in \ref{maxima-of-censored} on p.~\pageref{maxima-of-censored'}  the local maxima of $\mathbbm{1}_E\cdot W$ belong $\WW$-a.s. to $E$, so the qualification ``belonging to $E$'' can be dropped.} but the local maxima of $W$ belonging to $E$ are not a.s.-$\WW$ the empty set.
\item\label{unstable:iv}  $\QQ(\tau_E=\tau\in E)=0$ for every selection $\tau$ of a local maximum of $W$, but $\WW(\tau\in E)>0$ for some such selection.
  \item\label{unstable:i} $\QQ(\tau_E=\tau\in E)=0$ for all maximizers  $\tau$ of $W$, but $\WW(\tau\in E)>0$ for some such maximizer.
     \item\label{unstable:new} There is no $\FF^W_E$-measurable selection $T$ of a local maximum of $W$ on $E$ for which $\WW(T\ne \dagger)>0$, but $W$ has local maxima belonging to $E$ with positive $\WW$-probability.
    \item\label{unstable:ii} $0_\PP\ne \FF_E\subset \FF_{\mathrm{stb}}$ (i.e. $S_E\subset \{K<\infty\}$ a.e.-$\mu$ but $S_E= \{\emptyset\}$ a.e.-$\mu$ fails). 
 \end{enumerate}
  When so, then $E\notin\overline{\EE}$.
 \end{theorem}
 \begin{definition}\label{def-unstb}
 An $\leb$-measurable $E$ is said to be max-totally unstable if it meets one and then all of the equivalent conditions \ref{unstable:iii}-\ref{unstable:i}  of Theorem~\ref{theorem:max-unstable}.
 \end{definition}
 Of course the complement of a max-totally unstable set must be dense, in particular a closed max-totally unstable set must be nowhere dense. 
 
  In connecting \ref{unstable:ii} to \ref{unstable:iii}-\ref{unstable:i} (the equivalence of the latter shall follow by relatively elementary arguments; separately we will argue  \ref{unstable:new}$\Leftrightarrow$\ref{unstable:ii}), Proposition~\ref{proposition:max-enumerable-necessary} will be instrumental. The significance of having stated it for an arbitrary finite partition $P$ of $\mathbb{R}$ into elementary sets, rather than just $P=\{\mathbb{R}\}$, stems from the observation of
 
 \begin{lemma}\label{lemma-key-totality}
 The collection of random variables of the form $\prod_{p\in P } g^p(B)\epsilon_{T^p}$ --- where for each $p\in P$, $g^p$ is  an element of  $\LLL^2(\WW\vert_{\FF_p^W})$ and $T^p$ is an $\FF^W_p$-measurable selection of a local maximum of $W$ on $p$ --- as $P$ ranges over all finite partitions  of $\mathbb{R}$ into elementary sets, is total in the sensitive subspace $H_{\mathrm{sens}}$.
 \end{lemma}
 \begin{proof}
Denote by $(2^\mathbb{N})_{\mathrm{fin}}$  the collection of the finite subsets of $\mathbb{N}$.	Let $G$ be the closure of the linear span of the indicated random variables. By approximation within $\LLL^2(\WW)$, with $P$ fixed, we see immediately that $G$ contains $g(B)\prod_{p\in P}\epsilon_{T^p}$ for all $g\in \LLL^2(\WW)$, any choice of $T^p$ an $\FF^W_p$-measurable selection of a local maximum of $W$ on $p$ for $p\in P$, all non-empty finite collections  $P$ of pairwise disjoint elementary sets.  Fix a sequence $(P_n)_{n\in \mathbb{N}}$ of ever finer finite partitions of $\mathbb{R}$ into intervals, whose union separates the points of $\mathbb{R}$. For $p\in \cup_{n\in \mathbb{N}}P_n$ denote by $\tau_p$ the maximizer of $W$ on $p$, setting it equal to $\dagger$ on the event on which it fails to exist uniquely. Recalling  the enumeration $\mathsf{S}$ from p.~\pageref{enumeration-page},  it is clear that for any $I\in (2^\mathbb{N})_{\mathrm{fin}}\backslash \{\emptyset\}$, $$\prod_{i\in I}\epsilon_{\mathsf{S}_i}=\lim_{n\to\infty}\sum_{F\in (2^{P_n})_{\mathrm{fin}}\backslash \{\emptyset\}}\mathbbm{1}_{\{\{\mathsf{S}_i(B):i\in I\}=\{\tau_p:p\in F\}\}}\prod_{p\in F}\epsilon_{\tau^p}\text{ a.s.-$\PP$ boundedly}.$$ 	
Also,
$\left\{g(B)\prod_{i\in I}\epsilon_{\mathsf{S}_i}:(g,I)\in \LLL^2(\WW)\times ((2^\mathbb{N})_{\mathrm{fin}}\backslash \{\emptyset\})\right\}$  is total in $H_{\mathrm{sens}}$. 
(Indeed, the family $\{g\otimes \prod_{i\in I}\xi_i:(g,I)\in \LLL^2(\WW)\times (2^\mathbb{N})_{\mathrm{fin}}\}$, where $(\xi_i)_{i\in \mathbb{N}}$ is the sequence of the coordinate projections on $\{-1,1\}^\mathbb{N}$, is total in $\LLL^2(\WW)\otimes \LLL^2((\frac{1}{2}\delta_{-1}+\frac{1}{2}\delta_1)^{\times \mathbb{N}})=\LLL^2(\WW\times (\frac{1}{2}\delta_{-1}+\frac{1}{2}\delta_1)^{\times \mathbb{N}})$ (equality up to the natural unitary equivalence), and therefore --- directly from the construction of $\PP$ via $\Theta$ in Subsection~\ref{subsection:splitting-noise} --- we get that $\left\{g(B)\prod_{i\in I}\epsilon_{\mathsf{S}_i}:(g,I)\in \LLL^2(\WW)\times (2^\mathbb{N})_{\mathrm{fin}}\right\}$ is total in $\LLL^2(\PP)$.) It is now elementary to conclude the argument.
 \end{proof}
 
 \begin{proof}[Proof of Theorem~\ref{theorem:max-unstable}]
The case $\leb(E)=0$ is elementary (\ref{unstable:iii}-\ref{unstable:ii} all fail). Assume $\leb(E)>0$. The equivalences \ref{unstable:iii}$\Leftrightarrow$\ref{unstable:iv}$\Leftrightarrow$\ref{unstable:i} are essentially trivial on noting that: maximizers of $W$ on intervals with rational endpoints, say, exhaust the local maxima of $W$; for any maximizer $\tau$ of $W$ on a non-degenerate compact interval $I$ and any non-degenerate compact subinterval $J$ of $I$, $\tau$ is the maximizer of $W$ on $J$ a.s.-$\WW$ on $ \{\tau\in J\}$.

 \ref{unstable:v}$\Rightarrow$\ref{unstable:iii}. Assuming \ref{unstable:iii} fails (then \ref{unstable:i} does and hence) there is a maximizer $\tau$ of  $W$ such that $\QQ(\tau=\tau_E\in E)>0$. Consequently, since $\tau$ and $\tau_E$ are independent and equally distributed given $\FF^W_E$, the conditional distribution of $\tau$ given $\FF^W_E$ must contain an atom belonging to $E$ with  positive $\WW$-probability. Thus there exists an $\FF^W_E$-measurable selection $Z$ of a local maximum of $W$ belonging to $E$ and having $\WW(Z\in E)>0$. This $Z$ must be a local maximum of $\mathbbm{1}_E\cdot W$ a.s-$\WW$ on $\{Z\in E\}$ (which is a contradiction with  \ref{unstable:v}) for if it was not, then there would exist an $\FF^W_E$-measurable sequence $(Z_n)_{n\in \mathbb{N}}$ of random variables converging to $Z$ as $n\to\infty$, but different from $Z$, and such that $(\mathbbm{1}_E\cdot W)_{Z_n}\geq (\mathbbm{1}_E\cdot W)_{Z}$ on an event $A\in \FF^W_E$ of  $\WW$-positive probability contained in $\{Z\in E\}$. But $\WW$-a.s. $W$ results from $\mathbbm{1}_E\cdot W$ by the addition of $\mathbbm{1}_{\mathbb{R}\backslash E}\cdot W$, which is independent of $\FF^W_E$ and whose increments are symmetric. It follows that $\WW(W_{Z_n}\geq W_Z\vert \FF^W_E)\geq \frac{1}{2}$ a.s.-$\WW$ on $A$, a fortiori $\WW(W_{Z_n}\geq W_Z\vert A)\geq \frac{1}{2}$, for all $n\in \mathbb{N}$. By (reverse) Fatou's lemma we deduce that, conditionally on $A$, $W_{Z_n}\geq W_Z$ for infinitely many $n\in\mathbb{N}$ with probability at least a half. But then certainly with $\WW$-positive probability on $\{Z\in E\}$, $Z$ is not a local maximum of $W$, which by itself is in contradiction with the choice of $Z$.
 
  \ref{unstable:iii}$\Rightarrow$\ref{unstable:v}. Again let us argue by contradiction. If \ref{unstable:v} does not hold true, then for some $\FF^W_E$-measurable selection $\tau$ of a local maximum of $\mathbbm{1}_E\cdot W$, $\WW(\tau\text{ a local maximum of $W$})>0$. Hence, with $\QQ$-positive probability we would also have 
$$\QQ(\tau\text{ a local maximum of $W$ and of $W_E$}\vert \FF_E^W)=\QQ(\tau\text{ a local maximum of $W$}\vert \FF_E^W)^2>0,$$ the equality coming from the conditional independence and equality in distribution of $W$ and $W_E$ given $\FF^W_E$. This contradicts   \ref{unstable:iii}.

\ref{unstable:ii}$\Rightarrow$\ref{unstable:i}. Assume \ref{unstable:ii}, i.e. that $0_\PP\ne \FF_E\subset \FF_{\mathrm{stb}}$. For any maximizer $\tau$ of $W$, $\epsilon_T\in H^{(1)\prime}\perp \LLL^2(\PP\vert_{\FF_{\mathrm{stb}}})$, hence Proposition~\ref{proposition:max-enumerable-necessary} yields $\QQ(\tau=\tau_E\in E)=0$. Since  $\leb(E)>0$  we must have $\WW(\tau\in E)>0$ for some such $\tau$. This is \ref{unstable:i}.  

\ref{unstable:i}$\Rightarrow$\ref{unstable:ii}. If \ref{unstable:i} holds, then  Proposition~\ref{proposition:max-enumerable-necessary} and Lemma~\ref{lemma-key-totality} show that $\LLL^2(\PP\vert_{\FF_E})$ is orthogonal to the sensitive subspace, which forces $\LLL^2(\PP\vert_{\FF_E})\subset {H_{\mathrm{sens}}}^\perp=H_{\mathrm{stb}}=\LLL^2(\PP\vert_{\FF_{\mathrm{stb}}})$, therefore $\FF_E\subset \FF_{\mathrm{stb}}$; also, $\leb(E)>0$ entails $\FF_E\ne 0_\PP$. Altogether, we get \ref{unstable:ii}. 

 \ref{unstable:ii}$\Rightarrow$\ref{unstable:new}.  Barring \ref{unstable:new} we get an $\FF^W_E$-measurable selection $T$ of a local maximum of $W$ on $E$ with $\WW(T\ne \dagger)>0$.  By Proposition~\ref{lemma:locmaxstable}, $\epsilon_T$ is $\FF_E$-measurable. Since $\WW(T\ne\dagger)>0$, $\PP(\epsilon_T\ne 0)>0$. From  \ref{unstable:ii},  $\FF_E\subset \FF_{\mathrm{stb}}$, and so  $\epsilon_T=\PP[\epsilon_T\vert \FF_{\mathrm{stb}}]=0$ a.s.-$\PP$. Put together, we have a contradiction. 
   
\ref{unstable:new}$\Rightarrow$\ref{unstable:ii}.  Assume  \ref{unstable:ii} fails so that $\FF_E\not\subset \FF_{\mathrm{stb}}$. Then for some $f\in \LLL^2(\PP\vert_{\FF_E})$, $f-\PP[f\vert \FF_{\mathrm{stb}}]\in (H_{\mathrm{sens}}\cap \LLL^2(\PP\vert_{\FF_E}))\backslash \{0\}$, where we have taken into account that by \eqref{eq:stable-in-vNa} and \eqref{eq:commuting}, $\PP[\cdot\vert\FF_E]$ and $\PP[\cdot\vert \FF_{\mathrm{stb}}]$ both belong to $\AA$ and are therefore commuting. In turn, the $\mu$-a.e. equality $\{K=\infty\}\cap S_E=\cup_P \cap_{p\in P}\pr_p^{-1}(\{ K'=1\}\cap S_E)$, where $\cup_P$ is over all partitions $P$ of $\mathbb{R}$ into intervals with rational endpoints, combined with \eqref{tensor-product} assures us that  $H^{(1)\prime}\cap \LLL^2(\PP\vert_{\FF_E})\ne  \{0\}$. This contradicts Proposition~\ref{proposition:rich-measures} and \ref{unstable:new}.
 
 Suppose finally that $0_\PP\ne \FF_E\subset \FF_{\mathrm{stb}}$ and per absurdum $E\in \overline{\EE}$. Since $\FF_E\subset \FF_{\mathrm{stb}}$, certainly $H^{(1)\prime}\cap \LLL^2(\PP\vert_{\FF_E})=\{0\}$. Owing to $E\in \overline{\EE}$, by Proposition~\ref{proposition:weird} we obtain $\mu(K'=1,\acc\subset E)=0$.  Also,  $0_\PP\ne \FF_E$ entails $\leb(E)>0$.  Taken together this cannot stand by the Poisson character of $\mu$: recalling the discussion of p.~\pageref{Poisson-character}, we know that $\acc_\star(\mu\vert_{\{K'<\infty\}})$ is equivalent to the law of a Poisson point process on $\mathbb{R}$ with finite intensity measure equivalent to $\leb$; therefore, since $\leb(E)>0$, $\mu(\vert \acc\cap E\vert=1,\vert \acc\backslash E\vert=0)>0$, which is in direct contradiction with $\mu(K'=1,\acc\subset E)=0$.
 \end{proof}
 
 \begin{corollary}\label{corollary:stable-not-in-completion}
$\BBB_{\mathrm{stb}}\cap \overline{\BBB}=\{0_\PP\}$.
 \end{corollary}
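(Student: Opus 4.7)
The plan is to reduce the statement directly to the ``$E \notin \overline{\EE}$'' clause of Theorem~\ref{theorem:max-unstable}. Suppose for contradiction that $x \in \BBB_{\mathrm{stb}} \cap \overline{\BBB}$ with $x \ne 0_\PP$. Then on the one hand $x = \FF^{\mathrm{stb}}_A$ for some $\leb$-measurable $A$, and $x \ne 0_\PP$ forces $\leb(A) > 0$ (since the classical Wiener noise is faithful). On the other hand, by Proposition~\ref{proposition:completion-in-spectral-ones}, $x = \FF_E$ for some $E \in \overline{\EE}$.

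The next step is to identify $A$ with $E$ modulo $\leb$-null sets. From $\FF_E = \FF^{\mathrm{stb}}_A \subset \FF_{\mathrm{stb}}$, intersecting with $\FF_{\mathrm{stb}}$ and invoking Proposition~\ref{proposition:extensions-mod}\ref{proposition:extensions-mod:i} gives $\FF_E = \FF_E \land \FF_{\mathrm{stb}} = \FF^{\mathrm{stb}}_E$. Hence $\FF^{\mathrm{stb}}_A = \FF^{\mathrm{stb}}_E$, which by the injectivity of the classical indexation on $\leb$-equivalence classes yields $A = E$ a.e.-$\leb$; in particular $\leb(E) > 0$, so $\FF_E \ne 0_\PP$.

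At this point we have $0_\PP \ne \FF_E \subset \FF_{\mathrm{stb}}$, which is exactly condition \ref{unstable:ii} of Theorem~\ref{theorem:max-unstable}. That theorem's concluding assertion then delivers $E \notin \overline{\EE}$, contradicting our choice of $E$. I do not anticipate a real obstacle here: the work has all been done in Theorem~\ref{theorem:max-unstable} and Proposition~\ref{proposition:completion-in-spectral-ones}, and the only thing to verify is the bookkeeping that an arbitrary element of $\BBB_{\mathrm{stb}} \cap \overline{\BBB}$ is indexed (modulo null sets) by the same $\leb$-measurable set from both sides.
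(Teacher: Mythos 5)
Your proof is correct and is precisely the argument the paper intends: the corollary is stated with an immediate \qed because it follows from Proposition~\ref{proposition:completion-in-spectral-ones} (to write a nonzero element of $\BBB_{\mathrm{stb}}\cap\overline{\BBB}$ as $\FF_E$ with $E\in\overline{\EE}$) together with condition \ref{unstable:ii} of Theorem~\ref{theorem:max-unstable} and its concluding assertion that such an $E$ cannot lie in $\overline{\EE}$. The only remark is that your middle step identifying $A$ with $E$ is not strictly needed, since $x\in\BBB_{\mathrm{stb}}$ already gives $x\subset\FF_{\mathrm{stb}}$ directly; but the bookkeeping is harmless.
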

 \begin{proof}
Apply the last asssertion of Theorem~\ref{theorem:max-unstable} with condition \ref{unstable:ii} thereof.
 \end{proof}
Theorem~\ref{thm:is-in-closure} from Appendix~\ref{appendix:ntba} will show that  $\FF_{\mathrm{stb}}\in \Cl(\BBB)$.
 It is remarkable that the existence of $\leb$-measurable sets that are max-totally unstable  follows already from this highly abstract result:
 \begin{example}\label{example:unstable}
Since $\FF_{\mathrm{stb}}\in \Cl(\BBB)$, by \eqref{eq:closure-identified} there is a sequence  $(A_n)_{n\in \mathbb{N}}$ of closed elementary sets such that $\liminf_{n\to\infty}\FF_{A_n}=\FF_{\mathrm{stb}}$.  Remark~\ref{rmk:intersect} and sequential monotone continuity of $\FF^\mathrm{stb}$ yield $\liminf_{n\to\infty}A_n=\mathbb{R}$ a.e.-$\leb$ (or see it directly by combining Proposition~\ref{proposition:meet-and-join}\ref{cap}, Proposition~\ref{proposition:extensions-mod}\ref{proposition:extensions-mod:i}, $\liminf_{n\to\infty}\FF_{A_n}\subset \FF_{\liminf_{n\to\infty}A_n}$). From Theorem~\ref{theorem:max-unstable} we  conclude that for all large enough $n\in \mathbb{N}$, $E_n:=\cap_{m\in \mathbb{N}_{\geq n}}A_m$, which by the preceding is $\uparrow\mathbb{R}$ a.e.-$\leb$ as $n\to\infty$, is  a closed max-totally unstable set (``large enough'' ensuring $ \leb(E_n)>0$).
\end{example}
For the remainder of this subsection let $\tau:=\tau_{0,1}$ be the maximizer of $W$ on $[0,1]$. We have just seen in the previous example
\begin{enumerate}[(A)]
\item\label{dichotomy:B} the  existence of a sequence $(E_n)_{n\in \mathbb{N}}$ of closed [automatically nowhere dense\footnote{For a closed not-nowhere dense $E\subset [0,1]$ trivially $\QQ(\tau=\tau_E)>0$.}] subsets of $[0,1]$ that is $\uparrow [0,1]$ a.e.-$\leb$ and such that $\QQ(\tau=\tau_{E_n})=0$ for all $n\in \mathbb{N}$. \qed
\end{enumerate}
The reader may well wonder whether  $\QQ(\tau=\tau_E)=0$ does not perhaps anyway hold true always whenever $E$ is a closed nowhere dense set. It is not so, indeed we claim 
\begin{enumerate}[(B)]
\item\label{dichotomy:A} the  existence of a sequence $(E_n)_{n\in \mathbb{N}}$ of closed  nowhere dense subsets of $[0,1]$ that is $\downarrow \emptyset$ a.e.-$\leb$ and such that $\QQ(\tau=\tau_{E_n})>0$ for all $n\in \mathbb{N}$.
\end{enumerate}
\begin{proof}[Proof of \ref{dichotomy:A}]\label{proof:A}
 We apply  Theorem~\ref{theorem:dense-fuck-stable} to $\{\FF_A:A\in \EE\cap 2^{[0,1]}\}$ as the noise  Boolean algebra (under $\PP\vert_{\FF_{[0,1]}}$), to $\epsilon_\tau$ as the random variable $\xi$, and to the sequence of finite noise Boolean subalgebras that are attached to the successive dyadic partitions of $[0,1]$ as $(b_n)_{n\in \mathbb{N}}$ (so, for each $n\in \mathbb{N}$, the atoms of $b_n$ are the $\FF_I$ for dyadic intervals $I$ of $[0,1]$ of lenght $2^{-n}$). Condition \eqref{dense-fuck-stable-weird} is met evidently, \eqref{dense-fuck-stable-dense} due to $K'<\infty$ a.e.-$\mu$. We get existence of a $\downarrow$ sequence $(A_n)_{n\in \mathbb{N}}$ of finite unions of closed subintervals of $[0,1]$, whose intersection $A$ is nowhere dense in $[0,1]$ (vis-\`a-vis \ref{dense-stbl-a}  of Theorem~\ref{theorem:dense-fuck-stable}) and such that (vis-\`a-vis \ref{dense-stbl-b}  of Theorem~\ref{theorem:dense-fuck-stable}) $$0<\PP[\PP[\epsilon_\tau\vert \land_{n\in \mathbb{N}}\FF_{A_n}]^2]=\PP[\PP[\epsilon_\tau\vert\FF_{A}]^2]=\QQ(\tau=\tau_A),$$ furthermore  (vis-\`a-vis \ref{dense-stbl-c}  of Theorem~\ref{theorem:dense-fuck-stable}) $$0<\PP[\PP[\epsilon_\tau\vert \FF_I\land (\land_{n\in \mathbb{N}}\FF_{A_n})]^2]=\PP[\PP[\epsilon_\tau\vert\FF_{A\cap I}]^2]=\QQ(\tau=\tau_{A\cap I}),$$ for every dyadic interval $I$ of $[0,1]$ for which $\leb(E\cap I)>0$, on using Proposition~\ref{proposition:max-enumerable-necessary} for the final two equalities of the preceding displays. It is now easy to conclude by taking a $\downarrow\downarrow$ sequence $(I_n)_{n\in \mathbb{N}}$ of closed dyadic intervals, $ \leb(I_n\cap A)>0$  and $I_n$ having length  $2^{-n}$ for all $n\in \mathbb{N}$, which surely exists, then setting $E_n:=I_n\cap A$ for $n\in \mathbb{N}$.
\end{proof}


\subsection{Max-totally stable sets}\label{subsection:stable}
Let us turn to the other end of the spectrum of sensitivity or rather stability of the local maxima.

\begin{theorem}\label{thm:stability}
For an $\leb$-measurable $E$, the following are equivalent.
\begin{enumerate}[(a)]
\item\label{stable.i} The local maxima of $W$ belonging to $E$ are precisely the local maxima of $W_E$ belonging to $E$ a.s.-$\QQ$.
        \item\label{stable.v}  The local maxima of $W$ belonging to $E$ are contained in the local maxima of $\mathbbm{1}_E\cdot W$  a.s.-$\WW$.
\item \label{stable.iii} For every selection $\tau$ of a local maximum of $W$: $\WW(\tau\in E)>0\Rightarrow \QQ(\tau=\tau_E\in E)>0$.
\item \label{stable.iv} For all maximizers $\tau$ of $W$,  all  $G\in\mathcal{B}_\mathbb{R}$: $\WW(\tau\in E\cap G)>0\Rightarrow \QQ(\tau=\tau_E\in E\cap G)>0$. 
\item \label{stable.ii} $E$ is max-enumerable.
\end{enumerate}
\end{theorem}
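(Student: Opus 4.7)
The plan is to establish the five-way equivalence via the cycle (\ref{stable.ii})$\Leftrightarrow$(\ref{stable.i}) coupled with (\ref{stable.ii})$\Rightarrow$(\ref{stable.v})$\Rightarrow$(\ref{stable.iii})$\Leftrightarrow$(\ref{stable.iv})$\Rightarrow$(\ref{stable.ii}). The equivalence (\ref{stable.iii})$\Leftrightarrow$(\ref{stable.iv}) is routine: maximizers on compact rational intervals exhaust all selections of local maxima, and the truncated selection $\tau_G:=\tau\mathbbm{1}_{\{\tau\in G\}}+\dagger\mathbbm{1}_{\{\tau\notin G\}}$ upgrades (\ref{stable.iii}) into (\ref{stable.iv}).

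For (\ref{stable.ii})$\Leftrightarrow$(\ref{stable.i}), the pivotal observation is that, under $\QQ$ and conditionally on $\FF^W_E$, $W$ and $W_E$ are i.i.d.\ two-sided Brownian motions: they share the $\FF^W_E$-measurable part $\mathbbm{1}_E\cdot W=\mathbbm{1}_E\cdot W_E$, while their off-$E$ parts are conditionally independent. For (\ref{stable.ii})$\Rightarrow$(\ref{stable.i}), an $\FF^W_E$-measurable enumeration $(T_n)$ of local maxima of $W$ in $E$ satisfies $T_n(W)=T_n(W_E)$ a.s.-$\QQ$ (each $T_n$ depends only on $\mathbbm{1}_E\cdot W$), and transferring the selection property renders $T_n(W_E)$ a local max of $W_E$ in $E$; this, plus symmetry, gives the set equality. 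Conversely, for (\ref{stable.i})$\Rightarrow$(\ref{stable.ii}), the random countable sets of local maxima of $W$ and of $W_E$ that fall in $E$ are conditionally i.i.d., and their $\QQ$-a.s.\ equality forces each to be conditionally deterministic, hence $\FF^W_E$-measurable as a random countable set; an $\FF^W_E$-measurable enumeration is then produced by ordering atoms within each rational subinterval.

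For (\ref{stable.ii})$\Rightarrow$(\ref{stable.v}): each $T_n$ above, being $\FF^W_E$-measurable and a local max of $W$, must also be a local max of $\mathbbm{1}_E\cdot W$, for otherwise the symmetry of the off-$E$ increments of $W$ combined with reverse Fatou's lemma (the argument from the proof of Theorem~\ref{theorem:max-unstable}~(\ref{unstable:iii})$\Leftrightarrow$(\ref{unstable:v})) would produce, with positive probability, a sequence of times along which $W$ exceeds $W(T_n)$, contradicting $T_n$'s local maximality for $W$. For (\ref{stable.v})$\Rightarrow$(\ref{stable.iii}): given a selection $\tau$ with $\WW(\tau\in E)>0$, the hypothesis places $\tau$ on $\{\tau\in E\}$ into the $\FF^W_E$-measurable countable set $L$ of local maxima of $\mathbbm{1}_E\cdot W$ (using also the footnote that $L\subset E$ a.s.); consequently the $\FF^W_E$-conditional law $\nu_\omega$ of $\tau$ is purely atomic on $E$, and $\QQ(\tau=\tau_E\in E)=\WW[\sum_{x\in E}\nu_\omega(\{x\})^2]>0$.

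The hard step is (\ref{stable.iv})$\Rightarrow$(\ref{stable.ii}). Proposition~\ref{proposition:max-enumerable-necessary} with $P=\{\mathbb{R}\}$, $g=\mathbbm{1}_G(\tau^I(B))$, $T=\tau^I$ yields
\[
\PP\bigl[\vert\PP[\mathbbm{1}_G(\tau^I)\epsilon_{\tau^I}\vert\FF_E]\vert^2\bigr]=\QQ(\tau^I=\tau^I_E\in E\cap G),
\]
so (\ref{stable.iv}) translates into the non-vanishing of $f^I_G:=\PP[\mathbbm{1}_G(\tau^I)\epsilon_{\tau^I}\vert\FF_E]\in H^{(1)\prime}\cap\LLL^2(\PP\vert_{\FF_E})$ whenever $\WW(\tau^I\in E\cap G)>0$. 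Proposition~\ref{proposition:rich-measures-basic} attaches to each $f^I_G$ a disintegration $\frac{\dd\mu'_{f^I_G}}{\dd\WW_1}$ that is $\WW$-a.s.\ purely atomic with atoms among the local maxima of $W$; a direct adaptation of Proposition~\ref{proposition:rich-measures}'s $G_\delta$-approximation argument (which in fact does not use $E\in\overline{\EE}$ beyond its open-set base case, via Corollary~\ref{corollary:F-for-open}, and the $\downarrow$ continuity of $\FF^W$) then shows the atoms lie in $E$ and that $\frac{\dd\mu'_{f^I_G}}{\dd\WW_1}$ is $\FF^W_E$-measurable. Ranging $(I,G)$ over countable families and collecting these atoms via measurable selection produces an $\FF^W_E$-measurable countable family of local maxima of $W$ in $E$. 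The principal difficulty is to confirm that this family exhausts \emph{every} local max of $W$ in $E$, which I expect will follow from the totality of $\{\mathbbm{1}_G(\tau^I)\epsilon_{\tau^I}\}_{I,G}$ in $H^{(1)\prime}$ combined with a spectral exhaustion argument analogous to that behind Proposition~\ref{proposition:rich-measures-bis}.
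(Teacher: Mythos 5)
Most of your cycle is sound, and in places it takes a genuinely different route from the paper: your (\ref{stable.ii})$\Rightarrow$(\ref{stable.i}) by transporting an $\FF^W_E$-measurable enumeration along the conditionally i.i.d.\ pair $(W,W_E)$, your (\ref{stable.i})$\Rightarrow$(\ref{stable.ii}) by forcing the conditional law of the random set to be Dirac (the paper instead writes the enumeration as $F(R,R')$ and freezes $R'$ by Tonelli), and your (\ref{stable.v})$\Rightarrow$(\ref{stable.iii}) via pure atomicity of the conditional law of $\tau$ on the $\FF^W_E$-measurable countable set of local maxima of $\mathbbm{1}_E\cdot W$, are all workable and arguably cleaner than the corresponding steps in the paper.

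The gap is in the step you yourself flag as unresolved, and it is not a technicality: it is the heart of the theorem. To close the cycle you must pass from the ``positive probability'' hypothesis (\ref{stable.iv}) to the almost-sure statement that \emph{every} local maximum of $W$ in $E$ is caught by an $\FF^W_E$-measurable object. Your proposed substitute --- totality of the $\mathbbm{1}_G(\tau^I)\epsilon_{\tau^I}$ in $H^{(1)\prime}$ plus ``a spectral exhaustion argument analogous to Proposition~\ref{proposition:rich-measures-bis}'' --- is not available here: Proposition~\ref{proposition:rich-measures-bis} rests on Proposition~\ref{proposition:weird}, which rests on the tensor factorization \eqref{tensor-product-complete} of Proposition~\ref{proposition:spectral-projection-for-extension}, and all of these require $E\in\overline{\EE}$, i.e.\ max-enumerability of \emph{both} $E$ and its complement --- exactly what you are trying to prove. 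Concretely, condition (\ref{stable.iv}) only tells you that for each \emph{deterministic} $G$ with $\WW(\tau^I\in E\cap G)>0$ the $\FF^W_E$-conditional law of $\tau^I$ has an atom in $E\cap G$ with positive probability; to rule out a residual diffuse part you would need to test against the random, $\FF^W_E$-measurable set carrying the diffuse component, which (\ref{stable.iv}) does not permit. The paper's proof of (\ref{stable.iv})$\Rightarrow$(\ref{stable.i}) supplies precisely the missing deterministic reduction: randomizing the right endpoint by an independent exponential time and invoking Millar's zero-one law (together with Kolmogorov's for the perturbing increments), it shows that $\{\tau_a\text{ is a local maximum of }W_E\text{ belonging to }E\}$ coincides a.s.\ with $\{\tau_a\in F_a\}$ for a \emph{deterministic} Borel set $F_a$, pieces the $F_a$ together into a single $F$, and only then uses (\ref{stable.iv}) to force $F=E$ a.e.-$\leb$. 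Some argument of this zero-one-law type has to be imported for your step (\ref{stable.iv})$\Rightarrow$(\ref{stable.ii}) to go through.
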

 \begin{definition}\label{definition:totally-stable}
 An $\leb$-measurable $E$ is said to be max-totally stable if it meets one and then all of the equivalent conditions  \ref{stable.i}-\ref{stable.iv} of Theorem~\ref{theorem:max-unstable}.
 \end{definition}
So, max-enumerability is equivalent to max-total stability and thus, by  Theorem~\ref{theorem:characterization}, $E\in \overline{\EE}$ iff $E$ and $\mathbb{R}\backslash E$ are both max-totally stable. Of course  an open $E$ is max-totally stable. 

Recall from Subsection~\ref{subsection:splitting-noise}, p.~\pageref{levy-shift},  the L\'evy shifts $(\Delta_t)_{t\in \mathbb{R}}$ acting on $\Omega_0$. For a continuous $\omega:\mathbb{R}\to \mathbb{R}$, interpret $\Delta_t(\omega):=\omega(t+\cdot)-\omega(t)$ for $t\in \mathbb{R}$ and $\Delta_\dagger(\omega)\equiv \mathsf{w}_0$, even if $\omega\notin \Omega_0$. We attempt the proof of Theorem~\ref{thm:stability} after preparing
\begin{lemma}\label{lemma:disaster}
Let $E$ be $\leb$-measurable and let, under $\QQ$, $\mathsf{e}$ be an exponentially distributed $(0,\infty)$-valued random variable independent of $(W,W')$. Denote by $\tau_\ee$ the maximizer of $W$ on $[0,\ee]$. Then 
\begin{align}\label{eq:trivial-deep}
\cap_{t\in (0,\infty)}0_\QQ\lor \sigma(\tau_\ee,(\Delta_{\tau_\ee}(W_E))\vert_{[0,t]})&=0_\QQ\lor \sigma(\tau_\ee)\text{ and}\\\label{eq:trivial-superficial}
\cap_{t\in (0,\infty)}0_\QQ\lor \sigma(\tau_\ee,(\Delta_{\tau_\ee}(\mathbbm{1}_E\cdot W))\vert_{[0,t]})&=0_\QQ\lor \sigma(\tau_\ee).
\end{align}
Also (no need for $\ee$ here), for any  $\leb$-measurable $A$,
\begin{equation}\label{metamorphosis}
(\mathbbm{1}_A\cdot W)_E=\mathbbm{1}_A\cdot W_E=\mathbbm{1}_{A\cap E}\cdot W+\mathbbm{1}_{A\backslash E}\cdot W'\text{ a.s.-$\QQ$}.
\end{equation}
\end{lemma}
\begin{proof}
Since the $0_\QQ\lor \sigma(W,\ee)$-measurable $\tau_\ee$ is independent of $W'$, the law of which is invariant under the L\'evy shifts,  we see that $\Delta_{\tau_\ee}W'$ is $\QQ$-independent of $(W,\ee)$ and ${\Delta_{\tau_\ee}W'}_\star\QQ=\WW$. By Blumenthal's zero-one law (for Brownian motion) it follows that $\cap_{t\in (0,\infty)}0_\QQ\lor \sigma((\Delta_{\tau_\ee}W')\vert_{[0,t]}])=0_\QQ$.
From the latter  and from \eqref{eq:millar} we deduce via \eqref{distributivity-indep-not-complete} that  $$\cap_{t\in (0,\infty)}0_\QQ\lor \sigma(\tau_\ee, \Delta_{\tau_\ee}\vert_{[0,t]},(\Delta_{\tau_\ee}W')\vert_{[0,t]}])=0_\QQ\lor \sigma(\tau_\ee).$$

At this point  \eqref{eq:trivial-deep}-\eqref{eq:trivial-superficial} seem almost immediate on an intuitive level: it appears to be indeed clear that the increments of  $\mathbbm{1}_E\cdot W$ and of $\mathbbm{1}_{\mathbb{R}\backslash E}\cdot W'$, therefore of $W_E$, on $[\tau_\ee,\tau_\ee+t]$ are recoverable from $\tau_\ee$ and from  the increments of $W$ and $W'$ on $[\tau_\ee,\tau_\ee+t]$, this for each $t\in (0,\infty)$. The latter  is indeed plain when $E\in \EE$, but for a general $\leb$-measurable $E$ it is in fact not completely transparent, the Wiener integral being defined by a ``global'' procedure (limits in $\LLL^2(\WW)$) rather than ``pathwise''. Nevertheless, we can harness this observation for elementary $E$, together with an approximation to complete the argument. 

As is well-known, on a finite measure space the elements of the underlying $\sigma$-algebra are approximated arbitrarily well in measure by those of a generating algebra; applying this to a finite measure equivalent to $\leb$ and passing to a subsequence if necessary,  we get existence of  a sequence $(E_n)_{n\in \mathbb{N}}$ in $\EE$ such that $\lim_{n\to\infty}\mathbbm{1}_{E_n}= \mathbbm{1}_E$ a.e.-$\leb$. By \ref{loc-unif-a.s.} of p.~\pageref{loc-unif-a.s.} (applied to $(W,(\mathbbm{1}_{E_n})_{n\in \mathbb{N}})$ and to $(W',(\mathbbm{1}_{\mathbb{R}\backslash E_n})_{n\in \mathbb{N}})$), after passing to a further subsequence if necessary, it follows that $\lim_{n\to\infty}W_{E_n}=W_E$ (uniformly on compact time intervals) a.s.-$\QQ$. Accordingly, 
\begin{align*}
\cap_{t\in (0,\infty)}0_\QQ\lor \sigma(\tau_\ee,(\Delta_{\tau_\ee}(W_E))\vert_{[0,t]})&\subset \cap_{t\in (0,\infty)}0_\QQ\lor \left(\lor_{n\in \mathbb{N}}\sigma(\tau_\ee,(\Delta_{\tau_\ee}(W_{E_n}))\vert_{[0,t]})\right)\\
&\subset \cap_{t\in (0,\infty)}0_\QQ\lor \sigma(\tau_\ee,\Delta_{\tau_\ee}\vert_{[0,t]},\Delta_{\tau_\ee}(W')\vert_{[0,t]}).\end{align*}
Combining the preceding two displays we get at once \eqref{eq:trivial-deep}, while \eqref{eq:trivial-superficial} follows by the slightest adjustment of the above argument.

As for \eqref{metamorphosis}, the second  equality is just associativity of (stochastic) integration, while the first is got from: if $\mathbbm{1}_A\cdot W=F(W)$ a.s.-$\WW$ for a measurable $F$ (such $F$ is of course unique within $\WW$-a.s. equality), then $\mathbbm{1}_A\cdot H=F(H)$ a.s.-$\mathbb{F}$  for any two-sided Brownian motion $H$ under a probability $\mathbb{F}$ (indeed, to get \eqref{metamorphosis}, just apply this with $H=W_E$ under $\QQ$). In turn, the latter implication clearly holds for $A\in \EE$; for a general $A$ approximate as above.
\end{proof}

\begin{proof}[Proof of Theorem~\ref{thm:stability}]
\ref{stable.v}$\Rightarrow$\ref{stable.ii}. By approximation from above  to within a set of $\leb$-measure zero we may and do assume $E$ is a $G_\delta$.  Consider an $\FF^W_E$-measurable enumeration $S$ of the local maxima of $\mathbbm{1}_E\cdot W$ belonging to $E$. By the $G_\delta$ property of $E$ and the $\downarrow$ sequential monotone continuity of~$\FF^W$, $$\{S_k\text{ is a local maximum of } W\}\in\FF^W_E,\quad k\in \mathbb{N}.$$  Thus we obtain at once an $\FF^W_E$-measurable enumeration of the local maxima of $W$ on $E$.

\ref{stable.ii}$\Rightarrow$\ref{stable.v}. We have seen in the proof of Theorem~\ref{theorem:max-unstable}, \ref{unstable:v}$\Rightarrow$\ref{unstable:iii}, that an $\FF^W_E$-measurable selection $Z$  of a local maximum of $W$ belonging to $E$ is automatically a local maximum of $\mathbbm{1}_E\cdot W$ a.s.-$\WW$ on $\{Z\in E\}$. 

 \ref{stable.i}$\Rightarrow$\ref{stable.ii}. Set $\tilde W:=\mathbbm{1}_E\cdot W$ and $\tilde W':= \mathbbm{1}_{\mathbb{R}\backslash E} \cdot W$. The local maxima of $W$ belonging to $E$ admit an enumeration $S$. Since  $\FF^W_E\lor \FF^W_{\mathbb{R}\backslash E}=1_\WW$, $\FF^W_E=\sigma(\tilde W)\lor 0_\WW$ and $\FF^W_{\mathbb{R}\backslash E}=\sigma(\tilde W')\lor 0_\WW$ we see that
 $S=F(\tilde{W},\tilde{W}')$ a.s.-$\WW$ for some measurable $F$. Then $\QQ$-a.s. the range of $S_E=F(\tilde{W}_E,\tilde{W}'_E)=F(\tilde{W}, \mathbbm{1}_{\mathbb{R}\backslash E} \cdot W')$  (second a.s. equality due to \eqref{metamorphosis}) 
 without $\dagger$ precisely covers the local maxima of $W_E$ belonging to $E$ ($\because$  ${W_E}_\star\QQ=W_\star\QQ$), hence the local maxima of $W$ belonging to $E$ ($\because$ \ref{stable.i}).  Now, $(\tilde{W},W)$ is $\QQ$-independent of $ \mathbbm{1}_{\mathbb{R}\backslash E} \cdot W'$.  It means that for $((\tilde{W},W), \mathbbm{1}_{\mathbb{R}\backslash E} \cdot W')_\star \QQ=((\tilde{W},W)_\star\WW)\times (( \mathbbm{1}_{\mathbb{R}\backslash E} \cdot W')_\star\QQ)$-a.e. $((\tilde w,w),w')$, the range of $F(\tilde w,w')$ without $\dagger$ precisely exhausts the local maxima of $w$ belonging to $E$. By Tonelli, for $( \mathbbm{1}_{\mathbb{R}\backslash E} \cdot W')_\star\QQ$-a.e. $w'$ --- hence certainly for some $w'$ --- for $(\tilde{W},W)_\star\WW$-a.e. $(\tilde w,w)$, the range of $F(\tilde w,w')$ without $\dagger$ precisely covers the local maxima of $w$ belonging to $E$. Thus, for the stated $w'$, $\WW$-a.s. the range of $F(\tilde{W},w')$ without $\dagger$ precisely exhausts the local maxima of $W$ belonging to $E$. On noting that $\FF^W_E\supset 0_\WW$ so that the exceptional set on which the property fails does not matter, we deduce that $E$ is max-enumerable.

 \ref{stable.ii}$\Rightarrow$\ref{stable.iii}.  Let $\tau$ be a selection of a local maximum of $W$ such that $\WW(\tau\in E)>0$. The random elements $\tau$ and $\tau_E$ are independent given $\FF^W_E$ and consequently for an $\FF_E^W$-measurable enumeration $(\tau_n)_{n\in \mathbb{N}}$ of the local maxima of $W$ on $E$   satisfying $\tau_i\ne \tau_j$ a.s.-$\WW$ on $\{\tau_i\ne\dagger,\tau_j\ne\dagger\}$ for $i\ne j$ from $\mathbb{N}$, we get   
 \begin{align*}
 0&<\WW\left[\sum_{n\in \mathbb{N}}\WW(\tau=\tau_n\in E\vert \FF_E^W)^2\right]=
 \sum_{n\in \mathbb{N}}\QQ\left[\QQ(\tau=\tau_n\in E,\tau_E=\tau_n\in E \vert \FF_{E}^W)\right]\\
 &=\QQ(\tau=\tau_E\in E).
 \end{align*}
 
   \ref{stable.iii}$\Rightarrow$\ref{stable.iv}. We have only to apply \ref{stable.iii} to the selection which is equal to $\tau$ on $\{\tau\in G\}$ and equal to $\dagger$ otherwise. 
 
  \ref{stable.iv}$\Rightarrow$\ref{stable.i}. We shall appeal here to \eqref{eq:trivial-deep} of Lemma~\ref{lemma:disaster}. 
  Assume then, without loss of generality, that $\QQ$ supports also an independent exponential random variable $\mathsf{e}$ of mean one, $(0,\infty)$-valued with certainty. 
  
  For $a\in \mathbb{R}$ (fixed for a while) let us consider the maximizer $\tau_a:=\tau_{a,a+\ee}$ of $W$ on the random interval $[a,a+\mathsf{e}]$.   Below, a right (resp. left) local maximum of a continuous $\omega:\mathbb{R}\to \mathbb{R}$  means a time $t\in \mathbb{R}$ for which $\omega$ stays strictly below $\omega(t)$ on $(t,t+\delta)$ (resp. on $(t-\delta,t)$) for some $\delta>0$. According to \eqref{eq:trivial-deep}, 
\begin{align*}
  \Gamma_a&:=\{\text{$\tau_a$ is a right local maximum of $W_E$ belonging to $E$}\}\\
  &=\{\text{$0$ is a right local maximum of $\Delta_{\tau_a}(W_E)$},\tau_a\in E\}\in  0_\QQ\lor \sigma(\tau_a);
  \end{align*}
thus $\Gamma_a=\tau_a^{-1}(F_a)$ a.s.-$\QQ$ for some Borel set $F_a\subset E\cap [a,\infty)$.

  We want  to argue now that  each $F_{a}$ can be chosen to be the intersection of a single Borel $F\subset E$ with $[a,\infty)$ across all $a\in \mathbb{R}$.    To this end, let $a\leq b$ from $\mathbb{R}$ be arbitrary.  We have that $\tau_a=\tau_b$ and hence ($\Gamma_a=\Gamma_b$, therefore) $\{\tau_a\in F_a\}=\{\tau_a\in F_b\}$  a.s.-$\QQ$ on $$\{\tau_a\geq b\}\cap \underbrace{\{\text{$W$ stays strictly below $W_{\tau_a}$ on $(a+\mathsf{e},b+\mathsf{e}]$}\}}_{\text{independent of $\tau_a$ and a.s.-$\QQ$ of positive probability \emph{given} $\mathsf{e}$}};$$ 
the noted fact in the underbrace (together with the independence of $\ee$ and $W$) entails that actually $\{\tau_a\in F_a\}=\{\tau_a\in F_b\}$  a.s.-$\QQ$ on $\{\tau_a\geq b\}$. Since ${\tau_a}_\star \QQ\vert_{\{\tau_a\geq b\}}\sim \leb\vert_{[b,\infty)}$, we deduce that $F_a\cap[b,\infty)=F_b$ a.e.-$\leb$. Therefore, setting $F:=E\cap \left(\text{$(\leb\vert_{\BB_\mathbb{R}})$-ess-$\cup$}\{F_d: d\in \mathbb{R}\}\right)$, we have indeed that
\begin{equation}\label{doublefuck}
\Gamma_d=\tau_d^{-1}(F)\text{ a.s.-$\QQ$ for all $d\in\mathbb{R}$}.
\end{equation}

 From \eqref{doublefuck}, the independence of $\ee$ and $(W,W_E)$ and Tonelli's theorem it now follows that $\QQ$-a.s.  the local maxima of $W$ that belong to $F$ are right local maxima of $W_E$, while $\QQ$-a.s. the local maxima of $W$ belonging to $\mathbb{R}\backslash F$ are not right local maxima of $W_E$ belonging to $E$. Of course, by the symmetry of time-reversal, there is also a Borel set $G\subset E$ such that the preceding statement holds with ``left'' replacing ``right'' and $G$ replacing $F$. Setting $H:=F\cap G\subset E$ we deduce that:

 $(*_1)$ $\QQ$-a.s. the local maxima of $W$ on $H$ are local maxima of $W_E$; but also
 
$(*_2)$ $\QQ$-a.s. the local maxima of $W$ on $ \mathbb{R}\backslash H$ are not local maxima of $W_E$ belonging to $E$. 

Notice that in order to arrive at $(*_1)$-$(*_2)$ we have not had to use \ref{stable.iv}.

Suppose next, per absurdum, that $\leb(E\backslash H)>0$. Then, for some compact non-degenerate interval $I\subset \mathbb{R}$, $\leb((E\backslash H)\cap I)>0$ and hence, with $\tau$ the maximizer of $W$ on $I$, $\QQ(\tau\in E\backslash H)>0$. In turn, from \ref{stable.iv} we get $\QQ(\tau=\tau_E\in E\backslash H)>0$. But $\QQ$-a.s. on $\{\tau=\tau_E\in E\backslash H\}$, on the one hand $\tau$ is a local maximum of $W_E$ belonging to $E$ ($\because$ $\tau=\tau_E\in E$), on the other hand $\tau$ is a local maximum of $W$ that does not belong to $H$. By $(*_2)$ this is a contradiction and we are forced to conclude that $H=E$ a.e.-$\leb$. 

Finally, no local maxima of $W$ belong to an $\leb$-negligible set a.s.-$\WW$ anyway. We infer from this,  from $E=H$ a.e-$\leb$ and from ($*_1$)  that the local maxima of $W$ on $E$ are local maxima of $W_E$ a.s.-$\QQ$. By symmetry \eqref{eq:symmetric-coupling} the converse is also true, and so we have \ref{stable.i}. 
\end{proof}
We conclude this subsection with the following result that will prove important later.

\begin{proposition}\label{proposition:loc-max-of-censored}
Let $E$ be $\leb$-measurable. Suppose that $\WW$-a.s. the local maxima of $\mathbbm{1}_E\cdot W$  are local maxima of $W$. Then $\WW$-a.s. the local maxima of $W$ belonging to $E$ are local maxima of  $\mathbbm{1}_E\cdot W$. (Cf. the condition of Theorem~\ref{thm:stability}\ref{stable.v}.)
\end{proposition}
\begin{proof}
 Much the same as in, and in the notation of the proof of Theorem~\ref{thm:stability}, \ref{stable.iv}$\Rightarrow$\ref{stable.i} --- we have literally only to replace $W_E$ with $\mathbbm{1}_E\cdot W$ and appeal to \eqref{eq:trivial-superficial} instead of \eqref{eq:trivial-deep} in the segment of the argument leading up to $(*_1)$-$(*_2)$ ---  we avail ourselves of the existence of a Borel $H\subset E$ such that   $\WW$-a.s. the local maxima of $W$ on $H$ are local maxima of $\mathbbm{1}_E\cdot W$, but also  $\WW$-a.s. the local maxima of $W$ on $ \mathbb{R}\backslash H$ are not local maxima of $\mathbbm{1}_E\cdot W$ belonging to $E$. Recall from \ref{maxima-of-censored} on p.~\pageref{maxima-of-censored'}  that the local maxima of $\mathbbm{1}_E\cdot W$ belong $\WW$-a.s. to $E$ anyway.

Assuming per absurdum that $\leb(E\backslash H)>0$, then by \eqref{aux:loc-max-censored} there is a selection $\tau$ of a local maximum of $\mathbbm{1}_E\cdot W$ such that $\WW(\tau\in E\backslash H)>0$. But $\WW$-a.s. on the event $\{\tau\in E\backslash H\}$,  by the very assumption of this proposition $\tau$ is a local maximum of $W$, hence, by the finding of the preceding paragraph, it is not a local maximum of $\mathbbm{1}_E\cdot W$, which is a contradiction, since it has nevertheless been chosen as being one of the local maxima of $\mathbbm{1}_E\cdot W$. Therefore $\leb(E\backslash H)=0$, and we conclude easily.
\end{proof}

\subsection{Density considerations}\label{subsection:density-considerations}
Recall the notation \eqref{E-t-u-notation} for an $\leb$-measurable $E$. According to the Lebesgue density theorem \cite[Theorem~7.10]{rudin}, 
\begin{equation}\label{eq:convergence-of-density}
\text{for $\leb$-a.e. $t$},  \quad  h^{-1}E_{t,t+h}  \rightarrow  \mathbbm{1}_E(t)\quad \mbox{ as } h\to 0.
\end{equation}
We are interested in how the speed at which this convergence occurs on $E$ controls the max-total (in)stability of $E$.

The first part of Proposition~\ref{proposition:density-of-sets} to follow says that if the convergence of \eqref{eq:convergence-of-density} on $E$ is fast enough then $E$ is max-totally stable: specifically,  $\WW$-a.s. the  local maxima of the censored Brownian motion $\mathbbm{1}_E\cdot W$ will be seen to be local maxima of $W$, which implies max-total stability by Theorem~\ref{thm:stability} and Proposition~\ref{proposition:loc-max-of-censored}.  Then,  complementing the previous result, we have that  $E$ is max-totally unstable if  the convergence of \eqref{eq:convergence-of-density} is slow enough.  Note that there is a gap between the conditions imposed on the behaviour of the densities which a more sophisticated approach might close.

\begin{proposition}\label{proposition:density-of-sets}
Let  a $\uparrow$ function $g>0$ be defined on $(0,\delta)$ for some $\delta>0$ and let $E$ be $\leb$-measurable.
\begin{enumerate}[(i)]
\item \label{towards-stable}
If $\int_{0+} g(h) \frac{\dd h}{h} < \infty$ and  
\[
\limsup_{h\to 0}\frac{|h| - E_{t,t+h}}{ \frac{ |h| \,g(|h|)^2}{ {\log\log (1/(\sqrt{|h|}g(|h|)))}}} <\infty\text{ for $\leb$-a.e. $t\in E$},
\]
then $E$ is max-totally stable.
\item \label{towards-unstable}
If $\int_{0+} g(h) \frac{\dd h}{h} =\infty$, $\leb(E)>0$ and  
\[
\left(\liminf_{h\downarrow 0}\frac{h - E_{t,t+h}}{hg(h)^2}\right)\lor \left(\liminf_{h\uparrow 0}\frac{|h| - E_{t,t+h}}{|h|g(|h|)^2}\right)>0\text{ for $\leb$-a.e. $t\in E$},
\]
then $E$ is max-totally unstable.
\end{enumerate}
\end{proposition}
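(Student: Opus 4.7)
The plan is to analyze both parts by studying, at a local maximum $\tau$ of $W$ belonging to $E$, the signed difference
\[(\mathbbm{1}_E\cdot W)_\tau - (\mathbbm{1}_E\cdot W)_{\tau+h} = (W_\tau - W_{\tau+h}) - N_h,\qquad N_h := \int_\tau^{\tau+h}\mathbbm{1}_{\mathbb{R}\setminus E}(s)\,\dd W_s,\]
as $h\to 0$. Here $N_h$ is a centred Gaussian martingale in $h$ with quadratic variation $|h|-E_{\tau,\tau+h}$. Two classical tools will govern the two terms on the right. First, Williams' path decomposition identifies $W_\tau - W_{\tau+h}$, for small $|h|$, with (essentially) a BES$(3)$ process in $|h|$, whose small-scale lower envelope is sharply captured by the Dvoretzky--Erd\H{o}s/Motoo integral test $\int_{0+}g(h)/h\,\dd h$ (convergence giving $\liminf=+\infty$ for the renormalised process, divergence giving $\liminf=0$). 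Second, the law of the iterated logarithm yields $|N_h|\le (1+o(1))\sqrt{2(|h|-E_{\tau,\tau+h})\log\log(1/(|h|-E_{\tau,\tau+h}))}$, a.s., as $h\to 0$.

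For part~\ref{towards-stable}, I would appeal to Theorem~\ref{thm:stability}\ref{stable.v}, whereby the claim reduces to showing that, almost surely, every local maximum of $W$ in $E$ is a local maximum of $\mathbbm{1}_E\cdot W$. A countable enumeration by maximizers of $W$ on compact intervals with rational endpoints lets me fix one such candidate $\tau$ and work on $\{\tau\in E\}$. Under $\int_{0+}g(h)/h\,\dd h<\infty$, Motoo's theorem supplies, for any chosen $M>0$, the a.s.\ bound $W_\tau - W_{\tau+h}\geq M\sqrt{|h|}\,g(|h|)$ for all small $|h|$. The density hypothesis, inserted into the LIL bound for $N_h$, yields $|N_h|\leq M'\sqrt{|h|}\,g(|h|)$ a.s., the two $\log\log$ factors cancelling after a short elementary estimate (both are asymptotic to $\log\log(1/|h|)$). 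Taking $M>M'$ delivers strict positivity of $(\mathbbm{1}_E\cdot W)_\tau - (\mathbbm{1}_E\cdot W)_{\tau+h}$ for all small $|h|\ne 0$.

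For part~\ref{towards-unstable}, I would appeal to Theorem~\ref{theorem:max-unstable}\ref{unstable:iv}: the task becomes $\QQ(\tau=\tau_E\in E)=0$ for every selection $\tau$. I then exploit the orthogonal splitting $W = B+C$, $W_E = B+C'$ with $B:=\mathbbm{1}_E\cdot W$, $C:=\mathbbm{1}_{\mathbb{R}\setminus E}\cdot W$, $C':=\mathbbm{1}_{\mathbb{R}\setminus E}\cdot W'$; since $B,C,C'$ are mutually $\QQ$-independent Gaussian martingales (pairwise orthogonal indicator integrands against independent BMs), $\tau$ and $\tau_E$ are conditionally i.i.d.\ given $B$, whence $\QQ(\tau=\tau_E\in E)>0$ only if $\mathcal L(\tau|B)$ carries an atom in $E$ with positive $\QQ$-probability. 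Assume the latter: there is then a $B$-measurable $\tilde\tau\in E$ that is a local max of $W=B+C$ on a positive-probability event. The event ``$\tilde\tau$ is a local max of $W$'' is a germ event for $C$ at the (given $B$) deterministic point $\tilde\tau$, so has conditional probability $0$ or $1$ given $B$ by Blumenthal's zero--one law. On the event of conditional probability one, the two-sided LIL for $C_{\tilde\tau+\cdot}-C_{\tilde\tau}$ (whose quadratic variation at scale $|h|$ is at least $\alpha|h|g(|h|)^2$ by the density lower bound at the Lebesgue point $\tilde\tau$) forces $u(h):=B_{\tilde\tau}-B_{\tilde\tau+h}\ge c\sqrt{|h|}\,g(|h|)\sqrt{\log\log(1/|h|)}$ for all small $|h|$ from both sides; in particular $u(h)>0$, so $\tilde\tau$ is a local max of $B$. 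But $B$ is a time-changed BM via $E_{0,\cdot}$ (derivative $1$ at the Lebesgue point $\tilde\tau$), so Williams' decomposition applied to $B$ at $\tilde\tau$ presents $u$ as (asymptotically) a BES$(3)$ process in $|h|$. Since $\int_{0+}g(h)\sqrt{\log\log(1/h)}/h\,\dd h\geq \int_{0+}g(h)/h\,\dd h=\infty$, the divergent half of Dvoretzky--Erd\H{o}s produces $|h|\to 0$ along which $u(h)<c\sqrt{|h|}\,g(|h|)\sqrt{\log\log(1/|h|)}$, contradicting the preceding lower bound.

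The main obstacle lies in part~\ref{towards-unstable}: converting the mere positivity of $\QQ(\tau=\tau_E\in E)$ into a sharp pathwise BES$(3)$ incompatibility. One must chain together Blumenthal's zero--one law (to promote positive to full conditional probability), the two-sided LIL on $C$ (to extract a \emph{lower} bound on $u$ from an upper-envelope requirement), the Lebesgue-density structure at $\tilde\tau$ (to time-change $B$ so that Williams' BES$(3)$ applies cleanly), and finally Dvoretzky--Erd\H{o}s in its divergent form. Part~\ref{towards-stable} is, by comparison, bookkeeping matching one Motoo envelope against one LIL envelope on a common scale.
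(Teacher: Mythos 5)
Your overall strategy (a Bessel(3) lower envelope for the ``on-$E$'' part against an LIL upper envelope for the ``off-$E$'' part, which is indeed the paper's mechanism) is right, but both parts contain a genuine gap traceable to the same issue: at which random time you are entitled to invoke these envelopes. In part~\ref{towards-stable} you centre at a local maximum $\tau$ of $W$ itself and apply the LIL to $N_h=\int_\tau^{\tau+h}\mathbbm{1}_{\mathbb{R}\setminus E}\,\dd W$. But $N_h$ is not an unconditioned Gaussian martingale evaluated at an independent time: given that $\tau$ is a local maximum, the increments of $W$ near $\tau$ --- including their off-$E$ component --- form a (scaled) meander, and the off-$E$ integral of a meander picks up a drift from the Bessel(3) compensator, so the claimed bound $|N_h|\lesssim\sqrt{(|h|-E_{\tau,\tau+h})\log\log(\cdot)}$ does not follow from the LIL. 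The paper avoids this by centring instead at the maximizer $\tilde\tau$ of the censored process $\mathbbm{1}_E\cdot W$, which is measurable w.r.t.\ $\mathbbm{1}_E\cdot W$ and hence genuinely independent of $\mathbbm{1}_{\mathbb{R}\setminus E}\cdot W$; it then proves that every local maximum of $\mathbbm{1}_E\cdot W$ is a local maximum of $W$, and converts this into condition~\ref{stable.v} of Theorem~\ref{thm:stability} by the separate zero--one-law argument of Proposition~\ref{proposition:loc-max-of-censored} --- a transfer step your proof omits entirely.

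In part~\ref{towards-unstable} the contradiction does not close. The LIL gives $C_{\tilde\tau+h}-C_{\tilde\tau}\geq(1-o(1))\sqrt{2(|h|-E_{\tilde\tau,\tilde\tau+h})\log\log(\cdot)}$ only along a random subsequence of $h$'s, so the requirement that $\tilde\tau$ be a local maximum of $B+C$ forces $u(h)\geq c\sqrt{|h|}\,g(|h|)\sqrt{\log\log(1/|h|)}$ only infinitely often, not ``for all small $|h|$''; the divergent Dvoretzky--Erd\H{o}s test likewise produces a violation only along another subsequence, and nothing guarantees the two subsequences meet. Moreover the extra $\sqrt{\log\log}$ factor is harmful rather than helpful: the conditional standard deviation of $C_{\tilde\tau+h}-C_{\tilde\tau}$ given $B$ is only of order $\sqrt{|h|}\,g(|h|)$, so the probability of exceeding $c\sqrt{|h|}\,g(|h|)\sqrt{\log\log(1/|h|)}$ tends to $0$ and cannot feed a Borel--Cantelli argument. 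The workable order of quantifiers is the paper's: first use the divergent integral test to select, measurably w.r.t.\ the censored process, times $h_i\to0$ at which $\tilde W(\tilde\tau)-\tilde W(\tilde\tau+h_i)<\sqrt{|h_i|}\,g(|h_i|)$; then observe that the independent Gaussian increment of the off-$E$ part at those very times exceeds this gap with conditional probability bounded away from $0$ (no $\log\log$ is needed, only the lower bound $|h_i|-E_{\tilde\tau,\tilde\tau+h_i}\gtrsim|h_i|g(|h_i|)^2$); finally upgrade positive probability to probability one via reverse Fatou and Kolmogorov's zero--one law, and conclude through condition~\ref{unstable:v} of Theorem~\ref{theorem:max-unstable}.
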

\begin{remark}\label{remark:alphas}
In particular, if for some $\alpha\in (2,\infty)$,
\[
\limsup_{h\to 0}\frac{|h| - E_{t,t+h}}{\frac{ |h|}{(\log(1/|h|))^\alpha}} <\infty \text{ for $\leb$-a.e. $t\in E$},
\]
 then $E$ is max-totally stable; if, on the other hand, $\leb(E)>0$ and 
\[
\left(\liminf_{h\downarrow 0}\frac{h - E_{t,t+h}}{\frac{ h}{(\log(1/h))^2}}\right)\lor \left(\liminf_{h\uparrow 0}\frac{|h| - E_{t,t+h}}{\frac{ |h|}{(\log(1/|h|))^2}}\right) >0\text{ for $\leb$-a.e. $t\in E$,}
\]
 then $E$ is  max-totally unstable. 
\end{remark}
We require the elementary
 \begin{lemma}\label{lemma:jon}
Let $C\in (0,\infty)$. Put $\phi(t):= \sqrt{C t \log\log (1/t)}$, which is well-defined and $\uparrow\uparrow$ for all sufficiently small $t\in (0,\infty)$.  Then, for any $C^\prime\in (C,\infty)$,  for all sufficiently small $u\in (0,\infty)$, 
     \[
     \phi\left(\frac{1}{C^\prime} \frac{u^2}{\log\log (1/u)}\right)\leq u.
     \]
 \end{lemma}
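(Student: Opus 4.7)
The plan is to proceed by straightforward direct computation, substituting the argument of $\phi$ and tracking asymptotics as $u\downarrow 0$.

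First I would note that for $u\in (0,\infty)$ small enough, both $u$ and the proposed argument $t:=\frac{1}{C'}\frac{u^2}{\log\log(1/u)}$ lie in the regime where $\phi$ is well-defined and $\uparrow\uparrow$, so the inequality $\phi(t)\leq u$ is equivalent (by squaring and using that $\phi(t)^2=Ct\log\log(1/t)$) to
\[
\frac{C}{C'}\cdot\frac{\log\log(1/t)}{\log\log(1/u)}\leq 1.
\]
Since $C/C'<1$, it will suffice to show that $\log\log(1/t)/\log\log(1/u)\to 1$ as $u\downarrow 0$, because then for all sufficiently small $u$ the ratio will be less than $C'/C$.

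To verify this asymptotic, I would just compute. We have $1/t=C'\log\log(1/u)/u^2$, hence
\[
\log(1/t)=\log C'+\log\log\log(1/u)+2\log(1/u),
\]
and the dominant term as $u\downarrow 0$ is $2\log(1/u)$, so $\log(1/t)\sim 2\log(1/u)$. Taking one more logarithm,
\[
\log\log(1/t)=\log\log(1/u)+\log 2+o(1)\quad\text{as }u\downarrow 0,
\]
which divided by $\log\log(1/u)\to\infty$ gives the stated limit $1$.

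There is no real obstacle here; the only mild care is to notice that $\log\log\log(1/u)$, while tending to $\infty$, is of lower order than $\log(1/u)$, so it does not disturb the leading-order equivalence $\log(1/t)\sim 2\log(1/u)$, and correspondingly the additive $O(1)$ correction in $\log\log(1/t)$ is negligible against $\log\log(1/u)\uparrow\infty$. Combining, for any $C'>C$ we may fix $u$ small enough that $(C/C')\log\log(1/t)/\log\log(1/u)\leq 1$, yielding $\phi(t)\leq u$ as required.
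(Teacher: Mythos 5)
Your proposal is correct and follows essentially the same route as the paper: substitute $t_u:=\frac{1}{C'}\frac{u^2}{\log\log(1/u)}$ into $\phi$, reduce the claim to $\frac{C}{C'}\cdot\frac{\log\log(1/t_u)}{\log\log(1/u)}\leq 1$, and use the asymptotic $\log\log(1/t_u)\sim\log\log(1/u)$. You merely spell out the verification of that asymptotic, which the paper asserts without detail.
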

 \begin{proof}
 Writing $t_u:= \frac{1}{C^\prime} \frac{u^2}{\log\log (1/u)} $, then $\log \log(1/t_u)\sim\log\log(1/u)$ as $u\downarrow 0$, and in particular
 $ \frac{\log \log(1/t_u)}{\log\log(1/u)} \leq \frac{C^\prime}{C}$ for sufficiently small $u\in (0,\infty)$. Now, substituting for $t_u$ in the definition of the function $\phi$ gives, for such $u$, $ \phi(t_u) = u \sqrt{ \frac{C}{C^\prime} \frac{\log \log(1/t_u)}{\log\log(1/u)}} \leq u$. 
 \end{proof}
 \begin{proof}[Proof of Proposition~\ref{proposition:density-of-sets}] Denote $\tilde W:=\mathbbm{1}_E\cdot W$ and  $\tilde W':= 1_{\mathbb{R}\backslash E} \cdot W$. For arbitrary real $s<t$ for which $E_{s,t}>0$, the findings of \ref{maxima-of-censored} from p.~\pageref{maxima-of-censored}, in particular \eqref{aux:loc-max-censored}, show that $(\tilde\tau_{s,t})_\star\WW\ll \leb(\cdot\cap E)$, where $\tilde{\tau}_{s,t}$ is the maximizer of $\tilde W$ on $[s,t]$. 
 
Let us prove \ref{towards-stable}. Pick arbitrary real $s<t$ for which $E_{s,t}>0$. The second assumption of \ref{towards-stable} and $(\tilde\tau_{s,t})_\star\WW\ll \leb(\cdot\cap E)$ entail 
\begin{equation}\label{jon:3}
\WW\left(\limsup_{h\to 0}\frac{|h| - E_{\tilde{\tau}_{s,t},\tilde{\tau}_{s,t}+h}}{\frac{ |h| \,g(|h|)^2}{ {\log\log (1/(\sqrt{|h|}g(|h|)))}}}<\infty\right)=1.
\end{equation}
  By  Lemma~\ref{lemma:jon} (applied with $u=\sqrt{\vert h\vert}g(\vert h\vert)$, $C=2$), \eqref{jon:2} and \eqref{jon:3},
 \[
\WW\left( \limsup_{h\to 0}\frac{|\tilde W'(\tilde{\tau}_{s,t} +h) -\tilde W'(\tilde{\tau}_{s,t})|}{\sqrt{|h|} g(|h|)}<\infty\right)=1.
 \]
 But the Lebesgue density theorem  \eqref{eq:convergence-of-density} for $E$ and $(\tilde\tau_{s,t})_\star\WW\ll \leb(\cdot\cap E)$ certainly show that $E_{\tilde{\tau}_{s,t} ,\tilde{\tau}_{s,t} +h}\geq |h|/2 $ for sufficiently small $h\in \mathbb{R}\backslash \{0\}$ a.s.-$\WW$. Therefore, the first assumption of \ref{towards-stable} and \eqref{jon:0} with  $\alpha g(2\cdot )$ in lieu of $g$, $\alpha\in (0,\infty)$ arbitrary, render
 \[
 \WW\left( \liminf_{h\to 0}\frac{\tilde{W}(\tilde{\tau}_{s,t}+h)- \tilde{W}( \tilde{\tau}_{s,t})}{\sqrt{|h|} g(|h|)}=\infty\right)=1.
 \]
Finally, writing $W$ as the sum of $\tilde{W}$ and $\tilde W'$ (a.s.-$\WW$) we see from the preceding two displays that $W$ has a local maximum at the time $\tilde\tau_{s,t}$ (a.s.-$\WW$).  By letting  $s<t$ vary through the rationals, say, for which $E_{s,t}>0$, we  see that $\WW$-a.s. all local maxima of the censored Brownian motion $\tilde{W}$ are also local maxima of $W$. As announced, by Theorem~\ref{thm:stability} and Proposition~\ref{proposition:loc-max-of-censored}, this is sufficient to secure the max-total stability of $E$.

 We turn to  the proof of \ref{towards-unstable}. Pick again  arbitrary real $s<t$ for which $E_{s,t}>0$. This time the first assumption of \ref{towards-unstable} and \eqref{jon:0}
 give the  existence of a  $\sigma(\tilde W)\lor 0_\WW$-measurable $(0,\infty)$-valued sequence $(h_i)_{i\in \mathbb{N}}$   that is $ \to 0$, strictly positive (resp. strictly negative) on $2\mathbb{N}$ (resp. $2\mathbb{N}-1$) and  such that $\WW$-a.s.
 \[
 \tilde{W}(\tilde{\tau}_{s,t}+h_i)- \tilde{W}( \tilde{\tau}_{s,t})< \sqrt{ \vert h_i \vert}g(\vert h_i\vert),\quad i\in \mathbb{N}.
 \]
From the independence of $\tilde W$ and $\tilde W'$, from the third hypothesis of \ref{towards-unstable} and from $(\tilde\tau_{s,t})_\star\WW\ll \leb(\cdot\cap E)$, we therefore have, $\WW$-a.s.,
 \begin{align*}
&\limsup_{i\to\infty} {\WW} \left( W(\tilde{\tau}_{s,t}+h_i)<  W( \tilde{\tau}_{s,t}) \vert \tilde W\right)\\
 &= \limsup_{i\to\infty}{\WW} \left( \tilde{W}(\tilde{\tau}_{s,t}+h_i)- \tilde{W}( \tilde{\tau}_{s,t})< -\left( \tilde{W}'(\tilde{\tau}_{s,t}+h_i)-\tilde{W}'( \tilde{\tau}_{s,t})\right)\vert \tilde W\right) \\ 
 &\geq \limsup_{i\to\infty}{\WW} \left( \sqrt{\vert h_i \vert}g(\vert h_i\vert)< -\left( \tilde{W}'(\tilde{\tau}_{s,t}+h_i)-\tilde{W}'( \tilde{\tau}_{s,t})\right)\vert \tilde W\right) \\
 &= \limsup_{i\to\infty}\left(1- \Phi\left( \frac{\sqrt{\vert h_i\vert}g(\vert h_i\vert)}{\sqrt{ \vert h_i\vert-E_{\tilde{\tau}_{s,t},\tilde{\tau}_{s,t}+h_i} }}   \right) \right)>0,
 \end{align*}
 where we have denoted, just for a moment, by $\Phi$ the c.d.f. of the $\mathrm{N}(0,1)$ law and interpret, for definiteness, $\Phi(\infty)=1$.  Now, by (reverse conditional) Fatou it follows that, $\WW$-a.s.,
 \[
\WW \bigl( W(\tilde{\tau}_{s,t}+ h_i)<  W( \tilde{\tau}_{s,t}) \mbox{ for infinitely many } i\in \mathbb{N} \vert \tilde W\bigr)>0.
 \]
 But this probability must be either $0$ or $1$ a.s.-$\WW$ by Kolmogorov's zero-one law for the increments of $\tilde W'$ and by the  independence of $\tilde W$ and $\tilde W'$, indeed conditionally on $\tilde W$ the event in question concerns only increments of the additive process $\tilde W'$ on arbitrarily small neighborhoods of $\tilde \tau_{s,t}$.   Therefore in fact  \[
\WW\left(\WW \left( W(\tilde{\tau}_{s,t}+h_i)<  W( \tilde{\tau}_{s,t}) \mbox{ for infinitely many } i\in \mathbb{N} \vert \tilde W\right)=1\right)=1,
 \]
 so $$\WW \left( W(\tilde{\tau}_{s,t}+h_i)<  W( \tilde{\tau}_{s,t}) \mbox{ for infinitely many } i\in \mathbb{N} \right)=1.$$
 Once again by letting $s<t$ vary over the rationals, say, for which $E_{s,t}>0$ we deduce that $\WW$-a.s. no  local maximum of $\tilde{W}$ is a local maximum of $W$, and this implies that $E$ is max-totally unstable by Theorem~\ref{theorem:max-unstable} (since \ref{towards-unstable} has as its second assumption also that $\leb(E)>0$). 
 \end{proof}
Proposition~\ref{proposition:density-of-sets} allows us to put in evidence non-trivial closed max-totally stable sets (which then, according to Theorem~\ref{thm:stability}, belong to $\overline{\EE}$, since their complements, being open, are also max-totally stable).
\begin{example}\label{example:stable}
Consider, under some probability $\mathbb{F}$, the closure \cite[Section~1.4]{subordinators}  $$E:=\overline{\{X_t:t\in [0,\infty)\}}=\{X_t:t\in [0,\infty)\}\cup \{X_{t-}:t\in (0,\infty)\}$$ of the range of a subordinator $X=(X_t)_{t\in [0,\infty)}$ having L\'{e}vy measure $\Pi$ and drift $d\in (0,\infty)$, $X(0)=0$ (of course $E$ differs from the range of $X$ by a countable set only a.s.-$\FFF$). For $t\in E$, the asymptotics of $E_{t,t+h}$ as $h\to 0$ can then be deduced from known results about the growth of $X$, or rather, of the associated driftless process $X^0$, $$X^0(t):=X(t)-dt,\quad t\in [0,\infty),$$ under suitable assumptions. We have indeed from \cite[Proposition~1.8]{subordinators} that,  $\mathbb{F}$-a.s.,
 \[
 \{ E_{0,h}  \leq t \} = \{ X(t/d) \geq h\},\quad \{h,t\}\subset [0,\infty).
 \]
 Consequently, $\FFF$-a.s., for all $t\leq h$ from $[0,\infty)$,
\begin{equation}\label{eq:ancillary}
 \{ h-E_{0,h} \geq t \} =\{ X((h-t)/d) \geq h\}= \{  X^0((h-t)/d) \geq t\}.
\end{equation}
\begin{enumerate}[(A)]
\item \label{subo:A} Suppose first that the tail $\overline{\Pi}$ of the L\'{e}vy measure satisfies, for some $\alpha\in (2,\infty)$, 
 \[
 \limsup_{x\downarrow 0}{\overline{\Pi} (x)}{x(\log (1/x))^{1+\alpha}}<\infty.
 \]
Note in particular that all stable subordinators satisfy this condition.   We will show that $E$ is max-totally stable a.s.-$\FFF$.  To this end, pick arbitrary $\alpha^\prime\in (2,\alpha)$ and define
  \[
  \theta (x) := \frac{x}{(\log(1/x))^{\alpha^\prime}},\quad x\in (0,1).
  \]
Then, because $\alpha'<\alpha$,
\[
\int_{0+} \overline{\Pi}(\theta(x)) \dd x<\infty;
\]
 consequently, by \cite[Theorem~III.9]{bertoin},
  \[
 \FFF\left( \lim_{h \downarrow 0} X^0(h)/ \theta(h) =0\right)=1.
  \]
  This certainly implies (there is no need to strive for optimality here) that, $\FFF$-a.s., for all sufficiently small $h\in (0,\infty)$,
$ X^0((h-\theta(h))/d)  < \theta(h/d)$, and thus, via \eqref{eq:ancillary}, $
  h- E_{0,h} <\theta (h/d)$. 
So, we have 
\begin{equation}\label{verbatim-onwards}
\FFF\left(\limsup_{h\downarrow 0}\frac{  h- E_{0,h}}{\theta(h/d)}\leq 1\right)=1.
\end{equation} By the Markov property of $X$, $$\FFF\left(\limsup_{h\downarrow 0}\frac{  h- E_{X_t,X_t+h}}{\theta(h/d)}\leq 1\right)=1,\quad t\in [0,\infty).$$ By Tonelli, $\FFF$-a.s., for $\leb$-a.e. $t \in [0,\infty)$, 
  \[
\limsup_{h\downarrow 0}\frac{  h- E_{X_t,X_t+h}}{ \theta (h/d)}\leq 1.
  \]
 Since $\FFF(X_\star\leb\vert_{[0,\infty)}\sim \mathbbm{1}_E\cdot \leb\vert_{[0,\infty)})=1$, in fact  even $\FFF(X_\star\leb\vert_{[0,\infty)}=d^{-1}\mathbbm{1}_E\cdot \leb\vert_{[0,\infty)})=1$ (which is easy to check from \cite[Proposition~1.8]{subordinators}, recalling that $X$ has strictly positive drift $d$)  we deduce that $\FFF$-a.s., for $\leb$-a.e. $t\in E$,
\begin{equation}\label{eq:verbatim-fin}
\limsup_{h\downarrow 0}\frac{  h- E_{t,t+h}}{ \theta (h/d)}\leq 1.
\end{equation}
Finally, by time-reversal of $X$ at deterministic times and using duality \cite[Lemma~II.2]{bertoin} we get the analogous control $\limsup_{h\uparrow 0}\frac{  \vert h\vert- E_{t,t+h}}{ \theta (\vert h\vert/d)}\leq 1$ for $\leb$-a.e. $t\in E$ a.s.-$\FFF$. Taking into account Remark~\ref{remark:alphas}  allows to infer that $\FFF$-a.s. $E$ is max-totally stable (since $\alpha'>2$).  

Notice that the preceding is only really interesting if $\Pi((0,\infty))=\infty$, giving then non-trivial max-totally stable $E$ belonging to $\overline{\EE}$ a.s.-$\FFF$ (but true regardless). For, if the L\'evy measure $\Pi$ is infinite, then $\FFF$-a.s. $E$ is  not equal to an open subset of $\mathbb{R}$ modulo an $\leb$-negligible set  (if it was, then, having positive $\leb$-measure due to $d>0$, it would have to contain $O\backslash N$ for some open non-empty interval $O\subset \mathbb{R}$ and $\leb$-negligible $N$, therefore, being closed, it would have to contain $O$, but this is a.s.-$\FFF$ impossible since $X$ has infinite activity). Conversely, if $\Pi$ is finite, then clearly $E$ is  a union of open intervals modulo a countable set a.s.-$\FFF$.

\item\label{subo:B} We look now for a conditon on the subordinator $X$ ensuring that $E$ is a.s.-$\FFF$ max-totally unstable. Suppose then that the tail $\overline{\Pi}$ of the L\'{e}vy measure satisfies
 \[
 \liminf_{x\downarrow 0}\overline{\Pi} (x) x(\log (1/x))^{3}>0,
 \]
 a condition that can certainly be met (is non-vacuous), and which
  implies a lower bound near $\infty$ on the  Laplace exponent $\Phi$ of the driftless  subordinator $X^0$:
\begin{align*}
\liminf_{\lambda\to\infty}(\log \lambda)^2 \frac{\Phi(\lambda)}{\lambda}&= \liminf_{\lambda\to\infty}(\log \lambda)^2\int_0^\infty e^{-\lambda x} \overline{\Pi}(x) \dd x \\
&\geq\liminf_{\lambda\to\infty}(\log \lambda)^2e^{-1}\int_0^{\lambda^{-1}}  \overline{\Pi}(x) \dd x >0.
\end{align*}
It means that, for some $\delta\in (0,\infty)$, for all large enough $\lambda\in (0,\infty)$, $\Phi(\lambda)\geq \delta \frac{\lambda}{(\log \lambda)^2}=:t_\lambda$; therefore,  since $\Phi\uparrow\uparrow\infty$,  for all large enough $\lambda\in (0,\infty)$, $\Phi^{-1}(t_\lambda)\leq \lambda=\delta^{-1} t_\lambda (\log \lambda)^2 \leq \delta^{-1} t_\lambda (\log t_\lambda)^2$, i.e.
 \[
\limsup_{t\to\infty} \frac{\Phi^{-1}(t)}{t(\log t)^2}<\infty.
 \]
 We want to consider  the normalizing function that appears in \cite[Eq.~(8) with $\gamma=2$]{pruitt}, namely,
 \[
 \theta(h):= \frac{\log \log (1/h)}{ \Phi^{-1}(2h^{-1} \log \log (1/h))},
 \]
which is well-defined for all sufficiently small $h\in (0,\infty)$.
 The upper bound on $\Phi^{-1}$ near $\infty$ implies a lower bound on $\theta$ near $0$:
 \begin{equation}\label{lower-bound}
\liminf_{h\downarrow 0} \theta(h) \frac{(\log(1/h))^2}{h}>0.
 \end{equation}
  Now, according to \cite[Lemma~4]{pruitt},
 \[
\FFF\left( \liminf_{h  \downarrow 0} \frac{ X^0(h)}{ \theta(h) } \geq 1\right)=1.
 \]
On the other hand, it is also clear that $\frac{\theta(h)}{h}\downarrow 0$ as $h\downarrow 0$. From \eqref{eq:ancillary} and the preceding display we are able therefore to conclude that
 $$\FFF\left(\liminf_{h\downarrow 0}\frac{  h- E_{0,h}}{\theta(h/(2d))}\geq 1\right)=1.$$  
Proceeding now essentially verbatim as in \ref{subo:A} between \eqref{verbatim-onwards} \& \eqref{eq:verbatim-fin} we deduce that   $\FFF$-a.s., for $\leb$-a.e. $t\in E$,
$$\liminf_{h\downarrow 0}\frac{  h- E_{t,t+h}}{ \theta (h/(2d))}\geq 1.$$
 and consequently $\FFF$-a.s. $E$ is max-totally unstable by Remark~\ref{remark:alphas} and the noted bound \eqref{lower-bound} on $\theta$ (for sure $\FFF(\leb(E)>0)=1$).
\end{enumerate}
\end{example}

\begin{appendix}
\section{The stable $\sigma$-field belongs to the closure}\label{appendix:ntba}
In this self-contained and, apart from Subsections~\ref{miscellaneous}-\ref{subsection:lattice} that remain in effect, notationally independent appendix we work in the setting of \cite{tsirelson}. In particular: 
\begin{enumerate}[(a)] 
\item $B$ is a noise(-type) Boolean algebra \cite[Definition~1.1]{tsirelson} under an essentially separable probability $\PP$ (meaning that $1_\PP=\PP^{-1}([0,1])$ is generated by a countable family of events together with $0_\PP=\PP^{-1}(\{0,1\})$, equivalently that $\LLL^2(\PP)$ is separable);
\item  $\Cl(B)=\left\{\liminf_{n\to\infty}x_n:x\text{ a sequence in $B$}\right\}$ is the sequential monotone --- or, which is the same, topological \cite[Eq.~(4.5)]{tsirelson} --- closure of $B$  \cite[Theorem~1.6]{tsirelson} (for the relevant topology on the lattice  $\hat\PP$ of complete sub-$\sigma$-fields of $\PP$, see \cite[Subsection~3.1]{tsirelson} -- it is just the strong operator topology of the associated conditional  expectation operators [acting on $\LLL^2(\PP)$]);
\item  $\overline{B}$ is the (noise-type) completion of $B$  \cite[Theorem~1.7]{tsirelson}, that is to say, the family of those members of $\Cl(B)$ that are independently complemented in $\Cl(B)$; 
\item $\AA$ is the von Neumann algebra generated by the conditional expectations $\PP[\cdot\vert x]$, $x\in B$ (or, which amounts to the same $\AA$, $x\in \Cl(B)$), acting on $\LLL^2(\PP)$ \cite[Subsection~7.2]{tsirelson}; 
\item  $\mu$ is the associated ([finite, complete] standard) spectral measure  with spectral sets $S_x\subset S$, $x\in \Cl(B)$ \cite[Eq.~(7.7)]{tsirelson} (it means that $ \AA$ is isomorphic to $\LLL^\infty(\mu)$ via a $^*$-isomorphism $\alpha:\AA\to \LLL^\infty(\mu)$, $\PP[\cdot\vert x]$ corresponding to (multiplication with) $\mathbbm{1}_{S_x}$, i.e. $\alpha(\PP[\cdot\vert x])=\mathbbm{1}_{S_x}$ for $x\in \Cl(B)$), subspaces $H(F)=\alpha^{-1}(\mathbbm{1}_F)\LLL^2(\PP)\subset\LLL^2(\PP)$ for $F\in \mu^{-1}([0,\infty))$ \cite[just before Eq.~(7.10)]{tsirelson}, and counting map $$K:=\text{$\mu$-ess sup }K_b,$$  $b$ running over all the finite noise Boolean subalgebras of $B$ \cite[Eq.~(7.21)]{tsirelson} (we shall recall what $K_b$ stands for just below); 
\item finally, $H^{(1)}=H(\{K=1\})$ is the  first chaos \cite[Definition~1.2, Proposition~7.8]{tsirelson}.
\end{enumerate}

For a finite noise Boolean subalgebra $b$ of $B$: write $\at(b)$ for the atoms of $b$; for $\mu$-a.e. $s$, 
\begin{quote}
\normalsize denote by $\underline{b}(s)$  the smallest $x\in b$ such that $s\in S_x$, 
\end{quote}
so that $$K_b(s)= \vert \at(b)\cap 2^{\underline{b}(s)}\vert,$$ i.e. the number of atoms of $b$ contained in $\underline{b}(s)$. We also introduce the stable $\sigma$-field of $B$,  $$\sigma_{\mathrm{stb}}:=\sigma(H^{(1)})\lor 0_\PP,$$  corresponding  \cite[Proposition~7.9]{tsirelson} \cite[Proposition~7.1(i)]{vidmar-noise} to the stable subspace $$H_{\mathrm{stb}}:=H(\{K<\infty\})=\LLL^2(\PP\vert_{\sigma_{\mathrm{stb}}}).$$
Like for the noise of splitting \eqref{eq:stable-in-vNa}, it means automatically that $\PP[\cdot\vert \sigma_{\mathrm{stb}}]\in \AA$, indeed $\alpha(\PP[\cdot \vert \sigma_{\mathrm{stb}}])=\mathbbm{1}_{\{K<\infty\}}$. We assert however that, moreover,
\begin{theorem}\label{thm:is-in-closure}
 $\sigma_{\mathrm{stb}}\in \mathrm{Cl}(B)$.
\end{theorem}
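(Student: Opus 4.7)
The plan is to display a sequence $(x_n)_{n \in \mathbb{N}}$ in $B$ such that $\liminf_{n \to \infty} x_n = \sigma_{\mathrm{stb}}$. By the spectral resolution,
\[
\LLL^2(\PP|_{\liminf_n x_n}) = \overline{\bigcup_n \bigcap_{m \geq n} \LLL^2(\PP|_{x_m})} = \overline{\bigcup_n H\bigl(\bigcap_{m \geq n} S_{x_m}\bigr)} = H\bigl(\liminf_n S_{x_n}\bigr),
\]
using $\bigcap_m H(S_{x_m}) = H(\bigcap_m S_{x_m})$; combined with $\LLL^2(\PP|_{\sigma_{\mathrm{stb}}}) = H_{\mathrm{stb}} = H(\{K < \infty\})$, the goal reduces to the set-theoretic identity $\liminf_n S_{x_n} = \{K < \infty\}$ modulo $\mu$-null.

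First I invoke essential separability of $\PP$ to choose an increasing chain $b_1 \subset b_2 \subset \cdots$ of finite noise Boolean subalgebras of $B$ along which $K_{b_n} \uparrow K$ $\mu$-a.e. The key structural observation is: for each $n$ and each $N \in \mathbb{N}$,
\[
\{s \in S : K_{b_n}(s) \leq N\} = \bigcup_{I \subset \at(b_n),\ |I| \leq N} S_{\bigvee I},
\]
since by definition $K_{b_n}(s)$ equals the number of atoms of $b_n$ below $\underline{b_n}(s)$; hence this threshold set is a finite union of spectral sets of elements of $B$.

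Then each $x_n \in B$ would be built as a join of a carefully selected subset of $\at(b_n)$, governed by a diagonal scheme designed to ensure: \emph{(i)} for $\mu$-a.e.\ $s$ with $K(s) < \infty$, once $K_{b_n}(s)$ stabilizes at $K(s)$ along the chain, the finitely many atoms of $b_n$ below $\underline{b_n}(s)$ are all retained in $x_n$ for every sufficiently large $n$, so $s \in S_{x_n}$ eventually; \emph{(ii)} for $\mu$-a.e.\ $s$ with $K(s) = \infty$, the divergence $K_{b_n}(s) \to \infty$ keeps introducing new atoms of $b_n$ below $\underline{b_n}(s)$, and the diagonal scheme rejects one such atom infinitely often, so $s \notin S_{x_n}$ for infinitely many $n$. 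Together these yield $\liminf_n S_{x_n} = \{K < \infty\}$ $\mu$-a.e., whence $\liminf_n x_n = \sigma_{\mathrm{stb}}$ as required.

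The principal obstacle is the design and verification of this diagonal scheme: exclusions of atoms must be frequent enough along each ``sensitive trajectory'' ($K(s) = \infty$) that $\mu$-a.e.\ such $s$ is disrupted infinitely often, yet sparse enough along each ``stable trajectory'' ($K(s) < \infty$) that eventually no atom below $\underline{b_n}(s)$ is removed. Calibrating this balance will require exploiting the measurable structure of the $K_{b_n}$ on $S$ and its relation to $K$, together with a diagonal enumeration of the atoms across the $b_n$'s and $\mu$-measure control on the level sets of $K$, so that the finite and infinite regimes can be separated $\mu$-almost surely.
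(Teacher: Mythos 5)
Your reduction of the theorem to the set-theoretic identity $\liminf_n S_{x_n}=\{K<\infty\}$ mod $\mu$ is sound, and the observation that $\{K_{b_n}\leq N\}$ is a finite union of spectral sets of elements of $B$ is correct. But the proof stops exactly where the theorem's content begins: you never exhibit the ``diagonal scheme,'' and the tension you name between requirements \emph{(i)} and \emph{(ii)} is not resolvable by a diagonal enumeration of atoms. The difficulty is quantitative, not combinatorial: discarding any single non-trivial atom $a\in\at(b_n)$ from $x_n$ removes from $S_{x_n}$ the entire positive-$\mu$-measure set $\{s:a\leq \underline{b_n}(s)\}$, which always contains a positive-measure piece of $\{K=1\}$; meanwhile every $s$ with $K(s)=\infty$ must be disrupted at infinitely many levels, which forces infinitely many discards accumulating near $\mu$-a.e.\ point of the sensitive part. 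Without a mechanism that makes the cumulative damage to the stable part summable while still guaranteeing that a.e.\ sensitive trajectory is hit infinitely often, the construction does not exist on the page, and ``calibrating this balance'' is precisely the theorem.

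The paper resolves this with the probabilistic method: each atom of $b_n$ is discarded independently with probability $p_n$, where $\sum_n p_n<\infty$ controls the expected loss on $\{K=1\}$, while a preliminary passage to a subsequence of $(b_n)$ along which $K_{b_n}(s)\geq c_n$ eventually for a.e.\ sensitive $s$, combined with $(1-p_n)^{c_n}\to 0$, forces almost sure disruption of a.e.\ $s$ with $K(s)=\infty$. Two further points in the paper's argument are absent from your plan and would likely be needed in any repair. First, even a successful single run only preserves \emph{most} of $\{K=1\}$, so the paper takes an increasing join over prunings started at later and later levels $m$ (Borel--Cantelli in $m$) to recover all of $\{K=1\}$ in the limit. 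Second, the paper never attempts your stronger goal of capturing all of $\{K<\infty\}$ in the $\liminf$ of spectral sets: it squeezes the limiting spectral set between $\{K=1\}$ and $\{K<\infty\}$ and concludes because the first chaos generates $\sigma_{\mathrm{stb}}$. Your stated target, $s\in S_{x_n}$ eventually for a.e.\ $s$ with $K(s)<\infty$, is harder (survival probabilities degrade with $K(s)$) and is nowhere argued. As written, the proposal is a correct reformulation plus an unproved existence claim, so it does not constitute a proof.
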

\begin{remark}\label{rmk:intersect}
So, for some sequence $x$ in $B$, $\sigma_{\mathrm{stb}}=\lim_{n\to\infty}x_n$ and if so, then ($\because$ $\land$ is sequentially continuous on $\mathrm{Cl}(B)$ \cite[Eq.~(4.11)]{tsirelson}) $\sigma_{\mathrm{stb}}=\lim_{n\to\infty}(x_n\land \sigma_{\mathrm{stb}})$.
\end{remark}
In the proof we shall employ the probabilistic method: the desired object will be constructed with positive probability (in fact a.s.) under a probability $\QQ$. The basic idea is as follows.  We approximate $B$ with a sequence $(b_n)_{n\in \mathbb{N}_0}$ of its finite noise Boolean subalgebras, $b_0=\{0_\PP,1_\PP\}$. The atoms of the approximating subalgebras can be considered as belonging to a rooted tree of atoms, $T=\cup_{n\in \mathbb{N}_0}\{n\}\times \at(b_n)$: root $(0,1_\PP)$ ($0_\PP= 1_\PP$ is a trivial case that we exclude for the purposes of this discussion); $(n+1,a)$, $a\in \at(b_{n+1})$, is connected to $(n,a')$, $a'\in \at(b_n)$, if and only if $a\subset a'$, this for all $n\in \mathbb{N}_0$, no other connections. At each approximation level we  ``prune away'' some of the atoms (together with all their ``descendants'') of this tree with a certain probability depending on the level, doing so  $\QQ$-independently across the atoms and the levels. Provided the pruning probabilities are judiciously chosen, what survives in the limit of this abstract pruning belongs to $\sigma_{\mathrm{stb}}$  --- the sensitive infromation has disappeared ---  moreover, the remnant can be made to $\uparrow \sigma_{\mathrm{stb}}$ as we delay the pruning further and further away  along the approximating level. Such pruning techniques have been previously employed by Tsirelson to establish  that a noise (Boolean algebra) cannot be completed unless it is classical \cite[Section 6.3]{picard2004lectures} \cite[Section~7]{tsirelson}. Here the process is a little more delicate, but still possible. Let us make it precise! 

\begin{proof}[Proof of Theorem~\ref{thm:is-in-closure}]
We may and do assume $\mu(K=1)>0$ ($\because$ otherwise the stable part is $0_\PP$, which of course belongs to the closure).  In particular, $0_\PP\ne 1_\PP$.

Let $b=(b_n)_{n\in \mathbb{N}}$ be a $\uparrow$ sequence of finite noise Boolean subalgebras of $B$ such that $K_{b_n}\uparrow K$ as $n\to\infty$ a.e.-$\mu$ (it exists, any will do).

Choose sequences $(p_n)_{n\in \mathbb{N}}$ in $(0,1)$ and $(c_n)_{n\in \mathbb{N}}$ in $\mathbb{N}$ [respectively decaying to $0$ and increasing to $\infty$ 
fast enough, in a sense to be made precise at once] such that 
\begin{equation}\label{conditions:c-p}
\sum_{n=1}^\infty p_n<\infty,
\end{equation}
moreover, 
\begin{equation}\label{eq:delta}
\sum_{m=1}^\infty\underbrace{\sqrt{\sum_{n=m}^\infty p_n}}_{=:\delta_m}<\infty,
\end{equation}
while
\begin{equation}\label{conditions:c-p-2}
\lim_{n\to\infty}(1-p_n)^{c_n}=0.
\end{equation}
In terms of the story above, for $n\in \mathbb{N}$, $p_n$ will be the probability of ``pruning out'' an atom of $b_n$ at ``approximation level'' $n$.  However, the pruning itself will only be done along a subsequence increasing to infinity fast enough relative to the $c_n$, $n\in \mathbb{N}$, as now follows.

 Since $\mu$ is finite, by continuity of $\mu$ from above,  $$\lim_{m\to\infty}\mu(K_{b_{m}}<c_n,K=\infty)=0,\quad n\in \mathbb{N},$$ hence 
there is a sequence $(\mathsf{n}_n)_{n\in \mathbb{N}}$ in $\mathbb{N}$ that is $\uparrow\uparrow\infty$ and such that 
\begin{equation*}
\sum_{n\in \mathbb{N}}\mu(K_{b_{\mathsf{n}(n)}}<c_n,K=\infty)<\infty; 
\end{equation*}
by Borel-Cantelli we deduce that 
\begin{equation*}
\text{[either $K<\infty$ or else ($K_{b_{\mathsf{n}(n)}}\geq c_n$ for all $n\in \mathbb{N}$ large enough)] a.e.-$\mu$.}
\end{equation*}
Replacing $b$ with $b_\mathsf{n}$ if necessary, we may and do assume  that
\begin{equation}\label{eq:funny-condition}
\text{[either $K<\infty$ or else ($K_{b_{n}}\geq c_n$ for all $n\in \mathbb{N}$ large enough)] a.e.-$\mu$.}
\end{equation}

Under a single probability $\QQ$ let now, for each $n\in \mathbb{N}$, $X_n$ be a random element (under $\QQ$) that takes its values in $b_n$ (with the discrete measurable structure) and that results from including in it each atom of $b_n$ independently of the others with probability $p_n$, so that $$\QQ(X_n=x)={\vert \at(b_n)\vert\choose \vert\at(b_n)\cap 2^x\vert}p_n^{\vert\at(b_n)\cap 2^x\vert}(1-p_n)^{\vert\at(b_n)\vert-\vert\at(b_n)\cap 2^x\vert},\quad x\in b_n$$ ($X_n$ is what will be ``pruned out'' at level $n$). We also insist that under $\QQ$ the $X_n$, $n\in \mathbb{N}$, are independent.

Let  $m\in \mathbb{N}$  be fixed for a while. Define the ``level-$m$ delayed running joins''
\begin{equation}\label{eq:theYs}
Y_n^m:=\underbrace{X_m\lor \cdots \lor X_n}_{=0_\PP\text{ for }n<m},\quad n\in \mathbb{N},
\end{equation}
and then ``what survives in the limit of delayed pruning''
\begin{equation*}
Z^m:=\land_{n\in \mathbb{N}} (Y_n^m)',
\end{equation*} 
also \begin{equation*} S^m:=\cap_{n\in \mathbb{N}}S_{(Y_n^m)'}.
\end{equation*} $S^m$ is a random (under $\QQ$) measurable set of $\mu$. We have deliberately avoided calling it $S_{Z^m}$, because  there is liberty up to $\mu$-a.e. equality in choosing each of the $S_{Z_m(\omega)}$ as $\omega$ runs over the sample space of $\QQ$, these choices being not necessarily countably many. But anyway, for all $\omega$ from the sample space of $\QQ$,  $S^m(\omega)=S_{Z^m(\omega)}$ a.e.-$\mu$. In a sense we have exploited the fact that there are only countably many $S_x$, $x\in \cup_{n\in \mathbb{N}}b_n$, to get a ``version'' of $S_{Z^m}$ that is jointly ``sufficiently well-behaved'' in the $(\QQ\times\mu)$-space, namely $(\omega,s)\mapsto \mathbbm{1}_{S^m(\omega)}(s)$ is measurable for $\QQ\times \mu$.

We now notice that thanks to \eqref{conditions:c-p-2}-\eqref{eq:funny-condition}, 
\begin{equation}\label{eq:closure-in-stable-one}
\QQ\text{-a.s.}\quad \mu(S^m\cap \{K=\infty\})=0.
\end{equation}
To explain how one arrives at \eqref{eq:closure-in-stable-one} in more detail we identify, for $\mu$-a.e. $s$, 
\begin{align*}
\{s\in S^m\}&=\cap_{n\in \mathbb{N}}\{s\in S_{(Y_n^m)'}\}= \cap_{n\in \mathbb{N}_{\geq m}}\{s\in S_{X_n'}\}
=\cap_{n\in \mathbb{N}_{\geq m}}\{\underline{b_n}(s)\subset X_n'\}\\
&=\cap_{n\in \mathbb{N}_{\geq m}}\cap_{a\in \at(b_n)\cap 2^{\underline{b_n}(s)}}\{a\not\subset X_n\},
\end{align*}
 conclude from \eqref{conditions:c-p-2}-\eqref{eq:funny-condition}  that  for $\mu$-a.e. $s\in \{K=\infty\}$, $$\QQ(s\in S^m)\leq \liminf_{n\to\infty}(1-p_{n})^{K_{b_{n}}(s)}\leq \lim_{n\to\infty}(1-p_{n})^{c_n}=0,$$ and apply Tonelli's theorem.

Besides, since clearly $\{K=1\}\cap S_x=\cup_{a\in \at(b)\cap 2^x}\{K=1\}\cap S_a$ (a disjoint union) a.e.-$\mu$ for any $x\in b$ for any finite noise Boolean subalgebra $b$ of $B$, then
\begin{align}\label{app:long}
&\QQ[\mu(S^m\cap \{K=1\})]=\QQ[\lim_{n\to\infty}\mu(S_{(Y_n^{m})'}\cap \{K=1\})]\\\nonumber
&=\lim_{n\to\infty}\QQ[\mu(S_{(Y_n^m)'}\cap \{K=1\})]=\lim_{n\to\infty}\QQ[\mu(\cap_{k=m}^nS_{{X_k}'}\cap \{K=1\})]\\\nonumber
&\geq \lim_{n\to\infty}\QQ\left[\mu(K=1)-\sum_{k=m}^n\mu(S_{X_k}\cap \{K=1\})\right]\\\nonumber
&=\mu(K=1)-\sum_{n=m}^\infty\QQ[\mu(S_{X_n}\cap \{K=1\})]\\\nonumber
&=\mu(K=1)-\sum_{n=m}^\infty\sum_{a\in \at(b_n)}\QQ[\mu(S_{a}\cap \{K=1\});a\subset X_n]\\\nonumber
&=\mu(K=1)-\sum_{n=m}^\infty \sum_{a\in \at(b_n)}\mu(S_a\cap \{K=1\})p_n\\\nonumber
&=\mu(K=1)\left(1-\sum_{n=m}^\infty p_n\right)=\mu(K=1)(1-\delta_m^2).
\end{align}
Consequently 
\begin{equation}\label{second-borel-cantelli}
\QQ\Big(\mu(S^m\cap \{K=1\})\leq (1-\delta_m)\mu(K=1)\Big)\leq \delta_m,
\end{equation}
 for otherwise we would get  
\begin{align*}
&\QQ\big[\mu(S^m\cap \{K=1\})\big]\\
&=\QQ\Big[\mu(S^m\cap \{K=1\});\mu(S^m\cap \{K=1\})\leq (1-\delta_m)\mu(K=1)\Big]\\
&\qquad+\QQ\Big[\mu(S^m\cap \{K=1\});\mu(S^m\cap \{K=1\})> (1-\delta_m)\mu(K=1)\Big]\\
&\leq \Big[ (1-\delta_m)\QQ\big(\mu(S^m\cap \{K=1\})\leq (1-\delta_m)\mu(K=1)\big)\\
&\qquad +1-\QQ\big(\mu(S^m\cap \{K=1\})\leq (1-\delta_m)\mu(K=1)\big)\Big]\mu(K=1)\\
&=\Big[1-\delta_m\QQ\big(\mu(S^m\cap \{K=1\})\leq (1-\delta_m)\mu(K=1)\big)\Big]\mu(K=1)< (1-\delta_m^2)\mu(K=1),
\end{align*} 
which is in direct contradiction with \eqref{app:long}. 
(No longer holding $m$ fixed) From \eqref{second-borel-cantelli} together with \eqref{eq:delta}  we deduce via  Borel-Cantelli (again) that  
\begin{equation}\label{eq:closure-in-stable-two}
\QQ\text{-a.s.:} \quad \mu(S^{m}\cap \{K=1\})> (1-\delta_m)\mu(K=1)\text{ for all except finitely many }m\in \mathbb{N},
\end{equation}
while \eqref{eq:closure-in-stable-one} entails 
\begin{equation}\label{eq:closure-in-stable-three}
\QQ\text{-a.s.:}\quad \mu(S^m\cap \{K=\infty\})=0\text{ for all $m\in \mathbb{N}$}.
\end{equation}

Combining \eqref{eq:closure-in-stable-two}-\eqref{eq:closure-in-stable-three} certainly there is at least one $\omega$ from the sample space of $\QQ$ such that

 ($\omega_a$) $\mu(S^m(\omega)\cap \{K=1\})> (1-\delta_m)\mu(K=1)$ for all except finitely many $m\in \mathbb{N}$, 
 
 \noindent but also 
 
 ($\omega_b$)  $\mu(S^m(\omega)\cap \{K=\infty\})=0$ for all $m\in \mathbb{N}$. 
 
 \noindent But then 
$\lor_{m\in \mathbb{N}}Z^{m}(\omega)$ belongs to the closure $\Cl(B)$ (indeed it is the $\uparrow$ join of $\downarrow$ intersections of elements of $B$), is contained in the stable $\sigma$-field ($\because$ by ($\omega_b$) each $Z^m(\omega)$, $m\in \mathbb{N}$, is) and its spectral set, it being equal to  $\cup_{m\in \mathbb{N}}S^m(\omega)$ a.e.-$\mu$, contains the whole of $\{K=1\}$ a.e.-$\mu$ ($\because$ of ($\omega_a$) and the continuity of $\mu$ from below), which means that $\lor_{m\in \mathbb{N}}Z^{m}(\omega)$ is actually equal to the stable $\sigma$-field (since the first chaos generates the stable $\sigma$-field).
\end{proof}

Theorem~\ref{thm:is-in-closure} pays an immediate dividend:
\begin{corollary}\label{corollary:stb-in-closure}
For $x\in \overline{B}$, $\{x\lor \sigma_{\mathrm{stb}},x\land  \sigma_{\mathrm{stb}}\}\subset \Cl(B)$.
\end{corollary}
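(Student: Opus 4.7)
The plan is to deploy Theorem~\ref{thm:is-in-closure} (giving $\sigma_{\mathrm{stb}} \in \Cl(B)$) together with the sequential continuity of $\land$ on $\Cl(B)$ quoted in Remark~\ref{rmk:intersect}, and to leverage the independent complement $x' \in \overline{B} \subset \Cl(B)$ of $x$. Since $x \in \overline{B} \subset \Cl(B)$ too, both $x$ and $\sigma_{\mathrm{stb}}$ sit inside $\Cl(B)$ from the outset.

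For $x \land \sigma_{\mathrm{stb}}$, I would pick sequences $(x_n)$ and $(b_n)$ in $B$ with $x = \liminf_n x_n$ and $\sigma_{\mathrm{stb}} = \liminf_n b_n$, and use sequential continuity of $\land$ to identify $x \land \sigma_{\mathrm{stb}}$ with $\liminf_n (x_n \land b_n)$. Since $B$ is a lattice, each $x_n \land b_n \in B$, displaying $x \land \sigma_{\mathrm{stb}}$ as the $\liminf$ of a sequence in $B$, hence as an element of $\Cl(B)$.

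For $x \lor \sigma_{\mathrm{stb}}$, general distributivity over the independent pair $\{x, x'\}$ (cf.\ \cite[Proposition~3.4]{vidmar_2019}) gives
\[
\sigma_{\mathrm{stb}} = \sigma_{\mathrm{stb}} \land (x \lor x') = (\sigma_{\mathrm{stb}} \land x) \lor (\sigma_{\mathrm{stb}} \land x'),
\]
so that $x \lor \sigma_{\mathrm{stb}} = x \lor (\sigma_{\mathrm{stb}} \land x')$. By the meet step applied with $x'$ in place of $x$, $\sigma_{\mathrm{stb}} \land x' \in \Cl(B)$, and is independent of $x$ as a sub-$\sigma$-field of $x'$.

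The main obstacle is to deduce that this independent join of two elements of $\Cl(B)$ is still in $\Cl(B)$. My plan here is to approximate $\sigma_{\mathrm{stb}} \land x'$ by a sequence $(z_n)$ in $B$ and observe that the joins $x \lor z_n$ lie in the noise Boolean algebra $\overline{B} \subset \Cl(B)$ (as $\overline{B}$ contains both $x$ and $B$), then pass to the limit using the tensor decomposition $\LLL^2(\PP) = \LLL^2(\PP | x) \otimes \LLL^2(\PP | x')$ induced by $x \in \overline{B}$. The delicate point is that $\lor$ is not known to be sequentially continuous on $\Cl(B)$ at this level of generality, so the approximants have to be chosen with care -- e.g., by first replacing $z_n$ by a suitable element of $\Cl(B)$ sitting under $x'$ to enforce independence from $x$ -- so that the limit of $x \lor z_n$ is governed by the tensor factorization and can be identified with $x \lor (\sigma_{\mathrm{stb}} \land x')$.
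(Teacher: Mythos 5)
Your overall strategy coincides with the paper's: handle the meet by sequential continuity of $\land$ on $\Cl(B)$, rewrite $x\lor\sigma_{\mathrm{stb}}$ as $x\lor(\sigma_{\mathrm{stb}}\land x')$, and realise the latter as a limit of joins taken over the independent complements $x,x'$. The meet step and the closing "independent join" step are essentially the argument the paper records (the latter in a footnote); the paper first reduces to $B=\overline{B}$ so that the approximants $(u_n\land x)\lor(v_n\land x')$ lie in $B$ itself, but your variant with $x\in\overline{B}$ works equally well since $\Cl(\overline{B})=\Cl(B)$.

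There is, however, a genuine gap at the pivot of your argument: the identity
\[
\sigma_{\mathrm{stb}}=\sigma_{\mathrm{stb}}\land(x\lor x')=(\sigma_{\mathrm{stb}}\land x)\lor(\sigma_{\mathrm{stb}}\land x')
\]
is \emph{not} an instance of distributivity over independent families. That result concerns meets and joins of subfamilies of a single independent family; it does not permit an arbitrary third $\sigma$-field to be distributed across an independent pair, and indeed $y\land(a\lor b)=(y\land a)\lor(y\land b)$ fails in general for independent $a,b$: take $a=\sigma(\xi)$, $b=\sigma(\eta)$ for independent fair signs $\xi,\eta$ and $y=\sigma(\xi\eta)\lor 0_\PP$, so that $y\land a=y\land b=0_\PP$ while $y\land(a\lor b)=y$. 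For $\sigma_{\mathrm{stb}}$ the identity is true, but it must be \emph{proved}, and the paper does so precisely by the approximation device: write $\sigma_{\mathrm{stb}}=\lim_{n\to\infty}z_n$ with $z_n\in\overline{B}$ (Theorem~\ref{thm:is-in-closure}), use honest Boolean-algebra distributivity $z_n=(z_n\land x)\lor(z_n\land x')$ inside $\overline{B}$, and pass to the limit using sequential continuity of $\land$ on $\Cl(B)$ and of $\lor$ over independent complements. You need to supply this (or a substitute, e.g.\ via the first-chaos decomposition $f=\PP[f\vert x]+\PP[f\vert x']$ for $f\in H^{(1)}$ and $x\in\overline{B}$) before the rest of your plan can go through. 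A minor further point: when invoking sequential continuity of $\land$ you should work with topologically convergent sequences rather than bare $\liminf$'s; since $\Cl(B)$ is the topological closure of $B$ this is only a matter of choosing the approximating sequences appropriately.
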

\begin{proof}
Since $B$ and $\overline{B}$ have the same closure and the same stable $\sigma$-field \cite[Proposition~1.10]{tsirelson} we may just as well assume that $B=\overline{B}$. From Theorem~\ref{thm:is-in-closure} we know that $ \sigma_{\mathrm{stb}}=\lim_{n\to\infty}z_n$ for some sequence $z$ in $B$; Remark~\ref{rmk:intersect}  delivers $\sigma_{\mathrm{stb}}\land x=\lim_{n\to\infty}(z_n\land x)$. Thus $x\land  \sigma_{\mathrm{stb}}\in \Cl(B)$. Then, since $\sigma_{\mathrm{stb}}=( \sigma_{\mathrm{stb}}\land x)\lor ( \sigma_{\mathrm{stb}}\land x')$ (we can see it from $z_n=(z_n\land x)\lor (z_n\land x')$ on passing to the limit, $\lor$ being certainly sequentially continuous ``over independent complements'' \cite[Theorem~3.8]{tsirelson}), we have  $x\lor \sigma_{\mathrm{stb}}=x\lor (\sigma_{\mathrm{stb}}\land x')$. But $\Cl(B)$ is closed for $\lor$ ``over independent complements from $B$''\footnote{We mean that for $x\in B$ and $\{u,v\}\subset\Cl(B)$ if $u\subset x$ and $v\subset x'$, then $u\lor v\in \Cl(B)$. Proof: There are sequences $(u_n)_{n\in \mathbb{N}}$ and $(v_n)_{n\in \mathbb{N}}$ in $B$ converging to $u$ and $v$, respectively. By sequential continuity of $\land$ on $\Cl(B)$ and the sequential continuity of $\lor$ over independent $\sigma$-fields, $(u_n\land x)\lor (v_n\land x')\to u\lor v$ as $n\to\infty$. Q.E.D.} and we deduce that $x\lor \sigma_{\mathrm{stb}}\in \Cl(B)$. 
\end{proof}
Let us close this appendix with a kind of complement to Theorem~\ref{thm:is-in-closure} shedding further light on its non-triviality (cf. Remark~\ref{remark:contrast}). For its formulation and eventual proof, recall from spectral theory that for each $\xi\in \LLL^2(\PP)$ there exists a unique measure $\mu_\xi$ on the spectral space $S$ that is absolutely continuous w.r.t. $\mu$ and such that $$\mu_\xi(S_x)=\PP[\vert\PP[\xi\vert x]\vert^2],\quad x\in B.$$
\begin{theorem}\label{theorem:dense-fuck-stable}
Let $b=(b_n)_{n\in \mathbb{N}}$ be a $\uparrow$ sequence of finite noise Boolean subalgebras of $B$   and set  $B_0:=\cup_{n\in \mathbb{N}}b_n$. Assume that 
\begin{equation}\label{dense-fuck-stable-weird}
\liminf_{n\to\infty}\frac{\vert \at(b_n)\cap 2^x\vert}{\vert\at(b_n)\vert}>0\text{ for all $x\in B_0\backslash \{0_\PP\}$}
\end{equation} [there is a  lower bound on how finely we are dissecting each non-trivial part of $1_\PP$ from $B_0$ relative to the whole using the atoms of $b_n$ as $n\to\infty$]. 
Let $\xi\in \LLL^2(\PP)$ be of zero mean  and one for which 
\begin{equation}\label{dense-fuck-stable-main}
\mu_\xi\left(\lim_{n\to\infty}\frac{K_{b_n}}{\vert \at(b_n)\vert}=0\right)>0
\end{equation}
[somehow the degree of sensitivity, as measured by the rate of growth of $(K_{b_n})_{n\in \mathbb{N}}$, must be smaller than that of $(\vert \at(b_n)\vert)_{n\in \mathbb{N}}$ with positive $\mu_\xi$-measure]. Then there exists a $\uparrow$ sequence $(y_n)_{n\in \mathbb{N}}$ in $B_0$ satisfying:
\begin{enumerate}[(i)]
\item\label{dense-stbl-a} for all $x\in B_0\backslash \{0_\PP\}$, for some (and therefore all large enough) $n\in \mathbb{N}$, $y_n\land x\ne 0_\PP$ [in a very figurative sense we may say that $\lor_{n\in \mathbb{N}}y_n$ is ``dense'' relative to the ``open sets'' of $B_0$];
\item\label{dense-stbl-b} $\PP[\xi\vert  \land_{n\in \mathbb{N}}y_n']\ne 0$ [some of $\xi$ ``survives'' on $\land_{n\in \mathbb{N}}y_n'$]. 
\end{enumerate}
Suppose now instead of \eqref{dense-fuck-stable-main} the stronger condition
\begin{equation}\label{dense-fuck-stable-dense}
\text{$\mu_{\PP[\xi\vert x]}\left(\lim_{n\to\infty}\frac{K_{b_n}}{\vert \at(b_n)\vert}=0\right)>0$ for all $x\in B_0\backslash \{0_\PP\}$},
\end{equation}
holds true.  Then in lieu of \ref{dense-stbl-b} we may ask for the fiercer
\begin{enumerate}[resume*]
\item\label{dense-stbl-c} 
$\PP[\xi\vert x\land ( \land_{n\in \mathbb{N}}y_n')]\ne 0$ for all $x\in B_0$ for which $x\land  ( \land_{n\in \mathbb{N}}y_n')\ne 0_\PP$ and also for $x=1_\PP$ [some of $\xi$ ``survives densely'' on  $\land_{n\in \mathbb{N}}y_n'$].
\end{enumerate}
\end{theorem}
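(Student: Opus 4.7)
The plan is to adapt the probabilistic-pruning construction from the proof of Theorem~\ref{thm:is-in-closure}. Under an auxiliary probability $\QQ$ I will take independent $(X_n)_{n \geq 1}$ with $X_n \in b_n$, obtained by independently retaining each atom of $b_n$ with probability $p_n \in [0,1]$, and set $y_n := X_1 \lor \cdots \lor X_n \in B_0$ (automatically $\uparrow$). The $(p_n)$ will be chosen to balance two antagonistic demands: $\lor_n y_n$ must intersect every non-trivial $x \in B_0$ (density), while the spectral support of $\xi$ sitting in $\{K<\infty\}$-like regions must largely survive inside $\bigcap_n S_{y_n'}$.

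For density, hypothesis \eqref{dense-fuck-stable-weird} furnishes $c_x>0$ with $|\at(b_n) \cap 2^x| \geq c_x |\at(b_n)|$ for all large $n$, hence, since $\{y_n \land x = 0_\PP\}$ is $\downarrow$ in $n$,
\[
\QQ(\forall n \ y_n \land x = 0_\PP) = \prod_n (1-p_n)^{|\at(b_n) \cap 2^x|} \leq \exp\!\Bigl(-c_x\textstyle \sum_n p_n |\at(b_n)|\Bigr)\cdot C(x),
\]
which vanishes as soon as $\sum_n p_n |\at(b_n)| = \infty$; the countability of $B_0$ then gives density \ref{dense-stbl-a} $\QQ$-a.s. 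For survival, the event $\{s \in S_{y_n'}\}$ translates, for a spectral set $s$, into ``no atom of $\at(b_k) \cap 2^{\underline{b_k}(s)}$ is retained in $X_k$ for any $k \leq n$'', so independence of the $X_k$'s gives
\[
\QQ\Bigl(s \in \textstyle\bigcap_n S_{y_n'}\Bigr)=\prod_n(1-p_n)^{K_{b_n}(s)};
\]
Fubini then yields $\QQ[\mu_\xi(\bigcap_n S_{y_n'})]>0$ as soon as $\sum_n p_n K_{b_n}(s) < \infty$ on a $\mu_\xi$-positive set.

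The key calibration invokes \eqref{dense-fuck-stable-main} through Egorov's theorem: since $K_{b_n}/|\at(b_n)|\to 0$ on $\{K<\infty\}$ restricted to $\{\lim_n K_{b_n}/|\at(b_n)|=0\}$, which has positive $\mu_\xi$-measure by \eqref{dense-fuck-stable-main}, there is $E$ contained in this set with $\mu_\xi(E)>0$ and $\epsilon_n := \sup_E K_{b_n}/|\at(b_n)| \to 0$. I then select a subsequence $(n_k)$ with $\epsilon_{n_k} \leq 2^{-k}$ and put $p_{n_k} := 1/|\at(b_{n_k})|$, $p_n := 0$ otherwise. On the one hand $\sum_n p_n |\at(b_n)|=\sum_k 1 = \infty$ (density); on the other, uniformly on $E$, $\sum_n p_n K_{b_n}(s) \leq \sum_k \epsilon_{n_k} \leq \sum_k 2^{-k}<\infty$, so via the elementary bound $\log(1-p_n)\geq -2p_n$ the product $\prod_n (1-p_n)^{K_{b_n}(s)}$ is bounded below by a positive constant on $E$, giving $\QQ[\mu_\xi(E \cap \bigcap_n S_{y_n'})]>0$. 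Intersecting the $\QQ$-a.s.~density event with this positive-probability survival event produces an $\omega$ at which $(y_n)$ verifies \ref{dense-stbl-a} and \ref{dense-stbl-b}.

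For the strengthened conclusion \ref{dense-stbl-c} under \eqref{dense-fuck-stable-dense}, I enumerate $B_0 \setminus \{0_\PP\}$ together with $1_\PP$ as $(x_j)_{j \geq 0}$, observe that $\mu_{\PP[\xi|x_j]}=\mathbbm{1}_{S_{x_j}}\cdot\mu_\xi$ (so $\PP[\xi|x_j \land \land_n y_n']\ne 0$ iff $\mu_\xi(S_{x_j} \cap \bigcap_n S_{y_n'})>0$), and apply Egorov separately to each $\mu_\xi|_{S_{x_j} \cap \{\lim_n K_{b_n}/|\at(b_n)|=0\}}$ to obtain $E_j \subset S_{x_j}$ with $\mu_\xi(E_j)>0$ and $\epsilon^j_n := \sup_{E_j} K_{b_n}/|\at(b_n)| \to 0$. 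A single subsequence $(n_k)$ chosen diagonally so that $\epsilon^j_{n_k}\leq 2^{-k}$ for all $j\leq k$, and the same prescription for $(p_n)$, gives each individual event $\Omega_j := \{\mu_\xi(E_j \cap \bigcap_n S_{y_n'})>0\}$ positive $\QQ$-probability. The main obstacle will be to force all the countably many $\Omega_j$ to occur on a common $\omega$: the naive lower bound on $\QQ[\mu_\xi(E_j \cap \bigcap_n S_{y_n'})]$ degrades with $j$ because of the contributions from $k<j$ in the diagonal. I plan to handle this by a Paley--Zygmund second-moment argument based on the positive-correlation inequality $\QQ(\{s,s'\}\subset \bigcap_n S_{y_n'})=\prod_n(1-p_n)^{N_n(s,s')} \leq \QQ(s\in \bigcap_n S_{y_n'})$, valid since $N_n(s,s') := |\at(b_n) \cap 2^{\underline{b_n}(s) \lor \underline{b_n}(s')}| \geq K_{b_n}(s)$; this gives $\QQ(\Omega_j) \geq \QQ[\mu_\xi(E_j \cap \bigcap_n S_{y_n'})]/\mu_\xi(E_j)$, which, combined with the delayed-pruning device from the proof of Theorem~\ref{thm:is-in-closure} (i.e.\ replacing $y_n$ by running joins $Y^m_n$ from level $m$ and exploiting that the resulting complements form a $\uparrow$ family in $m$), is pushed past $1-2^{-j-1}$ for each $j$, and a Borel--Cantelli/union bound then delivers $\QQ(\bigcap_j \Omega_j \cap \text{density})>0$, producing the desired $(y_n)$.
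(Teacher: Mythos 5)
Your treatment of \ref{dense-stbl-a}--\ref{dense-stbl-b} is correct and is essentially the paper's pruning argument with a different calibration: where you use Egorov to extract a set $E$ of positive $\mu_\xi$-measure with uniform decay of $K_{b_n}/\vert\at(b_n)\vert$ and then prune sparsely with $p_{n_k}=1/\vert\at(b_{n_k})\vert$, the paper instead passes to a Borel--Cantelli subsequence and takes $p_n=1-\zeta_n^{1/\vert\at(b_n)\vert}$ for a sequence $\zeta_n\downarrow 0$ with $\prod_n\zeta_n^{\zeta_n}>0$. Both calibrations achieve the same balance (divergence of $\sum_n p_n\vert\at(b_n)\cap 2^x\vert$ for density versus convergence of $\sum_n p_nK_{b_n}$ on a $\mu_\xi$-positive set for survival), and your version is, if anything, slightly more transparent.

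For \ref{dense-stbl-c}, however, there is a genuine gap: the event you are trying to realize is incompatible with the density event, hence $\QQ$-null. Indeed, on the density event, for every atom $a$ of every $b_{n_k}$ that ever appears, some atom $a^*\subset a$ of some $b_{n_{k'}}$ must be retained in $X_{n_{k'}}$; but then $a^*\subset y_{n_{k'}}$, so $a^*\land(\land_n y_n')=0_\PP$ and $S_{a^*}\cap\bigcap_n S_{y_n'}=\{\emptyset\}$ up to $\mu$-null sets, whence $\mu_\xi(E_{j^*}\cap\bigcap_n S_{y_n'})\le\mu_\xi(\{\emptyset\})=\vert\PP[\xi]\vert^2=0$ for the index $j^*$ with $x_{j^*}=a^*$ (here the zero-mean hypothesis is used). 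So $\QQ\bigl(\bigcap_j\Omega_j\cap\{\text{density}\}\bigr)=0$, and no second-moment or union-bound refinement can rescue the plan as stated. Relatedly, your proposed repair via delayed pruning proves only that $\mu_\xi(E_j\cap\bigcap_{n\ge m_j}S_{X_n'})>0$ with high probability, and this does \emph{not} imply $\Omega_j$, since the full intersection $\bigcap_{n\ge 1}S_{X_n'}$ is strictly smaller and the measure may vanish upon intersecting with the earlier levels.

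The point you are missing is that \ref{dense-stbl-c} is a \emph{conditional} assertion: survival is only demanded for those $x$ with $x\land(\land_n y_n')\ne 0_\PP$, and this hypothesis is exactly what neutralizes the uncontrolled early pruning levels. The paper's proof reduces to $x=a$ an atom of some $b_{\mathsf{m}_{n-1}}$ and observes that $a\land(\land_l y_l')\ne 0_\PP$ forces $a\subset X'_{\mathsf{m}_k}(\omega)$ for every $k<n$ (an atom is either contained in or disjoint from each element of the algebra), so that $S_a\cap(\bigcap_l S_{X'_{\mathsf{m}_l}(\omega)})=S_a\cap(\bigcap_{l\ge n}S_{X'_{\mathsf{m}_l}(\omega)})$: on the hypothesis of \ref{dense-stbl-c} the delayed intersection \emph{is} the undelayed one. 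With this identification, one only needs, for each $n$ and each atom $a$ of $b_{\mathsf{m}_{n-1}}$, that $\mu_\xi(S_a\cap\bigcap_{k\ge\mathsf{m}_n}S_{X_k'})>0$ with probability at least $1-s_n$ for a summable $(s_n)$ (obtained from a Kolmogorov zero--one law applied to the tail event $\bigcup_m\{\mu_{\PP[\xi\vert a]}(\bigcap_{n\ge m}S_{X_n'})>0\}$, which upgrades your positive-probability survival to probability one along delayed tails), and Borel--Cantelli then finishes. You should restructure your argument for \ref{dense-stbl-c} along these lines; the Egorov calibration can be kept, but the simultaneity must be arranged for the delayed events together with the atom-containment reduction, not for the undelayed events $\Omega_j$.
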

\begin{remark}
Condition \eqref{dense-fuck-stable-dense} holds true for a not-mean zero, non-zero $\xi$ automatically and indeed we may then take $y_n=1_\PP$ for all $n\in \mathbb{N}$; such a case would be trivial and is for this reason excluded. On the other hand, for a mean zero $\xi$, \eqref{dense-fuck-stable-main} can only ever hold true if $\lim_{n\to\infty}\vert \at(b_n)\vert=\infty$; when $\lim_{n\to\infty}\vert \at(b_n)\vert=\infty$, then \eqref{dense-fuck-stable-main} is automatic for a stable  (mean zero)  non-zero $\xi$ (stable means that $\xi\in H_{\mathrm{stb}}$), since in such case $\lim_{n\to\infty}\frac{K_{b_n}}{\vert \at(b_n)\vert}=0$ a.e.-$\mu$ on $\{K<\infty\}$ (and therefore a.e.-$\mu_\xi$). 
\end{remark}
\begin{remark}\label{remark:adiuvat}
If $\lim_{n\to\infty}\frac{K_{b_n}}{\vert \at(b_n)\vert}=0$ a.e.-$\mu_\xi$, then: \eqref{dense-fuck-stable-main} holds true also with $\PP[\xi\vert y]$ in lieu of $\xi$ for any $y\in B_0\backslash \{0\}$ for which $\PP[\xi\vert y]\ne 0$; \eqref{dense-fuck-stable-dense} holds true if $\PP[\xi\vert x]\ne 0$ for all $x\in B_0\backslash \{0_\PP\}$.
\end{remark}
\begin{remark}\label{remark:contrast}
In case $\xi$ is sensitive --- meaning that $\PP[\xi\vert\sigma_{\mathrm{stb}}]=0$ ---  \ref{dense-stbl-b} (resp. \ref{dense-stbl-c}) precludes the possibility that $\land_{n\in \mathbb{N}}y_n'$  is included in the stable $\sigma$-field (resp. on any non-trivial intersection with a member of $B_0$), which contrasts  with Theorem~\ref{thm:is-in-closure}, according to which such decreasing intersections were found to approximate arbitrarily well  the stable $\sigma$-field in case $B_0$ is sufficient for (i.e. has the same first chaos \cite[top of p.~316]{tsirelson} and hence the same stable part as)  $B$. (Surprisingly, by a result of Tsirelson \cite[Theorem~1.13]{tsirelson} any atomless $B_0$ is sufficient.)
\end{remark}
The method of proof of Theorem~\ref{theorem:dense-fuck-stable} is very much similar to that of Theorem~\ref{thm:is-in-closure} so we will allow ourselves  ever so slightly more scarcity of detail.
\begin{proof}[Proof of Theorem~\ref{theorem:dense-fuck-stable}]
First, pick any sequence $(\zeta_n)_{n\in \mathbb{N}}$ in $(0,1)$ that is $\downarrow 0$ and is such that  $\sum_{n\in \mathbb{N}}(1-\zeta_n^{\zeta_n})<\infty$ (it is possible because $\lim_{\zeta\downarrow 0}\zeta^\zeta=1$), so that \begin{equation}\label{zeta-choice}\prod_{n\in \mathbb{N}} {\zeta_n}^{\zeta_n}>0.\end{equation}

Second, let $b=(b_n)_{n\in \mathbb{N}}$ be any $\uparrow$ sequence of finite noise Boolean subalgebras of $B$ such that $K_{b_n}\uparrow K$ as $n\to\infty$ a.e.-$\mu$; because $\mu_\xi$ is finite, by bounded convergence we have that for each $m\in \mathbb{N}$, $$\lim_{l\to\infty}\mu_\xi\left(\lim_{n\to\infty}\frac{K_{b_n}}{\vert \at(b_n)\vert}=0,\frac{K_{b_l}}{\vert \at(b_{l})\vert}> \zeta_m\right)=0$$ and therefore  there exists a $\uparrow\uparrow$ sequence $(\mathsf{n}_n)_{n\in\mathbb{N}}$ in $\mathbb{N}$ such that 
$$\sum_{m\in \mathbb{N}}\mu_\xi\left(\lim_{n\to\infty}\frac{K_{b_n}}{\vert \at(b_n)\vert}=0,\frac{K_{b_{\mathsf{n}_m}}}{\vert \at(b_{\mathsf{n}_m})\vert}> \zeta_m\right)<\infty;$$ by Borel-Cantelli we deduce that $\frac{K_{b_{\mathsf{n}_n}}}{\vert\at(b_{\mathsf{n}_n})\vert}\leq \zeta_n$ for all except finitely many $n\in \mathbb{N}$ a.e.-$\mu_\xi$ on $\left\{\lim_{n\to\infty}\frac{K_{b_n}}{\vert \at(b_n)\vert}=0\right\}$; replacing $b$ with $b_\mathsf{n}$ if necessary we may and do assume that 
\begin{equation}\label{eq:adiuvat}
\text{$\frac{K_{b_{n}}}{\vert \at(b_n)\vert}\leq \zeta_n$ for all except finitely many $n\in \mathbb{N}$ a.e.-$\mu_\xi$ on $\left\{\lim_{n\to\infty}\frac{K_{b_n}}{\vert \at(b_n)\vert}=0\right\}$}
\end{equation}
and resolve to pruning.

Setting $$p_n:=1-\sqrt[\vert\at(b_n)\vert ]{\zeta_n}\in (0,1),\quad n\in \mathbb{N},$$ construct under a probability $\QQ$ the random sequences $X=(X_n)_{n\in \mathbb{N}}$ and $Y=(Y_n)_{n\in \mathbb{N}}:=(Y_n^1)_{n\in \mathbb{N}_0}$  [no ``delay'']\phantomsection\label{no-delay} as in, and in the paragraph just before \eqref{eq:theYs}. To wit, the $X_n$, $n\in \mathbb{N}$, are independent random elements (under $\QQ$), $X_n$ taking values in $b_n$, $$\QQ(X_n=x)={\vert \at(b_n)\vert\choose \vert\at(b_n)\cap 2^x\vert}p_n^{\vert\at(b_n)\cap 2^x\vert}(1-p_n)^{\vert\at(b_n)\vert-\vert\at(b_n)\cap 2^x\vert},\quad x\in b_n, n\in \mathbb{N},$$  finally 
\begin{equation*}
Y_n:=X_1\lor \cdots \lor X_n,\quad n\in \mathbb{N}.
\end{equation*}
 (We shall eventually set $y_n:=Y_n(\omega)$ for all $n\in \mathbb{N}$ for some $\omega$ from the sample space of $\QQ$.)

Now, on the one hand, for all $x\in B_0\backslash \{0_\PP\}$, $$\cap_{n\in \mathbb{N}}\{Y_n\land x=0_\PP\}=\cap_{n\in \mathbb{N}}\{X_n\land x=0_\PP\}=\cap_{n\in \mathbb{N}}\cap_{a\in \at(b_n)\cap 2^x}\{a\not\subset X_n\},$$ the $\QQ$-probability of which is $$\leq\liminf_{n\to\infty}(1-p_n)^{\vert \at(b_n)\cap 2^x\vert}=\lim_{n\to\infty}\zeta_n^{\frac{\vert \at(b_n)\cap 2^x\vert}{\vert \at(b_n)\vert}}=0$$ due to \eqref{dense-fuck-stable-weird} and because $\lim_{n\to\infty}\zeta_n=0$. Since $B_0$ is countable, we deduce that 
\begin{equation}\label{first-hit}
\QQ\text{-a.s.:}\quad Y_n\land x\ne 0_\PP\text{ for some }n\in \mathbb{N}\text{ for all }x\in B_0\backslash \{0_\PP\}.
\end{equation}

On the other hand, for $\mu$-a.e. $s$, 
\begin{align*}
\left\{s\in \cap_{n\in \mathbb{N}}S_{Y_n'}\right\}&=\cap_{n\in \mathbb{N}}\{\underline{b_n}(s)\subset Y_n'\}=\cap_{n\in \mathbb{N}}\{\underline{b_n}(s)\subset X_n'\}\\
&=\cap_{n\in \mathbb{N}}\cap_{a\in \at(b_n)\cap 2^{\underline{b_n}(s)}}\{a\not\subset X_n\},
\end{align*} the $\QQ$-probability of which is $$ \prod_{n\in \mathbb{N}}(1-p_n)^{K_{b_n}(s)}=\prod_{n\in \mathbb{N}}\zeta_n^{\frac{K_{b_n}(s)}{\vert\at(b_n)\vert}}$$ and this is $>0$ thanks to \eqref{zeta-choice}-\eqref{eq:adiuvat} if further  $s\in \left\{\lim_{n\to\infty}\frac{K_{b_n}}{\vert \at(b_n)\vert}=0\right\}$, the latter having positive $\mu_\xi$-measure by  \eqref{dense-fuck-stable-main}. Via Tonelli we infer that with positive $\QQ$-probability $\mu_\xi(\cap_{n\in \mathbb{N}}S_{Y_n'})>0$, i.e. 
\begin{equation}\label{second-hit}
\QQ\left(\PP[\vert\PP[\xi\vert \land_{n\in \mathbb{N}}(Y_n)']\vert^2]>0\right)>0.
\end{equation}
Combining \eqref{first-hit}-\eqref{second-hit} it is now elementary to conclude \ref{dense-stbl-a}-\ref{dense-stbl-b} in the manner indicated paranthetically just above.

Suppose now that \eqref{dense-fuck-stable-dense} holds true. In order to get  \ref{dense-stbl-a}-\ref{dense-stbl-c} we have to be a little more careful.  

We begin by noting that the $\uparrow$ union $\cup_{m\in\mathbb{N}}\{\mu_\xi(\cap_{n\in \mathbb{N}_{\geq m}}S_{X_n'})>0\}$ is a tail event of the sequence $X$. By the above this event has positive $\QQ$-probability; by Kolmogorov's zero-one law it has $\QQ$-probability one, and so for any given $\epsilon\in (0,1)$ there is $m\in \mathbb{N}$ such that $\QQ(\mu_\xi(\cap_{n\in \mathbb{N}_{\geq m}}S_{X_n'})>0)\geq 1-\epsilon$. 

Let  $s=(s_n)_{n\in \mathbb{N}}$ be a summable sequence with values in $(0,1)$.  Inductively, for each $n\in \mathbb{N}$, applying the preceding discussion to $\PP[\xi\vert a]$, $a\in \at(b_{\mathsf{m}_{n-1}})$ ($\mathsf{b}_0:=\{0_\PP,1_\PP\}$), in lieu of $\xi$ and exploiting independence we get existence of $\mathsf{m}_n>\mathsf{m}_{n-1}$ ($\mathsf{m}_0:=0$) such that $$\QQ(\cap_{a\in \at(b_{\mathsf{m}_{n-1}})}\{\mu_\xi(S_a\cap (\cap_{k\in \mathbb{N}_{\geq \mathsf{m}_n}}S_{X_k'}))>0\})\geq 1-s_n.$$ By Borel-Cantelli,
\begin{align}\label{third-hit}
&\QQ\text{-a.s.:}\text{ for all but finitely many $n\in \mathbb{N}$, for all $a\in \at(b_{\mathsf{m}_{n-1}})$,} \\\nonumber
&\qquad \qquad\qquad \qquad \qquad\qquad\qquad \qquad \qquad \mu_\xi(S_a\cap (\cap_{k\in \mathbb{N}_{\geq \mathsf{m}_n}}S_{X_k'}))>0.
\end{align}
Of course we also have the following strengthening of \eqref{first-hit}:
\begin{equation}\label{fifth-hit}
\QQ\text{-a.s.:}\text{ for all $n\in \mathbb{N}$, for all $x\in B_0\backslash \{0_\PP\}$, for some $k\in \mathbb{N}_{\geq n}$, $X_{\mathsf{m}_k}\land x\ne 0_\PP$}.
\end{equation}

Now take any $\omega$ from the sample space of $\QQ$ on which the $\QQ$-a.s. events evidenced in \eqref{third-hit}-\eqref{fifth-hit} transpire. By \eqref{third-hit} there is $N\in \mathbb{N}$ such that for all $n\in \mathbb{N}_{\geq N}$, for all $a\in \at(b_{\mathsf{m}_{n-1}})$, 
\begin{equation}\label{app:last}
\mu_\xi(S_a\cap (\cap_{k\in \mathbb{N}_{\geq \mathsf{m}_n}}S_{X_k'(\omega)}))>0.
\end{equation}
 By \eqref{fifth-hit}, for all $x\in B_0\backslash \{0_\PP\}$, for some $k\in \mathbb{N}_{\geq N}$, $X_{\mathsf{m}_k}(\omega)\land x\ne 0_\PP$.  

Setting 
\begin{equation*}
y_l:=\underbrace{\lor_{k=N}^lX_{\mathsf{m}_k}(\omega)}_{=0_\PP\text{ if }l<N},\quad l\in \mathbb{N},
\end{equation*}
 certainly we have \ref{dense-stbl-a}. As for \ref{dense-stbl-c}, for sure $\PP[\xi\vert \land_{n\in \mathbb{N}}y_n']\ne 0$.  Suppose that for some  $a\in \cup_{l\in \mathbb{N}}\at(b_l)$ we have $a\land (\land_{l\in \mathbb{N}}y_l')\ne 0_\PP$. We seek to show $\PP[\xi\vert a\land (\land_{n\in \mathbb{N}}y_n')]\ne 0$, which will demonstrate \ref{dense-stbl-c}. We may and do assume $a\in \at(b_{\mathsf{m}_{n-1}})$ for some $n\in \mathbb{N}_{\geq N}$ that we fix. Now, since $a\land (\land_{l\in \mathbb{N}}y_l')\ne 0_\PP$, it must be that for every $k\in \mathbb{N}\cap [N,n-1]$, $$a\land X_{\mathsf{m}_k}'(\omega)\ne 0_\PP\quad  \therefore\quad  a\subset X_{\mathsf{m}_k}'(\omega)$$ ($\because$ every  atom of $b_{\mathsf{m}_k}$ is the join of some of the atoms of $b_{\mathsf{m}_n}$) and so $a\land  (\land_{l\in \mathbb{N}}y_l')=a\land (\land_{l\in \mathbb{N}_{\geq n}}X_{\mathsf{m}_l}'(\omega))$. But then by  \eqref{app:last}, $\PP[\PP[\xi\vert a\land  (\land_{l\in \mathbb{N}}y_l')]^2]=\mu_\xi(S_a\cap (\cap_{l\in \mathbb{N}_{\geq n}}S_{X_{\mathsf{m}_l}'(\omega)}))\geq \mu_\xi(S_a\cap (\cap_{k\in \mathbb{N}_{\geq \mathsf{m}_n}}S_{X_{k}'(\omega)}))>0$. 
\end{proof}

 \section{A coupling lemma}\label{appendix:coupling}
 Here we present an elementary but quite non-obvious and neat result involving conditioning that may also be of independent interest. Like Appendix~\ref{appendix:ntba} this section stands by itself except for the agreements of Subsection~\ref{miscellaneous} (those of~\ref{subsection:lattice} are not needed here). 
 \begin{lemma}\label{lemma:elementary}
On a probability space $(\Omega,\FF,\PP)$ let there be given random elements $X$, $X_1$, $X_2$ taking values in a countably separated measurable space $(E,\mathcal{E})$, also square-integrable complex-valued random variables $g$, $g_1$, $g_2$, and assume the existence of sub-$\sigma$-fields $\FF_1\subset \FF_2$ such that, for each $i\in \{1,2\}$, the following holds: $(g,X)$ is independent of and identically distributed as $(g_i,X_i)$, both of these \emph{given} $\FF_i$. Then $$0\leq \PP[\overline{g}g_1;X=X_1]\leq \PP[\overline{g}g_2;X=X_2].$$ 
\end{lemma}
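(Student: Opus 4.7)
The plan is to reduce to a finite-partition approximation of the diagonal event, exploit the two conditional symmetries to turn each approximate expectation into a sum of squared conditional $L^2$-norms, and then use $\FF_1\subset\FF_2$ together with the $L^2$-contractivity of conditional expectation to conclude.

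Concretely, I would first use the countable separation of $(E,\mathcal{E})$ to pick a countable $\mathcal{E}_0\subset\mathcal{E}$ separating points of $E$. Taking $\mathcal{E}_n$ to be the $\sigma$-field generated by the first $n$ elements of $\mathcal{E}_0$ and denoting its (finite) atom-partition by $P_n$, the sets $D_n:=\bigcup_{A\in P_n}A\times A\in\mathcal{E}\otimes\mathcal{E}$ are $\downarrow$ to the diagonal $\Delta$. Hence $\mathbbm{1}_{(X,X_i)\in D_n}=\sum_{A\in P_n}\mathbbm{1}_A(X)\mathbbm{1}_A(X_i)\downarrow\mathbbm{1}_{X=X_i}$. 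Since $g g_i\in\LLL^1(\PP)$ by Cauchy--Schwarz, dominated convergence gives
\[
\PP[\overline{g}g_i;X=X_i]=\lim_{n\to\infty}\sum_{A\in P_n}\PP[\overline{g}\mathbbm{1}_A(X)\cdot g_i\mathbbm{1}_A(X_i)].
\]

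Next I would compute each summand using the two hypotheses. Conditional independence of $(g,X)$ and $(g_i,X_i)$ given $\FF_i$ factors the conditional expectation, and conditional equality in distribution equates the two factors. So, for any $A\in\mathcal{E}$,
\[
\PP\bigl[\overline{g}\mathbbm{1}_A(X)\cdot g_i\mathbbm{1}_A(X_i)\bigm|\FF_i\bigr]
=\overline{\PP[g\mathbbm{1}_A(X)\mid\FF_i]}\cdot\PP[g_i\mathbbm{1}_A(X_i)\mid\FF_i]
=\bigl|\PP[g\mathbbm{1}_A(X)\mid\FF_i]\bigr|^2,
\]
whose unconditional expectation is $\|\PP[g\mathbbm{1}_A(X)\mid\FF_i]\|_{\LLL^2}^2\ge 0$. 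Summing over $A\in P_n$ gives the key identity
\[
\PP\Bigl[\overline{g}g_i\,\mathbbm{1}_{(X,X_i)\in D_n}\Bigr]=\sum_{A\in P_n}\bigl\|\PP[g\mathbbm{1}_A(X)\mid\FF_i]\bigr\|_{\LLL^2}^2,
\]
which is already $\ge 0$ and so, on passing to the limit $n\to\infty$, yields $0\le\PP[\overline{g}g_i;X=X_i]$.

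For the monotonicity, set $Y_A:=g\mathbbm{1}_A(X)\in\LLL^2(\PP)$; this random variable does \emph{not} depend on $i$. Because $\FF_1\subset\FF_2$ we have $\PP[Y_A\mid\FF_1]=\PP[\PP[Y_A\mid\FF_2]\mid\FF_1]$, and conditional expectation is an $\LLL^2$-contraction, so $\|\PP[Y_A\mid\FF_1]\|_{\LLL^2}\le\|\PP[Y_A\mid\FF_2]\|_{\LLL^2}$. Summing over $A\in P_n$ and letting $n\to\infty$ in the identity above finishes the proof.

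The only potentially delicate point is the very first conditional factorisation with complex $g$: but this is immediate by splitting into real and imaginary parts (or by applying conditional independence to bounded truncations and using $\LLL^2$-dominated convergence, which is legitimate since $g,g_i\in\LLL^2(\PP)$). Everything else is bookkeeping; no serious obstacle is expected.
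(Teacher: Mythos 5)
Your proposal is correct and follows essentially the same route as the paper: approximate the diagonal by finite separating partitions, factor each term via conditional independence and equality in distribution given $\FF_i$ to obtain $\sum_A\Vert\PP[g\mathbbm{1}_A(X)\mid\FF_i]\Vert_{\LLL^2}^2$, and conclude by the $\LLL^2$-contractivity of conditioning on $\FF_1\subset\FF_2$.
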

\begin{proof}
Because $(E,\mathcal{E})$ is countably separated we have a dissecting sequence $(J_n)_{n\in \mathbb{N}}$, i.e. a sequence of ever finer measurable finite partitions of $E$, whose union separates the points of $E$. Then we compute:
\begin{align*}
&\PP[\overline{g}g_1;X=X_1]=\PP[\overline{g}g_1;\cap_{n\in \mathbb{N}}\cup_{j\in J_n}\{X\in j,X_1\in j\}]\\
&=\lim_{n\to\infty}\PP[\overline{g}g_1;\cup_{j\in J_n}\{X\in j,X_1\in j\}]=\lim_{n\to \infty}\sum_{j\in J_n}\PP[\overline{g}g_1;X\in j,X_1\in j]\\
&=\lim_{n\to \infty}\sum_{j\in J_n}\PP[\PP[\overline{g}g_1;X\in j,X_1\in j\vert\FF_1]]=\lim_{n\to \infty}\sum_{j\in J_n}\PP[\PP[\overline{g}\mathbbm{1}_{\{X\in j\}}\vert \FF_1]\PP[g_1\mathbbm{1}_{\{X_1\in j\}}\vert\FF_1]]\\
&\,\,\qquad \qquad\qquad \qquad \qquad\qquad \qquad \qquad  \text{ ($\because$ $(g,X)$ is independent of $(g_1,X_1)$ given $\FF_1$)}\\
&=\lim_{n\to \infty}\sum_{j\in J_n}\PP[\vert\PP[g\mathbbm{1}_{\{X\in j\}} \vert\FF_1]\vert^2]\geq 0 \,\text{ ($\because$ $(g,X)$ has the same law as $(g_1,X_1)$ given $\FF_1$)}.
\end{align*}
By the same token
$$\PP[\overline{g}g_2;X=X_2]=\lim_{n\to \infty}\sum_{j\in J_n}\PP[\vert\PP[g\mathbbm{1}_{\{X\in j\}} \vert\FF_2]\vert^2].$$
But $\FF_1\subset \FF_2$ and conditioning is a projection, therefore a contraction on $\LLL^2(\PP)$, so
\begin{equation*}
\PP[\vert\PP[g\mathbbm{1}_{\{X\in j\}} \vert\FF_2]\vert^2]\geq \PP[\vert\PP[\PP[g\mathbbm{1}_{\{X\in j\}} \vert\FF_2]\vert\FF_1]]\vert^2]=\PP[\vert\PP[g\mathbbm{1}_{\{X\in j\}} \vert\FF_1]\vert^2],
\end{equation*} for all $j\in \cup_{n\in \mathbb{N}}J_n$, wherefrom we conclude at once.
\end{proof} 
\end{appendix}

\bibliographystyle{plain}
\bibliography{Biblio_noise}
\end{document}